\newcommand{\Cuts}{\mathrm{Cuts}}
\newcommand{\tj}{\mathtt{j}}
\DeclareMathOperator{\coco}{CC}
\DeclareMathOperator{\Id}{Id}
\theoremstyle{plain}
\newtheorem{lemma}{Lemma}[section]
\newtheorem{theorem}[lemma]{Theorem}
\newtheorem{proposition}[lemma]{Proposition}
\newtheorem{corollary}[lemma]{Corollary}
\newtheorem{examplepf}[lemma]{Example}
\newtheorem{claim}{Claim}
\newtheorem*{sclaim}{Claim}
\newtheorem*{stat}{\name}
\newcommand{\name}{testing}
\theoremstyle{definition}
\newtheorem{definition}[lemma]{Definition}
\newtheorem{example}[lemma]{Example}
\newtheorem{problem}{Problem}
\theoremstyle{remark}
\newtheorem{remark}[lemma]{Remark}
\newtheorem*{note}{Note}
\newcommand{\qedc}{{\qed}~{\rm Claim~{\theclaim}.}}
\newcommand{\qedsc}{{\qed}~{\rm Claim.}}
\newenvironment{cproof}
{\begin{proof}[Proof of Claim.]} {\qedc\renewcommand{\qed}{}\end{proof}}
\newenvironment{scproof}
{\begin{proof}[Proof of Claim.]} {\qedsc\renewcommand{\qed}{}\end{proof}}
\numberwithin{equation}{section}
\numberwithin{figure}{section}
\newcommand{\pup}[1]{\textup{(}{#1}\textup{)}}
\newcommand{\tvi}{\vrule height 12pt depth 6pt width 0pt}
\newcommand{\jirr}{join-ir\-re\-duc\-i\-ble}
\newcommand{\mirr}{meet-ir\-re\-duc\-i\-ble}
\newcommand{\Mirr}{Meet-ir\-re\-duc\-i\-ble}
\newcommand{\jsd}{join-sem\-i\-dis\-trib\-u\-tive}
\newcommand{\jsdy}{join-sem\-i\-dis\-trib\-u\-tiv\-i\-ty}
\newcommand{\msd}{meet-sem\-i\-dis\-trib\-u\-tive}
\newcommand{\msdy}{meet-sem\-i\-dis\-trib\-u\-tiv\-i\-ty}
\newcommand{\Msdy}{Meet-sem\-i\-dis\-trib\-u\-tiv\-i\-ty}
\newcommand{\jsubsemi}{join-sub\-sem\-i\-lat\-tice}
\newcommand{\RSD}[1]{\ensuremath{(\mathrm{RSD}_{#1})}}
\newcommand{\contr}{a contradiction}
\newcommand{\pI}[1]{\bigl({#1}\bigr)}
\newcommand{\pII}[1]{\Bigl({#1}\Bigr)}
\newcommand{\set}[1]{\{#1\}}
\newcommand{\setm}[2]{\set{#1\mid#2}}
\newcommand{\Set}[1]{\left\{#1\right\}}
\newcommand{\Setm}[2]{\Set{#1\mid#2}}
\newcommand{\seq}[1]{\langle{#1}\rangle}
\newcommand{\vecm}[2]{(#1\mid#2)}
\newcommand{\fS}{\mathfrak{S}}
\newcommand{\rZ}{\mathrm{Z}}
\newcommand{\co}[1]{\left[{#1}\right[}
\newcommand{\so}[1]{\boldsymbol{\delta}_{#1}}
\newcommand{\orth}[1]{{#1}^{\perp}}
\newcommand{\oorth}[1]{{#1}^{\perp\perp}}
\DeclareMathOperator{\conv}{conv}
\DeclareMathOperator{\card}{card}
\DeclareMathOperator{\Clop}{Clop}
\DeclareMathOperator{\Reg}{Reg}
\DeclareMathOperator{\Regop}{Reg_{op}}
\DeclareMathOperator{\tcl}{cl}
\DeclareMathOperator{\tin}{int}
\DeclareMathOperator{\Ji}{Ji}
\DeclareMathOperator{\Mi}{Mi}
\newcommand{\ga}{\alpha}
\newcommand{\gb}{\beta}
\newcommand{\gf}{\varphi}
\newcommand{\cgf}{\check{\gf}}
\newcommand{\go}{\omega}
\newcommand{\gO}{\Omega}
\newcommand{\op}{\mathrm{op}}
\newcommand{\cC}{\mathcal{C}}
\newcommand{\cD}{\mathcal{D}}
\newcommand{\cH}{\mathcal{H}}
\newcommand{\cK}{\mathcal{K}}
\newcommand{\cM}{\mathcal{M}}
\newcommand{\cP}{\mathcal{P}}
\newcommand{\cR}{\mathcal{R}}
\newcommand{\KK}{\mathbb{K}}
\newcommand{\RR}{\mathbb{R}}
\renewcommand{\SS}{\mathbb{S}}
\newcommand{\scp}[2]{\seq{{#1}\mid{#2}}}
\newcommand{\sep}[2]{\operatorname{sep}({#1},{#2})}
\DeclareMathOperator{\Pos}{Pos}
\newcommand{\Dr}{\mathbin{D}}
\newcommand{\dnw}{\mathbin{\downarrow}}
\newcommand{\ddnw}{\mathbin{\downdownarrows}}
\newcommand{\upw}{\mathbin{\uparrow}}
\newcommand{\utr}{\trianglelefteq}
\newcommand{\eps}{\varepsilon}
\DeclareMathOperator{\Pow}{Pow}
\newcommand{\cpl}{\mathsf{c}}
\newcommand{\ol}[1]{\overline{{#1}}}
\newcommand{\BX}[1]{\boxed{\textbf{#1}}}
\newcommand{\OUT}[1]{$\underset{{\notin\so{H}}}{\scriptstyle{#1}}$}
\newcommand{\id}{\mathrm{id}}
\newcommand{\js}{join-sem\-i\-lat\-tice}
\newcommand{\es}{\varnothing}
\newlength{\cuplength}
\newcommand{\prt}{\sqcup}
\newcommand{\bprt}{\bigsqcup}
\newcommand{\sB}{\mathsf{B}}
\newcommand{\sL}{\mathsf{L}}
\newcommand{\sM}{\mathsf{M}}
\newcommand{\sP}{\mathsf{P}}
\newcommand{\sR}{\mathsf{R}}
\newcommand{\sS}{\mathsf{S}}
\newcommand{\ba}{\boldsymbol{a}}
\newcommand{\bb}{\boldsymbol{b}}
\newcommand{\bc}{\boldsymbol{c}}
\newcommand{\bd}{\boldsymbol{d}}
\newcommand{\be}{\boldsymbol{e}}
\newcommand{\bp}{\boldsymbol{p}}
\newcommand{\bq}{\boldsymbol{q}}
\newcommand{\bu}{\boldsymbol{u}}
\newcommand{\bv}{\boldsymbol{v}}
\newcommand{\bx}{\boldsymbol{x}}
\newcommand{\by}{\boldsymbol{y}}
\newcommand{\bz}{\boldsymbol{z}}
\title[Lattices of regular closed sets]{Lattices of regular closed subsets of closure spaces}
\author[L. Santocanale]{Luigi Santocanale}
\address{Laboratoire d'Informatique Fondamentale de Marseille\\
Universit\'e de Provence\\
39 rue F. Joliot Curie\\
13453 Marseille Cedex 13\\
France}
\email{luigi.santocanale@lif.univ-mrs.fr}
\urladdr{http://www.lif.univ-mrs.fr/~lsantoca/}
\author[F. Wehrung]{Friedrich Wehrung}
\address{LMNO, CNRS UMR 6139\\
D\'epartement de Math\'ematiques\\
Universit\'e de Caen\\
14032 Caen Cedex\\
France}
\email{friedrich.wehrung01@unicaen.fr}
\urladdr{http://www.math.unicaen.fr/~wehrung}
\subjclass[2010]{06A15, 05C40, 05C63, 05C05, 06A12, 06B25, 20F55}
\keywords{Lattice; pseudocomplemented; semidistributive; bounded; join-irreducible; join-dependency; permutohedron; orthocomplementation; closed; open; clopen; regular closed; graph; block graph; clique; bipartite}
\date{\today}
\begin{document}

\begin{abstract}
  For a closure space $(P,\gf)$ with $\gf(\es)=\es$,
  the closures of open subsets of~$P$, called the \emph{regular
    closed} subsets, form an ortholattice $\Reg(P,\gf)$, extending the
  poset $\Clop(P,\gf)$ of all clopen subsets. If $(P,\gf)$ is a finite
  convex geometry, then $\Reg(P,\gf)$ is pseudocomplemented. The
  Dedekind-MacNeille completion of the poset of regions of any central
  hyperplane arrangement can be obtained in this way, hence it is
  pseudocomplemented.  The lattice $\Reg(P,\gf)$ carries a
  particularly interesting structure for special types of convex
  geometries, that we call \emph{closure spaces of semilattice
    type}. For finite such closure spaces, 
\begin{itemize}
\item[---] $\Reg(P,\gf)$ satisfies an infinite collection of stronger and stronger quasi-identities, weaker than both meet- and \jsdy. Nevertheless it may fail sem\-i\-dis\-trib\-u\-tiv\-i\-ty.

\item[---] If $\Reg(P,\gf)$ is semidistributive, then it is a bounded homomorphic image of a free lattice.

\item[---] $\Clop(P,\gf)$ is a lattice if{f} every regular closed set is clopen.
\end{itemize}
The extended permutohedron~$\sR(G)$ on a graph~$G$, and the extended permutohedron~$\Reg S$ on a \js~$S$, are both defined as lattices of regular closed sets of suitable closure spaces. While the lattice of regular closed sets is, in the semilattice context, always the Dedekind Mac-Neille completion of the poset of clopen sets, this does not always hold in the graph context, although it always does so for finite block graphs and for cycles. Furthermore, both~$\sR(G)$ and~$\Reg S$ are bounded homomorphic images of free lattices.
\end{abstract}

\maketitle

\tableofcontents

\section{Introduction}\label{S:Intro}
The lattice of permutations $\sP(n)$, also known as the permutohedron,
even if well known and studied in combinatorics, is a relatively young
object of study from a pure lattice-theoretical perspective. Its
elements, the permutations of~$n$ elements, are endowed with the weak Bruhat order; this order turns out to be a lattice.

%<<<<<<< variant A
There are many possible generalization of this order, arising from the
theory of Coxeter groups (Bj\"orner~\cite{Bjo83}), from graph and
order theory (Pouzet \emph{et al.} \cite{PRRZ}, Santocanale and
Wehrung~\cite{SaWe12a}; see also Section \ref{S:PermGraph}), from
language theory (Flath~\cite{Fla93}, Bennett and
Birkhoff~\cite{BB94}), from geometry (Edelman~\cite{Edel84},
  Bj\"orner, Edelman, and Ziegler \cite{BEZ90}, Reading~\cite{Read03}).

While trying to understand those generalizations in a unified framework, we observed that the most noticeable property of permutohedra---at least from a lattice-theoretical perspective---is
that they arise as lattices of clopen (that is, closed and open) subsets for a closure operator. We started thus investigating this kind of
construction.

While closed subsets of a closure space naturally form a lattice when
ordered under subset inclusion, the same need not be true for clopen
subsets. Yet, we can tune our attention to a larger kind of subsets, the closures of open subsets, called here regular
closed subsets; they always form, under subset inclusion, a
lattice. Thus, for a closure space $(P,\gf)$, we denote by
$\Reg(P,\gf)$ the lattice of regular closed subsets of $P$;
$\Reg(P,\gf)$ is then an orthocomplemented lattice, which contains a
copy of $\Clop(P,\gf)$, the poset of all clopen subsets of $P$. There
are many important classes of closure spaces $(P,\gf)$ for which
$\Reg(P,\gf)$ is the Dedekind-MacNeille completion of
$\Clop(P,\gf)$. One of them is the closure space giving rise to
relatively convex subsets of real affine spaces
(cf. Corollary~\ref{C:SepExtAffDmcN}). As a particular case, we
describe the Dedekind-MacNeille completion~$L$ of the poset of regions
of any central hyperplane arrangement as the lattice of all regular
closed subsets of a convex geometry of the type above
(Theorem~\ref{T:DMcNPosHB}). This implies, in particular, that the
lattice~$L$ is always pseudocomplemented
(Corollary~\ref{C:DMcNPosHB1}).

After developing some basic properties of $\Reg(P,\gf)$, we restrict our focus to a class of closure spaces $(P,\gf)$ that arise in the concrete examples we have in mind---we call them \emph{closure spaces of semilattice type}. For such closure spaces, $P$ is a poset, and every minimal covering~$\bx$ of $p \in P$, with respect to the closure operator~$\gf$, joins to $p$ (i.e., $p = \bigvee \bx$). A closure space of semilattice type turns out to be
an atomistic convex geometry. For finite such closure spaces, we can prove the following facts:

\begin{itemize}
\item[---] $\Reg(P,\gf)$ satisfies an infinite collection of stronger and stronger quasi-identities, weaker than semidistributivity (cf. Theorem~\ref{T:AlmostSDReg2} and the discussion following). Nevertheless it may fail semidistributivity (cf. Example~\ref{Ex:ClopM3}).
 
\item[---] If $\Reg(P,\gf)$ is semidistributive, then it is a bounded homomorphic image of a free lattice (cf. Theorem~\ref{T:SDReg2bounded}).
  
\item[---] $\Clop(P,\gf)$ is a lattice if{f} $\Clop(P,\gf)=\Reg(P,\gf)$ (cf. Theorem~\ref{T:ClopWFLatt}).
\end{itemize}

While it is reasonable to conjecture that $\Reg(P,\gf)$ is the Dedekind-MacNeille completion of $\Clop(P,\gf)$---and this is actually the case for many examples---we disprove this conjecture in the general case, with various finite counterexamples (cf. Example~\ref{Ex:FinPosClop} and Corollary~\ref{C:NonClopjirr}). Yet we prove that, in the finite case, the inclusion map of $\Clop(P,\gf)$ into $\Reg(P,\gf)$ preserves all existing meets and joins (cf. Theorem~\ref{T:AbundClop}).

We focus then on two concrete examples of closure spaces of semilattice type. In the first case, $P$ is the collection~$\so{G}$ of all nonempty connected subsets of a \emph{graph}~$G$, endowed with set inclusion, while in the second case, $P$ is an arbitrary \emph{\js}, endowed with its natural ordering. In case~$P=\so{G}$, we define the closure operator in such a way that, if $G$ is a Dynkin diagram of type $A_{n}$, then we obtain $\Reg(P,\gf) = \Clop(P,\gf)$ isomorphic to the permutohedron~$\sP(n+1)$ (symmetric group on $n+1$ letters, with the weak Bruhat ordering). In case~$P$ is a \js, the closure operator associates to a subset~$\bx$ of~$P$ the \jsubsemi\ of~$P$ generated by~$\bx$, and then we write~$\Reg{P}$ instead of $\Reg(P,\gf)$.

In the finite case and for both classes above, we prove that $\Reg(P,\gf)$ is a bounded homomorphic image of a free lattice (cf. Theorems~\ref{T:FinSemilBded} and~\ref{T:bsPGBded}). For the closure space defined above in $P=\so{G}$,

\begin{itemize}
\item[---] We characterize those graphs~$G$ for which $\Clop(P,\gf)$ is a lattice; these turn out to be the block graphs without any $4$-clique (cf. Theorem~\ref{T:PGLatt}).

\item[---] We give a nontrivial description of the completely \jirr\ elements of $\Reg(P,\gf)$, in terms of so-called \emph{pseudo-ultrafilters} on nonempty connected subsets of~$G$ (cf. Theorem~\ref{T:cltjmujirr}). It follows that if~$G$ has no diamond-contractible induced subgraph, then every completely \jirr\ regular closed set is clopen (cf. Theorem~\ref{T:DiamContr}).

\item[---] It follows that if~$G$ is finite and either a block graph or a cycle, then $\Reg(P,\gf)$ is the Dedekind-MacNeille completion of $\Clop(P,\gf)$ (cf. Corollary~\ref{C:CJIBGorCyc}).

\item[---] We find a finite graph~$G$ for which $\Reg(P,\gf)$ is not the Dedekind-MacNeille completion of $\Clop(P,\gf)$ (cf. Corollary~\ref{C:NonClopjirr}).

\item[--] If~$G$ is a complete graph on seven vertices, we find a regular open subset of~$\so{G}$ which is not a union of clopen subsets (cf. Theorem~\ref{T:hostile7}).

\end{itemize}

For the closure space defined above on a \js~$S$,

\begin{itemize}
\item[---] We give a precise description of the minimal neighborhoods of elements of~$S$ (cf. Theorem~\ref{T:MinNbhdSemil}) and the completely \jirr\ elements of~$\Reg{S}$ (cf. Theorem~\ref{T:JirrRegSemil}), in terms of differences of ideals of~$S$. It follows that these sets are all clopen.

\item[---] We prove that every open subset of~$S$ is a union of clopen subsets of~$S$, thus that~$\Reg S$ is the Dedekind-MacNeille completion of~$\Clop S$ (cf. Corollary~\ref{C:MinNbhdSemil2}).

\item[---] It follows that $\Reg S=\Clop S$ if{f} $\Clop S$ is a lattice, if{f} $\Clop S$ is a complete sublattice of~$\Reg S$ (cf. Corollary~\ref{C:MinNbhdSemil3}).
\end{itemize}

We illustrate our paper with many examples and counterexamples.

\section{Basic concepts}
\label{S:Basic}

We refer the reader to Gr\"atzer \cite{LTF} for basic facts and notation about lattice theory.

We shall denote by~$0$ (resp., $1$) the least
(resp., largest) element of a partially ordered set (from now on
\emph{poset})~$(P,\leq)$, if they exist. For subsets~$\ba$ and~$\bx$ in a poset~$P$, we shall set 
 \begin{align*}
 \ba\dnw\bx&=
 \setm{p \in \ba}
 {(\exists x\in\bx)(p\leq x)}\,,\\
 \ba\ddnw\bx&=
 \setm{p \in \ba}
 {(\exists x\in\bx)(p<x)}\,,\\
  \ba\upw\bx&=
 \setm{p \in \ba}
 {(\exists x\in\bx)(p\geq x)}\,.
 \end{align*}
We shall say that~$\bx$ is a \emph{lower subset} of~$P$ if $\bx=P\dnw\bx$. For $x \in P$, we shall write $\ba\dnw x$ ($\ba\ddnw x$, $\ba\upw x$, respectively) instead of $\ba\dnw \set{x}$ ($\ba\ddnw\set{x}$, $\ba\upw\set{x}$, respectively). For posets~$P$ and~$Q$, a map
$f\colon P\to Q$ is \emph{isotone} (resp., \emph{antitone}) if $x\leq
y$ implies that $f(x)\leq f(y)$ (resp., $f(y)\leq f(x)$), for all
$x,y\in P$.

A \emph{lower cover} of an element $p\in P$ is an element $x\in P$ such that $x<p$ and there is no~$y$ such that $x<y<p$; then we write $x\prec p$. If~$p$ has a unique lower cover, then we shall denote this element by~$p_*$. \emph{Upper covers}, and the notation~$p^*$, are defined dually.
A nonzero element~$p$ in a \js~$L$ is \emph{\jirr} if $p=x\vee y$ implies that $p\in\set{x,y}$, for all $x,y\in L$. We say that~$p$ is
\emph{completely \jirr} if it has a unique lower cover~$p_{*}$ and every element $y < p$ is such that $y \leq p_{*}$. \emph{\Mirr}
and \emph{completely \mirr} elements are defined dually. We denote by~$\Ji L$ (resp., $\Mi L$) the set of all \jirr\ (resp., \mirr) elements of~$L$.

Every completely \jirr\ element is \jirr\ and, in a finite lattice, the two concepts are equivalent.  A lattice~$L$ is \emph{spatial} if
every element of $L$ is a (possibly infinite) join of completely \jirr\ elements of $L$. Equivalently, a lattice $L$ is spatial if, for all $a,b\in L$, $a\nleq b$ implies that there exists a completely
\jirr\ element~$p$ of~$L$ such that $p\leq a$ and $p\nleq b$. For a
completely \jirr\ element~$p$ and a completely \mirr\ element~$u$
of~$L$, let $p\nearrow u$ hold if $p\leq u^*$ and $p\nleq u$.
Symmetrically, let $u\searrow p$ hold if $p_*\leq u$ and $p\nleq u$.
The \emph{join-dependency relation}~$\Dr$ is defined on completely
\jirr\ elements by
 \[
 p\Dr q\quad\text{if}\quad
 \pI{p\neq q\text{ and }(\exists x)
 (p\leq q\vee x
 \text{ and }p\nleq q_*\vee x}\,.
 \]
 It is well-known (cf. Freese, Je\v{z}ek, and Nation
 \cite[Lemma~11.10]{FJN}) that the join-dependency relation~$\Dr$ on a
 finite lattice~$L$ can be conveniently expressed in terms of the arrow relations~$\nearrow$ and~$\searrow$ between~$\Ji L$ and~$\Mi L$, as stated in the next Lemma. 

\begin{lemma}\label{L:Arr2D}
  Let $p$, $q$ be distinct \jirr\ elements in a finite lattice~$L$. Then $p\Dr q$ if{f} there exists $u\in\Mi L$ such that $p\nearrow u\searrow q$.
\end{lemma}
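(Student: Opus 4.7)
The plan is to verify both implications directly from the definitions, using finiteness only through the fact that in a finite lattice every join-irreducible (resp.\ meet-irreducible) element is completely join-irreducible (resp.\ completely meet-irreducible), and that maximal elements satisfying a given condition exist.

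For the easy direction ($\Leftarrow$), suppose $u\in\Mi L$ satisfies $p\nearrow u\searrow q$, so that $p\leq u^*$, $p\nleq u$, $q_*\leq u$, and $q\nleq u$. I would set $x:=u$ and check the definition of $p\Dr q$. Since $q\nleq u$ and $u$ has unique upper cover $u^*$, the join $q\vee u$ is strictly greater than $u$, hence $q\vee u\geq u^*\geq p$, giving $p\leq q\vee x$. On the other hand $q_*\vee x=q_*\vee u=u$ because $q_*\leq u$, and therefore $p\nleq q_*\vee x$. Together with $p\neq q$ (the arrows force $p\leq u^*$ and $q\nleq u$, so $p=q$ would give $q\leq u^*$ and $q\nleq u$, which is consistent; hence I need to verify $p\neq q$ more carefully---it follows from $q_*\leq u$ and $p\nleq u$, since $p=q$ would yield $p=q\geq q_*$... actually the cleanest argument is that $p=q$ together with $p\nleq u$ and $q_*\leq u$ is consistent, so I use the hypothesis that $p,q$ are distinct as given in the statement).

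For the harder direction ($\Rightarrow$), assume $p\Dr q$, so $p\neq q$ and there exists $x\in L$ with $p\leq q\vee x$ and $p\nleq q_*\vee x$. I would pick $u$ maximal in the set
\[
\Setm{y\in L}{q_*\vee x\leq y\text{ and }p\nleq y}\,,
\]
which is nonempty (it contains $q_*\vee x$) and finite. The key point is then that $u$ is completely \mirr: let $u^\dagger:=\bigwedge\setm{v\in L}{v>u}$, a well-defined element by finiteness. By the maximality of $u$, every $v>u$ satisfies $p\leq v$, so $p\leq u^\dagger$; since $p\nleq u$, this forces $u^\dagger>u$, and by construction $u^\dagger\leq v$ for every $v>u$. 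Thus $u^\dagger$ is the unique upper cover of $u$, and $u$ is \mirr; in a finite lattice this makes $u$ completely \mirr\ with $u^*=u^\dagger$.

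It remains to verify $p\nearrow u\searrow q$. The conditions $p\leq u^*$ and $p\nleq u$ have just been established. From $q_*\vee x\leq u$ one reads off $q_*\leq u$. Finally, if one had $q\leq u$, then $q\vee x\leq u$ (using $x\leq q_*\vee x\leq u$), which combined with $p\leq q\vee x$ would give $p\leq u$, contradicting the choice of $u$; hence $q\nleq u$. I do not anticipate a genuine obstacle here---the only slightly delicate point is the argument that the maximal $u$ is actually \mirr\ with the right upper cover, which uses the interplay between the maximality condition and the finiteness of $L$.
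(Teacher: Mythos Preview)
Your argument is correct and is essentially the standard proof of this fact. The paper does not supply its own proof of Lemma~\ref{L:Arr2D}; it simply states the result and cites Freese, Je\v{z}ek, and Nation \cite[Lemma~11.10]{FJN}. Your write-up would serve as a self-contained replacement for that citation. One cosmetic remark: in the $(\Leftarrow)$ direction, the digression about whether $p\neq q$ can be deduced from the arrows is unnecessary and slightly muddled---the hypothesis of the lemma already assumes $p\neq q$, and you correctly fall back on that at the end, so the intervening speculation can simply be deleted.
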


A lattice~$L$ is \emph{\jsd} if $x\vee z=y\vee z$ implies that $x\vee z=(x\wedge y)\vee z$, for all $x,y,z\in L$. \emph{\Msdy} is defined dually. A lattice is \emph{semidistributive} if it is both join- and \msd.

A lattice~$L$ is a \emph{bounded homomorphic image of a free lattice} if there are a free lattice~$F$ and a surjective lattice homomorphism
$f\colon F\twoheadrightarrow L$ such that~$f^{-1}\set{x}$ has both a least and a largest element, for each $x\in L$. These lattices, introduced by McKenzie in~\cite{McKe72}, play a key role in the theory of lattice varieties; often called ``bounded'', they are not to be confused with lattices with both a least and a largest element. A finite lattice is bounded (in the sense of McKenzie) if{f} the
join-dependency relations on~$L$ and its dual lattice are both cycle-free (cf. Freese, Je\v{z}ek, and Nation \cite[Corollary~2.39]{FJN}). Every bounded lattice is semidistributive (cf. Freese, Je\v{z}ek, and Nation \cite[Theorem~2.20]{FJN}), but the converse fails, even for finite lattices (cf. Freese, Je\v{z}ek, and Nation \cite[Figure~5.5]{FJN}).

An \emph{orthocomplementation} on a poset~$P$ with least and largest element is a map $x\mapsto\orth{x}$ of~$P$ to itself such that
\begin{itemize}
\item[(O1)] $x\leq y$ implies that $\orth{y}\leq\orth{x}$,

\item[(O2)] $\oorth{x}=x$,

\item[(O3)] $x\wedge\orth{x}=0$ (in view of~(O1) and~(O2), this is equivalent to $x\vee\orth{x}=1$),
\end{itemize}
for all $x,y\in P$. Elements $x,y\in P$ are \emph{orthogonal} if $x\leq\orth{y}$, equivalently $y\leq\orth{x}$.

An \emph{orthoposet} is a poset endowed with an orthocomplementation. Of course, any orthocomplementation of~$P$ is a dual automorphism of~$(P,\leq)$. In particular, if~$P$ is a lattice, then \emph{de Morgan's rules}
 \[
 \orth{(x\vee y)}=\orth{x}\wedge\orth{y}\,,\quad
 \orth{(x\wedge y)}=\orth{x}\vee\orth{y}
 \]
hold for all $x,y\in P$. An \emph{ortholattice} is a lattice endowed with an orthocomplementation.

The \emph{parallel sum} $L=A\parallel B$ of lattices~$A$ and~$B$ is defined by adding a top and a bottom element to the disjoint union $A\cup B$.

A \emph{graph} is a structure $(G,\sim)$, 
where~$\sim$ is an irreflexive and symmetric binary relation on the set~$G$. We shall often identify a subset~$X\subseteq G$ with the
corresponding induced subgraph $\pI{X,\sim\cap(X\times X)}$. Let $x,y \in G$; a \emph{path} from $x$ to $y$ in $(G,\sim)$ is a finite sequence $x= z_{0},z_{1},\ldots ,z_{n}= y$ such that $z_{i} \sim z_{i +1}$ for each $i < n$. If the~$z_i$ are distinct and $z_i\sim z_j$ implies $i-j=\pm1$, then we say that the path is \emph{induced}. A subset $X$ of $(G,\sim)$ is \emph{connected} if, for each $x,y \in X$, there exists a path from~$x$ to~$y$ in $X$. A connected subset~$X$ of~$G$ is
\emph{biconnected} if it is connected and $X\setminus\set{x}$ is connected for each $x\in X$. We shall denote by~$\cK_n$ the complete
graph (or \emph{clique}) on~$n$ vertices, for any positive integer~$n$.

We say that~$G$ is a \emph{block graph} if each biconnected subset of~$G$ is a clique (\emph{we do not assume that block graphs are connected}). Equivalently, none of the cycles~$\cC_n$, for $n\geq4$, nor the diamond~$\cD$ (cf. Figure~\ref{Fig:Graphs}) embeds into~$G$ as an induced subgraph.

\begin{figure}[htb]
\includegraphics{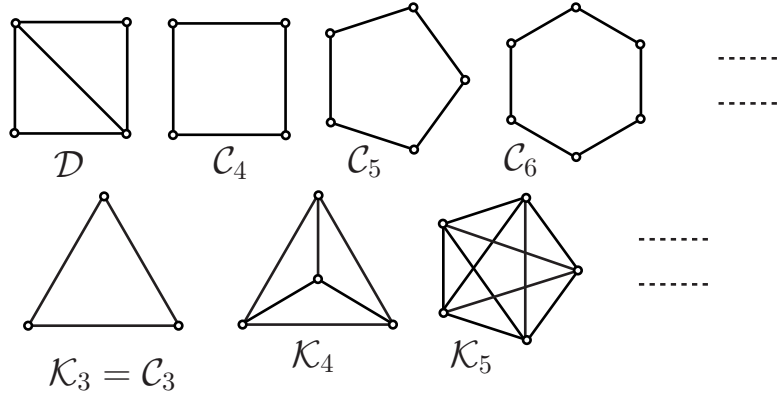}
\caption{Cycles, diamond, and cliques}\label{Fig:Graphs}
\end{figure}

Also, block graphs are characterized as those graphs where there is at
most one induced path between any two vertices, respectively the
graphs where any intersection of connected subsets is connected. For
references, see Bandelt and Mulder~\cite{BaMu86},
Howorka~\cite{Howo79}, Kay and Chartrand~\cite{KaCh65}, and the
wonderful online database \url{http://www.graphclasses.org/}. Block
graphs have been sometimes (for example in Howorka~\cite{Howo79})
called \emph{Husimi trees}.  

We shall denote by~$\Pow X$ the powerset of a set~$X$. For every positive integer~$n$, $[n]$ will denote the set $\set{1,2,\dots,n}$.

\section{Regular closed subsets with respect to a closure operator}\label{S:RegCl}

A \emph{closure operator} on a set~$P$ is usually defined as an extensive, idempotent, isotone map $\gf\colon\Pow P\to\Pow P$; that is, $\bx\subseteq\gf(\bx)$, $\gf(\gf(\bx)) = \gf(\bx)$, and $\gf(\bx)\subseteq \gf(\by)$ if $\bx \subseteq \by$, for all $\bx,\by\subseteq P$. Throughout this paper we
shall require that a closure operator $\gf$ satisfies the additional
condition $\gf(\es)=\es$. A \emph{closure space} is a pair $(P,\gf)$,
where~$\gf$ is a closure operator on~$P$.

We say that the closure space~$(P,\gf)$ is \emph{atomistic} if $\gf(\set{p})=\set{p}$ for each $p\in P$. The associated \emph{kernel} (or \emph{interior}) \emph{operator} is defined by $\cgf(\bx)=P\setminus\gf(P\setminus\bx)$ for each $\bx\subseteq P$. We shall often call~$\gf(\bx)$ the \emph{closure} of~$\bx$ and~$\cgf(\bx)$ the \emph{interior} of~$\bx$. Then both~$\gf$ and~$\cgf$ are idempotent operators, with $\cgf\leq\id\leq\gf$. It is very easy to find examples with $\gf\neq\gf\cgf\gf$. However,

\begin{lemma}\label{L:gfcgfidemp}
The operators $\gf\cgf$ and $\cgf\gf$ are both idempotent. Thus, $\cgf\gf$ is a closure operator on the collection of open sets, and $\gf\cgf$ is a kernel operator on the collection of closed sets.
\end{lemma}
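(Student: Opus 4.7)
The plan is to reduce everything to a single core equality, namely that $\gf\cgf\gf(\by)=\gf(\by)$ for every open subset~$\by$ of~$P$, and then deduce the rest by duality between closed and open sets. The basic tools are the relations $\cgf\leq\id\leq\gf$, the isotony and idempotency of both~$\gf$ and~$\cgf$, and the two identities $\gf(P\setminus\bx)=P\setminus\cgf(\bx)$ and $\cgf(P\setminus\bx)=P\setminus\gf(\bx)$ that follow from the definition of~$\cgf$.

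First I would record the two trivial observations that $\gf(\bx)$ is always closed and $\cgf(\bx)$ is always open (the latter because its complement equals $\gf(P\setminus\bx)$, which is closed). Then I would prove the core equality: if~$\by$ is open, so that $\cgf(\by)=\by$, then extensivity $\by\subseteq\gf(\by)$ together with isotony of~$\cgf$ yields $\by=\cgf(\by)\subseteq\cgf\gf(\by)$; applying~$\gf$ gives $\gf(\by)\subseteq\gf\cgf\gf(\by)$. Conversely, from $\cgf\gf(\by)\subseteq\gf(\by)$ and idempotency of~$\gf$ we obtain $\gf\cgf\gf(\by)\subseteq\gf(\by)$. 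The dual statement, that $\cgf\gf\cgf(\bz)=\cgf(\bz)$ for every closed~$\bz$, follows by taking complements: writing $\bz=P\setminus\by$ with~$\by$ open and using the two complement identities, the established equality for~$\by$ translates into the desired equality for~$\bz$.

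Idempotency of $\gf\cgf$ then follows in one line: for arbitrary $\bx\subseteq P$, the set $\cgf(\bx)$ is open, so
\[
\gf\cgf\gf\cgf(\bx)=\gf\cgf\gf\pI{\cgf(\bx)}=\gf\pI{\cgf(\bx)}=\gf\cgf(\bx)\,.
\]
Symmetrically, $\gf(\bx)$ is closed, so $\cgf\gf\cgf\gf(\bx)=\cgf\gf\cgf\pI{\gf(\bx)}=\cgf\pI{\gf(\bx)}=\cgf\gf(\bx)$.

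It then remains to verify the final two clauses, which are essentially bookkeeping. For $\cgf\gf$ acting on open sets: it maps any set to $\cgf(\text{closed})$, hence to an open set; it is isotone as a composite of isotone maps; $\cgf\gf(\es)=\cgf(\es)=\es$; idempotency was just shown; and extensivity on open~$\by$ was established in the very first step via $\by\subseteq\cgf\gf(\by)$. For $\gf\cgf$ acting on closed sets, everything is dual: the output is closed, the map is isotone and idempotent, and if~$\bz$ is closed then $\gf\cgf(\bz)\subseteq\gf(\bz)=\bz$, so~$\gf\cgf$ is anti-extensive on closed sets. No real obstacle is expected; the only delicate point is being careful that the ``extensivity on open sets'' step really uses openness, which is why I would separate out the core equality before addressing idempotency.
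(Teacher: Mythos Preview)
Your proof is correct and uses the same ingredients as the paper's: the inequalities $\cgf\leq\id\leq\gf$ together with isotony and idempotency of~$\gf$ and~$\cgf$. The paper's argument is marginally more direct---it bounds $\gf\cgf\gf\cgf(\bx)$ above and below in two lines without isolating your intermediate ``core equality'' $\gf\cgf\gf(\by)=\gf(\by)$ for open~$\by$---but the content is identical.
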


\begin{proof}
Let $\bx\subseteq P$. {}From $\cgf\leq\id$ it follows that $\gf\cgf\gf\cgf(\bx)\subseteq\gf\gf\cgf(\bx)=\gf\cgf(\bx)$. {}From $\id\leq\gf$ it follows that $\gf\cgf\gf\cgf(\bx)\supseteq\gf\cgf\cgf(\bx)=\gf\cgf(\bx)$.

If~$\bx$ is open, then $\bx=\cgf(\bx)\subseteq\gf\cgf(\bx)$. As $\cgf\gf$ is isotone, it follows that $\cgf\gf$ is a closure operator on the collection of all open sets. Dually, $\gf\cgf$ is a kernel operator
on the collection of all closed sets.
\end{proof}

\begin{definition}\label{D:RegCl}
For a closure space $(P,\gf)$, a subset~$\bx$ of~$P$ is
\begin{itemize}
\item[---] \emph{closed} if $\bx=\gf(\bx)$,

\item[---] \emph{open} if $\bx=\cgf(\bx)$,

\item[---] \emph{regular closed} if $\bx=\gf\cgf(\bx)$,

\item[---] \emph{regular open} if $\bx=\cgf\gf(\bx)$,

\item[---] \emph{clopen} if it is simultaneously closed and open.
\end{itemize}
We denote by $\Clop(P,\gf)$ ($\Reg(P,\gf)$, $\Regop(P,\gf)$, respectively) the set of all clopen (regular closed, regular open, respectively) subsets of~$P$, ordered by set inclusion. Due to the condition $\gf(\es)=\es$, the sets~$\es$ and~$P$ are both clopen.
\end{definition}

Of course, a set~$\bx$ is open (closed, regular closed, regular open, clopen, respectively) if{f} its complement $\bx^\cpl=P\setminus\bx$ is closed (open, regular open, regular closed, clopen, respectively). A straightforward application of Lemma~\ref{L:gfcgfidemp} yields the following.

\goodbreak
\begin{lemma}\label{L:BasicReg}
\hfill
\begin{enumerate}
\item A subset~$\bx$ of~$P$ is regular closed if{f} $\bx=\gf(\bu)$ for some open set~$\bu$.

\item The poset~$\Reg(P,\gf)$ is a complete lattice, with meet and join given by
 \begin{align*}
 \bigvee\vecm{\ba_i}{i\in I}&=\gf
 \pI{\bigcup\vecm{\ba_i}{i\in I}}\,,\\
 \bigwedge\vecm{\ba_i}{i\in I}&=\gf\cgf
 \pI{\bigcap\vecm{\ba_i}{i\in I}}\,, 
 \end{align*}
for any family $\vecm{\ba_i}{i\in I}$ of regular closed sets.
\end{enumerate}
\end{lemma}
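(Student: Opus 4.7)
The plan is to establish part (i) first and then deploy it to verify both formulas in part (ii). The whole argument rests on the basic inequalities $\cgf\leq\id\leq\gf$, the idempotence of $\gf$ and $\cgf$, their isotony, and Lemma~\ref{L:gfcgfidemp}; each step amounts to careful bookkeeping with these facts.

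For part (i), the forward direction is immediate: if $\bx=\gf\cgf(\bx)$, set $\bu:=\cgf(\bx)$, which is open because $\cgf$ is idempotent. Conversely, assume $\bx=\gf(\bu)$ for some open~$\bu$. Since $\cgf\leq\id$, applying~$\gf$ yields $\gf\cgf(\bx)\subseteq\gf(\bx)=\bx$. For the reverse inclusion, openness of~$\bu$ together with $\bu\subseteq\gf(\bu)=\bx$ and isotony of~$\cgf$ give $\bu=\cgf(\bu)\subseteq\cgf(\bx)$, so $\bx=\gf(\bu)\subseteq\gf\cgf(\bx)$. Hence $\bx$ is regular closed.

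For the join formula in (ii), let $\vecm{\ba_i}{i\in I}$ be regular closed and set $\bw:=\gf\pI{\bigcup_i\ba_i}$. Since each $\ba_i=\gf\cgf(\ba_i)$, writing $\bu_i:=\cgf(\ba_i)$ one verifies $\bu_i\subseteq\ba_i\subseteq\gf(\bu_i)$, whence $\bw=\gf\pI{\bigcup_i\bu_i}$; and $\bigcup_i\bu_i$ is open since unions of open sets are open (dually to the standard fact that intersections of closed sets are closed, itself an easy consequence of the isotony and idempotence of~$\gf$). By part~(i), $\bw$ is regular closed. It is evidently an upper bound of the family, and if $\bc\in\Reg(P,\gf)$ satisfies $\ba_i\subseteq\bc$ for every~$i$, then $\bw=\gf\pI{\bigcup_i\ba_i}\subseteq\gf(\bc)=\bc$, so $\bw$ is the join.

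For the meet formula, set $\bv:=\gf\cgf\pI{\bigcap_i\ba_i}$. Since $\cgf\pI{\bigcap_i\ba_i}$ is open (by idempotence of~$\cgf$), part~(i) shows $\bv\in\Reg(P,\gf)$. It is a lower bound: from $\cgf\pI{\bigcap_i\ba_i}\subseteq\bigcap_i\ba_i\subseteq\ba_j$ and $\ba_j=\gf(\ba_j)$ we deduce $\bv\subseteq\ba_j$. For maximality, if $\bc\in\Reg(P,\gf)$ satisfies $\bc\subseteq\ba_i$ for all $i$, then $\bc\subseteq\bigcap_i\ba_i$, so by isotony of $\cgf$ we get $\cgf(\bc)\subseteq\cgf\pI{\bigcap_i\ba_i}$, and then $\bc=\gf\cgf(\bc)\subseteq\bv$. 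No step is a genuine obstacle; the only mildly delicate point is the identification $\gf\pI{\bigcup_i\ba_i}=\gf\pI{\bigcup_i\bu_i}$ used to display $\bw$ as $\gf$ of an open set, so that part~(i) applies.
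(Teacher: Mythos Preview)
Your proof is correct and follows essentially the same route as the paper. The paper itself does not spell out a detailed argument, merely stating that the lemma is a straightforward application of Lemma~\ref{L:gfcgfidemp} (that $\gf\cgf$ is a kernel operator on closed sets), and Remark~\ref{Rk:gfunionregop} records precisely your key observation that $\gf\pI{\bigcup_i\ba_i}=\gf\pI{\bigcup_i\cgf(\ba_i)}$, which displays the join as the closure of an open set.
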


\begin{remark}\label{Rk:gfunionregop}
The previous Lemma is an immediate consequence of the fact that $\gf\cgf$ is a kernel operator on closed sets. For a direct proof, we need to argue that $\gf\pI{\bigcup\vecm{\ba_i}{i\in I}}$ is regular closed. To this goal, observe that this set is equal to $\gf\pI{\bigcup\vecm{\cgf(\ba_i)}{i\in I}}$ and, more generally,
    \begin{align*}
      \bigvee\vecm{\ba_i}{i\in I}&
      = \gf \pI{\bigcup\vecm{\ba_j}{j\in J} \cup
      \bigcup\vecm{\cgf(\ba_j)}{j \notin J}}\,,
    \end{align*}
whenever~$J$ is a subset of $I$.
\end{remark}

The complement of a regular closed set may not be closed. Nevertheless, we shall now see that there is an obvious ``complementation-like'' map from the regular closed sets to the regular closed sets.

\begin{definition}\label{D:OrthReg}
We define the \emph{orthogonal} of~$\bx$ as $\orth{\bx}=\gf(\bx^\cpl)$, for any $\bx\subseteq P$.
\end{definition}

\begin{lemma}\label{L:BasicOrth}
\hfill
\begin{enumerate}
\item $\orth{\bx}$ is regular closed, for any $\bx\subseteq P$.

\item The assignment $\orth{}\colon\bx\mapsto\orth{\bx}$ defines an orthocomplementation of~$\Reg(P,\gf)$.
\end{enumerate}
\end{lemma}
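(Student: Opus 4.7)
The plan is to reduce everything to the de~Morgan identity $(\gf(\by))^{\cpl} = \cgf(\by^{\cpl})$, which is merely the definition of $\cgf$ rewritten, together with Lemma~\ref{L:BasicReg}. Part~(1) is then the easy half: by Lemma~\ref{L:BasicReg}(1), regular closed subsets are exactly closures of open subsets, and whenever $\bx$ is closed---in particular, when $\bx \in \Reg(P,\gf)$, since $\bx = \gf\cgf(\bx)$ immediately forces $\gf(\bx) = \bx$---the complement $\bx^{\cpl}$ is open, so $\orth{\bx} = \gf(\bx^{\cpl})$ is regular closed.

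For part~(2), I would check (O1)--(O3) in order. (O1) is immediate from the isotonicity of~$\gf$: $\bx \subseteq \by$ implies $\by^{\cpl} \subseteq \bx^{\cpl}$, whence $\orth{\by} \subseteq \orth{\bx}$. For~(O2), applying the displayed identity with $\by = \bx^{\cpl}$ yields $(\orth{\bx})^{\cpl} = \cgf(\bx)$, and hence $\oorth{\bx} = \gf\cgf(\bx) = \bx$, the last step by regular closedness of~$\bx$.

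The bulk of the work is~(O3), namely $\bx \wedge \orth{\bx} = \es$ in $\Reg(P,\gf)$. By Lemma~\ref{L:BasicReg}(2) this meet equals $\gf\cgf(\bx \cap \orth{\bx})$, so it suffices to show $\gf\pI{(\bx \cap \orth{\bx})^{\cpl}} = P$. Unfolding the complement and invoking the identity $(\orth{\bx})^{\cpl} = \cgf(\bx)$ established in~(O2) gives $(\bx \cap \orth{\bx})^{\cpl} = \bx^{\cpl} \cup \cgf(\bx)$; then extensivity of~$\gf$ together with the tautology $\gf(\bx^{\cpl}) \cup \cgf(\bx) = P$ (once again, just the definition of $\cgf$) forces $\gf(\bx^{\cpl} \cup \cgf(\bx)) \supseteq \gf(\bx^{\cpl}) \cup \cgf(\bx) = P$, as required. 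The only potential obstacle is keeping the $\gf$--$\cgf$ bookkeeping straight; once one internalises that $(\orth{\bx})^{\cpl}$ is precisely $\cgf(\bx)$, both (O2) and~(O3) fall out mechanically.
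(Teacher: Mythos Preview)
Your proof is correct and closely tracks the paper's, with one notable difference in the verification of~(O3). The paper proves the \emph{join} version $\bx\vee\orth{\bx}=P$ instead, which is immediate: since $\orth{\bx}=\gf(\bx^{\cpl})\supseteq\bx^{\cpl}$, we get $\bx\cup\orth{\bx}=P$, hence the join in $\Reg(P,\gf)$ is~$P$; the meet version then follows from~(O1) and~(O2). Your route via the meet is equally valid but trades a one-line inclusion for a bit more $\gf$--$\cgf$ bookkeeping. Incidentally, your care in part~(i) to restrict to closed~$\bx$ is warranted: the statement ``for any $\bx\subseteq P$'' is a slight overstatement in the paper (for arbitrary~$\bx$, the set $\gf(\bx^{\cpl})$ need not be regular closed), and the paper's own one-line appeal to Lemma~\ref{L:BasicReg}(i) implicitly uses openness of~$\bx^{\cpl}$, just as you do.
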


\begin{proof}
(i). This follows right away from Lemma~\ref{L:BasicReg}(i).

(ii). It is obvious that the map $\orth{}$ is antitone. Now, using Lemma~\ref{L:gfcgfidemp}, we obtain
 \[
 \oorth{\bx}=\gf((\orth{\bx})^\cpl)=\gf(\gf(\bx^\cpl)^\cpl)
 =\gf(\cgf(\bx))=\bx\,,\quad\text{for each }\bx\in\Reg(P,\gf)\,.
 \]
Therefore, $\orth{}$ defines a dual automorphism of the lattice~$\Reg(P,\gf)$. As~$\orth{\bx}$ contains~$\bx^\cpl$, $P=\bx\cup\orth{\bx}$ for each~$\bx\subseteq P$, hence $P=\bx\vee\orth{\bx}$ in case $\bx\in\Reg(P,\gf)$.
\end{proof}

In particular, $\Reg(P,\gf)$ is self-dual. As $\bx\mapsto\bx^\cpl$ defines a dual isomorphism from $\Reg(P,\gf)$ to $\Regop(P,\gf)$, we obtain the following.

\begin{corollary}\label{C:RegP2RegopP}
  Let $(P,\gf)$ be a closure space. Then the lattices~$\Reg(P,\gf)$
  and $\Regop(P,\gf)$ are both self-dual. Moreover, the maps
  $\cgf\colon\Reg(P,\gf)\to\Regop(P,\gf)$ and
  $\gf\colon\Regop(P,\gf)\to\Reg(P,\gf)$ are mutually inverse
  isomorphisms.
\end{corollary}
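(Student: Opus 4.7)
The first assertion, namely self-duality of $\Reg(P,\gf)$, is immediate from Lemma~\ref{L:BasicOrth}(ii): any orthocomplementation is, by axiom~(O1) together with~(O2), a dual automorphism. To transport this to $\Regop(P,\gf)$, I would show first that the complementation map $\bx\mapsto\bx^{\cpl}$ is a dual isomorphism from $\Reg(P,\gf)$ onto $\Regop(P,\gf)$. The key identity is the de Morgan-type relation
\[
\cgf(\by)^{\cpl}=\gf(\by^{\cpl})\,,\qquad\gf(\by)^{\cpl}=\cgf(\by^{\cpl})\,,
\]
which is an immediate unpacking of the definition $\cgf(\by)=P\setminus\gf(P\setminus\by)$. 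Applying these twice, one sees at once that $\bx=\gf\cgf(\bx)$ iff $\bx^{\cpl}=\cgf\gf(\bx^{\cpl})$, so the involution $\bx\mapsto\bx^{\cpl}$ restricts to an order-reversing bijection between $\Reg(P,\gf)$ and $\Regop(P,\gf)$. Composing with the (dual) orthocomplementation of $\Reg(P,\gf)$ yields a dual automorphism of $\Regop(P,\gf)$, hence the latter is also self-dual.

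For the second assertion, I would use the slick observation that $\cgf$, viewed on $\Reg(P,\gf)$, factors as the composition of two dual isomorphisms, namely $\cgf(\bx)=\orth{\bx}^{\cpl}$ for every $\bx\in\Reg(P,\gf)$ (this is just $\cgf(\bx)=P\setminus\gf(\bx^{\cpl})=\gf(\bx^{\cpl})^{\cpl}=\orth{\bx}^{\cpl}$). Since $\orth{}\colon\Reg(P,\gf)\to\Reg(P,\gf)$ and $\cpl\colon\Reg(P,\gf)\to\Regop(P,\gf)$ are both dual isomorphisms, their composition $\cgf$ is an (order-preserving) isomorphism from $\Reg(P,\gf)$ onto $\Regop(P,\gf)$. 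A symmetric argument shows that $\gf\colon\Regop(P,\gf)\to\Reg(P,\gf)$ is an isomorphism.

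It remains to verify that these two maps are mutually inverse. For $\bx\in\Reg(P,\gf)$, by definition $\gf\cgf(\bx)=\bx$, so $\gf\circ\cgf=\id_{\Reg(P,\gf)}$; dually, for $\bu\in\Regop(P,\gf)$, $\cgf\gf(\bu)=\bu$, so $\cgf\circ\gf=\id_{\Regop(P,\gf)}$. This finishes the proof.

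I do not anticipate any real obstacle: the whole statement is formal consequences of Lemmas~\ref{L:gfcgfidemp} and~\ref{L:BasicOrth} together with the two de Morgan identities for $\gf$ and $\cgf$. The only point one needs to be slightly careful with is distinguishing between the abstract orthocomplementation $\orth{}$ of $\Reg(P,\gf)$ and the set-theoretic complement $\cpl$, and keeping straight which direction each map reverses.
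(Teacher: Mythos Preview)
Your argument is correct and follows essentially the same route as the paper: the paper's entire justification is the one-line remark preceding the corollary, namely that $\Reg(P,\gf)$ is self-dual via the orthocomplementation and that $\bx\mapsto\bx^{\cpl}$ is a dual isomorphism from $\Reg(P,\gf)$ to $\Regop(P,\gf)$. You have simply spelled out the details, including the nice factorization $\cgf=\cpl\circ\orth{}$ and the verification that $\gf$ and $\cgf$ are mutual inverses on these sets.
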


As the following result shows, there is nothing special about orthoposets of the form~$\Clop(P,\gf)$, or complete ortholattices of the form~$\Reg(P,\gf)$.

\begin{proposition}\label{P:Mayet}
Let~$(L,0,1,\leq,\orth{})$ be an orthoposet. Then there exists a closure space $(\gO,\gf)$ such that $L\cong\Clop(\gO,\gf)$, and such that, in addition, $\Reg(\gO,\gf)$ is the Dedekind-MacNeille completion of $\Clop(\gO,\gf)$.
\end{proposition}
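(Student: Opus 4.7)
My plan is to realize the Dedekind--MacNeille completion $\overline{L}$ of $L$ as $\Reg(\Omega,\varphi)$ for a suitable closure space $(\Omega,\varphi)$, with $L$ appearing as the clopen sub-orthoposet $\Clop(\Omega,\varphi)$. This reduces the problem to two tasks: extending the orthoposet structure from $L$ to $\overline{L}$, and then building a closure space that simultaneously realizes $\overline{L}$ and isolates $L$ as its lattice of clopens.

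First I would extend the orthocomplementation from $L$ to $\overline{L}$. Viewing an element of $\overline{L}$ as a DM-closed subset $A\subseteq L$ (i.e., $A=A^{ul}$ in the usual upper/lower bound notation), set $A^{\perp}=\{x\in L:x\leq a^{\perp}\text{ for all }a\in A\}^{ul}$. A routine verification of (O1)--(O3) shows that this is a well-defined orthocomplementation on $\overline{L}$, making the latter a complete ortholattice in which the canonical image of $L$ is a join-dense and meet-dense sub-orthoposet; this is one of the classical features of DM-completions of orthoposets.

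Second, I would construct $(\Omega,\varphi)$. Take $\Omega$ to be the disjoint union of a ``principal'' set of points indexed by $\overline{L}\setminus\{0,1\}$ together with an auxiliary family of ``marker'' points $\ast_{a}$ indexed by the elements $a\in\overline{L}\setminus L$. For each $b\in\overline{L}$ associate a canonical subset $h(b)\subseteq\Omega$ combining a principal-downset component in the principal part with appropriately placed markers, and define $\varphi$ so that its closed sets are exactly $\{h(b):b\in\overline{L}\}$. Then $b\mapsto h(b)$ is a lattice isomorphism $\overline{L}\to\Reg(\Omega,\varphi)$, and, provided the markers are placed correctly, this isomorphism intertwines the extended $\perp$ with the closure-space orthocomplement $X\mapsto\varphi(\Omega\setminus X)$.

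The crux, and the step I expect to be the main obstacle, is the simultaneous design of $\Omega$ and $\varphi$ so that $h(b)$ is clopen \emph{precisely} when $b\in L$. For $b\in L$ the markers must be configured so that $\Omega\setminus h(b)=h(b^{\perp})$ as sets, making $h(b)$ open and hence clopen; for $b\in\overline{L}\setminus L$ the marker $\ast_{b}$ must witness a strict inclusion $\Omega\setminus h(b)\subsetneq h(b^{\perp})$, obstructing openness. Verifying that the resulting $\varphi$ is genuinely idempotent, respects $\varphi(\es)=\es$, and produces no accidental clopens from $\overline{L}\setminus L$ is the technical heart of the argument. Once all of this is in place, the join- and meet-density of $L$ in $\overline{L}$ yields immediately that every regular closed set is both a join and a meet of clopens, identifying $\Reg(\Omega,\varphi)$ with the Dedekind--MacNeille completion of $\Clop(\Omega,\varphi)$.
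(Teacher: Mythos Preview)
Your plan diverges sharply from the paper's construction, and the specific route you outline cannot work as stated.

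The paper invokes the Mayet--Katrno\v{s}ka representation: take $\Omega$ to be the set of maximal anti-orthogonal subsets of~$L$, set $Z(p)=\{X\in\Omega:p\in X\}$, and let $\varphi(\bx)$ be the intersection of all $Z(p)$ containing~$\bx$. One checks $Z(p^{\perp})=\Omega\setminus Z(p)$, so the clopens are exactly the $Z(p)$ and $p\mapsto Z(p)$ is an ortho-isomorphism $L\cong\Clop(\Omega,\varphi)$. Since every closed set is by construction an intersection of clopens, Lemma~\ref{L:MacNeille} gives the DM-completion statement for free. No markers, no prior construction of~$\overline{L}$.

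Your approach has a structural obstruction. You ask that the closed sets of $(\Omega,\varphi)$ be exactly $\{h(b):b\in\overline{L}\}$ with $h$ a lattice isomorphism onto $\Reg(\Omega,\varphi)$. This forces $\mathrm{Closed}=\Reg$, and in any closure lattice the meet is set-theoretic intersection, so $h(b)\cap h(b^{\perp})=h(b\wedge b^{\perp})=h(0)=\es$. On the other hand you need $\varphi(\Omega\setminus h(b))=h(b^{\perp})$ for the orthocomplementations to match, which entails $\Omega\setminus h(b)\subseteq h(b^{\perp})$. Disjointness then gives $h(b^{\perp})=\Omega\setminus h(b)$ for \emph{every} $b\in\overline{L}$: all $h(b)$ are clopen, $\Clop(\Omega,\varphi)=\Reg(\Omega,\varphi)$, and there is no room for the strict inclusion $\Omega\setminus h(b)\subsetneq h(b^{\perp})$ that your markers are supposed to create. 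The plan is therefore internally inconsistent; to repair it you would have to allow more closed sets than just the $h(b)$, at which point you are essentially rebuilding the Mayet construction.
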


\begin{proof}[Outline of proof]
We invoke a construction due to Mayet~\cite{Mayet82}, or, equivalently, Katrno\v{s}ka~\cite{Katr82}. As everything needed here is already proved in those papers, we just give an outline of the proof, leaving the details as an exercise.

For such an orthoposet~$L$, we say that a subset $X$ of
$L$ is \emph{anti-orthogonal} if its elements are pairwise
non-orthogonal. We denote then by~$\gO$ the set of all maximal
anti-orthogonal subsets of~$L$. We set $\rZ(p)=\setm{X\in\gO}{p\in
  X}$, for each $p\in L$, and we call the sets~$\rZ(p)$
\emph{elementary clopen}. We define~$\gf(\bx)$ as the intersection of
all elementary clopen sets containing~$\bx$, for
each~$\bx\subseteq\gO$. The pair $(\gO,\gf)$ is a closure space. It turns out that the clopen sets, with respect to that closure space, are exactly the elementary clopen sets. A key property, to be verified in the course of the proof above, is that $\rZ(\orth{p})=\gO\setminus\rZ(p)$ for every $p\in L$. Hence, the assignment $p\mapsto\rZ(p)$ defines an isomorphism from~$(L,0,1,\leq,\orth{})$
onto~$(\Clop(\gO,\gf),\es,\gO,\subseteq,\complement)$, and clopen is the same as elementary clopen.

Every closed set is, by definition, an intersection of clopen sets. Hence, by Lemma~\ref{L:MacNeille}, $\Reg(\gO,\gf)$ is then the Dedekind-MacNeille completion of $\Clop(\gO,\gf)$.
\end{proof}

While Proposition~\ref{P:Mayet} implies that every finite orthocomplemented lattice has the form $\Reg(P,\gf)$, for some finite closure space~$(P,\gf)$, we shall now establish a restriction on $\Reg(P,\gf)$ in case $(P,\gf)$ is a \emph{convex geometry}, that is (cf. Edelman and Jamison~\cite{EdJa}), $\gf(\bx\cup\set{p})=\gf(\bx\cup\set{q})$ implies that~$p=q$, for all $\bx\subseteq P$ and all $p,q\in P\setminus\gf(\bx)$.

Recall that a lattice~$L$ with zero is \emph{pseudocomplemented} if for each $x\in L$, there exists a largest $y\in L$, called the \emph{pseudocomplement} of~$x$, such that $x\wedge y=0$. It is mentioned in Chameni-Nembua and Monjardet~\cite{ChaMon92} (and credited there to a personal communication by Le Conte de Poly-Barbut) that every permutohedron is pseudocomplemented; see also Markowsky~\cite{Mark94}.

While not every orthocomplemented lattice is pseudocomplemented (the easiest counterexample is~$\sM_4$, see Figure~\ref{Fig:M4}), we shall now see that the lattice of regular closed subsets of a finite convex geometry is always pseudocomplemented. Our generalization is formally similar to Hahmann \cite[Lemma~4.17]{Hahm08}, although the existence of a precise connection between Hahmann's work and the present paper remains, for the moment, mostly hypothetical.

\begin{figure}[htb]
\includegraphics{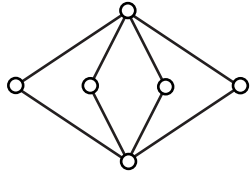}
\caption{The orthocomplemented, non pseudocomplemented lattice~$\sM_4$}\label{Fig:M4}
\end{figure}

We set $\partial\ba=\setm{x\in\ba}{x\notin\gf(\ba\setminus\set{x})}$, for every subset~$\ba$ in a closure space $(P,\gf)$. Observe that $p \in \gf(\bx)$ implies $p \in \bx$, for any $p\in\partial P$ and any $\bx\subseteq P$. It is well-known that $P=\gf(\partial P)$  for any finite convex geometry~$(P,\gf)$ (cf. Edelman and Jamison \cite[Theorem~2.1]{EdJa}), and an easy exercise to find finite examples, with $P=\gf(\partial P)$, which are not convex geometries.

\begin{proposition}\label{P:PseudoCpl}
The lattice~$\Reg(P,\gf)$ is 
pseudocomplemented, for any closure
space $(P,\gf)$ such that $P=\gf(\partial P)$.
In particular, $\Reg(P,\gf)$ is pseudocomplemented in case~$(P,\gf)$ is a finite convex geometry.
\end{proposition}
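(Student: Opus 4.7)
The plan is to produce an explicit formula for the pseudocomplement of a regular closed set~$\ba$, driven by the observation that, under the hypothesis $P=\gf(\partial P)$, the boundary~$\partial P$ completely controls emptiness of interiors.

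The first step is an auxiliary lemma: for every $\bx\subseteq P$,
\[
\cgf(\bx)=\es\iff\bx\cap\partial P=\es\,.
\]
The reverse direction is where $P=\gf(\partial P)$ enters: if $\bx\cap\partial P=\es$, then $\partial P\subseteq\bx^{\cpl}$, so $P=\gf(\partial P)\subseteq\gf(\bx^{\cpl})$, yielding $\cgf(\bx)=\es$. The forward direction exploits that each $p\in\partial P$ satisfies $p\in\cgf(\set{p})$ and, together with $\cgf\leq\id$, $\cgf(\set{p})=\set{p}$; so any $p\in\bx\cap\partial P$ forces $\set{p}\subseteq\cgf(\bx)$. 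Combined with the meet formula of Lemma~\ref{L:BasicReg}(ii), this delivers the clean reformulation
\[
\ba\wedge\bb=\es\iff\ba\cap\bb\cap\partial P=\es\,,\qquad\text{for all }\ba,\bb\in\Reg(P,\gf)\,.
\]

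I would then propose the candidate
\[
\ba^{*}=\gf\cgf\pI{P\setminus(\ba\cap\partial P)}\,,
\]
which is regular closed by Lemma~\ref{L:BasicReg}(i). Setting $\bv=\cgf\pI{P\setminus(\ba\cap\partial P)}$, disjointness $\ba\wedge\ba^{*}=\es$ is checked by contradiction: any $p\in\ba\cap\ba^{*}\cap\partial P$ would satisfy $p\notin\bv$, hence $\bv\subseteq P\setminus\set{p}$, and then $p\in\gf(\bv)=\ba^{*}\subseteq\gf(P\setminus\set{p})$, contradicting $p\in\partial P$. Maximality follows because, for any $\bb\in\Reg(P,\gf)$ with $\ba\wedge\bb=\es$, the open set $\cgf(\bb)$ is contained in $\bb$ and hence disjoint from $\ba\cap\partial P$; being open it lands in $\bv$, so $\bb=\gf\cgf(\bb)\subseteq\gf(\bv)=\ba^{*}$.

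The only conceptually delicate point is the initial equivalence characterizing empty interior by disjointness from~$\partial P$; once that bridge is in place, the rest of the proof is routine bookkeeping with~$\gf$ and~$\cgf$. The second assertion is then immediate, since every finite convex geometry satisfies $P=\gf(\partial P)$, as recalled just before the statement.
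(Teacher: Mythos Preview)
Your proof is correct. Both your argument and the paper's hinge on the same two facts about~$\partial P$: that $p\in\partial P$ and $p\in\gf(\bx)$ force $p\in\bx$, and that $P=\gf(\partial P)$ forces every nonempty open set to meet~$\partial P$. You isolate these cleanly as the equivalence $\cgf(\bx)=\es\iff\bx\cap\partial P=\es$, which the paper uses only implicitly.

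Where the approaches diverge is in how pseudocomplementedness is established. The paper does not write down a formula for~$\ba^{*}$; instead it shows directly that the set $\setm{\ba\in\Reg(P,\gf)}{\ba\wedge\bb=\es}$ is closed under arbitrary joins, arguing by contradiction that a minimal witness in $\partial P\cap\cgf(\ba\cap\bb)$ would already lie in some $\ba_i\cap\bb$, violating $\ba_i\wedge\bb=\es$. Your route is more constructive: you exhibit $\ba^{*}=\gf\cgf\pI{P\setminus(\ba\cap\partial P)}$ and verify the two defining properties. This buys an explicit description of the pseudocomplement, which the paper's proof does not provide; the paper's join-closure argument, on the other hand, is marginally shorter and makes no detour through a candidate formula. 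Both are equally elementary once the role of~$\partial P$ is understood.
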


\begin{proof}
Let $\bb\in\Reg(P,\gf)$ and let $\vecm{\ba_i}{i\in I}$ be a family of elements of~$\Reg(P,\gf)$ with join~$\ba$ such that $\ba_i\wedge\bb=\es$. We must prove that $\ba\wedge\bb=\es$. Suppose otherwise and set $\bd=\cgf(\ba\cap\bb)$. {}From $\ba\wedge\bb=\gf(\bd)$ it follows that $\bd\neq\es$. If $\partial P\cap\bd=\es$, then $\partial P\subseteq\bd^\cpl$, thus, as $P=\gf(\partial P)$ and $\bd^\cpl$ is closed, $\bd=\es$, \contr.

Pick $p\in\partial P\cap\bd$. As
$p\in\bd\subseteq\ba=\gf\pI{\bigcup_{i\in I}\ba_i}$ and $p\in\partial
P$, we get $p\in\ba_i$ for some $i\in I$. Furthermore,
$p\in\bd\subseteq\bb$, so $p\in\ba_i\cap\bb$. On the other hand, from
$\ba_i\wedge\bb=\es$ it follows that $\cgf(\ba_i\cap\bb)=\es$, thus
$\gf(P\setminus(\ba_i\cap\bb))=P$, and thus (as $p\in\partial P$) we
get $p\in P\setminus(\ba_i\cap\bb)$, \contr.
\end{proof}

As we shall see in Example \ref{Ex:InfteMainSD}, the result of the second part of Proposition~\ref{P:PseudoCpl} cannot be extended to the infinite case. Observe that $\Reg(P,\gf)$ being pseudocomplemented is, in the finite case, an immediate consequence of $\Reg(P,\gf)$ being \msd\ (which is here, by self-duality, equivalent to being semidistributive). Example~\ref{Ex:ClopM3} shows that the lattice $\Reg(P,\gf)$ need not be semidistributive, even in case $(P,\gf)$ is a convex geometry.

\section{Regular closed as Dedekind-MacNeille completion of clopen}\label{S:DMcN}

For any closure space $(P,\gf)$, the lattice $\Reg(P,\gf)$ contains
the poset $\Clop(P,\gf)$. It turns out that in many cases, the
inclusion is a Dedekind-MacNeille completion. The following result
will be always used for $K=\Clop(P,\gf)$, except in
Sections~\ref{S:ConvE} and~\ref{S:HypArr}.

\begin{lemma}\label{L:MacNeille}
  The following statements hold, for any closure space $(P,\gf)$ and
  any subset~$K$ of $\Reg(P,\gf)$.
  \begin{enumerate}
  \item $\Reg(P,\gf)$ is the Dedekind-MacNeille completion of~$K$ if{f} every regular closed set is a join of members of~$K$. This occurs, in particular, if every regular open set is a union of members of~$K$.
  
  \item If $\Reg(P,\gf)$ is the
Dedekind-MacNeille completion of~$K$, then every completely \jirr\ element in $\Reg(P,\gf)$ belongs to~$K$. If~$\Reg(P,\gf)$ is spatial, then the converse holds.

\end{enumerate}
\end{lemma}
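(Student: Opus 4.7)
The plan is to prove~(i) via the standard characterisation of the Dedekind--MacNeille completion of a poset $K$ embedded in a complete lattice $L$: namely, $L$ is that completion precisely when $K$ is both join-dense and meet-dense in $L$. The forward direction of the equivalence in~(i) is then immediate, since the stated condition is exactly join-density of $K$ in $\Reg(P,\gf)$. For the backward direction, join-density is the hypothesis; to obtain meet-density I would dualise via the orthocomplementation $\orth{}$ of Lemma~\ref{L:BasicOrth}, which is a dual automorphism of $\Reg(P,\gf)$, so that a join decomposition of $\orth{\ba}$ by members of $K$ transports, through de~Morgan together with a second application of join-density to each factor, to a meet decomposition of $\ba$ by members of $K$. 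In the principal intended application $K=\Clop(P,\gf)$, this step trivialises, because $\orth{\bx}=\bx^\cpl$ whenever $\bx$ is clopen, so $K$ is stable under $\orth{}$.

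For the ``in particular'' clause, suppose every regular open subset of $P$ is a union of members of $K$. Given $\ba\in\Reg(P,\gf)$, apply this to the regular open set $\cgf(\ba)$ to produce a family $(\ba_i)_{i\in I}$ in $K$ with $\cgf(\ba)=\bigcup_{i\in I}\ba_i$. Remark~\ref{Rk:gfunionregop} then yields
\[
  \ba \;=\; \gf\cgf(\ba) \;=\; \gf\pI{\bigcup_{i\in I}\ba_i} \;=\; \bigvee_{i\in I}\ba_i\,,
\]
exhibiting $\ba$ as a join of members of $K$.

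For part~(ii), assume first that $\Reg(P,\gf)$ is the Dedekind--MacNeille completion of $K$, and let $p\in\Reg(P,\gf)$ be completely \jirr\ with unique lower cover $p_*$. By part~(i), $p=\bigvee_i k_i$ for some $k_i\in K$ with $k_i\leq p$. Since $p\nleq p_*$ while any join of elements $\leq p_*$ is itself $\leq p_*$, some $k_{i_0}\nleq p_*$; complete \jirry\ of $p$ then forces $k_{i_0}=p$, so $p\in K$. For the converse under the extra hypothesis that $\Reg(P,\gf)$ is spatial, every regular closed set is a join of completely \jirr\ elements, all of which lie in $K$ by assumption, so $K$ is join-dense and part~(i) closes the argument.

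The main obstacle I anticipate is the meet-density step in the backward direction of~(i), which requires careful use of the orthocomplementation and is automatic only when $K$ is itself stable under $\orth{}$; the remaining steps are a routine unpacking of definitions combined with the closure-theoretic identities already recorded in Lemmas~\ref{L:gfcgfidemp} and~\ref{L:BasicReg} together with Remark~\ref{Rk:gfunionregop}.
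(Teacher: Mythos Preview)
Your approach matches the paper's: invoke the join-and-meet density characterisation of the Dedekind--MacNeille completion, then use the orthocomplementation to pass from join-density to meet-density. Your arguments for the ``in particular'' clause and for part~(ii) are essentially the paper's.

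The obstacle you flag is genuine, and your proposed patch does not work: from $\orth{\ba}=\bigvee_i k_i$ you get $\ba=\bigwedge_i\orth{k_i}$, and rewriting each $\orth{k_i}$ as a join of members of~$K$ produces a meet of joins, not a meet of members of~$K$. In fact the backward implication in~(i), as stated for an \emph{arbitrary} $K\subseteq\Reg(P,\gf)$, is false. Take $(P,\gf)=(\{1,2,3\},\id)$, so that $\Reg(P,\gf)=\Pow P\cong\mathbf{2}^3$, and let $K=\bigl\{\{1\},\{2\},\{3\},P\bigr\}$. Every element of $\Pow P$ is a union (hence a join) of members of~$K$, yet the Dedekind--MacNeille completion of the four-element poset~$K$ has only five elements.

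The paper's own proof glosses over exactly this point, saying only that ``Item~(i) follows easily'' and, in the converse of~(ii), slipping from ``members of~$K$'' to ``clopen subsets''. Both your argument and the paper's are valid under the tacit extra hypothesis that~$K$ is stable under~$\orth{}$; this holds in every application in the paper, since $\Clop(P,\gf)$ and $\Clop^*(E,\conv_E)$ are closed under complementation and hence under~$\orth{}$. You were right to isolate this as the nontrivial step.
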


\begin{proof}
It is well-known that a complete lattice~$L$ is the Dedekind-MacNeille completion of a subset~$K$ if{f} every element of~$L$ is simultaneously a join of elements of~$K$ and a meet of elements of~$K$ (cf. Davey and Priestley \cite[Theorem~7.41]{DP02}). Item~(i) follows easily.

If~$\Reg(P,\gf)$ is the Dedekind-MacNeille completion of~$K$, then every element of~$\Reg(P,\gf)$ is a join of elements of~$K$, thus every completely \jirr\ element of~$\Reg(P,\gf)$ belongs to~$K$. Conversely, if $\Reg(P,\gf)$ is spatial and every completely \jirr\ element in $\Reg(P,\gf)$ belongs to~$K$, then every element of~$\Reg(P,\gf)$ is a join of members of~$K$, thus, using the orthocomplementation, also a meet of clopen subsets. By~(i), the conclusion of~(ii) follows.
\end{proof}

\begin{definition}\label{D:tight}
A subset~$K$ of a poset~$L$ is \emph{tight} in~$L$ if the inclusion map preserves all existing (not necessarily finite) joins and meets. Namely,
 \begin{align}
 a=\bigvee X\text{ in }K&\Longrightarrow
 a=\bigvee X\text{ in }L\,,\quad
 \text{for all }
 a\in K\text{ and all }X\subseteq K\,.
 \label{Eq:PresCreatJJ}\\
 a=\bigwedge X\text{ in }K&
 \Longrightarrow
 a=\bigwedge X\text{ in }L\,, \quad
 \text{for all }
 a\in K\text{ and all }X\subseteq K\,.
 \label{Eq:PresCreatMM}
 \end{align}
\end{definition}

It is well-known (and quite easy to verify) that if the lattice~$L$ is
the Dedekind-MacNeille completion of the poset~$K$, then~$K$ is tight in~$L$. We shall observe---see Theorem~\ref{T:AbundClop}---that $\Clop(P,\gf)$ is often tight in $\Reg(P,\gf)$; in particular this holds if~$P$ is a finite set and~$\gf$ has \emph{semilattice type}, as defined later in~\ref{D:SemilType}. Yet, even under those additional assumptions, there are many examples where $\Reg(P,\gf)$ is not the Dedekind-MacNeille completion of~$\Clop(P,\gf)$ (even in the finite case), see Examples~\ref{Ex:roP} and~\ref{Ex:FinPosClop}.

In our next section, we shall discuss a well-known class of convex geometries for which $\Reg(P,\gf)$ is always the Dedekind-MacNeille completion of $\Clop(P,\gf)$.

\section{Convex subsets in affine spaces}\label{S:ConvE}

Denote by $\conv(X)$ the convex hull of a subset~$X$ in any left affine space~$\Delta$ over a linearly ordered division ring~$\KK$. For a subset~$E$ of~$\Delta $, the \emph{convex hull operator relatively to~$E$} is the map $\conv_E\colon\Pow{E}\to\Pow{E}$ defined by
 \[
 \conv_E(X)=
 \conv(X)\cap E\,,\quad
 \text{for any }X\subseteq E\,.
 \]
The map $\conv_E$ is a closure operator on~$E$. It is well-known that $(E,\conv_E)$ is an atomistic convex geometry (cf. Edelman and Jamison \cite[Example~I]{EdJa}). The fixpoints of~$\conv_E$ are the \emph{relatively convex subsets} of~$E$. The poset $\Clop(E,\conv_E)$ consists of the \emph{relatively bi-convex} subsets of~$E$, that is, those $X\subseteq E$ such that both~$X$ and $E\setminus X$ are relatively convex; equivalently, $\conv_E(X)\cap\conv_E(E\setminus X)=\es$.
A subset~$X$ of~$E$ is \emph{strongly bi-convex} (relatively to~$E$) if $\conv(X)\cap\conv(E\setminus X)=\es$. We denote by $\Clop^*(E,\conv_E)$ the set of all strongly bi-convex subsets of~$E$. This set is contained in $\Clop(E,\conv_E)$, and the containment may be proper.

An \emph{extended affine functional on~$\Delta$} is a map of the form $\ell\colon\Delta\to{}^*\KK$, where~${}^*\KK$ is a ultrapower of~$\KK$ and
 \[
 \ell((1-\lambda)x+\lambda y)=
 (1-\lambda)\ell(x)+ \lambda\ell(y)\,,
 \quad\text{for all }x,y\in\Delta
 \text{ and all }\lambda\in\KK\,.
 \]
If ${}^*\KK=\KK$, then we say that~$\ell$ is an \emph{affine functional} on~$\Delta$.

\begin{lemma}\label{L:SepAff}
Let $X\subseteq E\subseteq\Delta$, with~$E$ finite, and let $p\in E\setminus\conv(X)$. Then there exists an affine functional $\ell\colon\Delta\to\KK$ such that
\begin{enumerate}
\item $E\cap\ell^{-1}\set{0}=\set{p}$;

\item $\ell(x)<0$ for each $x\in X$.

\end{enumerate}
\end{lemma}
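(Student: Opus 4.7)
The plan is to first build an affine functional strictly separating $p$ from $X$, and then perturb it generically so that it vanishes only at $p$ on $E$. In Step~1, let $V$ denote the $\KK$-vector space of affine functionals $\Delta \to \KK$. I seek $\ell_0 \in V$ with $\ell_0(p) = 0$ and $\ell_0(x) \leq -1$ for every $x \in X$, a finite system of linear constraints in $\ell_0$. By Farkas's lemma (valid over any linearly ordered division ring, for instance via Fourier--Motzkin elimination), either such $\ell_0$ exists, or there are nonnegative scalars $\mu_x \in \KK$ for $x \in X$, not all zero, such that $\sum_{x \in X} \mu_x \ell(x) - \bigl(\sum_x \mu_x\bigr)\ell(p) = 0$ identically in $\ell \in V$. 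Specializing $\ell$ to coordinate projections then yields $\sum_x \mu_x\, x = \bigl(\sum_x \mu_x\bigr) p$, displaying $p$ as a convex combination of elements of $X$ and contradicting $p \notin \conv(X)$. Hence $\ell_0$ exists.

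In Step~2, set $W = \{\ell \in V : \ell(p) = 0\}$. For each $q \in E \setminus \{p\}$, the subset $W_q = \{\ell \in W : \ell(q) = 0\}$ is a proper $\KK$-subspace of $W$, since distinct points in $\Delta$ admit separating affine functionals vanishing at $p$. As $\KK$ is infinite (every ordered division ring contains $\mathbb{Q}$) and $E \setminus \{p\}$ is finite, $W$ is not covered by the finitely many proper subspaces $W_q$; I pick $\ell_1 \in W \setminus \bigcup_q W_q$, so $\ell_1(p) = 0$ and $\ell_1(q) \neq 0$ for every $q \in E \setminus \{p\}$.

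In Step~3, set $\ell_t = \ell_0 + t\ell_1$ for $t \in \KK$. Automatically $\ell_t(p) = 0$. For each $x \in X$, $\ell_t(x) = \ell_0(x) + t\ell_1(x)$ remains strictly negative as long as $t > 0$ is sufficiently small; finiteness of $X$ yields an explicit positive upper bound (take the minimum of $|\ell_0(x)|/\ell_1(x)$ over those $x \in X$ with $\ell_1(x) > 0$, and any positive $t$ otherwise). For each $q \in E \setminus (X \cup \{p\})$, the equation $\ell_t(q) = 0$ has at most one solution in $t$ (because $\ell_1(q) \neq 0$), so only finitely many $t$ are forbidden. Choosing $t$ positive, below the $X$-bound, and outside this finite forbidden set yields $\ell := \ell_t$ satisfying both (i) and (ii). The main obstacle in this plan is Step~1: over an arbitrary linearly ordered division ring $\KK$, no topology or compactness is available, so strict separation of $p$ from the finite convex set $\conv(X)$ must be established purely algebraically via Farkas/Fourier--Motzkin; once that step is done, Steps~2 and~3 are routine genericity arguments over the infinite scalar ring.
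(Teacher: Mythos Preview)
Your proof is correct and complete; the Farkas alternative you state is the right one (taking $\ell\equiv 1$ forces the equality multiplier to equal $-\sum_x\mu_x$), and the perturbation in Step~3 goes through because $(0,t^*)$ is infinite in any ordered division ring.

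Your route, however, differs from the paper's. The paper first \emph{enlarges} $X$ to a maximal subset of~$E$ with $p\notin\conv(X)$, then invokes the same separation step you use (your Step~1, which the paper simply asserts) to obtain $\ell$ with $\ell(p)=0$ and $\ell<0$ on~$X$. The maximality of~$X$ then yields condition~(i) for free: if $\ell(y)=0$ for some $y\in E\setminus\set{p}$, then $y\notin X$, so $p\in\conv(X\cup\set{y})$, whence $p=(1-\lambda)x+\lambda y$ with $x\in\conv(X)$ and $\lambda<1$; applying~$\ell$ gives $\ell(x)=0$, contradicting $\ell<0$ on~$\conv(X)$. Thus the paper replaces your entire Steps~2 and~3 by a three-line maximality argument. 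What your approach buys is an explicit justification of the separation step over an arbitrary ordered division ring (the paper takes this for granted), and a reusable perturbation technique; what the paper's approach buys is brevity, since enlarging~$X$ does the genericity work automatically.
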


\begin{proof}
We may assume without loss of generality that~$X$ is a maximal subset of~$E$ with the property that $p\notin\conv(X)$. Since~$X$ is finite, there exists an affine functional~$\ell$ such that $\ell(p)=0$ and $\ell(x)<0$ for each $x\in X$.

Suppose that $\ell(y)=0$ for some $y\in E\setminus\set{p}$. Necessarily, $y\in E\setminus X$, thus, by the maximality assumption on~$X$, we get $p\in\conv(X\cup\set{y})$, so $p=(1-\lambda)x+\lambda y$ for some $x\in\conv(X)$ and some $\lambda\in\KK$ with $0\leq\lambda\leq 1$. {}From $p\neq y$ it follows that $\lambda<1$. Since $\ell(p)=\ell(y)=0$, it follows that $\ell(x)=0$, a contradiction.
\end{proof}

\begin{lemma}\label{L:SepExtAff}
  Let $X\subseteq E\subseteq\Delta$ and let $p\in
  E\setminus\conv(X)$. Then there are a ultrapower~${}^*\KK$ of~$\KK$ and an extended affine functional
  $\ell\colon\Delta\to{}^*\KK$ such that
\begin{enumerate}
\item $E\cap\ell^{-1}\set{0}=\set{p}$;

\item $\ell(x)<0$ for each $x\in X$.

\end{enumerate}
\end{lemma}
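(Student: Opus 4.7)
The plan is to promote Lemma~\ref{L:SepAff} from the finite case to the general case via an ultraproduct construction. The key observation is that the separating functional exists for every finite ``approximation'' of the data $(X,E,p)$, and an ultrapower lets us amalgamate this infinite family of finite witnesses into a single extended affine functional.

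More precisely, let $\mathcal{F}$ denote the directed set of finite subsets $F\subseteq E$ with $p\in F$. For each $F\in\mathcal{F}$, set $X_F=X\cap F$. Since $X_F\subseteq X$, we still have $p\in F\setminus\conv(X_F)$, so Lemma~\ref{L:SepAff} applied to $(X_F,F,p)$ produces an affine functional $\ell_F\colon\Delta\to\KK$ with $F\cap\ell_F^{-1}\set{0}=\set{p}$ and $\ell_F(x)<0$ for all $x\in X_F$. For each $F_0\in\mathcal{F}$ put $\hat{F_0}=\setm{F\in\mathcal{F}}{F\supseteq F_0}$. The family $\setm{\hat{F_0}}{F_0\in\mathcal{F}}$ has the finite intersection property, hence extends to an ultrafilter~$\mathcal{U}$ on~$\mathcal{F}$.

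Next, set ${}^*\KK=\KK^{\mathcal{F}}/\mathcal{U}$; this is again a linearly ordered division ring, into which $\KK$ embeds diagonally. Define
\[
\ell\colon\Delta\to{}^*\KK\,,\qquad \ell(x)=\rr{F\mapsto\ell_F(x)}_{\mathcal{U}}\,.
\]
Affineness of each~$\ell_F$ is preserved pointwise, so $\ell$ satisfies $\ell\pI{(1-\lambda)x+\lambda y}=(1-\lambda)\ell(x)+\lambda\ell(y)$ for all $x,y\in\Delta$ and $\lambda\in\KK$; that is, $\ell$ is an extended affine functional. Trivially, $\ell(p)=0$ because each $\ell_F(p)=0$.

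Finally, I verify the two required properties via \L{}o\'s-type arguments. For any $y\in E\setminus\set{p}$, the set $\hat{\set{p,y}}$ belongs to~$\mathcal{U}$, and for every $F\in\hat{\set{p,y}}$ we have $y\in F\setminus\set{p}$, whence $\ell_F(y)\neq 0$; therefore $\ell(y)\neq 0$ in~${}^*\KK$, giving $E\cap\ell^{-1}\set{0}=\set{p}$. Similarly, for any $x\in X$, the set $\hat{\set{p,x}}$ lies in~$\mathcal{U}$ and forces $x\in X_F$ whenever $F\in\hat{\set{p,x}}$, hence $\ell_F(x)<0$ on a $\mathcal{U}$-large set, so $\ell(x)<0$ in~${}^*\KK$. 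The only real subtlety is making sure the finite lemma is applied with the correct data (use $X_F=X\cap F$, not $X$ itself) so that the hypothesis $p\notin\conv(X_F)$ remains available; everything else is routine ultrapower bookkeeping.
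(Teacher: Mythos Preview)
Your proof is correct and follows essentially the same approach as the paper's: apply Lemma~\ref{L:SepAff} to each finite piece $F\subseteq E$ (with $X$ replaced by $X\cap F$), then glue the resulting functionals via an ultrapower indexed by finite subsets of~$E$, using an ultrafilter refining the ``eventually contains $F_0$'' filter. The paper is slightly terser---it indexes over all finite subsets of~$E$ rather than only those containing~$p$, leaving the reader to note that the ones containing~$p$ form a set in the ultrafilter---but the argument is the same, and your extra care in restricting to $p\in F$ and spelling out the \L{}o\'s-type verifications is harmless.
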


\begin{proof}
  Denote by~$I$ the set of all finite subsets of~$E$ and let~$\mathcal{U}$ be a ultrafilter on~$I$ such that $\setm{I\upw F}{F\in I}\subseteq\mathcal{U}$. We denote by~${}^*\KK$
  the ultrapower of~$\KK$ by~$\mathcal{U}$. It follows from
  Lemma~\ref{L:SepAff} that for each $F\in I$, there exists an affine
  functional $\ell_F\colon\Delta\to\KK$ such that
  $F\cap\ell_F^{-1}\set{0}=\set{p}$ and $\ell_F(x)<0$ for each $x\in
  X\cap F$. For each $v\in\Delta$, denote by~$\ell(v)$ the equivalence
  class modulo~$\mathcal{U}$ of the family $\vecm{\ell_F(v)}{F\in
    I}$. Then~$\ell$ is as required.
\end{proof}

Say that an extended affine functional $\ell\colon\Delta\to{}^*\KK$ is
\emph{special}, with respect to a subset~$E$ of~$\Delta$, if
$\ell^{-1}\set{0}\cap E$ is a singleton. A \emph{special relative
  half-space} of~$E$ is a subset of~$E$ of the form $\ell^{-1}[<0]\cap
E$ (where we set $\ell^{-1}[<0]=\setm{x\in E}{\ell(x)<0}$), for some
special affine functional~$\ell$. It is obvious that every special
relative half-space is a strongly bi-convex, proper subset of~$E$. The
converse statement for~$E$ finite is an easy exercise. For~$E$
infinite, the converse may fail (take $\Delta=\RR$, $E=\setm{1/n}{0<n<\omega}\cup\set{0}$, and $X=\set{0}$).

\begin{corollary}\label{C:SepExtAff}
Let $E\subseteq\Delta$. Then every relatively convex subset of~$E$ is the intersection of all special relative half-spaces of~$E$ containing it.
\end{corollary}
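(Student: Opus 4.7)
The plan is to prove the two inclusions separately. The inclusion of $X$ into the intersection of all special relative half-spaces containing it is immediate from definitions: if $H=\ell^{-1}[<0]\cap E$ contains $X$, then certainly every element of $X$ lies in $H$.

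For the reverse inclusion, I would argue by contrapositive: given $p\in E\setminus X$, I produce a special relative half-space of~$E$ containing~$X$ but not~$p$. The key observation is that because $X$ is relatively convex in $E$, that is $X=\conv(X)\cap E$, the hypothesis $p\in E\setminus X$ forces $p\in E\setminus\conv(X)$. At this point Lemma~\ref{L:SepExtAff} applies, producing a ultrapower~${}^*\KK$ of~$\KK$ and an extended affine functional $\ell\colon\Delta\to{}^*\KK$ with $E\cap\ell^{-1}\set{0}=\set{p}$ and $\ell(x)<0$ for each $x\in X$. By construction $\ell$ is special relative to~$E$, so $H=\ell^{-1}[<0]\cap E$ is a special relative half-space of~$E$; it contains~$X$ by the strict negativity of~$\ell$ on~$X$, but it excludes~$p$ since $\ell(p)=0$.

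There is essentially no obstacle here, as the work has already been done in Lemma~\ref{L:SepExtAff}; the corollary is just a reformulation that packages the separation statement into the language of relatively convex sets and their special half-space envelopes. The only point worth double-checking is that one really only needs $p\notin\conv(X)$ (rather than the stronger $p\notin\conv_E(X)$) to invoke the lemma, which is clear from the statement of Lemma~\ref{L:SepExtAff}.
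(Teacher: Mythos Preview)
Your proof is correct and follows essentially the same approach as the paper: both invoke Lemma~\ref{L:SepExtAff} after observing that $p\in E\setminus X$ together with relative convexity of~$X$ gives $p\notin\conv(X)$. The only cosmetic difference is that the paper phrases the reverse inclusion as a proof by contradiction (taking $p\in\widetilde{X}\setminus X$ and deriving $\ell(p)<0$ from $p\in\widetilde{X}$) rather than directly as a contrapositive.
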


\begin{proof}
  Any relatively convex subset~$X$ of~$E$ is trivially contained in
  the intersection~$\widetilde{X}$ of all special relative half-spaces
  of~$E$ containing~$X$. Let $p\in\widetilde{X}\setminus X$. Since~$X$
  is relatively convex, $p\notin\conv(X)$. By Lemma~\ref{L:SepExtAff},
  there are a ultrapower~${}^*\KK$ of~$\KK$ and an
  extended affine functional $\ell\colon\Delta\to{}^*\KK$ such that
  $E\cap\ell^{-1}\set{0}=\set{p}$ and $X\subseteq\ell^{-1}[<0]$. The
  set~$\widetilde{X}$ is, by definition, contained in $\ell^{-1}[<0]$,
  whence $\ell(p)<0$, a contradiction. Therefore, $\widetilde{X}=X$.
\end{proof}

Since every (special) relative half-space of~$\Delta$ is strongly bi-convex, a simple application of Lemma~\ref{L:MacNeille} yields the following.

\begin{corollary}\label{C:SepExtAffDmcN}
Let $E\subseteq\Delta$. Then $\Reg(E,\conv_E)$ is the Dedekind-MacNeille completion of $\Clop^*(E,\conv_E)$ \pup{thus also of $\Clop(E,\conv_E)$}.
\end{corollary}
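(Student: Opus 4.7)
The plan is to apply Lemma~\ref{L:MacNeille}(i) with $K=\Clop^*(E,\conv_E)$. This reduces the statement to showing that every regular open subset of~$E$ is a union of strongly bi-convex subsets of~$E$.

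As a preliminary observation, the defining condition $\conv(X)\cap\conv(E\setminus X)=\es$ of strong bi-convexity is symmetric in $X$ and $E\setminus X$, so $\Clop^*(E,\conv_E)$ is closed under complementation in~$E$. Combined with the fact, already recorded in the text, that every special relative half-space of~$E$ is strongly bi-convex, this shows that the complement in~$E$ of any special relative half-space is still strongly bi-convex.

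Now let~$\bu$ be a regular open subset of~$E$. Then $E\setminus\bu$ is regular closed, hence in particular closed, that is, relatively convex. By Corollary~\ref{C:SepExtAff}, $E\setminus\bu$ is the intersection of all special relative half-spaces of~$E$ that contain it. Taking complements in~$E$, the set~$\bu$ is the union of the complements of those half-spaces, and by the preliminary observation each such complement belongs to $\Clop^*(E,\conv_E)$. This is exactly what Lemma~\ref{L:MacNeille}(i) asks for, so $\Reg(E,\conv_E)$ is the Dedekind-MacNeille completion of $\Clop^*(E,\conv_E)$. For the parenthetical assertion, from $\Clop^*(E,\conv_E)\subseteq\Clop(E,\conv_E)\subseteq\Reg(E,\conv_E)$ it follows that every regular closed subset, being a join of strongly bi-convex subsets, is a fortiori a join of clopen subsets, so a second application of Lemma~\ref{L:MacNeille}(i) finishes the proof.

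No serious obstacle is expected; the only point requiring care is the duality step, namely recognizing that Corollary~\ref{C:SepExtAff}, which describes relatively convex sets as \emph{intersections} of special half-spaces, translates on complementation in~$E$ into the \emph{union} description on the regular open side that Lemma~\ref{L:MacNeille}(i) requires.
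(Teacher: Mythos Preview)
Your proof is correct and follows essentially the same route as the paper's own (one-line) argument: both rely on Corollary~\ref{C:SepExtAff} to write every relatively convex set as an intersection of special relative half-spaces, observe that these half-spaces (and, as you carefully note, their complements) lie in $\Clop^*(E,\conv_E)$, and then invoke Lemma~\ref{L:MacNeille}(i). Your version simply makes explicit the complementation step and the symmetry of strong bi-convexity that the paper leaves to the reader.
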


In addition, we point in the following result a few noticeable features of the completely \jirr\ elements of $\Reg(E,\conv_E)$.

\begin{theorem}\label{T:SepExtAffCplete}
Let $E\subseteq\Delta$. For every completely \jirr\ element~$P$ of $\Reg(E,\conv_E)$, there are $p\in P$, a ultrapower~${}^*\KK$ of~$\KK$, and a special extended affine functional $\ell\colon\Delta\to{}^*\KK$ such that the following statements hold:
\begin{enumerate}
\item $\ell(p)=0$;

\item $P=\ell^{-1}[\geq 0]\cap E$;

\item $P_*=P\setminus\set{p}$;

\item both~$P$ and~$P_*$ are strongly bi-convex.
\end{enumerate}
In particular, the element~$p$ above is unique. Furthermore, if~$E$ is finite, then~$\ell$ can be taken an affine functional \pup{i.e., ${}^*\KK=\KK$}.
\end{theorem}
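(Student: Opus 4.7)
My plan is first to reduce the problem, via the Dedekind--MacNeille description from Section~\ref{S:DMcN}, to the task of producing a single special extended affine functional $\ell$ with the correct sign pattern. By Corollary~\ref{C:SepExtAffDmcN}, $\Reg(E,\conv_E)$ is the Dedekind--MacNeille completion of $\Clop^*(E,\conv_E)$; Lemma~\ref{L:MacNeille}(ii) then implies that the completely \jirr\ element~$P$ already lies in $\Clop^*(E,\conv_E)$, so $P$ is strongly bi-convex. Since $P_*\subsetneq P$, I fix any $p\in P\setminus P_*$; the rest of the argument will force $p$ to be unique and produce the desired~$\ell$.

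The heart of the proof is the claim that $X:=E\setminus P$ is already \emph{maximal} in~$E$ subject to $p\notin\conv(X)$; once this is known, a Hahn--Banach-type argument in the spirit of the proof of Lemma~\ref{L:SepAff} produces $\ell$ satisfying $\ell(p)=0$, $\ell<0$ on $E\setminus P$, \emph{and} $\ell>0$ on $P\setminus\set{p}$. I prove the maximality by contradiction: suppose some $y\in P\setminus\set{p}$ satisfies $p\notin\conv((E\setminus P)\cup\set{y})$, and use Zorn's lemma to enlarge $(E\setminus P)\cup\set{y}$ to some $\widetilde{X}\subseteq E$ that is maximal with $p\notin\conv(\widetilde{X})$. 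Lemma~\ref{L:SepExtAff} then yields a special extended affine functional $\widetilde{\ell}$ with $\widetilde{\ell}(p)=0$ and $\widetilde{\ell}<0$ on $\widetilde{X}$. For any $z\in E\setminus(\widetilde{X}\cup\set{p})$, the maximality of $\widetilde{X}$ forces $p\in\conv(\widetilde{X}\cup\set{z})$, hence $p=(1-\lambda)w+\lambda z$ for some $w\in\conv(\widetilde{X})$ and $\lambda\in(0,1)$; since $\widetilde{\ell}(w)<0$ strictly (as an affine combination of strictly negative values), evaluating yields $\widetilde{\ell}(z)>0$. Therefore $H^+:=\widetilde{\ell}^{-1}[\geq 0]\cap E=E\setminus\widetilde{X}$ is a special relative half-space of~$E$, hence a strongly bi-convex clopen subset of~$E$. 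Because $E\setminus P\subseteq\widetilde{X}$, we have $H^+\subseteq P$; because $y\in\widetilde{X}\cap P$, $H^+\subsetneq P$; consequently $H^+\leq P_*$ in $\Reg(E,\conv_E)$, and since $p\in H^+$ this gives $p\in P_*$, contradicting the choice of~$p$.

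With the maximality of $X=E\setminus P$ established, the same recipe applied this time to $X$ itself delivers an $\ell$ establishing (i) and (ii). Then $P\setminus\set{p}=\ell^{-1}[>0]\cap E$ is a special relative half-space of~$E$, hence strongly bi-convex and a regular closed proper subset of~$P$, so $P\setminus\set{p}\subseteq P_*$; combined with $p\notin P_*$ this yields~(iii), the uniqueness of~$p$, and~(iv). In the finite case I invoke Lemma~\ref{L:SepAff} directly, so that ${}^*\KK$ may be taken to be $\KK$ itself and $\ell$ is an ordinary affine functional. The main technical subtlety I anticipate is the passage to ultrapowers when~$E$ is infinite: the maximality of $\widetilde{X}$ in~$E$ need not descend to maximality of $\widetilde{X}\cap F$ in every finite $F$ appearing in the construction underlying Lemma~\ref{L:SepExtAff}, so each approximating functional $\ell_F$ must be obtained from some maximal extension $X_F\subseteq F$ of $\widetilde{X}\cap F$. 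The saving observation is that for each fixed $z\in E\setminus(\widetilde{X}\cup\set{p})$, the convex combination witnessing $p\in\conv(\widetilde{X}\cup\set{z})$ uses only finitely many elements of $\widetilde{X}$, so $z\notin X_F$ for cofinally many~$F$, whence $\ell(z)>0$ survives in the ultralimit.
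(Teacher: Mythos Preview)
Your proof is correct, but it takes a different route from the paper's. The paper exploits the orthocomplementation: since~$P$ is completely \jirr\ and strongly bi-convex, its complement $E\setminus P$ is a completely \mirr\ element of $\Reg(E,\conv_E)$; by Corollary~\ref{C:SepExtAff} it is an intersection (hence a meet in~$\Reg$) of special relative half-spaces, so by complete \mirry\ it \emph{equals} one such half-space $\ell^{-1}[<0]\cap E$. The point~$p$ is then \emph{produced} by~$\ell$, namely as the unique element of $\ell^{-1}\set{0}\cap E$, and the verification of~(iii) is a short two-line argument using $P=Q\cup X$ with $Q=P\setminus\set{p}$. Your approach instead \emph{chooses}~$p\in P\setminus P_*$ first and then works to show that $E\setminus P$ is maximal among subsets of~$E$ whose convex hull avoids~$p$; this requires Zorn's Lemma and an auxiliary half-space~$H^+$ to derive a contradiction. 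Both arguments are sound, but the paper's is shorter because it leverages the dual irreducibility directly rather than re-deriving it through the maximality detour.

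One remark on your final paragraph: the anxiety about ultrapowers is unwarranted. Once Lemma~\ref{L:SepExtAff} hands you~$\widetilde{\ell}$ with $\widetilde{\ell}(p)=0$ and $\widetilde{\ell}<0$ on~$\widetilde{X}$, the inequality $\widetilde{\ell}(z)>0$ for $z\in E\setminus(\widetilde{X}\cup\set{p})$ follows immediately from the convex-combination identity $0=\widetilde{\ell}(p)=(1-\lambda)\widetilde{\ell}(w)+\lambda\widetilde{\ell}(z)$ with $\widetilde{\ell}(w)<0$; you never need to reopen the ultraproduct construction or worry about whether maximality descends to finite~$F$. The black-box conclusion of Lemma~\ref{L:SepExtAff} already suffices.
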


\begin{proof}
It follows from Corollary \ref{C:SepExtAffDmcN} that~$P$ is strongly bi-convex. Since $E\setminus P$ is a completely \mirr\ element of $\Reg(E,\conv_E)$ and since, by Corollary~\ref{C:SepExtAff}, $E\setminus P$ is an intersection (thus also a meet) of special relative half-spaces of~$E$, $E\setminus P$ is itself a special relative half-space of~$E$, so $E\setminus P=\ell^{-1}[<0]\cap E$ for some special extended affine functional $\ell\colon\Delta\to{}^*\KK$. Denote by~$p$ the unique element of $\ell^{-1}\set{0}\cap E$.

Setting $Q=\ell^{-1}[>0]\cap E=P\setminus\set{p}$, we get $E\setminus Q=\ell^{-1}[\leq0]\cap E$, so~$Q$ is strongly bi-convex as well.

Let $X\in\Reg(E,\conv_E)$ be properly contained in~$P$. If~$X$ is not contained in~$Q$, then $p\in X$, thus $P=Q\cup X$, and thus, \emph{a fortiori}, $P=Q\vee X$ in $\Reg(E,\conv_E)$. Since~$P$ is \jirr\ and since $Q\neq P$, it follows that $X=P$, a contradiction. Therefore, $X\subseteq Q$, thus completing the proof of~(iii).

If~$E$ is finite, then Lemma~\ref{L:SepAff} can be used in place of Lemma~\ref{L:SepExtAff} in the proof of Corollary \ref{C:SepExtAff}, so ``extended affine functional'' can be replaced by ``affine functional'' in the argument above.
\end{proof}

\section{Posets of regions of central hyperplane arrangements}\label{S:HypArr}

In this section we shall fix a positive integer~$d$, together with a \emph{central hyperplane arrangement} in~$\RR^d$, that is, a finite set~$\cH$ of hyperplanes of~$\RR^d$ through the origin. The open set $\RR^d\setminus\bigcup\cH$ has finitely many connected components, of course all of them open, called the \emph{regions}. We shall denote by $\cR$ the set of all regions. Set
 \[
 \sep{X}{Y}=\setm{H\in\cH}
 {H\text{ separates }X\text{ and }Y}\,,
 \quad\text{for all }X,Y\in\cR\,.
 \]
After Edelman~\cite{Edel84}, we fix a distinguished ``base region'' $B$ and define a partial ordering~$\leq_B$ on the set~$\cR$ of all regions, by
 \[
 X\leq_B Y\quad\text{if}\quad
 \sep{B}{X}\subseteq\sep{B}{Y}\,,
 \quad\text{for all }X,Y\in\cR\,.
 \]
The poset $\Pos(\cH,B)=(\cR,\leq_B)$ has a natural orthocomplementation, given by $X\mapsto -X=\setm{-x}{x\in X}$. This poset is not always a lattice, see Bj\"orner, Edelman, and Ziegler \cite[Example~3.3]{BEZ90}.

Denote by $(x,y)\mapsto\scp{x}{y}$ the standard inner product on~$\RR^d$ and pick, for each $H\in\cH$, a vector~$z_H\in\RR^d$, on the same side of~$H$ as~$B$, such that
 \[
 H=\setm{x\in\RR^d}{\scp{z_H}{x}=0}\,.
 \]
Fix $b\in B$. Observing that $\scp{z_H}{b}>0$ for each $H\in\cH$, we may scale~$z_H$ and thus assume that
 \begin{equation}\label{Eq:NormzH}
 \scp{z_H}{b}=1\quad\text{for each}
 \quad H\in\cH\,. 
 \end{equation}
 The set $\Delta=\setm{x\in\RR^d}{\scp{x}{b}=1}$ is an affine
 hyperplane of~$\RR^d$, containing $E=\setm{z_H}{H\in\cH}$. For each $R \in \cR$ and each $z\in E$, the sign of $\scp{z}{x}$, for $x\in R$, is constant. Accordingly, we shall write $\scp{z}{R}>0$ instead of $\scp{z}{x}>0$ for some (every) $x\in R$; and similarly for $\scp{z}{R}<0$.  The following result is contained in Bj\"orner, Edelman, and Ziegler \cite[Remark~5.3]{BEZ90}. Due to~\eqref{Eq:NormzH}, $z_H$ can be expressed in the form $\sum_{i\in I}\lambda_iz_{H_i}$, with all $\lambda_i\geq0$, if{f} it belongs to the convex hull of $\setm{z_{H_i}}{i\in I}$ (i.e., one can take $\sum_{i\in I}\lambda_i=1$); hence our formulation involves convex sets instead of convex cones.

\begin{lemma}\label{L:EmbPosHB}
The assignment $\eps\colon R\mapsto\setm{z\in E}{\scp{z}{R}<0}$ defines an order-isomorphism from $\Pos(\cH,B)$ onto the set $\Clop^*(E,\conv_E)$ of all strongly bi-convex subsets of~$E$.
\end{lemma}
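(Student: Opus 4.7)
The plan is to verify three things separately: that $\eps(R) \in \Clop^*(E,\conv_E)$ for every region $R$ (well-definedness), that $\eps$ is an order-embedding, and that $\eps$ is surjective onto $\Clop^*(E,\conv_E)$. The common tool throughout will be the observation that, for any $x \in \RR^d$, the map $z \mapsto \scp{z}{x}$ restricts to an affine functional on $\Delta$ whose sign at $z_H$ detects on which side of $H$ the point $x$ lies.

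For well-definedness, I would fix any $x \in R$; since $R$ is connected and disjoint from $\bigcup\cH$, the sign of $\scp{z_H}{x}$ is the same for every $x \in R$. Hence $\eps(R) = \setm{z \in E}{\scp{z}{x} < 0}$ and $E \setminus \eps(R) = \setm{z \in E}{\scp{z}{x} > 0}$, and the affine functional $z \mapsto \scp{z}{x}$ strictly separates these two finite sets, yielding $\conv(\eps(R)) \cap \conv(E \setminus \eps(R)) = \es$. For the order-embedding property, I would invoke the normalization $\scp{z_H}{b} = 1 > 0$, which gives $H \in \sep{B}{R}$ iff $\scp{z_H}{R} < 0$ iff $z_H \in \eps(R)$; since $H \mapsto z_H$ is injective, the inclusion $\sep{B}{R} \subseteq \sep{B}{R'}$ is equivalent to $\eps(R) \subseteq \eps(R')$, which is exactly the order-embedding property (and which also forces injectivity of $\eps$).

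The main content is surjectivity. Take $X \in \Clop^*(E,\conv_E)$. The degenerate cases $X = \es$ and $X = E$ are handled directly by $\eps(B) = \es$ and $\eps(-B) = E$, both immediate from $\scp{z_H}{B} > 0$. Otherwise, $\conv(X)$ and $\conv(E \setminus X)$ are nonempty, disjoint, compact convex subsets of $\Delta$ (compact because $E$ is finite), so the finite-dimensional hyperplane separation theorem yields an affine functional $\ell$ on $\Delta$ with $\ell < 0$ on $X$ and $\ell > 0$ on $E \setminus X$. Extending $\ell$ to an affine function $z \mapsto \scp{z}{y} + c$ on $\RR^d$ and using $\scp{z}{b} = 1$ on $\Delta$, I absorb the constant to get $\ell(z) = \scp{z}{y + cb}$ for $z \in \Delta$. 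Setting $x = y + cb$, the strict signs force $\scp{z_H}{x} \neq 0$ for every $H \in \cH$, so $x \in \RR^d \setminus \bigcup\cH$ lies in some region $R$, and by construction $\eps(R) = X$.

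The hardest part is the surjectivity step, specifically translating the abstract separating affine functional on $\Delta$ back into an inner-product pairing against a concrete vector of $\RR^d$ that lies in some region. The normalization $\scp{z_H}{b} = 1$ is precisely what makes this translation painless, since it lets the constant term of any affine functional on $\Delta$ be absorbed into the first argument of the pairing.
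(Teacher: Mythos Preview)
Your proof is correct and follows essentially the same approach as the paper's: both verify well-definedness via a separating linear form, establish the order-embedding via the equivalence $H\in\sep{B}{R}\Leftrightarrow z_H\in\eps(R)$, and prove surjectivity by strictly separating $X$ from $E\setminus X$ and recovering a region. The only cosmetic difference is in the surjectivity step, where you absorb the constant term of the separating affine functional algebraically using $\scp{z}{b}=1$ to produce the vector $x=y+cb$, whereas the paper obtains the same vector geometrically as a normal to the linear hyperplane of~$\RR^d$ spanned by $\{0\}\cup\ell^{-1}\{0\}$; the two constructions are equivalent.
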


\begin{proof}
For any $r\in R$, the set $\eps(R)=\setm{z\in E}{\scp{z}{r}<0}$ has complement $E\setminus\eps(R)=\setm{z\in E}{\scp{z}{r}\geq0}$, hence it is strongly bi-convex in~$E$. Furthermore, for $R,S\in\cR$, $\eps(R)\subseteq\eps(S)$ if{f} $\scp{z_H}{R}<0$ implies that $\scp{z_H}{S}<0$ for any $H\in\cH$, if{f} $\sep{B}{R}\subseteq\sep{B}{S}$, if{f} $R\leq_BS$; whence~$\eps$ is an order-embedding.

Finally, we must prove that every strongly bi-convex subset~$U$ of~$E$ belongs to the image of~$\eps$. Since $\es=\eps(B)$ and $E=\eps(-B)$, we may assume that $U\neq\es$ and $U\neq E$. Since~$U$ and $E\setminus U$ have disjoint convex hulls in~$\Delta$, there exists an affine functional~$\ell$ on~$\Delta$ such that
 \[
 U=\setm{z\in E}{\ell(z)<0}
 \quad\text{and}\quad
 E\setminus U=
 \setm{z\in E}{\ell(z)>0}\,.
 \]
Let~$c$ be any normal vector to the unique hyperplane of~$\RR^d$ containing $\set{0}\cup\ell^{-1}\set{0}$, on the same side of that hyperplane as $E\setminus U$. Then
 \[
 (\forall z\in U)\pI{\scp{z}{c}<0}\quad
 \text{and}\quad
 (\forall z\in E\setminus U)
 \pI{\scp{z}{c}>0}\,.
 \]
In particular, $c\notin\bigcup\cH$. Furthermore, if~$R$ denotes the unique region such that $c\in R$, we get
 \begin{equation*}
 \eps(R)=
 \setm{z\in E}{\scp{z}{R}<0}
 =\setm{z\in E}{\scp{z}{c}<0}=U\,.
 \tag*{\qed}
 \end{equation*}
\renewcommand{\qed}{}
\end{proof}

According to Lemma~\ref{L:EmbPosHB}, we shall identify $\Pos(\cH,B)$ with the collection of all strongly bi-convex subsets of~$E$.

\begin{theorem}\label{T:DMcNPosHB}
The lattice~$\Reg(E,\conv_E)$ is the Dedekind-MacNeille completion of $\Pos(\cH,B)$ \pup{via the embedding~$\eps$}.
\end{theorem}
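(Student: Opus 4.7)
The plan is to reduce the theorem to two results already established in this section. By Lemma~\ref{L:EmbPosHB}, the map $\eps$ is an order-isomorphism from $\Pos(\cH,B)$ onto $\Clop^*(E,\conv_E)$, viewed as a subposet of $\Reg(E,\conv_E)$. Hence, composing $\eps$ with the inclusion $\Clop^*(E,\conv_E)\hookrightarrow\Reg(E,\conv_E)$ yields an order-embedding of $\Pos(\cH,B)$ into $\Reg(E,\conv_E)$ whose image is exactly $\Clop^*(E,\conv_E)$.

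Next, I would invoke Corollary~\ref{C:SepExtAffDmcN}, which asserts precisely that $\Reg(E,\conv_E)$ is the Dedekind-MacNeille completion of $\Clop^*(E,\conv_E)$. Transporting this statement along the order-isomorphism $\eps$, one obtains that $\Reg(E,\conv_E)$ is also the Dedekind-MacNeille completion of $\Pos(\cH,B)$ via~$\eps$, which is the desired conclusion.

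Since Dedekind-MacNeille completion is characterized intrinsically by the fact that every element is both a join of, and a meet of, elements from the embedded subposet (as recalled in the proof of Lemma~\ref{L:MacNeille}), it is insensitive to replacing $\Clop^*(E,\conv_E)$ by any order-isomorphic copy of it inside the ambient lattice. Thus the whole argument collapses to citing Lemma~\ref{L:EmbPosHB} and Corollary~\ref{C:SepExtAffDmcN}. There is no real obstacle here; the genuine content of the theorem lies in Lemma~\ref{L:EmbPosHB} (the identification of regions with strongly bi-convex subsets of~$E$) and in the separation lemmas of Section~\ref{S:ConvE} that feed into Corollary~\ref{C:SepExtAffDmcN}.
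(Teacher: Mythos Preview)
Your proof is correct. Both your approach and the paper's rely on Lemma~\ref{L:EmbPosHB} to identify $\Pos(\cH,B)$ with $\Clop^*(E,\conv_E)$, but they differ in the second ingredient. You invoke Corollary~\ref{C:SepExtAffDmcN} directly, which already states that $\Reg(E,\conv_E)$ is the Dedekind-MacNeille completion of $\Clop^*(E,\conv_E)$; this is the shortest path. The paper instead cites Theorem~\ref{T:SepExtAffCplete} to conclude that every completely \jirr\ element of $\Reg(E,\conv_E)$ is strongly bi-convex, hence lies in the image of~$\eps$, and then appeals to Lemma~\ref{L:MacNeille}(ii) (using that~$E$ is finite, so $\Reg(E,\conv_E)$ is spatial). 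Your route is more economical and avoids re-deriving a special case of a corollary already on record; the paper's route highlights the explicit structure of the completely \jirr\ elements, which feeds into Corollary~\ref{C:DMcNPosHB0} and the surrounding discussion.
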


\begin{proof}
By Theorem~\ref{T:SepExtAffCplete}, every completely \jirr\ element~$P$ of the lattice $\Reg(E,\conv_E)$ is strongly bi-convex. By Lemma~\ref{L:EmbPosHB}, $P$ belongs to the image of~$\eps$. The conclusion follows then from Lemma~\ref{L:MacNeille}.
\end{proof}

The following corollary is a slight strengthening of Bj\"orner, Edelman, and Ziegler \cite[Theorem~5.5]{BEZ90}, obtained by changing ``bi-convex'' to ``regular closed''.

\begin{corollary}\label{C:DMcNPosHB0}
The poset of regions $\Pos(\cH,B)$ is a lattice if{f} every regular closed subset of~$E$ is strongly bi-convex, that is, it has the form~$\eps(R)$ for some region~$R$.
\end{corollary}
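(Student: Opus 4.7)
The plan is to combine the identification of $\Pos(\cH,B)$ with $\Clop^*(E,\conv_E)$ furnished by Lemma~\ref{L:EmbPosHB} with the Dedekind-MacNeille description of~$\Reg(E,\conv_E)$ provided by Theorem~\ref{T:DMcNPosHB}, and then exploit the finiteness of~$E$ (which follows from the finiteness of~$\cH$). The statement to prove is then equivalent to the assertion that $\Clop^*(E,\conv_E)$ is a lattice if{f} $\Clop^*(E,\conv_E)=\Reg(E,\conv_E)$.

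The backward direction is essentially free: if every regular closed subset of~$E$ is strongly bi-convex, then $\eps[\Pos(\cH,B)]=\Reg(E,\conv_E)$ as subsets of~$\Pow E$, and the right-hand side is a complete lattice by Lemma~\ref{L:BasicReg}(ii); transporting this structure back through the order-isomorphism~$\eps$ of Lemma~\ref{L:EmbPosHB} shows that $\Pos(\cH,B)$ is a lattice.

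For the forward direction, assume that $\Pos(\cH,B)$ is a lattice. Since $\cH$ is finite, so is $E=\setm{z_H}{H\in\cH}$, whence $\Reg(E,\conv_E)$ is finite and, via~$\eps$, so is $\Pos(\cH,B)$. A finite lattice is automatically complete. Now I would invoke the standard fact that if a poset~$K$ is itself a complete lattice, then~$K$ coincides with its own Dedekind-MacNeille completion (up to the canonical embedding): indeed, in that case every element of~$K$ is trivially both a join and a meet of members of~$K$, so the characterization of the DM completion recalled in the proof of Lemma~\ref{L:MacNeille} (via Davey and Priestley) forces equality. Applying this to $K=\eps[\Pos(\cH,B)]\subseteq\Reg(E,\conv_E)$, and using Theorem~\ref{T:DMcNPosHB}, yields $\eps[\Pos(\cH,B)]=\Reg(E,\conv_E)$, i.e.\ every regular closed subset of~$E$ has the form~$\eps(R)$.

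The only genuine subtlety is the passage, in the forward direction, from ``$\Pos(\cH,B)$ is a lattice'' to ``$\eps[\Pos(\cH,B)]$ is closed under the joins and meets computed inside $\Reg(E,\conv_E)$''. The cleanest way to dispatch this is to note that $\eps$ is already an order-isomorphism onto its image (Lemma~\ref{L:EmbPosHB}), so the (complete) lattice structure transported via~$\eps$ agrees with the ambient one precisely because~$\eps[\Pos(\cH,B)]$ is tight in $\Reg(E,\conv_E)$ as a consequence of the DM-completion statement in Theorem~\ref{T:DMcNPosHB}; alternatively, one observes that in a finite poset the DM completion adds new elements if{f} some pair fails to have a join or a meet, so the lattice hypothesis rules this out. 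Either route reduces the corollary to purely formal manipulations, with no additional geometric input beyond what is already encoded in Theorem~\ref{T:DMcNPosHB}.
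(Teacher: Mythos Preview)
Your proposal is correct and follows essentially the same route as the paper: both rely on Theorem~\ref{T:DMcNPosHB} together with the standard fact that a finite poset equals its Dedekind-MacNeille completion precisely when it is already a lattice. The paper's proof is a one-line invocation of Theorem~\ref{T:DMcNPosHB}; you have simply unpacked the implicit reasoning (finiteness of~$E$, tightness of a poset in its DM completion, and the resulting collapse $\eps[\Pos(\cH,B)]=\Reg(E,\conv_E)$) in more detail.
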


\begin{proof}
By Theorem~\ref{T:DMcNPosHB}, the lattice $\Reg(E,\conv_E)$ is generated by the image of~$\eps$.
\end{proof}

\begin{corollary}\label{C:DMcNPosHB1}
The Dedekind-MacNeille completion of $\Pos(\cH,B)$ is a pseudocomplemented lattice.
\end{corollary}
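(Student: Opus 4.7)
The plan is to assemble Corollary~\ref{C:DMcNPosHB1} directly from Theorem~\ref{T:DMcNPosHB} and the second (finite convex geometry) part of Proposition~\ref{P:PseudoCpl}. By Theorem~\ref{T:DMcNPosHB}, the Dedekind-MacNeille completion of $\Pos(\cH,B)$ is isomorphic, via~$\eps$, to the lattice $\Reg(E,\conv_E)$, where $E=\setm{z_H}{H\in\cH}$. So the task reduces to verifying that $\Reg(E,\conv_E)$ is pseudocomplemented.

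For this, I would check that the hypotheses of the ``finite convex geometry'' clause of Proposition~\ref{P:PseudoCpl} are met. Finiteness of $E$ is immediate from the fact that~$\cH$ is finite (by the definition of a central hyperplane arrangement). The convex-geometry property of $(E,\conv_E)$ was already recorded in the opening paragraph of Section~\ref{S:ConvE}, citing Edelman and Jamison, and holds for any subset~$E$ of an affine space. Hence $(E,\conv_E)$ is a finite convex geometry, Proposition~\ref{P:PseudoCpl} applies, and $\Reg(E,\conv_E)$ is pseudocomplemented.

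There is no substantive obstacle here: the corollary is a formal consequence of two already-established results, and the only ``work'' is to match the hypotheses. In particular, one does not need to revisit the proof of Proposition~\ref{P:PseudoCpl} or to re-examine the structure of the regions; Theorem~\ref{T:DMcNPosHB} has already done the heavy lifting of realizing the completion as a lattice of regular closed sets of a finite convex geometry.
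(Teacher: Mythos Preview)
Your proposal is correct and mirrors the paper's proof exactly: the paper also invokes Theorem~\ref{T:DMcNPosHB} to identify the completion with $\Reg(E,\conv_E)$ and then applies Proposition~\ref{P:PseudoCpl} using that $(E,\conv_E)$ is a finite convex geometry. Your explicit remark that finiteness of~$E$ follows from finiteness of~$\cH$ is a small but welcome clarification that the paper leaves implicit.
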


\begin{proof}
By Theorem~\ref{T:DMcNPosHB}, $\Reg(E,\conv_E)$ is the Dedekind-MacNeille completion of $\Pos(\cH,B)$. Now $(E,\conv_E)$ is a convex geometry, so the conclusion follows from Proposition~\ref{P:PseudoCpl}.
\end{proof}

\begin{remark}\label{Rk:SDHyp}
There are many important cases where $\Pos(\cH,B)$ is a lattice, see Bj\"orner, Edelman, and Ziegler~\cite{BEZ90}. Further lattice-theoretical properties of $\Pos(\cH,B)$ are investigated in Reading~\cite{Read03}. In particular, even if $\Pos(\cH,B)$ is a lattice, it may not be semidistributive (cf. Reading \cite[Figure~3]{Read03}); and even if it is semidistributive, it may not be bounded (cf. Reading \cite[Figure~5]{Read03}).
\end{remark}

\begin{remark}\label{Rk:ConvGenHyp}
We established in Lemma~\ref{L:EmbPosHB} that the poset of all regions of any central hyperplane arrangement with base region is isomorphic to the poset of all strongly bi-convex subsets of some finite set~$E$. Conversely, the collection of all strongly bi-convex subsets of any finite subset~$E$ in any finite-dimensional real affine space~$\Delta$ arises in this fashion. Indeed, embed~$\Delta$ as a hyperplane, avoiding the origin, into some~$\RR^d$, and pick $b\in\RR^d$ such that $\scp{b}{x}=1$ for all $x\in\Delta$. The set~$\cH$ of orthogonals of all the elements of~$E$ is a central hyperplane arrangement of~$\RR^d$, and the set $\setm{z_H}{H\in\cH}$ associated to~$\cH$ and~$b$ as above is exactly~$E$. The corresponding base region~$B$ is the one containing~$b$, that is,
 \[
 B=\setm{x\in\RR^d}
 {\scp{z}{x}>0
 \text{ for each }z\in E}\,.
 \] 
By the discussion above,
$\Pos(\cH,B)\cong\Clop^*(E,\conv_E)$. 
\end{remark}

\section{Closure operators of poset and semilattice type}
\label{S:SemilType}

We study in this section closure spaces $(P,\gf)$ where~$P$ is a poset and the closure operator~$\gf$ is related to the order of $P$; such a relation will make it possible to derive properties of $\Reg(P,\gf)$ from the order. Closure spaces of this kind originate from concrete examples generalizing permutohedra, investigated in further sections.

Let $(P,\gf)$ be a closure space. A \emph{covering} of an element $p\in P$ is a subset~$\bx$ of~$P$ such that $p\in\gf(\bx)$. If~$\bx$ is minimal, with respect to set inclusion, for the property
of being a covering, then we shall say that~$\bx$ is a \emph{minimal covering} of~$p$. We shall denote by~$\cM_{\gf}(p)$, or~$\cM(p)$
if~$\gf$ is understood, the set of all minimal coverings of~$p$. Due to the condition $\gf(\es)=\es$, every minimal covering of an element of~$P$ is nonempty.
We say that a covering~$\bx$ of~$p$ is
\emph{nontrivial} if $p\notin\bx$.

We say that $(P,\gf)$ is \emph{algebraic} if~$\gf(\bx)$ is the union of the~$\gf(\by)$, for all finite subsets~$\by$ of~$\bx$, for any $\bx\subseteq P$. A great deal of the relevance of algebraic closure spaces for our purposes is contained in the following straightforward lemma.

\begin{lemma}\label{L:Alg2MinCov}
Let~$p$ be an element in an algebraic closure space $(P,\gf)$. Then every minimal covering of~$p$ is a finite set, and every covering of~$p$ contains a minimal covering of~$p$.
\end{lemma}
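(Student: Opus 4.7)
The plan is to exploit the algebraicity hypothesis directly for both parts. The key observation is that algebraicity lets us replace any covering by a finite covering contained in it, and once we are working inside a finite set, minimization under inclusion is automatic.

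For the first assertion, suppose $\bx$ is a minimal covering of $p$. Then $p\in\gf(\bx)$, so by algebraicity there exists a finite subset $\by\subseteq\bx$ with $p\in\gf(\by)$. Thus $\by$ is itself a covering of $p$ contained in $\bx$, and by the minimality of $\bx$ we must have $\by=\bx$. Hence $\bx$ is finite.

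For the second assertion, let $\bx$ be any covering of $p$. By algebraicity applied to $\bx$, pick a finite $\by\subseteq\bx$ with $p\in\gf(\by)$. Since $\by$ is finite, the collection of all subsets $\bz\subseteq\by$ with $p\in\gf(\bz)$ is a nonempty finite poset under inclusion, hence has a minimal element $\by'$. I claim $\by'$ is a minimal covering of $p$ in the global sense: any $\bz\subsetneq\by'$ is in particular a subset of $\by$, so if $p\in\gf(\bz)$ then $\bz$ would contradict the minimality of $\by'$ within $\by$. Thus $\by'\in\cM(p)$ and $\by'\subseteq\by\subseteq\bx$, as required.

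I do not foresee any real obstacle; the argument is essentially a direct unpacking of the definitions of \emph{algebraic closure space} and \emph{minimal covering}, with the only mildly non-trivial point being the remark that minimality inside the finite witness $\by$ automatically yields minimality among \emph{all} subsets of $P$, which follows because any potential smaller covering of $p$ contained in $\by'$ lives inside $\by$ too.
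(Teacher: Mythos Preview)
Your argument is correct and is exactly the standard unpacking one would expect; the paper itself omits the proof entirely, labeling the lemma ``straightforward'', so there is nothing further to compare.
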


The following trivial observation is quite convenient for the understanding of open sets and the interior operator~$\cgf$.

\begin{lemma}\label{L:OpenSemType}
Let $(P,\gf)$ be an algebraic closure space, let $p\in P$, and let $\ba\subseteq P$. Then $p\in\cgf(\ba)$ if{f} every minimal covering of~$p$ meets~$\ba$.
\end{lemma}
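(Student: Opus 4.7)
The plan is to unfold the definition of $\cgf$ and reduce the statement to an equivalent contrapositive involving $\gf$. Recall that $\cgf(\ba)=P\setminus\gf(P\setminus\ba)$, so $p\in\cgf(\ba)$ is equivalent to $p\notin\gf(P\setminus\ba)$. The statement to prove therefore becomes: $p\in\gf(P\setminus\ba)$ if and only if some minimal covering of $p$ is disjoint from $\ba$ (i.e., contained in $P\setminus\ba$).

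For the easy direction ($\Leftarrow$ in the contrapositive), I would argue that if $\bx\in\cM(p)$ satisfies $\bx\cap\ba=\es$, then $\bx\subseteq P\setminus\ba$, and monotonicity of $\gf$ together with $p\in\gf(\bx)$ immediately gives $p\in\gf(P\setminus\ba)$. No appeal to algebraicity is needed here.

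For the harder direction ($\Rightarrow$), assume $p\in\gf(P\setminus\ba)$. Then $P\setminus\ba$ is itself a covering of~$p$, so by Lemma~\ref{L:Alg2MinCov} (this is precisely where algebraicity enters), $P\setminus\ba$ contains a minimal covering $\bx$ of $p$. Since $\bx\subseteq P\setminus\ba$, we have $\bx\cap\ba=\es$, producing a minimal covering of~$p$ that fails to meet $\ba$.

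There is no real obstacle: the content of the lemma is essentially a translation between the dual operators $\gf$ and $\cgf$, and the only non-formal ingredient is the existence of a minimal covering inside any given covering, which is exactly what Lemma~\ref{L:Alg2MinCov} supplies under the algebraicity hypothesis. The proof should fit in a few lines.
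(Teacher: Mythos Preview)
Your proposal is correct and matches the paper's own proof essentially line for line: both unfold $\cgf(\ba)=P\setminus\gf(\ba^\cpl)$, handle one direction by monotonicity of~$\gf$, and invoke Lemma~\ref{L:Alg2MinCov} for the other. The only cosmetic difference is that you phrase the argument via the contrapositive while the paper argues each direction of the biconditional directly.
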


\begin{proof}
Assume first that~$p\in\cgf(\ba)$ and let~$\bx$ be a minimal covering of~$p$. If $\bx\subseteq\ba^\cpl$ then $p\in\gf(\ba^\cpl)$, \contr. Conversely, suppose that $p\notin\cgf(\ba)$, that is, $p\in\gf(\ba^\cpl)$. By Lemma~\ref{L:Alg2MinCov}, there exists $\bx\in\cM(p)$ contained in $\ba^\cpl$.
\end{proof}

\begin{definition}\label{D:SemilType}
We say that an algebraic closure space~$(P,\gf)$, with~$P$ a poset, has
\begin{itemize}
\item[---] \emph{poset type}, if $\bx\subseteq P\dnw p$ whenever $\bx\in\cM_\gf(p)$,
  
\item[---] \emph{semilattice type}, if
    $p=\bigvee\bx$ whenever $\bx\in\cM_\gf(p)$.
\end{itemize}
We say that an algebraic closure space~$(P,\gf)$, with~$P$ just a set, has poset (resp., semilattice) type if there exists a poset structure on~$P$ such that, with respect to that structure, $(P,\gf)$ has poset (resp., semilattice) type.
\end{definition}

\begin{remark}\label{Rk:SemTypeNotSem}
We do not require, in the statement of Definition~\ref{D:SemilType}, that~$P$ be a \js; and indeed, in many important examples, this will not be the case.
\end{remark}

\begin{example}\label{Ex:roP}
Let~$P$ be a poset and set $\gf(\bx)=P\upw\bx=\setm{p\in P}{(\exists x\in\bx)(x\leq p)}$, for each $\bx\subseteq P$. Then $(P,\gf)$ is an algebraic closure space, and the elements of~$\cM(p)$ are exactly the singletons~$\set{q}$ with $q\leq p$, for each $p\in P$. In particular, $(P,\gf)$ has poset type, and it has semilattice type if{f} the ordering of~$P$ is trivial.

The lattice $\Reg(P,\gf)$ turns out to be complete and Boolean. It plays a fundamental role in the theory of set-theoretical forcing, where it is usually called the completion of~$P$ (or the Boolean algebra of all regular open subsets of~$P$), see for example Jech~\cite{Jech}. Any complete Boolean algebra can be described in this form, so~$\Reg(P,\gf)$ may not be spatial. Further, if~$P$ has a largest element, then $\Clop(P,\gf)=\set{\es,P}$, so even in the finite case, $\Reg(P,\gf)$ may not be the Dedekind-MacNeille completion of~$\Clop(P,\gf)$.
\end{example}

Another example of a closure space of poset type, but not of semilattice type, is the following. 

\begin{example}\label{Ex:PosNotSemCl}
Consider a four-element set $P=\set{p_0,p_1,q_0,q_1}$ and define the closure operator~$\gf$ on~$P$ by setting $\gf(\bx)=\bx$ unless $\set{q_0,q_1}\subseteq\bx$, in which case $\gf(\bx)=P$. Then the partial ordering~$\utr$ on~$P$ with the only nontrivial coverings $q_j\utr p_i$, for $i,j<2$, witnesses~$(P,\gf)$ being a closure space of poset type.

As $\set{q_0,q_1}$ is a minimal covering, with respect to~$\gf$, of
both~$p_0$ and~$p_1$, any ordering on~$P$ witnessing $(P,\gf)$ being
of semilattice type would thus satisfy that $p_0=q_0\vee q_1$ and
$p_1=q_0\vee q_1$ (with respect to that ordering), contradicting $p_{0} \neq p_{1}$.
\end{example}

An important feature of closure spaces of poset type, reminiscent of the absence of $D$-cycles in lower bounded homomorphic images of free lattices, is the following.

\begin{lemma}\label{L:DrelClSpSem}
Let $(P,\gf)$ be a closure space of poset type, let $p,q\in P$, and let $\ba\subseteq P$. If $p\in\gf(\ba\cup\set{q})\setminus\gf(\ba)$, then $p\geq q$.
\end{lemma}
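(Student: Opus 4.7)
The plan is to use the algebraicity of the closure space to reduce to a minimal covering, then apply the poset-type hypothesis to read off the order information.

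More concretely, since $p\in\gf(\ba\cup\set{q})$ and $(P,\gf)$ is algebraic, Lemma~\ref{L:Alg2MinCov} supplies a minimal covering $\bx\in\cM_\gf(p)$ with $\bx\subseteq\ba\cup\set{q}$. I would first observe that $q$ must belong to $\bx$: otherwise $\bx\subseteq\ba$, whence $p\in\gf(\bx)\subseteq\gf(\ba)$, contradicting the hypothesis $p\notin\gf(\ba)$.

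Next, since $(P,\gf)$ is of poset type and $\bx\in\cM_\gf(p)$, we have $\bx\subseteq P\dnw p$, so every element of $\bx$ is below $p$. Applying this to $q\in\bx$ yields $q\leq p$, as required.

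There is no real obstacle here; the statement is essentially an unpacking of the two definitions (algebraicity gives the finite minimal covering living inside $\ba\cup\set{q}$, and the poset-type hypothesis converts membership in that covering into an order comparison with $p$). The only point to watch is the first step, namely that $q$ actually occurs in the chosen minimal covering, which follows immediately from $p\notin\gf(\ba)$ combined with the monotonicity of $\gf$.
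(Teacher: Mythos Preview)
Your proof is correct and follows exactly the same line as the paper's: extract a minimal covering $\bx\subseteq\ba\cup\set{q}$ via Lemma~\ref{L:Alg2MinCov}, observe $q\in\bx$ since $p\notin\gf(\ba)$, and apply the poset-type hypothesis to get $q\leq p$.
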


\begin{proof}
By Lemma~\ref{L:Alg2MinCov}, there exists $\bx\in\cM(p)$ such that $\bx\subseteq\ba\cup\set{q}$. {}From $p\notin\gf(\ba)$ it follows that $q\in\bx$, while, as $(P,\gf)$ has poset type, $\bx\subseteq P\dnw p$.
\end{proof}

It follows easily from the previous Lemma that \emph{every closure space of poset type is a convex geometry}. For semilattice type, we get the following additional property.

\begin{lemma}\label{L:SemType2Atom}
Every closure space of semilattice type is atomistic.
\end{lemma}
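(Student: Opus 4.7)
The plan is to show that for any $p\in P$, the only element of $\gf(\set{p})$ is $p$ itself, using the combination of algebraicity and the semilattice-type condition.

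First I would fix $p\in P$ and take an arbitrary $q\in\gf(\set{p})$; the goal is to force $q=p$. Since the singleton $\set{p}$ is a covering of~$q$ and~$(P,\gf)$ is algebraic, Lemma~\ref{L:Alg2MinCov} yields a minimal covering $\bx\in\cM_\gf(q)$ with $\bx\subseteq\set{p}$. Because $\gf(\es)=\es$ (as recalled just after Definition~\ref{D:SemilType}), no minimal covering can be empty, so necessarily $\bx=\set{p}$.

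Next I would invoke the semilattice-type hypothesis: since $\bx=\set{p}$ is a minimal covering of~$q$, we must have $q=\bigvee\bx=\bigvee\set{p}=p$. This gives $\gf(\set{p})\subseteq\set{p}$, and the reverse inclusion is just the extensivity of~$\gf$. Hence $\gf(\set{p})=\set{p}$, which is exactly atomicity in the sense defined in Section~\ref{S:RegCl}.

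There is no real obstacle here; the whole argument hinges on the elementary observation that any minimal covering contained in a singleton must equal that singleton, together with the fact that the join of a singleton is its unique element. The only subtlety worth flagging is the use of $\gf(\es)=\es$ to rule out $\bx=\es$, which is what prevents the trivial (and wrong) conclusion that $\set{p}$ might cover some $q$ via an empty minimal covering.
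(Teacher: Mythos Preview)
Your argument is correct and follows essentially the same route as the paper: show that for $q\in\gf(\set{p})$ the singleton $\set{p}$ is a minimal covering of~$q$ (using $\gf(\es)=\es$), then apply the semilattice-type condition to get $q=\bigvee\set{p}=p$. The only difference is that you invoke Lemma~\ref{L:Alg2MinCov} to produce a minimal covering inside~$\set{p}$, whereas the paper observes directly that~$\set{p}$ itself is already minimal (its only proper subset is~$\es$, which covers nothing); this detour through algebraicity is harmless but unnecessary.
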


\begin{proof}
If $y \in \gf(\set{x})$, then $\set{x}\in\cM(y)$ (because $\gf(\es)=\es$), so $y=\bigvee \set{x}=x$.
\end{proof}

\begin{example}\label{Ex:PermPoset}
Let~$\be$ be a transitive binary relation on some set. It is obvious that the transitive closure on subsets of~$\be$ gives rise to an algebraic closure operator~$\tau$ on~$\be$, the latter being viewed as a set of pairs. We study in our paper~\cite{SaWe12a} the lattice of all regular closed subsets of~$\be$. In particular, we prove  the following statement: \emph{If~$\be$ is finite and antisymmetric, then the lattice~$\Reg(\be,\tau)$ is a bounded homomorphic image of a free lattice}.
  
If~$\be$ is antisymmetric (\emph{not necessarily reflexive}), then we can define a partial ordering~$\sqsubseteq$ between elements of~$\be$ as follows:
 \begin{equation}\label{Eq:OrdsoPos}
 (x,y)\sqsubseteq(x',y')
 \quad\text{if}\quad
 \pI{x'=x\text{ or }(x',x)\in\be}
 \text{ and }
 \pI{y=y'\text{ or }(y,y')\in\be}\,.
 \end{equation}
We argue next that, with respect to this ordering, $(\be,\tau)$ is a closure space of semilattice type. For $(x,y)\in\be$ and $\bz\in\cM((x,y))$, the pair $(x,y)$ belongs to the (transitive) closure of~$\bz$, hence there exists a subdivision $x=z_0<z_1<\cdots<z_n=y$ such that each $(z_i,z_{i+1})\in\bz$. As $(x,y)$ does not belong to the closure of any proper subset of~$\bz$, it follows that $\bz=\setm{(z_i,z_{i+1})}{i<n}$; whence~$(x,y)$ is
 the join of~$\bz$ with respect to the ordering~$\sqsubseteq$.
 
%<<<<<<< variant A
 In case~$\be$ is the \emph{strict ordering} associated to a partial
 ordering~$(E,\leq)$, the poset $\Clop(\be,\tau)$ is the
 ``permutohedron-like'' poset denoted, in Pouzet~\emph{et
   al.}~\cite{PRRZ}, by~$\mathbf{N}(E)$. In particular, it is proved
 there that~\emph{$\mathbf{N}(E)$ is a lattice if{f}~$E$ contains no
   copy of the two-atom Boolean lattice~$\sB_2$}.  The latter fact is
 extended in our paper~\cite{SaWe12a} to all transitive relations. In
 particular, this holds for the full relation $\be=E\times E$ on any
 set~$E$. The corresponding lattice $\Reg(\be,\tau)=\Clop(\be,\tau)$
 is called the \emph{bipartition lattice} of~$E$. This structure
 originates in Foata and Zeilberger~\cite{FoZe96} and
 Han~\cite{Han96}. Its poset structure is investigated further
 in Hetyei and Krattenthaler~\cite{HetKra11}. However, it can be verified that the closure
 space $(E\times E,\tau)$ does not have poset type if $\card E\geq3$.
\end{example}

\begin{example}\label{Ex:FromJSemil}
Let $(S,\vee)$ be a \js. We set, for any $\bx\subseteq S$, $\tcl(\bx)=\bx^\vee$, the set of joins of all nonempty finite subsets of~$\bx$. The closure lattice of $(S,\tcl)$ is the lattice of all (possibly empty) \jsubsemi{s} of~$S$. We shall call $(S,\tcl)$ the \emph{closure space canonically associated to the \js~$S$}. For any $p\in S$, a nonempty subset $\bx\subseteq S$ belongs to~$\cM(p)$ if{f}~$p$ is the join of a nonempty finite subset of~$\bx$ but of no proper subset of~$\bx$; thus, $\bx$ is finite and $p=\bigvee\bx$.

Therefore, the closure space $(S,\tcl)$ thus constructed has semilattice type. The ortholattice $\Reg{S}=\Reg(S,\tcl)$ and the orthoposet $\Clop{S}=\Clop(S,\tcl)$ will be studied in some detail in the subsequent sections, in particular Sections~\ref{S:HostSemil} and~\ref{S:BdedReg}.
\end{example}

Another large class of examples, obtained from \emph{graphs}, will be studied in more detail in subsequent sections, in particular Sections~\ref{S:PermGraph} and~\ref{S:ClopGraph}.

\section{Minimal neighborhoods in closure spaces}\label{S:MinNbhd}

Minimal neighborhoods are a simple, but rather effective, technical tool for dealing with lattices of closed, or regular closed, subsets of an algebraic closure space.

\begin{definition}\label{D:nbhd}
Let $(P,\gf)$ be a closure space and let $p\in P$. A \emph{neighborhood} of~$p$ is a subset~$\bu$ of~$P$ such that $p\in\cgf(\bu)$.
\end{definition}

Since~$\cgf(\bu)$ is also a neighborhood of~$p$, it follows that every minimal neighborhood of~$p$ is open. The following result gives a simple sufficient condition, in terms of minimal neighborhoods, for $\Reg(P,\gf)$ being the Dedekind-MacNeille completion of $\Clop(P,\gf)$.

\begin{proposition}\label{P:MinNbhdClos}
The following statements hold, for any algebraic closure space $(P,\gf)$.
\begin{enumerate}
\item Every open subset of~$P$ is a union of minimal neighborhoods.

\item Every minimal neighborhood in~$P$ is clopen if{f} every open subset of~$P$ is a union of clopen sets, and in that case, $\Reg(P,\gf)$ is the Dedekind-MacNeille completion of $\Clop(P,\gf)$.
\end{enumerate}
\end{proposition}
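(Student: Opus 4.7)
The plan is to prove (i) by a Zorn's lemma argument that produces, for each $p$ in an open set $\bu$, a minimal neighborhood of $p$ contained in $\bu$. Since every neighborhood of $p$ automatically contains $p$ (because $\cgf\leq\id$, so $p\in\cgf(\bv)$ entails $p\in\bv$), this will express $\bu$ as the union $\bigcup_{p\in\bu}\bv_p$ of such minimal neighborhoods. Specifically, I would order the collection of neighborhoods of $p$ contained in $\bu$ by reverse inclusion and check that any descending chain $(\bv_\alpha)$ has intersection again a neighborhood of $p$.

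To verify the Zorn hypothesis, I would use Lemma~\ref{L:OpenSemType} to reduce the neighborhood condition to a combinatorial one: $\bv$ is a neighborhood of $p$ iff $\bv$ meets every $\bx\in\cM(p)$. By Lemma~\ref{L:Alg2MinCov}, each such $\bx$ is finite, so for a descending chain $(\bv_\alpha)$ of neighborhoods of $p$, the sets $\bv_\alpha\cap\bx$ form a descending chain of nonempty subsets of the finite set $\bx$, which must stabilize at some nonempty $\bw\subseteq\bx$. Then $\bw\subseteq\bv_\alpha$ for every $\alpha$, so $\bigcap_\alpha\bv_\alpha$ still meets $\bx$, and meeting all minimal coverings shows that $\bigcap_\alpha\bv_\alpha$ is a neighborhood of $p$.

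For (ii), the forward direction of the equivalence is immediate from (i). For the converse, assuming every open subset is a union of clopens, I take a minimal neighborhood $\bv$ of a point $p$; it is open (as noted in the remark immediately preceding the proposition) and contains $p$, so writing $\bv$ as a union of clopens yields some clopen $\bw$ with $p\in\bw\subseteq\bv$. Being an open set containing $p$, this $\bw$ is itself a neighborhood of $p$, and minimality of $\bv$ forces $\bv=\bw$, so $\bv$ is clopen. The last statement of (ii) is then immediate from Lemma~\ref{L:MacNeille}(i) applied to $K=\Clop(P,\gf)$, since every regular open subset of $P$ is, \emph{a fortiori}, a union of clopens.

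The main subtlety I expect lies in the Zorn step of (i): guaranteeing that descending intersections of neighborhoods of $p$ are again neighborhoods. The real content here is purely combinatorial---descending chains of nonempty subsets of a finite set stabilize---and the only place the closure-theoretic hypothesis enters is through the algebraicity of $\gf$, which supplies finiteness of minimal coverings via Lemma~\ref{L:Alg2MinCov}. Everything else in the proof is just bookkeeping around the definition of neighborhood and the statement of Lemma~\ref{L:MacNeille}(i).
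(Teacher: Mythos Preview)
Your proposal is correct and follows essentially the same approach as the paper. The paper's proof of (i) is terser---it phrases the Zorn verification as ``every downward directed intersection of open sets is open'' rather than your explicit minimal-covering argument via Lemmas~\ref{L:Alg2MinCov} and~\ref{L:OpenSemType}---but both arguments rest on the same use of algebraicity, and your more detailed treatment of (ii) simply unpacks what the paper leaves implicit.
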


\begin{proof}
(i). Let~$\bu$ be an open subset of~$P$. Since $(P,\gf)$ is an algebraic closure space, every downward directed intersection of open sets is open, so it follows from Zorn's Lemma that every element of~$\bu$ is contained in some minimal neighborhood of~$p$.

(ii) follows easily from~(i) together with Lemma~\ref{L:MacNeille}.
\end{proof}

Minimal neighborhoods can be easily recognized by the following test.

\begin{proposition}\label{P:MinNbhd}
The following are equivalent, for any algebraic closure space~$(P,\gf)$, any open subset~$\bu$ of~$P$, and any $p\in\bu$:
\begin{enumerate}
\item $\bu$ is a minimal neighborhood of~$p$.

\item For each $x\in\bu$, there exists $\bx\in\cM(p)$ such that $\bx\cap\bu=\set{x}$.
\end{enumerate}
\end{proposition}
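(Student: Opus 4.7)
The plan is to reduce both directions to Lemma~\ref{L:OpenSemType}, which characterizes membership in the interior $\cgf(\ba)$ in terms of minimal coverings meeting $\ba$. The key observation is that, since $\bu$ is already open, ``$\bu$ is a neighborhood of $p$'' simply means $p \in \bu$; and more generally, for any subset $\ba \subseteq \bu$, the interior $\cgf(\ba)$ is an open set contained in $\bu$, so it is a candidate for a smaller open neighborhood of~$p$ when it contains~$p$.

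For the direction (i)$\Rightarrow$(ii), I would fix $x \in \bu$ and apply Lemma~\ref{L:OpenSemType} to the set $\bu \setminus \set{x}$. Since $\cgf$ is a kernel operator, $\cgf(\bu \setminus \set{x})$ is an open subset of $\bu \setminus \set{x}$, hence properly contained in $\bu$. If $p$ were in $\cgf(\bu \setminus \set{x})$, minimality of~$\bu$ as a neighborhood of~$p$ would be violated; so $p \notin \cgf(\bu \setminus \set{x})$, and Lemma~\ref{L:OpenSemType} delivers some $\bx \in \cM(p)$ disjoint from $\bu \setminus \set{x}$. Since $\bu$ itself is a neighborhood of~$p$, Lemma~\ref{L:OpenSemType} applied to~$\bu$ forces $\bx \cap \bu \neq \es$, and combining the two gives $\bx \cap \bu = \set{x}$.

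For (ii)$\Rightarrow$(i), suppose $\bv \subseteq \bu$ is an open neighborhood of $p$; it suffices to show $\bv = \bu$. If some $x \in \bu \setminus \bv$ existed, condition~(ii) would supply $\bx \in \cM(p)$ with $\bx \cap \bu = \set{x}$, so that $\bx \cap \bv = \es$; but this contradicts Lemma~\ref{L:OpenSemType} applied to the open neighborhood~$\bv$ of~$p$.

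I do not expect any genuine obstacle: the whole argument is bookkeeping around Lemma~\ref{L:OpenSemType}, with the single slightly nontrivial move being to pass from $\bu \setminus \set{x}$ (which need not be open) to its interior $\cgf(\bu \setminus \set{x})$, which is open and still omits~$x$. One must also verify that $\bu$ being a neighborhood of~$p$ together with $\bu$ being open forces $p \in \bu$, so that condition~(ii), which quantifies over $x \in \bu$, can actually be applied to witnesses produced by minimality.
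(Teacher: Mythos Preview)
Your proof is correct and essentially identical to the paper's: both directions pass through $\cgf(\bu\setminus\set{x})$ for (i)$\Rightarrow$(ii) and pick $x\in\bu\setminus\bv$ for (ii)$\Rightarrow$(i), invoking Lemma~\ref{L:OpenSemType} (or its content) at exactly the same places. The only cosmetic difference is that in (ii)$\Rightarrow$(i) the paper lets~$\bv$ be an arbitrary neighborhood of~$p$ while you assume~$\bv$ open; this is harmless since any neighborhood contains the open neighborhood~$\cgf(\bv)$, but you might state that reduction explicitly.
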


\begin{proof}
(i)$\Rightarrow$(ii). For each $x\in\bu$, the interior $\cgf(\bu\setminus\set{x})$ is a proper open subset of~$\bu$, thus, by the minimality assumption on~$\bu$, it does not contain~$p$ as an element. Since $(P,\gf)$ is an algebraic closure space, there is $\bx\in\cM(p)$ such that $\bx\cap(\bu\setminus\set{x})=\es$. Since~$\bu$ is a neighborhood of~$p$, $\bx\cap\bu\neq\es$, so $\bx\cap\bu=\set{x}$.

(ii)$\Rightarrow$(i). Let~$\bv$ be a neighborhood of~$p$ properly contained in~$\bu$, and pick $x\in\bu\setminus\bv$. By assumption, there is $\bx\in\cM(p)$ such that $\bx\cap\bu=\set{x}$. Since~$\bv$ is a neighborhood of~$p$, $\bx\cap\bv\neq\es$, thus, as $\bv\subseteq\bu$, we get $\bx\cap\bv=\set{x}$, and thus $x\in\bv$, a contradiction.
\end{proof}

\section{Minimal neighborhoods in semilattices}\label{S:HostSemil}

The present section will be devoted to the study of minimal neighborhoods in a \js~$S$, endowed with its canonical closure operator introduced in Example~\ref{Ex:FromJSemil}. It will turn out that those minimal neighborhoods enjoy an especially simple structure.

The following crucial result gives a simple description of minimal neighborhoods (not just minimal regular open neighborhoods) of elements in a \js.

\begin{theorem}\label{T:MinNbhdSemil}
Let~$p$ be an element in a \js~$S$. Then the minimal neighborhoods of~$p$ are exactly the subsets of the form $(S\dnw p)\setminus\ba$, for maximal proper ideals~$\ba$ of~$S\dnw p$. In particular, every minimal neighborhood of~$p$ is clopen.
\end{theorem}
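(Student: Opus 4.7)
The plan is to combine Lemma~\ref{L:OpenSemType} with the description of $\cM(p)$ from Example~\ref{Ex:FromJSemil}, according to which the minimal coverings of $p$ are precisely the finite join-irredundant subsets $\bx\subseteq S\dnw p$ with $\bigvee\bx=p$. First I would observe that whenever $\bu$ is a neighborhood of $p$, so is $\bu\cap(S\dnw p)$, since every minimal covering of $p$ already lies in $S\dnw p$. Hence every minimal neighborhood of $p$ sits inside $S\dnw p$, and the problem reduces to characterising the minimal open neighborhoods of $p$ among subsets of $S\dnw p$.

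For the ``maximal ideal $\Rightarrow$ minimal neighborhood'' direction, I would take a maximal proper ideal $\ba$ of $S\dnw p$ and set $\bu=(S\dnw p)\setminus\ba$. Two applications of Lemma~\ref{L:OpenSemType} will show that $\bu$ is both open and a neighborhood of $p$: a minimal covering of $p$ (resp., of some $x\in\bu$) lying in $\ba$ would have its join $p$ (resp., $x$) in $\ba$, contradicting properness or $x\notin\ba$. Clopenness will follow from the downward-closure of $\ba$: if $x,y\in\bu$ then $x\vee y\leq p$, and $x\vee y\in\ba$ would drag both $x$ and $y$ back into $\ba$. Minimality will then come from maximality of $\ba$: a strictly smaller neighborhood $\bv\subsetneq\bu$ would give $(S\dnw p)\setminus\bv$ contained in a proper ideal (since no finite subset of it joins to $p$) strictly larger than $\ba$, contradicting maximality.

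The main obstacle, and technically the sharpest step, will be the converse. Given a minimal neighborhood $\bu\subseteq S\dnw p$, and writing $\ba:=(S\dnw p)\setminus\bu$, I must verify that $\ba$ is a \emph{maximal proper ideal}, not merely a set free of minimal coverings of $p$. For this I would introduce $\tilde{\ba}$, the ideal of $S\dnw p$ generated by $\ba$. The neighborhood hypothesis prevents any finite subset of $\ba$ from joining to $p$, whence $p\notin\tilde{\ba}$ and $\tilde{\ba}$ is proper. Then $(S\dnw p)\setminus\tilde{\ba}$ is an open neighborhood of $p$ contained in $\bu$ by the construction of the previous paragraph, so minimality of $\bu$ forces $\tilde{\ba}=\ba$; maximality then follows because any strictly larger proper ideal $\ba''$ would yield a strictly smaller open neighborhood $(S\dnw p)\setminus\ba''$, contradicting minimality of $\bu$. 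The ``in particular'' clause has already been established along the way.
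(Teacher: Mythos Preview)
Your proof is correct and takes a genuinely different route from the paper's. The paper leans on Proposition~\ref{P:MinNbhd} (the elementwise test: $\bu$ is a minimal neighborhood of~$p$ if{f} for each $x\in\bu$ there is $\bx\in\cM(p)$ with $\bx\cap\bu=\set{x}$). In the forward direction the paper verifies that test directly from maximality of~$\ba$; in the converse it uses the test to prove a key Claim (for each $x\in\bu\setminus\set{p}$ there is $a\in\ba$ with $p=x\vee a$), and then reads off downward-closure and maximality of~$\ba$ from that Claim by hand. You instead bypass Proposition~\ref{P:MinNbhd} entirely and run a closure argument: take the ideal~$\tilde{\ba}$ generated by the complement, observe that $(S\dnw p)\setminus\tilde{\ba}$ is again an (open) neighborhood of~$p$, and let minimality of~$\bu$ force $\tilde{\ba}=\ba$; maximality then drops out by the same mechanism. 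Your approach is slightly more elementary and makes the antitone correspondence between neighborhoods of~$p$ inside $S\dnw p$ and proper ideals of $S\dnw p$ transparent. The paper's approach, on the other hand, extracts along the way the structural fact that every $x\in\bu\setminus\set{p}$ is a ``co-atom witness'' ($p=x\vee a$ with $a\in\ba$), which your argument does not isolate explicitly, though it follows \emph{a posteriori} from maximality.
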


\begin{note}
We allow the empty set~$\es$ as an ideal of~$S$.
\end{note}

\begin{proof}
  Let~$\ba$ be an ideal of~$S\dnw p$. It is straightforward to verify
  that the subset $\bu=(S\dnw p)\setminus\ba$ is clopen. Now, assuming that~$\ba$ is a maximal proper ideal of~$S\dnw p$, we shall prove that~$\bu$ is a minimal neighborhood of~$p$. For each $x\in\bu$, it follows from the maximality assumption on~$\ba$ that~$p$ belongs to the ideal of~$S$ generated by $\ba\cup\set{x}$, thus either $p=x$ or there exists $a\in\ba$ such that $p=a\vee x$. Therefore, the set~$\bx$, defined as~$\set{p}$ in the first case and as~$\set{a,x}$
  in the second case, is a minimal covering of~$p$ that meets~$\bu$
  in~$\set{x}$. By Proposition~\ref{P:MinNbhd}, $\bu$ is a minimal
  neighborhood of~$p$.

Conversely, any minimal neighborhood~$\bu$ of~$p$ is open. Since~$\bu\dnw p$ is a lower subset of~$\bu$, it is open as well, hence $\bu=\bu\dnw p$. Since~$\bu$ is an open subset of the ideal~$S\dnw p$, the subset $\ba=(S\dnw p)\setminus\bu$ is closed, that is, $\ba$ is a subsemilattice of~$S\dnw p$.

\begin{sclaim}
For every $x\in\bu\setminus\set{p}$, there exists $a\in\ba$ such that $p=x\vee a$.
\end{sclaim}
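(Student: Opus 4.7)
The plan is to apply Proposition~\ref{P:MinNbhd} to the minimal neighborhood~$\bu$ and then exploit the fact that~$(S,\tcl)$ has semilattice type (as established in Example~\ref{Ex:FromJSemil}). Fix $x\in\bu\setminus\set{p}$. By Proposition~\ref{P:MinNbhd}, I would pick a minimal covering $\bx\in\cM(p)$ with $\bx\cap\bu=\set{x}$. Since $(S,\tcl)$ has semilattice type, this minimal covering satisfies $p=\bigvee\bx$; in particular $\bx$ is finite and $\bx\subseteq S\dnw p$.

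Next I would split $\bx=\set{x}\cup(\bx\setminus\set{x})$ and observe that $\bx\setminus\set{x}\subseteq(S\dnw p)\setminus\bu=\ba$, because $\bx\cap\bu=\set{x}$. Here I would use that $x\neq p$ to rule out the degenerate case $\bx=\set{x}$: indeed, if $\bx$ reduced to $\set{x}$, then $p=\bigvee\bx=x$, contradicting the assumption. Hence $\bx\setminus\set{x}$ is a nonempty finite subset of~$\ba$.

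Finally, since $\ba$ was shown above the claim to be a subsemilattice of $S\dnw p$, the join $a=\bigvee(\bx\setminus\set{x})$ lies in $\ba$, and $p=\bigvee\bx=x\vee a$, as desired. The argument is essentially routine once one unwinds the definitions; the main conceptual ingredient is the combination of semilattice type (which converts ``minimal covering'' into ``join decomposition'') with the characterization of minimal neighborhoods from Proposition~\ref{P:MinNbhd}. There is no real obstacle beyond verifying that the case $x=p$ is what forces us to take a genuine $a\in\ba$ rather than the empty join.
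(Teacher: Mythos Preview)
Your proof is correct and follows essentially the same route as the paper: invoke Proposition~\ref{P:MinNbhd} to obtain $\bx\in\cM(p)$ with $\bx\cap\bu=\set{x}$, use $x\neq p$ to ensure $\bx\setminus\set{x}\neq\es$, observe that $\bx\setminus\set{x}\subseteq\ba$, and set $a=\bigvee(\bx\setminus\set{x})\in\ba$ since~$\ba$ is a subsemilattice.
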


\begin{scproof}
Since~$\bu$ is a minimal neighborhood of~$p$, it follows from Proposition~\ref{P:MinNbhd} that there exists $\bx\in\cM(p)$ such that $\bx\cap\bu=\set{x}$. {}From $x\neq p$ it follows that $\bx\neq\set{x}$, so $\bx\setminus\set{x}$ is a nonempty subset of~$\ba$, and so the element $a=\bigvee(\bx\setminus\set{x})$ is well-defined and belongs to~$\ba$. Therefore,
 \[
 p=\bigvee\bx=
 x\vee\bigvee(\bx\setminus\set{x})=
 x\vee a\,,
 \]
as desired.
\end{scproof}

Now let $x<y$ with $y\in\ba$, and suppose, by way of contradiction, that $x\notin\ba$, that is, $x\in\bu$. By the Claim above, there exists $a\in\ba$ such that $p=x\vee a$, thus $p\leq y\vee a$, and thus, as $\set{y,a}\subseteq\ba\subseteq S\dnw p$, we get $p=y\vee a\in\ba$, a contradiction. Therefore, $x\in\ba$, thus completing the proof that~$\ba$ is an ideal of~$S\dnw p$.

By definition, $p\notin\ba$. For each $x\in(S\dnw p)\setminus\ba$ (i.e., $x\in\bu$), there exists, by the Claim, $a\in\ba$ such that $p=x\vee a$. This proves that there is no proper ideal of~$S\dnw p$ containing $\ba\cup\set{x}$, so~$\ba$ is a maximal proper ideal of~$S\dnw p$.
\end{proof}

Obviously, every set-theoretical difference of ideals of~$S$ is
clopen. By combining Lemma~\ref{L:MacNeille},
Proposition~\ref{P:MinNbhdClos}, and Theorem~\ref{T:MinNbhdSemil}, we obtain the following results (recall that the open subsets of~$S$ are exactly the complements in~$S$ of the \jsubsemi{s} of~$S$).

\begin{corollary}
\label{C:MinNbhdSemil2}
The following statements hold, for any \js~$S$.

\begin{enumerate}
\item Every open subset of~$S$ is a set-theoretical union of differences of ideals of~$S$; thus it is a set-theoretical union of clopen subsets of~$S$.

\item $\Reg S$ is generated, as a complete ortholattice, by the set of all ideals of~$S$.

\item $\Reg S$ is the Dedekind-MacNeille completion of $\Clop S$.

\item Every completely \jirr\ element of~$\Reg S$ is clopen.

\item $\Clop S$ is tight in~$\Reg S$.
\end{enumerate}
\end{corollary}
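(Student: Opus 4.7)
The five statements follow by combining Proposition~\ref{P:MinNbhdClos}, Theorem~\ref{T:MinNbhdSemil}, and Lemma~\ref{L:MacNeille}, together with the basic fact (recorded just after Definition~\ref{D:tight}) that a poset is always tight in its Dedekind-MacNeille completion.

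For part~(i), the first step is to observe that any ideal $\ba$ of $S\dnw p$ is also an ideal of $S$, because $S\dnw p$ is itself an ideal of $S$ (a downward closed join-subsemilattice, with top $p$). Theorem~\ref{T:MinNbhdSemil} then says that every minimal neighborhood of an element of $S$ has the form $(S\dnw p)\setminus\ba$, which is thus a set-theoretic difference of two ideals of $S$; Proposition~\ref{P:MinNbhdClos}(i) yields that every open subset of $S$ is a union of such differences, and the last clause of Theorem~\ref{T:MinNbhdSemil} says these differences are clopen. Proposition~\ref{P:MinNbhdClos}(ii) now gives~(iii). From there, (iv) is immediate via Lemma~\ref{L:MacNeille}(ii), and (v) follows from the tightness of any poset in its Dedekind-MacNeille completion.

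The slightly subtler part is~(ii). I would first check that every ideal $J$ of $S$ is itself clopen and that $\orth{J}=S\setminus J$: $J$ is closed because it is a join-subsemilattice, and $S\setminus J$ is closed because $J$ is a lower set (so $x\vee y\in J$ would force $x,y\in J$), whence $\gf(S\setminus J)=S\setminus J$. The same kind of argument shows that, for any two ideals $I,J$ of $S$, the difference $I\setminus J=I\cap(S\setminus J)$ is clopen: it is an intersection of two closed sets, and its complement $(S\setminus I)\cup J$ is closed by a short case analysis on whether the joinands lie in $S\setminus I$ or in $J$. Consequently $I\setminus J$ is regular closed and coincides with the meet $I\wedge\orth{J}$ computed in $\Reg S$. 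Combining this with~(i) and Lemma~\ref{L:BasicReg}(ii), every regular closed subset of $S$ can be written as a join in $\Reg S$ of terms of the form $I\wedge\orth{J}$ with $I,J$ ideals, which is exactly the generation statement~(ii). The main obstacle is the verification that $I\setminus J$ is genuinely regular closed, so that the ortholattice meet $I\wedge\orth{J}$ really equals the set-theoretic difference rather than some strictly smaller regular closed subset; this however reduces to the routine check above that both $I\setminus J$ and its complement are join-subsemilattices.
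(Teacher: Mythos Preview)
Your proposal is correct and follows essentially the same route as the paper, which simply records the corollary as an immediate consequence of Lemma~\ref{L:MacNeille}, Proposition~\ref{P:MinNbhdClos}, and Theorem~\ref{T:MinNbhdSemil} (together with the remark, stated just before the corollary, that differences of ideals are clopen). Your explicit verification for part~(ii) that $I\setminus J$ is clopen and equals $I\wedge\orth{J}$ in $\Reg S$ is exactly the computation the paper leaves to the reader.
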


\begin{corollary}\label{C:MinNbhdSemil3}
The following are equivalent, for any \js~$S$:
\begin{enumerate}
\item $\Clop S$ is a lattice.

\item $\Clop S$ is a complete sublattice of~$\Reg S$.

\item $\Clop S=\Reg S$.

\item The join-closure of any open subset of~$S$ is open.

\end{enumerate}
\end{corollary}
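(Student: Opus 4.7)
The plan is to run the cycle (iii)$\Rightarrow$(ii)$\Rightarrow$(i)$\Rightarrow$(iv)$\Rightarrow$(iii). Three of these implications are almost immediate: (iii)$\Rightarrow$(ii) because $\Clop S=\Reg S$ is trivially a complete sublattice of itself; (ii)$\Rightarrow$(i) because every complete sublattice is in particular a lattice; and (iv)$\Rightarrow$(iii) because, by Lemma~\ref{L:BasicReg}(i), every regular closed subset of~$S$ has the form $\gf(\bu)$ for some open~$\bu$, which under the hypothesis of~(iv) is open as well, hence clopen; this yields the non-trivial inclusion $\Reg S\subseteq\Clop S$.

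The heart of the argument is (i)$\Rightarrow$(iv). Assume that $\Clop S$ is a lattice, let $\bu\subseteq S$ be open, and aim to show that $\gf(\bu)$ is open. By Corollary~\ref{C:MinNbhdSemil2}(i), I would write $\bu=\bigcup\vecm{\bc_i}{i\in I}$ as a set-theoretical union of clopen subsets. Since the closure space $(S,\gf)$ is algebraic, $\gf(\bu)$ equals the union of the subsets $\gf\pI{\bigcup\vecm{\bc_i}{i\in F}}$, where~$F$ ranges over the finite subsets of~$I$. For each such~$F$, Lemma~\ref{L:BasicReg}(ii) identifies $\gf\pI{\bigcup\vecm{\bc_i}{i\in F}}$ with the finite join $\bigvee\vecm{\bc_i}{i\in F}$ computed in $\Reg S$; because $\Clop S$ is a lattice, that join already exists in $\Clop S$, and by tightness of $\Clop S$ in $\Reg S$ (Corollary~\ref{C:MinNbhdSemil2}(v)) its value there coincides with its value in $\Reg S$. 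So each $\gf\pI{\bigcup\vecm{\bc_i}{i\in F}}$ is clopen, whence $\gf(\bu)$ is a union of open sets, hence open.

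The subtle point is that hypothesis~(i) only grants closure under \emph{finite} joins in $\Clop S$, whereas openness of $\gf(\bu)$ concerns the $\gf$-closure of a potentially infinite union of clopens. Algebraicity of~$\gf$ is what bridges the gap: it reduces the calculation to finite subunions, at which point tightness allows the finite lattice structure of $\Clop S$ to take over, without having to pre-suppose completeness of $\Clop S$.
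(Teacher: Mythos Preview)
Your proof is correct and uses essentially the same ingredients as the paper's: tightness of $\Clop S$ in $\Reg S$ (Corollary~\ref{C:MinNbhdSemil2}(v)), the decomposition of open sets into clopens (Corollary~\ref{C:MinNbhdSemil2}(i)), and a reduction to finite joins via algebraicity. The only difference is the route through the cycle: the paper first proves (i)$\Rightarrow$(ii) (showing that the join in $\Reg S$ of any family of clopens is clopen, by passing to finite subfamilies and then taking a directed union) and then deduces (iii) from (ii) via Corollary~\ref{C:MinNbhdSemil2}, whereas you go straight for (i)$\Rightarrow$(iv), effectively merging those two steps. Both arguments are short and neither gains anything substantial over the other.
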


\begin{proof}
It is trivial that (ii) implies~(i). Suppose, conversely, that~(i) holds and let $\setm{\ba_i}{i\in I}$ be a collection of clopen subsets of~$S$, with join~$\ba$ in~$\Reg S$. We must prove that~$\ba$ is clopen. Since~$\Clop S$ is a lattice, for each finite $J\subseteq I$, the set $\setm{\ba_i}{i\in J}$ has a join, that we shall denote by~$\ba_{(J)}$, in~$\Clop S$. Since $\Clop S$ is tight in~$\Reg S$, $\ba_{(J)}$ is also the join of $\setm{\ba_i}{i\in J}$ in~$\Reg S$. Since~$\ba$ is the directed join of the clopen sets~$\ba_{(J)}$, for $J\subseteq I$ finite, it is clopen as well, thus completing the proof that $\Clop S$ is a complete sublattice of~$\Reg S$.

It is obvious that~(iii) and~(iv) are equivalent, and that they imply~(i). Hence it remains to prove that~(ii) implies~(iv). By Corollary~\ref{C:MinNbhdSemil2}, every regular closed subset of~$S$ is a join of clopen subsets, hence~$\Clop S$ generates~$\Reg S$ as a complete sublattice. Since~$\Clop S$ is a complete sublattice of~$\Reg S$, (iii) follows.
\end{proof}

\begin{example}\label{Ex:RS2S3}
Denote by $\sS_m$ the \js\ of all nonempty subsets of $[m]$, for any positive integer~$m$. It is an easy exercise to verify that~$\Clop\sS_2=\Reg\sS_2$  is isomorphic to the permutohedron on three letters~$\sP(3)$, which is the six-element ``benzene lattice''.

On the other hand, the lattice~$\Reg\sS_3=\Clop\sS_3$ has apparently not been met until now.

Denote by~$a$, $b$, $c$ the generators of the \js~$\sS_3$ (see the left hand side of Figure~\ref{Fig:RS3}).  The lattice~$\Clop\sS_3$ is represented on the right hand side of Figure~\ref{Fig:RS3}, by using the following labeling convention:
 \[
 \set{a}\mapsto a\,,\quad
 \set{a,a\vee b}\mapsto a^2b\,,\quad
 \set{a,b,a\vee b}\mapsto a^2b^2\,, 
 \]
 (the ``variables'' $a$, $b$, $c$ being thought of as pairwise commuting, so for example $a^2b=ba^2$) and similarly for the pairs $\set{b,c}$ and $\set{a,c}$, and further, $\ol{\es}=\sS_3$, $\ol{a}^2\ol{b}=\sS_3\setminus(a^2b)$, and so on.

\begin{figure}[htb]
\centering
\includegraphics{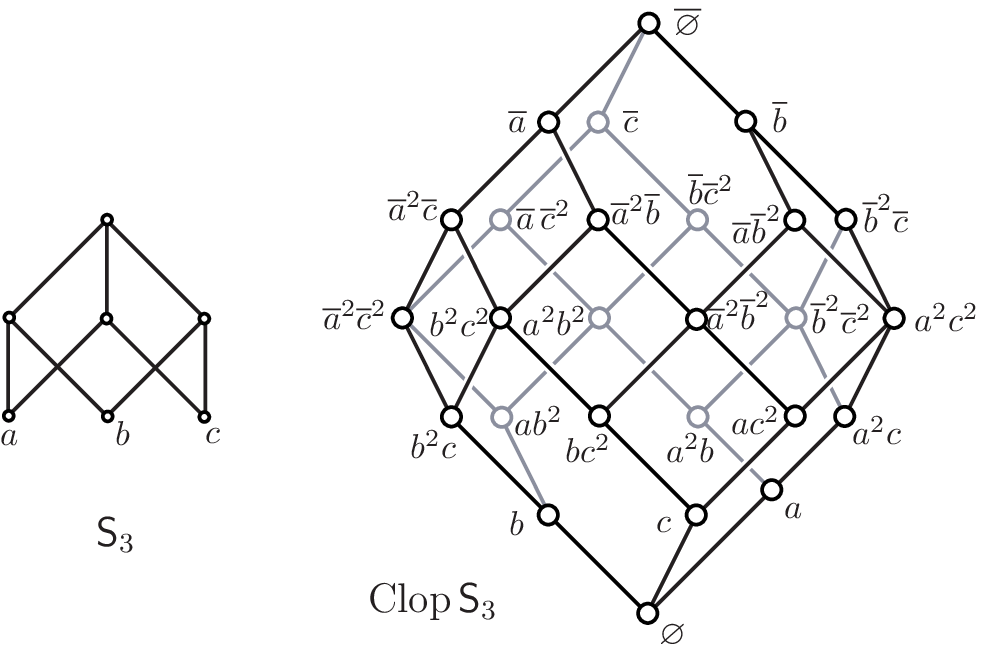}
\caption{The permutohedron on the \js\ $\sS_3$}
\label{Fig:RS3}
\end{figure}

Going to higher dimensions, it turns out that $\Clop\sS_4$ is not a
lattice. In order to see this, observe that (denoting by~$a$, $b$,
$c$, $d$ the generators of~$\sS_4$) the subsets $\bx=\set{a,a\vee b}$
and $\by=\set{c,c\vee d}$ are clopen, and their join $\bx\vee\by$
in~$\Reg\sS_4$ is the regular closed set $\bz=\set{a,c,a\vee b,c\vee
  d,a\vee c,a\vee b\vee c,a\vee c\vee d,a\vee b\vee c\vee d}$, which
is not clopen (for $a\vee b\vee c\vee d=(a\vee d)\vee(b\vee c)$ with
neither $a\vee d$ nor $b\vee c$ in~$\bz$). Brute force computation
shows that $\card\Reg\sS_4=162$ while $\card\Clop\sS_4=150$. Every
\jirr\ element of~$\Reg\sS_4$ belongs to~$\Clop\sS_4$
(cf. Corollary~\ref{C:MinNbhdSemil2}).
\end{example}

\begin{example}\label{Ex:PsubSRS}
Unlike the situation with graphs (cf. Theorem~\ref{T:PGLatt}), the property of $\Clop S$ being a lattice is not preserved by subsemilattices, so it cannot be expressed by the exclusion of a list of ``forbidden subsemilattices''. For example, consider the subsemilattice~$P$ of~$\sS_3$ represented on the left hand side of Figure~\ref{Fig:RnotPSem}.

\begin{figure}[htb]
\includegraphics{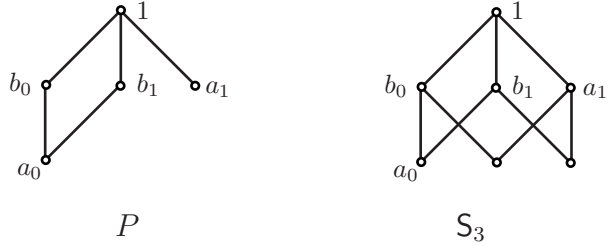}
\caption{A subsemilattice of~$\sS_3$}\label{Fig:RnotPSem}
\end{figure}

Then the sets~$\set{a_i}$ and~$\set{a_0,a_1,1,b_j}$ are clopen in~$P$, with $\set{a_i}\subset\set{a_0,a_1,1,b_j}$, for all $i,j<2$. However, there is no $\bc\in\Clop P$ such that $\set{a_i}\subseteq\bc\subseteq\set{a_0,a_1,1,b_j}$ for all $i,j<2$. Hence $\Clop P$ is not a lattice. On the other hand, $P$ is a subsemilattice of~$\sS_3$ and $\Clop\sS_3$ is a lattice (cf. Example~\ref{Ex:RS2S3}).
\end{example}

\section{A collection of quasi-identities for closure spaces of poset type}\label{S:SDReg}

A natural strengthening of pseudocomplementedness, holding in particular for all permutohedra, and even in all finite Coxeter lattices (see Le Conte de Poly-Barbut~\cite{Poly94}), is \msdy. Although we shall verify shortly (Example~\ref{Ex:ClopM3}) that semidistributivity may not hold in~$\Reg(P,\gf)$ even for $(P,\gf)$ of semilattice type, we shall prove later that once it holds, then it implies, in the finite case, a much stronger property, namely being bounded (cf. Theorem~\ref{T:SDReg2bounded}). Recall that the implication ``semidistributive $\Rightarrow$ bounded'' does not hold for arbitrary finite ortholattices, see Example~\ref{Ex:OrthSDnotBded}, also Reading \cite[Figure~5]{Read03}.

\begin{examplepf}\label{Ex:ClopM3}
A finite closure space $(P,\gf)$ of semilattice type such that the lattice~$\Reg(P,\gf)$ is not semidistributive.
\end{examplepf}

\begin{proof}
Denoting by~$a$, $b$, $c$ the atoms of the five-element modular nondistributive lattice~$\sM_3$ (see the left hand side of Figure~\ref{Fig:PM3}), we endow $P=\sM_3^-=\set{a,b,c,1}$ with the restriction of the ordering of~$\sM_3$. For any subset~$\bx$ of~$P$, we set $\gf(\bx)=\bx$, unless $\bx=\set{a,b,c}$, in which case we set $\gf(\bx)=P$. The only nontrivial covering of $(P,\gf)$ is $1\in\gf(\set{a,b,c})$, and indeed $1=a\vee b\vee c$ in~$P$, hence $(P,\gf)$ has semilattice type.

The lattice $\Reg(P,\gf)=\Clop(P,\gf)$ is represented on the right hand side of Figure~\ref{Fig:PM3}. Its elements are labeled as $\set{a} \mapsto a$, $\set{1,a,b} \mapsto 1ab$, and so on.

\begin{figure}[htb]
\includegraphics{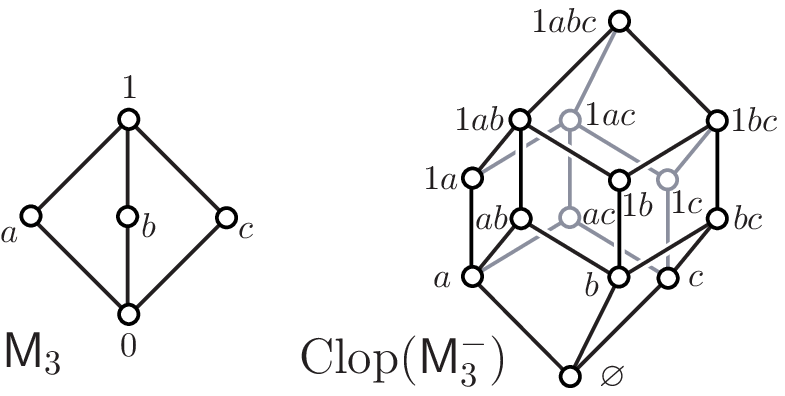}
\caption{The lattice $\Clop(\sM_3^-,\gf)$}\label{Fig:PM3}
\end{figure}

It is not semidistributive, as $ab\wedge 1b=bc\wedge 1b=b$ while $(ab\vee bc)\wedge 1b=1b>b$.
\end{proof}

We shall now introduce certain weakenings of semidistributivity, which are always satisfied by lattices of regular closed subsets of well-founded closure spaces of poset type (a poset~$P$ is \emph{well-founded} if every nonempty subset of~$P$ has a minimal element). We begin with an easy lemma.

\begin{lemma}\label{L:downpReg}
Let $(P,\gf)$ be a closure space. The following statements hold, for any $p\in P$ and any $\ba\subseteq P$.
\begin{enumerate}
\item If $(P,\gf)$ has semilattice type, then $\gf(\ba\dnw p)=\gf(\ba)\dnw p$.

\item If $(P,\gf)$ has poset type, then $\cgf(\ba\dnw p)=\cgf(\ba)\dnw p$.

\item If $(P,\gf)$ has semilattice type and if $\ba$ is closed \pup{resp., open, regular closed, regular open, clopen, respectively}, then so is $\ba\dnw p$.
\end{enumerate}
\end{lemma}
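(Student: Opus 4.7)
The three parts hinge on two characterizations coming from earlier in the paper: in an algebraic closure space, any covering of an element contains a minimal covering of it (Lemma~\ref{L:Alg2MinCov}), and a point lies in $\cgf(\ba)$ if and only if every one of its minimal coverings meets~$\ba$ (Lemma~\ref{L:OpenSemType}). I will also use the trivial fact, valid in any closure space with $\gf(\es)=\es$, that $\set{q}$ is always a minimal covering of~$q$, since~$\es$ is not a covering of~$q$.

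For~(i), assume semilattice type. For the containment $\gf(\ba\dnw p)\subseteq\gf(\ba)\dnw p$, the inclusion into $\gf(\ba)$ is by isotony; to show that any $q\in\gf(\ba\dnw p)$ satisfies $q\leq p$, apply Lemma~\ref{L:Alg2MinCov} to get $\bx\in\cM(q)$ with $\bx\subseteq\ba\dnw p$, whence by semilattice type $q=\bigvee\bx\leq p$ because every element of $\bx$ is below~$p$. For the reverse containment, given $q\in\gf(\ba)\dnw p$, pick $\bx\in\cM(q)$ with $\bx\subseteq\ba$; semilattice type gives $q=\bigvee\bx$, so each element of~$\bx$ is below~$q\leq p$, forcing $\bx\subseteq\ba\dnw p$ and hence $q\in\gf(\ba\dnw p)$.

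For~(ii), assume poset type. Given $q\in\cgf(\ba\dnw p)$, Lemma~\ref{L:OpenSemType} says every minimal covering of~$q$ meets $\ba\dnw p$; applying this to the minimal covering $\set{q}$ yields $q\in\ba\dnw p$, so in particular $q\leq p$, and applying it to arbitrary minimal coverings (which meet $\ba\dnw p\subseteq\ba$) yields $q\in\cgf(\ba)$. Conversely, given $q\in\cgf(\ba)\dnw p$ and $\bx\in\cM(q)$, poset type forces $\bx\subseteq P\dnw q\subseteq P\dnw p$; combined with a chosen $x\in\bx\cap\ba$ (furnished by Lemma~\ref{L:OpenSemType}) this gives $x\in\ba\dnw p$, so every minimal covering of~$q$ meets $\ba\dnw p$, and Lemma~\ref{L:OpenSemType} again gives $q\in\cgf(\ba\dnw p)$.

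Part~(iii) follows by formal manipulation. Since semilattice type implies poset type (for if $\bx\in\cM(q)$ then $q=\bigvee\bx$ forces $\bx\subseteq P\dnw q$), both (i) and~(ii) are available. If $\ba$ is closed, then (i) gives $\gf(\ba\dnw p)=\gf(\ba)\dnw p=\ba\dnw p$; dually, if $\ba$ is open, (ii) gives $\cgf(\ba\dnw p)=\ba\dnw p$. For $\ba$ regular closed, chain the identities: $\gf\cgf(\ba\dnw p)=\gf\pI{\cgf(\ba)\dnw p}=\gf\cgf(\ba)\dnw p=\ba\dnw p$, using (ii) then (i). The regular open case is symmetric, using (i) then~(ii), and the clopen case is the conjunction of the closed and open cases. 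The only real subtlety is recognizing that $\set{q}\in\cM(q)$ always—without this observation the downward containment $\cgf(\ba\dnw p)\subseteq P\dnw p$ in~(ii) would not be automatic—and keeping the directions of the two type hypotheses straight; no further obstacle arises.
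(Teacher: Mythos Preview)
Your proof is correct and follows essentially the same route as the paper's. The one minor difference is in the easy direction of~(ii): you invoke $\set{q}\in\cM(q)$ to force $q\leq p$, whereas the paper simply observes $\cgf(\ba\dnw p)\subseteq\ba\dnw p\subseteq P\dnw p$ (interior is contained in the set), so the ``real subtlety'' you flag is not actually needed.
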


\begin{proof}
(i). Let $q\in\gf(\ba\dnw p)$. By Lemma~\ref{L:Alg2MinCov}, there exists $\bx\in\cM(q)$ such that $\bx\subseteq\ba\dnw p$, so $q=\bigvee\bx\leq p$, and so $q\in\gf(\ba)\dnw p$. Conversely, let $q\in\gf(\ba)\dnw p$. By Lemma~\ref{L:Alg2MinCov}, there exists $\bx\in\cM(q)$ such that $\bx\subseteq\ba$; as moreover $\bigvee\bx = q\leq p$, we get $\bx\subseteq\ba\dnw p$, whence $q\in\gf(\ba\dnw p)$.
  
  (ii). The containment $\cgf(\ba\dnw p)\subseteq\cgf(\ba)\dnw p$ is trivial: $\cgf(\ba\dnw p)\subseteq\cgf(\ba)$ since $\cgf$ is isotone, while $\cgf(\ba\dnw p)\subseteq\ba\dnw p\subseteq P\dnw p$. Conversely, let $q\in\cgf(\ba)\dnw p$ and let $\bx\in\cM(q)$. Since $(P,\gf)$ has poset type, $\bx\subseteq P\dnw q\subseteq P\dnw p$. {}From $q\in\cgf(\ba)$ it follows (cf. Lemma~\ref{L:OpenSemType}) that $\bx\cap\ba\neq\es$; hence $\bx\cap(\ba\dnw p)\neq\es$. By using again Lemma~\ref{L:OpenSemType}, we obtain that $q\in\cgf(\ba\dnw p)$.

(iii) follows trivially from the combination of~(i) and~(ii).
\end{proof}

\begin{lemma}\label{L:AlmostSDReg}
Let~$\ba$ and~$\bc$ be subsets in a closure space $(P,\gf)$ of poset type. Then every minimal element~$x$ of $\gf(\ba\cup\bc)\setminus\gf(\bc)$ belongs to~$\ba$.
\end{lemma}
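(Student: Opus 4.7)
The plan is to use the algebraic nature of $\gf$ to extract a minimal covering of $x$ inside $\ba\cup\bc$, then use the poset-type assumption to confine this covering to $P\dnw x$, and finish by a short case distinction powered by the minimality of~$x$.

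First, since $x\in\gf(\ba\cup\bc)$ and $(P,\gf)$ is algebraic, Lemma~\ref{L:Alg2MinCov} gives a minimal covering $\bx\in\cM(x)$ with $\bx\subseteq\ba\cup\bc$. The poset-type hypothesis (Definition~\ref{D:SemilType}) then forces $\bx\subseteq P\dnw x$, i.e., every $y\in\bx$ satisfies $y\leq x$. I would now split on whether $x$ itself belongs to $\bx$.

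If $x\in\bx$, then in particular $x\in\ba\cup\bc$; but $x\notin\gf(\bc)\supseteq\bc$, so $x\in\ba$, which is what we want. If instead $x\notin\bx$, then every $y\in\bx$ satisfies $y<x$ strictly. Each such $y$ lies in $\ba\cup\bc\subseteq\gf(\ba\cup\bc)$, and by minimality of~$x$ within $\gf(\ba\cup\bc)\setminus\gf(\bc)$, we must have $y\in\gf(\bc)$. Hence $\bx\subseteq\gf(\bc)$, whence $x\in\gf(\bx)\subseteq\gf\gf(\bc)=\gf(\bc)$ by monotonicity and idempotence, contradicting $x\notin\gf(\bc)$. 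So this second case cannot occur, and the proof is complete.

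There is no real obstacle here; the whole argument hinges on recognizing that the poset-type property is exactly the tool needed to make the minimality of~$x$ bite. If I had to name a subtle point, it would be the dichotomy $x\in\bx$ versus $x\notin\bx$: the first case yields the conclusion directly, while the second is ruled out by the closure-under-$\gf$ step applied to the strictly smaller elements, and it is worth noting that without the poset-type hypothesis (which supplies $\bx\subseteq P\dnw x$) one could not guarantee that every element of $\bx\setminus\set{x}$ is strictly below~$x$ and hence subject to the minimality assumption.
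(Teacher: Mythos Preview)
Your proof is correct and follows essentially the same approach as the paper: extract a minimal covering $\bx\subseteq\ba\cup\bc$ of~$x$, split according to whether the covering is trivial (your case $x\in\bx$) or nontrivial (your case $x\notin\bx$), and in the nontrivial case use minimality of~$x$ to push~$\bx$ into~$\gf(\bc)$, yielding the contradiction $x\in\gf(\bc)$. The only cosmetic difference is that in the nontrivial case the paper applies the minimality argument only to the elements of $\bx\cap\ba$ (those in~$\bc$ being already in~$\gf(\bc)$), whereas you apply it uniformly to every $y\in\bx$; your version is arguably a shade more direct.
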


\begin{proof}
  Pick $\bu\in\cM(x)$ such that $\bu\subseteq\ba\cup\bc$ and suppose
  that~$\bu$ is nontrivial. Since~$(P,\gf)$ has poset type, this implies
  that every $u\in\bu$ is smaller than~$x$, thus, if~$u\in\ba$ (so
  \emph{a fortiori} $u\in\gf(\ba\cup\bc)$), it follows from the
  minimality assumption on~$x$ that $u\in\gf(\bc)$. Hence
  $\bu\cap\ba\subseteq\gf(\bc)$, but
  $\bu\subseteq\ba\cup\bc\subseteq\ba\cup\gf(\bc)$, thus
  $\bu\subseteq\gf(\bc)$, and thus, as $\bu\in\cM(x)$ and~$\gf(\bc)$
  is closed, we get $x\in\gf(\bc)$, \contr. Therefore, $\bu$ is the trivial covering~$\set{x}$, so that we get $x\in\ba$ from $x\notin\bc$.
\end{proof}

Although semidistributivity may fail in finite lattices of regular closed sets (cf. Example~\ref{Ex:ClopM3}), we shall now prove that a certain weak form of semidistributivity always holds in those lattices, whenever $(P,\gf)$ has poset type. This will be sufficient to yield, in Corollary~\ref{C:AlmostSDReg2}, a characterization of semidistributivity by the exclusion of a single lattice.

\begin{theorem}\label{T:AlmostSDReg2}
Let $(P,\gf)$ be a well-founded closure space of poset type, let $o\in I$, let $\vecm{\ba_i}{i\in I}$ be a nonempty family of regular closed subsets of~$P$, and let $\bc,\bd\subseteq P$ be regular closed subsets such that \pup{the joins and meets being evaluated in~$\Reg(P,\gf)$}
\begin{enumerate}
\item $\ba_i\vee\bc=\bd$ for each $i\in I$;

\item $\ba_o\wedge\bc=
\bigwedge_{i\in I}\ba_i$.
\end{enumerate}

Then $\ba_i\subseteq\bc$ for each $i\in I$.
\end{theorem}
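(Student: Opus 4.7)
I plan to prove the stronger statement $\bd\subseteq\bc$, which suffices since $\ba_i\subseteq\ba_i\vee\bc=\bd$ for each $i\in I$. I argue by contradiction: suppose $\bd\setminus\bc$ is nonempty. By the well-foundedness of $P$, I can pick $x$ minimal in $\bd\setminus\bc$.

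First, I apply Lemma~\ref{L:AlmostSDReg} with $\ba:=\ba_i$. Since $\bc$ is closed and $\gf(\ba_i\cup\bc)=\ba_i\vee\bc=\bd$, minimality of $x$ in $\bd\setminus\bc=\gf(\ba_i\cup\bc)\setminus\gf(\bc)$ gives $x\in\ba_i$. This holds for every $i\in I$, so $x\in\bigcap_{i\in I}\ba_i$.

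Second, I strengthen this to $x\in\cgf(\ba_i)$ for every $i$. By Remark~\ref{Rk:gfunionregop}, $\bd=\gf(\cgf(\ba_i)\cup\bc)$; Lemma~\ref{L:Alg2MinCov} then yields a minimal covering $\bu\in\cM(x)$ with $\bu\subseteq\cgf(\ba_i)\cup\bc$. If $\bu$ is nontrivial, poset type forces $u<x$ and $u\in\cgf(\ba_i)\cup\bc\subseteq\bd$ for each $u\in\bu$; minimality of $x$ in $\bd\setminus\bc$ then gives $u\in\bc$, so $\bu\subseteq\bc$ and $x\in\gf(\bu)\subseteq\bc$, contradicting $x\notin\bc$. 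Hence $\bu=\set{x}$, and $x\notin\bc$ forces $x\in\cgf(\ba_i)$. By Lemma~\ref{L:OpenSemType}, every minimal covering of $x$ meets each $\ba_i$.

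The final step, which I expect to be the main obstacle, invokes hypothesis~(ii). Since $\ba_o\wedge\bc=\bigwedge_{i\in I}\ba_i=\gf\cgf(\bigcap_i\ba_i)\subseteq\bc$, a contradiction will follow once I show $x\in\gf\cgf(\bigcap_i\ba_i)$; by Lemma~\ref{L:Alg2MinCov} it suffices to produce a minimal covering of $x$ contained in $\cgf(\bigcap_i\ba_i)$, and by Lemma~\ref{L:OpenSemType} this in turn reduces to showing that every $\bz\in\cM(x)$ meets $\bigcap_i\ba_i$. The subtle point is that the information collected so far only guarantees that every such $\bz$ meets each $\ba_i$ \emph{individually}, whereas we need meeting the intersection. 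The key auxiliary identity is $\gf(\cgf(\bigcap_i\ba_i)\cup\bc)=\bc$, a direct consequence of hypothesis~(ii) together with $\gf\cgf(\bigcap_i\ba_i)\subseteq\bc$. Combining this with the observation that, for each $i$, any witness $z_i\in\bz\cap\ba_i$ satisfies $z_i<x$ and $z_i\in\bd$---hence $z_i\in\bc$ by minimality of $x$---and with the minimality of $\bz$ as a covering, my goal is to rule out any nontrivial $\bz\in\cM(x)$ with $\bz\cap\bigcap_i\ba_i=\es$, so that the required membership $x\in\cgf(\bigcap_i\ba_i)$ is obtained and the proof closes.
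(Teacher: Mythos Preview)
Your plan is sound through the point where you establish $x\in\cgf(\ba_i)$ for every $i$; this matches the paper's argument. The gap is in the final step, and it is genuine: your stated goal---that every minimal covering $\bz\in\cM(x)$ meets $\bigcap_{i\in I}\ba_i$---is equivalent to $x\in\cgf\bigl(\bigcap_i\ba_i\bigr)$, but none of the ingredients you list can establish this. Your ``key auxiliary identity'' $\gf\bigl(\cgf(\bigcap_i\ba_i)\cup\bc\bigr)=\bc$ is vacuous, since $\cgf(\bigcap_i\ba_i)\subseteq\gf\cgf(\bigcap_i\ba_i)=\ba_o\wedge\bc\subseteq\bc$ already; it carries no information beyond hypothesis~(ii). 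And the observation that each witness $z_i\in\bz\cap\ba_i$ lands in~$\bc$ does not force a \emph{single} element of $\bz$ into $\bigcap_i\ba_i$: nothing rules out $\bz=\set{z_1,z_2}$ with $z_1\in\ba_1\setminus\ba_2$ and $z_2\in\ba_2\setminus\ba_1$, both in~$\bc$. The minimality of~$\bz$ as a covering is a statement about~$\gf$, not about membership in the~$\ba_i$, so it does not help either.

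The paper resolves this by going \emph{down a second level} and working with the single index~$o$ rather than the full intersection. From $x\notin\bc$ and~(ii) one gets $x\notin\bigwedge_i\ba_i$, hence $x\notin\cgf(\bigcap_i\ba_i)$, so there exists $\bu\in\cM(x)$ with $\bu\cap\bigcap_i\ba_i=\es$ (and $\bu$ nontrivial, since $x\in\bigcap_i\ba_i$). Now each $u\in\bu$ lies below~$x$ and outside $\bigcap_i\ba_i$, hence outside $\ba_o\wedge\bc$ by~(ii), hence outside $\cgf(\ba_o\cap\bc)$; so one can choose $\bz_u\in\cM(u)$ with $\bz_u\cap\ba_o\cap\bc=\es$. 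Setting $\bz=\bigcup_{u\in\bu}\bz_u$, one has $x\in\gf(\bz)$, so $x\in\cgf(\ba_o)$ forces $\bz\cap\ba_o\neq\es$. Any $z$ in this intersection is strictly below~$x$ and in $\ba_o\subseteq\bd$, hence in~$\bc$ by minimality of~$x$, contradicting $\bz_u\cap\ba_o\cap\bc=\es$. The crucial move you are missing is this second descent: hypothesis~(ii) is exploited at the level of the $u\in\bu$, not at~$x$ itself, and only through the pair $(\ba_o,\bc)$ rather than the whole family.
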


\begin{proof}
Suppose, by way of contradiction, that $\bigcup_{i \in I} \ba_{i} \not\subseteq \bc$. Then the set $\bd\setminus\bc$ is nonempty, thus, since~$P$ is well-founded, $\bd\setminus\bc$ has a minimal element~$e$. Observing that $\gf\pI{\cgf(\ba_i)\cup\bc}=\bd$ for each $i\in I$, it follows from Lemma~\ref{L:AlmostSDReg} that
 \begin{equation}\label{Eq:einallai}
 e\in\cgf(\ba_i)\text{ for each }i\in I\,.
 \end{equation}
On the other hand, from $e\notin\bc$ together with Assumption~(ii) it
follows that $e\notin\bigwedge_{i\in I}\ba_i$, so, \emph{a fortiori},
$e\notin\cgf\pI{\bigcap_{i\in I}\ba_i}$; consequently,
there exists $\bu\in\cM(e)$ such that
 \begin{equation}\label{Eq:bucapcaai=es}
 \bu\cap\bigcap_{i\in I}\ba_i=\es\,.
 \end{equation}
{}From~\eqref{Eq:einallai} and~\eqref{Eq:bucapcaai=es} it follows that~$e\notin\bu$. Since $(P,\gf)$ has poset type, it follows that $u<e$ for each $u\in\bu$.

Let $u\in\bu$. {}From~\eqref{Eq:bucapcaai=es} together with Assumption~(ii) it follows that $u\notin\ba_o\wedge\bc$, hence $u\notin\cgf(\ba_o\cap\bc)$, and hence there exists $\bz_u\in\cM(u)$ such that
 \begin{equation}\label{Eq:zusmall}
 \bz_u\cap\ba_o\cap\bc=\es\,.
 \end{equation}
 Since $(P,\gf)$ has poset type, $\bz_u\subseteq P\dnw u$ for each
 $u\in\bu$.  Set $\bz=\bigcup_{u\in\bu}\bz_u$. Since $u\in\gf(\bz_u)$ for
 each $u\in\bu$ and as $e\in\gf(\bu)$, we get $e\in\gf(\bz)$. Since $e\in\cgf(\ba_o)$ (cf.~\eqref{Eq:einallai}), it follows that $\bz\cap\ba_o\neq\es$. Pick $z\in\bz\cap\ba_o$. There exists $u\in\bu$ such that $z\in\bz_u$. {}From $z\leq u$ and $u<e$ it follows that $z<e$. Since $z\in\ba_o\subseteq\bd$ and by the minimality statement on~$e$, we get $z\in\bc$, so $z\in\bz_u\cap\ba_o\cap\bc$, which contradicts~\eqref{Eq:zusmall}.
\end{proof}

In particular, whenever~$(P,\gf)$ is a closure system of poset type with~$P$ well-founded and~$m$ is a positive integer, the lattice~$\Reg(P,\gf)$ satisfies the following quasi-identity, weaker than \jsdy:
 \begin{equation}
 \pII{a_0\vee c=a_1\vee c=\cdots=
 a_m\vee c\text{ and }
 a_0\wedge c=
 \bigwedge_{0\leq i\leq m}a_i}
 \ \Longrightarrow\ a_0\leq c\,.
 \tag*{\RSD{m}}
 \end{equation}

\begin{proposition}\label{P:RSDm}
The following statements hold, for every positive integer~$m$.
\begin{enumerate}
\item Meet-semidistributivity and \jsdy\ both imply \RSD{m}.

\item \RSD{m+1} implies \RSD{m}.
\end{enumerate}
\end{proposition}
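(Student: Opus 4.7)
The plan is to handle the two parts separately, with part~(ii) being an immediate reduction.

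For part~(ii), given data $a_{0},\ldots,a_{m},c$ satisfying the hypotheses of \RSD{m}, I adjoin $a_{m+1}=a_{0}$. Then $a_{m+1}\vee c=a_{0}\vee c$, while $\bigwedge_{i=0}^{m+1}a_{i}=a_{0}\wedge\bigwedge_{i=0}^{m}a_{i}=\bigwedge_{i=0}^{m}a_{i}=a_{0}\wedge c$; hence the hypotheses of \RSD{m+1} are met by the tuple $(a_{0},\ldots,a_{m+1},c)$, and that quasi-identity delivers $a_{0}\leq c$.

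For part~(i), I treat \jsdy\ and \msdy\ separately, since each implication is proved by iterating the axiom at hand. Under \jsdy, I use $c$ as the common summand: from $a_{0}\vee c=a_{1}\vee c$, \jsdy\ yields $a_{0}\vee c=(a_{0}\wedge a_{1})\vee c$; this still equals $a_{2}\vee c$, so another application gives $a_{0}\vee c=(a_{0}\wedge a_{1}\wedge a_{2})\vee c$; continuing through $a_{3},\ldots,a_{m}$, I arrive at $a_{0}\vee c=(\bigwedge_{i=0}^{m}a_{i})\vee c=(a_{0}\wedge c)\vee c=c$, whence $a_{0}\leq c$.

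Under \msdy, I argue by induction on~$m$. The base case $m=1$ is a two-step use of the axiom: from $a_{0}\wedge c=a_{0}\wedge a_{1}$, \msdy\ gives $a_{0}\wedge c=a_{0}\wedge(c\vee a_{1})=a_{0}\wedge(a_{0}\vee c)=a_{0}$, so $a_{0}\leq c$. For $m\geq 2$, I set $x=\bigwedge_{i=0}^{m-1}a_{i}$ and observe that $x\wedge a_{m}=\bigwedge_{i=0}^{m}a_{i}=a_{0}\wedge c$ while $x\wedge c=a_{0}\wedge c$ as well (the inequality $a_{0}\wedge c\leq x\wedge c$ holds because $a_{0}\wedge c\leq a_{i}$ for every~$i$, and the reverse because $x\leq a_{0}$). \msdy\ then yields $x\wedge a_{m}=x\wedge(a_{m}\vee c)=x\wedge(a_{0}\vee c)=x$, the last equality because $x\leq a_{0}\leq a_{0}\vee c$. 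Hence $x\leq a_{m}$, so $\bigwedge_{i=0}^{m-1}a_{i}=x=a_{0}\wedge c$; the shortened family $(a_{0},\ldots,a_{m-1},c)$ now fulfils the hypotheses of \RSD{m-1}, and the inductive hypothesis concludes.

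The only delicate point is in the inductive step under \msdy: the naive attempt of applying the axiom to $x=a_{0}$, $y=c$, $z=a_{1}\wedge\cdots\wedge a_{m}$ produces $a_{0}\wedge c=a_{0}\wedge(c\vee(a_{1}\wedge\cdots\wedge a_{m}))$, but pushing the right-hand factor up to $a_{0}\vee c$ would require the dual axiom. Trimming the meet one coordinate at a time bypasses this obstruction, because after the trim the term $x\wedge(a_{0}\vee c)$ collapses to $x$ automatically from $x\leq a_{0}$.
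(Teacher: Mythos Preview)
Your proof is correct. The \jsdy\ argument and the handling of~(ii) are essentially the same as the paper's, just written out more explicitly (the paper compresses the iterated \jsdy\ step into a single line, and declares~(ii) ``trivial'').

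The \msdy\ argument, however, is genuinely different. The paper invokes the J\'onsson--Kiefer theorem (\cite[Theorem~2.1]{JoKi62}, also \cite[Theorem~1.21]{FJN}): in a \msd\ lattice, if $\bigwedge X=\bigwedge Y$ then this common value equals $\bigwedge_{x\in X,\,y\in Y}(x\vee y)$. Applied with $X=\{a_0,c\}$ and $Y=\{a_0,\dots,a_m\}$, this yields $a_0\wedge c=\bigwedge_i(a_0\vee a_i)\wedge\bigwedge_i(c\vee a_i)=a_0\wedge(a_0\vee c)=a_0$ in one stroke. Your route is more elementary: you avoid any external citation and work directly from the \msdy\ axiom by induction on~$m$, peeling off~$a_m$ at each step. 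What the paper's approach buys is brevity and a connection to a classical result; what yours buys is self-containment---no appeal to the literature, and the reader sees exactly how the axiom is used.
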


\begin{proof}
(i). Let $a_0$, \dots, $a_m$, $c$ be elements in a lattice~$L$, satisfying the premise of \RSD{m}. If~$L$ is \msd, then, as $a_0\wedge c=\bigwedge_{0\leq i\leq m}a_i$ and by J\'onsson and Kiefer \cite[Theorem~2.1]{JoKi62} (see also Freese, Je\v{z}ek, and Nation \cite[Theorem~1.21]{FJN}),
 \[
 a_0\wedge c=
 \bigwedge_{0\leq i\leq m}(a_0\vee a_i)\wedge
 \bigwedge_{0\leq i\leq m}(c \vee a_i)
=a_0\,,
 \]
so $a_0\leq c$, as desired. If~$L$ is \jsd, then
 \[
 a_0\vee c=\pII{\bigwedge_{0\leq i\leq m} a_i}\vee c
 =(a_0\wedge c)\vee c=c\,,
 \]
so $a_0\leq c$ again.

(ii) is trivial.
\end{proof}

A computer search, using the software \texttt{Mace4} (see McCune~\cite{McCune}), yields that every 24-element (or less) lattice satisfying \RSD{1} also satisfies \RSD{2}, nevertheless that there exists a 25-element lattice~$K$ satisfying both \RSD{1} and its dual, but not \RSD{2}. It follows that the 52-element ortholattice $K\parallel K^\op$ (cf. Section~\ref{S:Basic}) satisfies \RSD{1} but not \RSD{2}. We do not know whether \RSD{m+1} is properly stronger than the conjunction of \RSD{m} and its dual, for each positive integer~$m$, although this seems highly plausible.

The quasi-identity~\RSD{1} does not hold in any of the lattices~$\sM_3$, $\sL_3$, and~$\sL_4$ represented in Figure~\ref{Fig:NonSD}. (We are following the notation of Jipsen and Rose~\cite{JiRo} for those lattices.) Since~$\Reg(P,\gf)$ is self-dual, none of those lattices neither their duals can be embedded into~$\Reg(P,\gf)$, whenever~$P$ is well-founded of poset type.

\begin{figure}[htb]
\includegraphics{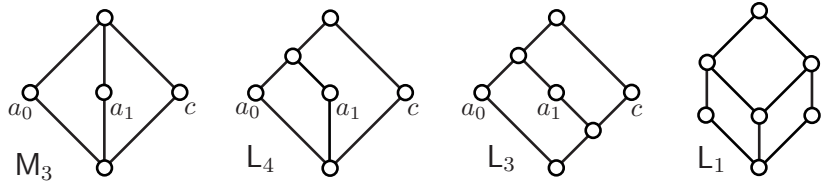}
\caption{Non-semidistributive lattices, with
failures of~\RSD{1} marked whenever possible}
\label{Fig:NonSD}
\end{figure}

As, in the finite case, semidistributivity is characterized by the exclusion, as sublattices, of~$\sM_3$, $\sL_3$, $\sL_4$, together with the lattice~$\sL_1$ of Figure~\ref{Fig:NonSD}, and the dual lattices of~$\sL_1$ and~$\sL_4$ (cf. Davey, Poguntke, and Rival~\cite{DPR} or Freese, Je\v{z}ek, and Nation \cite[Theorem~5.56]{FJN}), it follows from the self-duality of~$\Reg(P,\gf)$ together with Theorem~\ref{T:AlmostSDReg2} that the semidistributivity of~$\Reg(P,\gf)$ takes the following very simple form.

\begin{corollary}\label{C:AlmostSDReg2}
Let $(P,\gf)$ be a finite closure system of poset type. Then $\Reg(P,\gf)$ is semidistributive if{f} it contains no copy of~$\sL_1$.
\end{corollary}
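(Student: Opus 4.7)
The plan is to combine three ingredients already present in the excerpt: the characterization (due to Davey--Poguntke--Rival, quoted just before the corollary) of finite semidistributive lattices by the exclusion of the six sublattices $\sM_3$, $\sL_3$, $\sL_4$, $\sL_1$, $\sL_4^{\op}$, $\sL_1^{\op}$; the fact that $\Reg(P,\gf)$ is self-dual (Corollary~\ref{C:RegP2RegopP}); and the fact that $\Reg(P,\gf)$ satisfies the quasi-identity \RSD{1} (the case $m=1$ of Theorem~\ref{T:AlmostSDReg2}, noting that any finite poset is well-founded).

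The forward direction is immediate: any semidistributive lattice excludes~$\sL_1$ as a sublattice, since $\sL_1$ is itself non-semidistributive.

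For the converse, suppose that $\Reg(P,\gf)$ contains no copy of~$\sL_1$. Self-duality of $\Reg(P,\gf)$ then forbids~$\sL_1^{\op}$ as well: a sublattice isomorphic to $\sL_1^{\op}$ would, under the dual isomorphism of $\Reg(P,\gf)$ onto itself, produce a sublattice isomorphic to~$\sL_1$. Next, the quasi-identity \RSD{1} is preserved under sublattices (its conjunction of equations only involves meet and join), so any sublattice of $\Reg(P,\gf)$ must satisfy \RSD{1}. But Figure~\ref{Fig:NonSD} exhibits explicit witnesses to the failure of \RSD{1} in each of $\sM_3$, $\sL_3$, $\sL_4$; hence none of these three lattices can embed as a sublattice of $\Reg(P,\gf)$. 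Applying self-duality once more rules out $\sL_4^{\op}$. All six forbidden sublattices of the Davey--Poguntke--Rival list are thus excluded, and $\Reg(P,\gf)$ is semidistributive.

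I do not anticipate any serious obstacle: every step has already been prepared in the paper, and the whole argument is essentially bookkeeping. The only subtle point worth stating carefully is that \RSD{1} is a genuine quasi-identity, so its persistence to sublattices (and hence the non-embeddability of lattices that fail it) is automatic—this is exactly what makes the exclusion of $\sM_3$, $\sL_3$, $\sL_4$ free of charge from Theorem~\ref{T:AlmostSDReg2}, reducing the whole corollary to the single remaining pair $\{\sL_1,\sL_1^{\op}\}$, which collapses to just $\sL_1$ via self-duality.
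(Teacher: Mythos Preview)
Your proposal is correct and follows exactly the paper's own argument: use Theorem~\ref{T:AlmostSDReg2} (via \RSD{1}) to exclude $\sM_3$, $\sL_3$, $\sL_4$, use self-duality to exclude $\sL_4^{\op}$ and to reduce $\sL_1^{\op}$ to $\sL_1$, and then invoke the Davey--Poguntke--Rival characterization. The paper presents this reasoning in the paragraph immediately preceding the corollary rather than in a separate proof environment, but the content is the same.
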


The following example shows that the assumption, in Theorem~\ref{T:AlmostSDReg2}, of $(P,\gf)$ being of poset type cannot be relaxed to $(P,\gf)$ being a convex geometry.

\begin{examplepf}\label{Ex:AlmostSDReg2}
A finite atomistic convex geometry $(P,\gf)$ such that $\Reg(P,\gf)$ does not satisfy~\RSD{1}.
\end{examplepf}

\begin{proof}
Consider a six-element set $P=\set{a,b,c,d,e,u}$, and let a subset~$\bx$ of~$P$ be \emph{closed} if
 \begin{align*}
 \set{c,d,u}\subseteq\bx&\Rightarrow
 \set{a,b,e}\subseteq\bx\,,\\
 \set{a,b,u}\subseteq\bx&\Rightarrow e\in\bx\,,\\
 \set{c,d,e}\subseteq\bx&\Rightarrow
 \set{a,b}\subseteq\bx\,. 
 \end{align*}
Denote by~$\gf(\bx)$ the least closed set containing~$\bx$, for each $\bx\subseteq P$. It is obvious that $(P,\gf)$ is an atomistic closure space. Brute force calculation also shows that $(P,\gf)$ is a convex geometry. There are~$51$ closed sets and~$40$ regular closed sets, the latter all clopen. The following subsets
 \begin{align*}
 \ba_0&=\set{a,d,e}\,,\\
 \ba_1&=\set{b,d,e}\,,\\
 \bc&=\set{c,d} 
 \end{align*}
 are all clopen. Moreover, $\ba_0\cap\ba_1=\set{e,d}$ and $e\notin\cgf(\ba_0\cap\ba_1)$ (because $\set{a,b,u}$ is a minimal covering of~$e$ disjoint from $\ba_0\cap\ba_1$), so $\ba_0\wedge\ba_1=\set{d}$. Furthermore, $\ba_0\vee\bc=\ba_1\vee\bc=\set{a,b,c,d,e}$ and $(\ba_0\vee\ba_1)\cap\bc=\set{a,b,d,e}\cap\bc = \set{d}$. Therefore,
 \emph{$\set{\ba_0,\ba_1,\bc}$ generates a sublattice of $\Reg(P,\gf)$ isomorphic to~$\sL_4$}, with labeling as given by Figure~\ref{Fig:NonSD}. In particular, \emph{$\Reg(P,\gf)$ does not satisfy the quasi-identity~\RSD{1}}.
\end{proof}

Nevertheless, the elements~$\set{c,u}$, $\set{d,u}$, and $\set{e,u}$ are the atoms of a copy of~$\sL_1$ in~$\Reg(P,\gf)$. Hence the construction of Example~\ref{Ex:AlmostSDReg2} is not sufficient to settle whether Corollary~\ref{C:AlmostSDReg2} can be extended to the case of convex geometries.
Observe, also, that although there are lattice embeddings from both~$\sL_1$ and~$\sL_4$ into $\Reg(P,\gf)$, there is no $0$-lattice embedding from either~$\sL_1$ or~$\sL_4$ into~$\Reg(P,\gf)$: this follows from Proposition~\ref{P:PseudoCpl} (indeed, neither~$\sL_1$ nor~$\sL_4$ is pseudocomplemented).

Figure \ref{fig:example5.10} illustrates the closure lattice of the
closure space $(P,\gf)$ of Example~\ref{Ex:AlmostSDReg2}, with the
copy of $\sL_1$ in gray and the copy of $\sL_4$ in black.

\begin{figure}[htbp]
  \centering
  \includegraphics[scale=0.4]{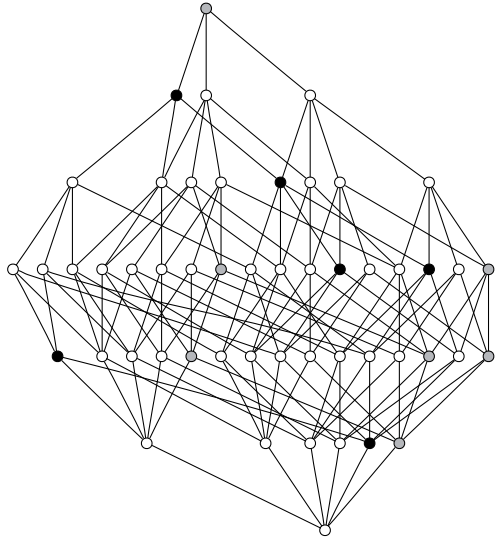}
  \label{fig:example5.10}
\caption{The closure lattice of Example \ref{Ex:AlmostSDReg2}}
\end{figure}

\section{{}From semidistributivity to boundedness for semilattice type}\label{S:BdedReg}

In Lemmas~\ref{L:DescrJirrRegP}--\ref{L:PerspInt2SD} let $(P,\gf)$ be
a closure space of semilattice type.

We begin with a useful structural property of the completely \jirr\ 
elements of $\Reg(P,\gf)$. We refer to Section~\ref{S:RegCl} for the notation~$\partial\ba$.

\begin{lemma}\label{L:DescrJirrRegP}
  Every completely \jirr\ element~$\ba$ of the lattice~$\Reg(P,\gf)$
  has a largest element~$p$, and
  $\ba\setminus\ba_*=\cgf(\ba)\setminus\cgf(\ba_*)=\set{p}$.
  Furthermore, for every $x\in\partial\ba$, there exists $\bx\in\cM(p)$
  such that $\bx\cap\ba=\set{x}$.
\end{lemma}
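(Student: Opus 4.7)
My plan is to lean on two facts used over and over: \textup{(a)} Lemma~\ref{L:downpReg} guarantees that $\ba\dnw q$ is regular closed, for each $q\in P$, whenever $\ba$ is regular closed; and \textup{(b)} complete \jirry\ of $\ba$ means that any regular closed subset strictly smaller than $\ba$ is contained in $\ba_*$. Combining \textup{(a)} and \textup{(b)} will repeatedly let me absorb candidate regular closed subsets of $\ba$ into $\ba_*$, and this is essentially the only mechanism I plan to use.

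The existence of a largest element of $\ba$ comes first. Since $\bigcup_{q\in\ba}(\ba\dnw q)=\ba$, if $\ba$ had no maximum, each $\ba\dnw q$ would be a proper regular closed subset of $\ba$, hence contained in $\ba_*$; taking unions would yield $\ba\subseteq\ba_*$, \contr. Call this maximum $p$. The equality $\ba\setminus\ba_*=\set{p}$ then follows by applying the same idea one $q$ at a time: for $q\in\ba$ with $q<p$, the set $\ba\dnw q$ is regular closed and misses $p$, so it is a proper regular closed subset of $\ba$, hence $\ba\dnw q\subseteq\ba_*$, giving $q\in\ba_*$; and $p\notin\ba_*$ because $\ba_*\subsetneq\ba$.

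For $\cgf(\ba)\setminus\cgf(\ba_*)=\set{p}$ I would combine the previous step with Lemma~\ref{L:downpReg}(ii): for $x\in\cgf(\ba)$ with $x<p$, applying $\cgf$ to $\ba\dnw x\subseteq\ba_*$ yields $\cgf(\ba)\dnw x=\cgf(\ba\dnw x)\subseteq\cgf(\ba_*)$, so $x\in\cgf(\ba_*)$; hence the difference is contained in $\set{p}$. It is automatically nonempty, for otherwise $\cgf(\ba)\subseteq\cgf(\ba_*)$ would imply $\ba=\gf\cgf(\ba)\subseteq\gf\cgf(\ba_*)=\ba_*$, \contr. So the difference is exactly $\set{p}$, which, as a pleasant by-product, records the slightly elusive fact $p\in\cgf(\ba)$.

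For the final assertion I would first translate it: by Lemma~\ref{L:Alg2MinCov} together with the identity $\gf(\by^{\cpl})=P\setminus\cgf(\by)$, a minimal covering $\bx$ of $p$ with $\bx\cap\ba\subseteq\set{x}$ exists if and only if $p\notin\cgf(\ba\setminus\set{x})$; and since $p\in\cgf(\ba)$, Lemma~\ref{L:OpenSemType} forces every minimal covering of $p$ to meet $\ba$, upgrading the containment to equality $\bx\cap\ba=\set{x}$. So the task reduces to ruling out $p\in\cgf(\ba\setminus\set{x})$, and this is where I expect the main obstacle to lie, namely finding the precise use of $x\in\partial\ba$. My plan is to look at the regular closed set $\ba'=\gf\cgf(\ba\setminus\set{x})$: from $\cgf(\ba\setminus\set{x})\subseteq\ba\setminus\set{x}$ one gets $\ba'\subseteq\gf(\ba\setminus\set{x})$, and $x\in\partial\ba$ says precisely that $x\notin\gf(\ba\setminus\set{x})$; so $\ba'$ is a proper regular closed subset of $\ba$. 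By complete \jirry, $\ba'\subseteq\ba_*$, whence $p\in\cgf(\ba\setminus\set{x})\subseteq\ba'\subseteq\ba_*$ would contradict $p\notin\ba_*$.
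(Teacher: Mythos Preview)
Your proof is correct and follows essentially the same skeleton as the paper's: exhibit the maximum~$p$ via $\ba=\bigcup_{q\in\ba}(\ba\dnw q)$ and Lemma~\ref{L:downpReg}, identify $\ba_*=\ba\setminus\set{p}$, then obtain $\cgf(\ba)\setminus\cgf(\ba_*)=\set{p}$, and finally deduce $p\notin\cgf(\ba\setminus\set{x})$ for $x\in\partial\ba$. The mechanics differ slightly in two places. For $\cgf(\ba)\setminus\cgf(\ba_*)=\set{p}$, the paper argues directly with minimal coverings (if $q\in\cgf(\ba)\setminus\cgf(\ba_*)$, pick $\bx\in\cM(q)$ disjoint from $\ba_*$; then $p\in\bx$ forces $q\geq p$), whereas you reduce to the already-established $\ba\dnw x\subseteq\ba_*$ via Lemma~\ref{L:downpReg}(ii). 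For the final step, the paper works on the regular \emph{open} side, observing that $\cgf(\ba\setminus\set{x})$ is a proper regular open subset of $\cgf(\ba)$ and hence lies in $\cgf(\ba)_*=\cgf(\ba_*)$ (using the isomorphism $\cgf\colon\Reg\to\Regop$ of Corollary~\ref{C:RegP2RegopP}); you instead push forward to $\ba'=\gf\cgf(\ba\setminus\set{x})$ and work on the regular \emph{closed} side. Both variants are clean; yours has the mild advantage of never leaving~$\Reg(P,\gf)$.
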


\begin{proof}
{}From the trivial observation that~$\ba$ is the union of all~$\ba\dnw x$, for $x\in\ba$, and by Lemma~\ref{L:downpReg}, it follows that $\ba=\bigvee\vecm{\ba\dnw x}{x\in\ba}$ in $\Reg(P,\gf)$. Since~$\ba$ is completely \jirr, there exists $p\in\ba$ such that $\ba=\ba\dnw p$. Of course, $p$ is necessarily the largest element of~$\ba$.

\setcounter{claim}{0}

\begin{claim}\label{Cl:xsuba2x=a}
Let $\bx\in\Reg(P,\gf)$ be contained in~$\ba$. Then $p\in\bx$ implies that $\bx=\ba$.
\end{claim}

\begin{cproof}
{}From $p\in\bx$ and $\bx\subseteq\ba$ it follows that $\ba=\bx\cup(\ba\setminus\set{p})$, hence
 \[
 \ba=\bx\cup\bigcup\vecm{\ba\dnw x}
 {x\in\ba\setminus\set{p}}\,,
 \]
and hence, by Lemma~\ref{L:downpReg},
 \[
 \ba=\bx\vee\bigvee\vecm{\ba\dnw x}
 {x\in\ba\setminus\set{p}}
 \text{ in }\Reg(P,\gf)\,.
 \]
Since~$\ba$ is completely \jirr\ and $p\in\ba\setminus(\ba\dnw x)$ for each $x\in\ba\setminus\set{p}$, the desired conclusion follows.
\end{cproof}

\begin{claim}\label{Cl:a-pinRegP}
The set $\ba\setminus\set{p}$ is regular closed.
\end{claim}

\begin{cproof}
Evaluate the join $\bx=\bigvee\vecm{\ba\dnw x}{x\in\ba\setminus\set{p}}$ in~$\Reg(P,\gf)$. Since each of the joinands~$\ba\dnw x$ is smaller than~$\ba$ and the latter is completely \jirr, we obtain that $\bx\subsetneqq\ba$, thus $p\notin\bx$ (cf. Claim~\ref{Cl:xsuba2x=a}), so $\bx\subseteq\ba\setminus\set{p}$. The converse containment being trivial, $\bx=\ba\setminus\set{p}$.
\end{cproof}

{}From Claims~\ref{Cl:xsuba2x=a} and~\ref{Cl:a-pinRegP} it follows that $\ba_*=\ba\setminus\set{p}$.

Let $q\in\cgf(\ba)\setminus\cgf(\ba_*)$. Since $q\notin\cgf(\ba_*)$, there exists $\bx\in\cM(q)$ such that $\bx\cap\ba_*=\es$. {}From $q\in\cgf(\ba)$ it follows that $\bx\cap\ba\neq\es$, thus $p\in\bx$ (as $\ba=\ba_*\cup\set{p}$) and $p\leq q$ (as $q=\bigvee\bx$); whence, as also $p=\max\ba$, we get $p=q$. Therefore, $\cgf(\ba)\setminus\cgf(\ba_*)\subseteq\set{p}$. Since $\cgf(\ba)\setminus\cgf(\ba_*)\neq\es$ (because $\ba_*$ is properly contained in~$\ba$ and both sets are regular closed), it follows that $\cgf(\ba)\setminus\cgf(\ba_*)=\set{p}$.

Finally, let $x\in\partial\ba$. Since $\ba\setminus\set{x}$ is closed
and does not contain~$x$ as an element, $\cgf(\ba\setminus\set{x})$ is
regular open and properly contained in~$\cgf(\ba)$, thus it is
contained in
$\cgf(\ba)_*=\cgf(\ba_*)=\cgf(\ba)\setminus\set{p}$. Hence
$p\notin\cgf(\ba\setminus\set{x})$, which means that there exists
$\bx\in\cM(p)$ such that $\bx\cap(\ba\setminus\set{x})=\es$. {}From
$p\in\cgf(\ba)$ it follows that $\bx\cap\ba\neq\es$, so
$\bx\cap\ba=\set{x}$.
\end{proof}

As we shall see from Example \ref{Ex:FinPosClop}, not every \jirr\ element of $\Reg(P,\gf)$ needs to be clopen.

\begin{lemma}\label{L:PerspInt2SD}
The following statements hold, for any completely \jirr\ elements~$\ba$ and~$\bb$ of $\Reg(P,\gf)$.
\begin{enumerate}
\item If $\orth{\ba}\searrow\bb$, then $\max\ba\geq\max\bb$; if, moreover, $\max\ba=\max\bb$, then $[\orth{\ba},\orth{(\ba_*)}]$ is down-perspective to $[\bb_*,\bb]$.
  
\item If $\ba\nearrow\orth{\bb}$, then $\max\ba\geq\max\bb$; if, moreover, $\max\ba=\max\bb$, then $[\ba_*,\ba]$ is up-perspective to $[\orth{\bb},\orth{(\bb_*)}]$.

\end{enumerate}
\end{lemma}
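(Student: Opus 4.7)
The plan is to exploit the structural description of completely \jirr\ regular closed sets provided by Lemma~\ref{L:DescrJirrRegP}, which forces both $\ba$ and $\bb$ to possess largest elements, denoted $p:=\max\ba$ and $q:=\max\bb$, with $\ba_{*}=\ba\setminus\set{p}$ and $\bb_{*}=\bb\setminus\set{q}$. Dually, $\orth{\ba}$ and $\orth{\bb}$ are completely \mirr, with unique upper covers $\orth{(\ba_{*})}$ and $\orth{(\bb_{*})}$. For (i), I would start from $\orth{\ba}\searrow\bb$, which unfolds as $\bb_{*}\subseteq\orth{\ba}$ together with $\bb\not\subseteq\orth{\ba}$. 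Picking any $y\in\bb\setminus\orth{\ba}$ and comparing with the decomposition $\bb=\bb_{*}\cup\set{q}$ forces $y=q$; then the trivial inclusion $P=\ba\cup\ba^{\cpl}\subseteq\ba\cup\gf(\ba^{\cpl})=\ba\cup\orth{\ba}$ gives $q\in\ba$, whence $q\leq p$.

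For the second half of~(i), assuming $p=q$, I will verify the two identities that witness down-perspectivity of the prime quotients $[\orth{\ba},\orth{(\ba_{*})}]$ and $[\bb_{*},\bb]$, namely
\[
\orth{\ba}\wedge\bb=\bb_{*}\qquad\text{and}\qquad\orth{\ba}\vee\bb=\orth{(\ba_{*})}.
\]
The meet identity is immediate: $\bb_{*}\subseteq\orth{\ba}\wedge\bb$ by hypothesis, and the complete \jirry\ of~$\bb$ with unique lower cover~$\bb_{*}$ together with $\bb\not\leq\orth{\ba}$ forces equality. For the join identity, the hypothesis $\bb\not\leq\orth{\ba}$ combined with the meet-irreducibility of~$\orth{\ba}$ gives $\orth{(\ba_{*})}\leq\orth{\ba}\vee\bb$. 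For the reverse, it is enough to show $\bb\subseteq\orth{(\ba_{*})}=\gf(\ba^{\cpl}\cup\set{p})$: the part $\bb_{*}\subseteq\orth{\ba}\subseteq\orth{(\ba_{*})}$ is automatic by antitonicity, while $p\in\ba_{*}^{\cpl}\subseteq\orth{(\ba_{*})}$ handles the remaining element, using crucially that $p=q$ so that $\bb=\bb_{*}\cup\set{p}$.

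Part~(ii) runs in parallel but exchanges the roles of meets and joins, and this is where the semilattice-type hypothesis actually enters (rather than merely self-duality). Assuming $\ba\nearrow\orth{\bb}$, pick $x\in\ba$ witnessing $x\in\gf(\bb_{*}^{\cpl})\setminus\gf(\bb^{\cpl})=\gf(\bb^{\cpl}\cup\set{q})\setminus\gf(\bb^{\cpl})$; since semilattice type implies poset type, Lemma~\ref{L:DrelClSpSem} yields $x\geq q$, and $x\leq p$ gives $p\geq q$. Under the assumption $p=q$, up-perspectivity of $[\ba_{*},\ba]$ with $[\orth{\bb},\orth{(\bb_{*})}]$ amounts to $\ba\vee\orth{\bb}=\orth{(\bb_{*})}$ and $\ba\wedge\orth{\bb}=\ba_{*}$. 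The first is the dual of the join computation above. The second reduces, via the orthocomplementation, to $\bb\leq\orth{(\ba_{*})}$, which is exactly the containment established at the end of~(i).

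The main subtlety will not be the arrow-relation bookkeeping, which is essentially symbolic, but the verification $\bb\subseteq\orth{(\ba_{*})}$ under $p=q$: one must split $\bb$ as $\bb_{*}\cup\set{p}$, treat $\bb_{*}$ via antitonicity of~$\orth{}$, and invoke the identification of $\ba_{*}^{\cpl}$ with $\ba^{\cpl}\cup\set{p}$ (whose validity relies on $p=\max\ba\notin\ba_{*}$, itself a consequence of Lemma~\ref{L:DescrJirrRegP}). Once this single containment is in hand, both statements follow by symmetric computations using the orthocomplementation and the covering properties of $\ba,\bb$.
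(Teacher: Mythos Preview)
Your proof is correct. Your argument for~(i) is essentially the paper's: where you pick $y\in\bb\setminus\orth{\ba}$ and deduce $y=q\in\ba$ from $P=\ba\cup\orth{\ba}$, the paper isolates the equivalent characterization $\cgf(\ba)\cap\bb=\set{q}$ (recall $\orth{\ba}=(\cgf(\ba))^\cpl$) and reads off $q\in\cgf(\ba)\subseteq\ba$. The key containment $\bb\subseteq\orth{(\ba_*)}$ under $p=q$ is obtained the same way in both arguments.

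The genuine difference is your treatment of~(ii). The paper simply observes that~(ii) is the orthocomplementation-translate of~(i): since $\bb_*\leq\orth{\ba}\Leftrightarrow\ba\leq\orth{(\bb_*)}$ and $\bb\nleq\orth{\ba}\Leftrightarrow\ba\nleq\orth{\bb}$, the hypotheses $\orth{\ba}\searrow\bb$ and $\ba\nearrow\orth{\bb}$ are equivalent, and the same computation shows the two perspectivity conclusions coincide. Hence~(ii) requires no separate proof. Your direct route through Lemma~\ref{L:DrelClSpSem} is valid but redundant. Relatedly, your remark that the semilattice-type hypothesis ``actually enters'' only in~(ii) is not quite accurate: it is already indispensable in~(i), because the very existence of $\max\ba$, $\max\bb$ and the identities $\ba_*=\ba\setminus\set{\max\ba}$, $\bb_*=\bb\setminus\set{\max\bb}$ all come from Lemma~\ref{L:DescrJirrRegP}, which is stated for semilattice type.
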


Let us observe that the first parts of items~(i) and~(ii) in the Lemma
immediately imply the following property (as well as its dual): if
$[\orth{\ba},\orth{(\ba_*)}]$ is down-perspective to $[\bb_*,\bb]$, then
$\max \ba = \max \bb$. The situation can be visualized on
Figure~\ref{Fig:4persp}. (Following the convention used in Freese,
Je\v{z}ek, and Nation~\cite{FJN}, prime intervals are highlighted by
crossing them with a perpendicular dash.) 

\begin{figure}[htb]
\includegraphics{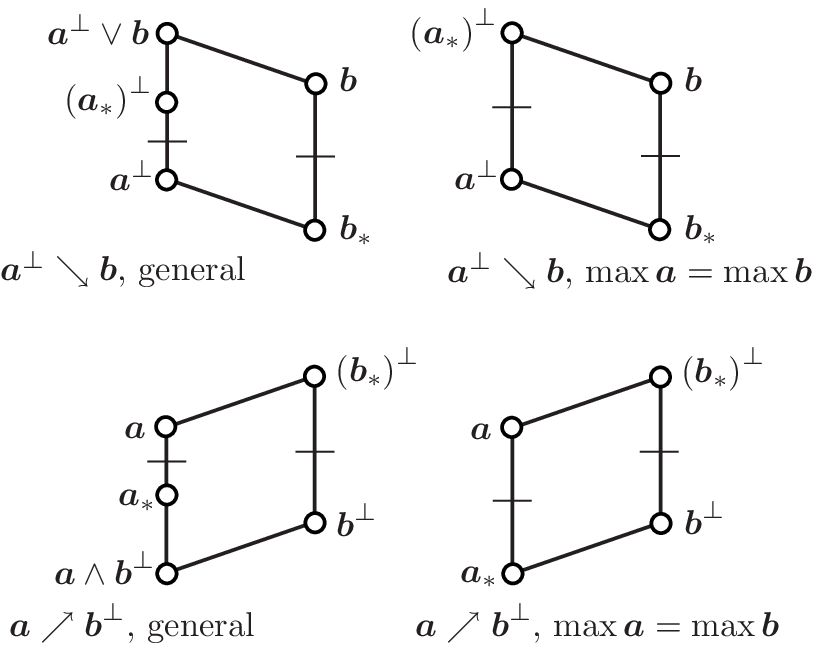}
\caption{Illustrating $\orth{\ba}\searrow\bb$ and $\ba\nearrow\orth{\bb}$}\label{Fig:4persp}
\end{figure}

\begin{proof}
Since~(ii) is dual of~(i) (\emph{via} the dual automorphism $\bx\mapsto\orth{\bx}$), it suffices to prove~(i).

Set $p=\max\ba$ and $q=\max\bb$. Let us state the next observation as a Claim.
 
\begin{sclaim}
The relation $\orth{\ba}\searrow\bb$ holds if{f} $\cgf(\ba)\cap\bb=\set{q}$.
\end{sclaim}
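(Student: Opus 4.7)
The plan is to unfold the definitions of the arrow relation and the orthocomplementation, and verify that the statement reduces to a straightforward set-theoretic manipulation. Since $\ba$ is completely \jirr\ in $\Reg(P,\gf)$, its image $\orth{\ba}$ under the dual automorphism $\bx \mapsto \orth{\bx}$ (cf. Lemma~\ref{L:BasicOrth}) is completely \mirr, with lower cover $(\orth{\ba})_* = \orth{\ba}$ being the unique lower cover dually corresponding to~$\ba_*$; more to the point, the upper cover is $(\orth{\ba})^{*}=\orth{(\ba_{*})}$. Thus $\orth{\ba}\searrow\bb$ unfolds, by the definition given in Section~\ref{S:Basic}, to the conjunction $\bb_*\subseteq\orth{\ba}$ and $\bb\not\subseteq\orth{\ba}$ (inclusions in $\Reg(P,\gf)$ being set-theoretical).

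Next I would use the fact that by definition $\orth{\ba}=\gf(\ba^\cpl)$, so by Lemma~\ref{L:gfcgfidemp} applied to the identity $\cgf(\ba)=P\setminus\gf(\ba^\cpl)$, we have $\cgf(\ba)=P\setminus\orth{\ba}$. Consequently, for any subset $\bx\subseteq P$, $\bx\subseteq\orth{\ba}$ is equivalent to $\bx\cap\cgf(\ba)=\es$. Applying this twice, the pair of conditions $\bb_*\subseteq\orth{\ba}$ and $\bb\not\subseteq\orth{\ba}$ translates into $\cgf(\ba)\cap\bb_*=\es$ together with $\cgf(\ba)\cap\bb\neq\es$.

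Finally, I invoke Lemma~\ref{L:DescrJirrRegP} to write $\bb=\bb_*\sqcup\set{q}$ with $q=\max\bb$. The first condition then says $\cgf(\ba)\cap\bb\subseteq\set{q}$, while the second forces $q\in\cgf(\ba)\cap\bb$; together these are exactly $\cgf(\ba)\cap\bb=\set{q}$. The converse direction is immediate by reading the same chain of equivalences backward. There is no real obstacle here — the argument is purely a definitional translation, whose only subtle point is identifying the upper cover $\orth{(\ba_*)}$ of $\orth{\ba}$, which is guaranteed by Lemma~\ref{L:DescrJirrRegP} combined with the dual automorphism property of $\orth{}$.
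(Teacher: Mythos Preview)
Your proof is correct and follows essentially the same route as the paper: both reduce $\orth{\ba}\searrow\bb$ to the pair of conditions $\bb_*\subseteq\orth{\ba}$ and $\bb\not\subseteq\orth{\ba}$, rewrite these via $\orth{\ba}=(\cgf(\ba))^\cpl$ as $\cgf(\ba)\cap\bb_*=\es$ and $\cgf(\ba)\cap\bb\neq\es$, and then conclude from $\bb=\bb_*\cup\set{q}$. Your detour through the upper cover $(\orth{\ba})^*=\orth{(\ba_*)}$ is unnecessary (the definition of $\searrow$ only involves~$\bb_*$), and the identity $\cgf(\ba)=P\setminus\orth{\ba}$ is immediate from the definitions rather than from Lemma~\ref{L:gfcgfidemp}, but these are cosmetic points.
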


\begin{scproof}
Since $\orth{\ba}=\pI{\cgf(\ba)}^\cpl$, the relation $\orth{\ba}\searrow\bb$ means that $\cgf(\ba)\cap\bb\neq\es$ and $\cgf(\ba)\cap\bb_*=\es$. Recalling that $\bb=\bb_*\cup\set{q}$ (cf. Lemma~\ref{L:DescrJirrRegP}), the Claim follows immediately.
\end{scproof}

Using our Claim, we see that if $\orth{\ba}\searrow\bb$, then, as $p=\max\ba$, we get $q \in \ba$, thus $q \leq p$.

Suppose next that $\orth{\ba}\searrow\bb$ and $p=q$. Since $\cgf(\ba)\setminus\cgf(\ba_*)=\set{p}$ (cf. Lemma~\ref{L:DescrJirrRegP}) and $\cgf(\ba)\cap\bb=\set{p}$ (by our Claim together with $p = q$), it follows that $\cgf(\ba_*)\cap\bb=\es$, that is, $\bb\subseteq\orth{(\ba_*)} = (\orth{\ba})^*$. Since $\bb\nleq\orth{\ba}$ follows from $\orth{\ba}\searrow\bb$, we get $\bb \nearrow \orth{\ba}$, showing that the interval $[\orth{\ba},\orth{(\ba_*)}]$ is down-perspective to $[\bb_*,\bb]$. 
\end{proof}

\begin{theorem}\label{T:SDReg2bounded}
Let $(P,\gf)$ be a finite closure space of semilattice type. Then $\Reg(P,\gf)$ is semidistributive if{f} it is a bounded homomorphic image of a free lattice.
\end{theorem}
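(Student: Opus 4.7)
The easy direction—every bounded lattice is semidistributive—is a standard fact from Freese, Je\v{z}ek, and Nation~\cite{FJN} already cited in Section~\ref{S:Basic}. For the converse, suppose that $L=\Reg(P,\gf)$ is semidistributive; my plan is to show that $L$ is bounded. By the FJN characterization of finite bounded lattices \cite[Corollary~2.39]{FJN}, recalled in Section~\ref{S:Basic}, this amounts to verifying that the join-dependency relation $\Dr$ is cycle-free on both $L$ and its dual. Since $L$ is self-dual via $\bx\mapsto\orth{\bx}$, these two conditions are equivalent, so I only need to rule out cycles $\ba_0\Dr\ba_1\Dr\cdots\Dr\ba_{n-1}\Dr\ba_0$ of distinct completely \jirr\ elements in $L$.

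Suppose such a cycle exists. For each index $i$ (read cyclically), Lemma~\ref{L:Arr2D} furnishes a completely \mirr\ element~$\bu_i$ of~$L$ with $\ba_i\nearrow\bu_i\searrow\ba_{i+1}$; since the orthocomplementation swaps completely \jirr\ and completely \mirr\ elements, I write $\bu_i=\orth{\bb_i}$ for a completely \jirr~$\bb_i$. Applying Lemma~\ref{L:PerspInt2SD} on both arrows yields $\max\ba_i\geq\max\bb_i\geq\max\ba_{i+1}$ for every~$i$, and traversing the cycle forces all these maxima to coincide with a single element~$p$. The ``moreover'' clauses of Lemma~\ref{L:PerspInt2SD} then produce the chain of prime-interval perspectivities
\[
[\ba_{i,*},\ba_i]\;\nearrow\;[\orth{\bb_i},\orth{(\bb_{i,*})}]\;\searrow\;[\ba_{i+1,*},\ba_{i+1}]\,,
\]
from which I read off $\ba_{i,*}\leq\orth{\bb_i}$, $\ba_i\vee\orth{\bb_i}=\orth{(\bb_{i,*})}$, and $\ba_{i+1}\vee\orth{\bb_i}=\orth{(\bb_{i,*})}$.

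The cycle-killing step is then short: applying \jsdy\ to the equality $\ba_i\vee\orth{\bb_i}=\ba_{i+1}\vee\orth{\bb_i}$ gives $(\ba_i\wedge\ba_{i+1})\vee\orth{\bb_i}=\orth{(\bb_{i,*})}$, so in particular $\ba_i\wedge\ba_{i+1}\nleq\orth{\bb_i}$. Because $\ba_i$ is completely \jirr\ with unique lower cover $\ba_{i,*}$ (Lemma~\ref{L:DescrJirrRegP}), every element strictly below $\ba_i$ lies below $\ba_{i,*}$, hence below $\orth{\bb_i}$; this forces $\ba_i\wedge\ba_{i+1}=\ba_i$, i.e.\ $\ba_i\leq\ba_{i+1}$. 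Going around the cycle collapses all $\ba_i$ to a single element, contradicting $\ba_0\Dr\ba_1$ (which requires $\ba_0\neq\ba_1$).

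The one genuinely delicate point is the last paragraph: turning the perspectivities supplied by Lemma~\ref{L:PerspInt2SD} into one equation that \jsdy\ can digest, and then exploiting the tight structural description of completely \jirr\ regular closed sets from Lemma~\ref{L:DescrJirrRegP} to upgrade $\ba_i\wedge\ba_{i+1}\nleq\orth{\bb_i}$ to $\ba_i\wedge\ba_{i+1}=\ba_i$. Everything else is bookkeeping with the arrow relations, the self-duality of $\Reg(P,\gf)$, and the FJN criterion. The semilattice-type hypothesis is used only indirectly, via Lemmas~\ref{L:DescrJirrRegP} and~\ref{L:PerspInt2SD}, which would not be available in the general poset-type setting.
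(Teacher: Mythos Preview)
Your argument is correct and follows essentially the same route as the paper's: both use Lemma~\ref{L:Arr2D} to factor each $D$-step through a \mirr\ element~$\orth{\bb}$, invoke Lemma~\ref{L:PerspInt2SD} to obtain the inequalities on maxima and the perspectivities $\ba\vee\orth{\bb}=\ba'\vee\orth{\bb}=\orth{(\bb_*)}$, and then apply \jsdy\ together with $\ba_*\leq\orth{\bb}$, $\ba'_*\leq\orth{\bb}$ to derive a contradiction. The only cosmetic difference is that the paper argues directly at the level of a single step $\ba\Dr\ba'$: from $\ba\neq\ba'$ one gets $\ba\wedge\ba'\leq\ba_*$ or $\ba\wedge\ba'\leq\ba'_*$, hence $\ba\wedge\ba'\leq\orth{\bb}$, contradicting $(\ba\wedge\ba')\vee\orth{\bb}=\orth{(\bb_*)}$, so $\max\ba>\max\ba'$ outright; you instead assume a full cycle, equalize all maxima around it, deduce $\ba_i\leq\ba_{i+1}$ for every~$i$, and collapse the cycle at the end. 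Either packaging works, and the paper's version saves the trip around the cycle by using both lower covers $\ba_*$ and $\ba'_*$ simultaneously.
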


\begin{proof}
It is well-known that every bounded homomorphic image of a free lattice is semidistributive, see Freese, Je\v{z}ek, and Nation \cite[Theorem~2.20]{FJN}.

Conversely, suppose that~$P$ is finite and that $\Reg(P,\gf)$ is semidistributive. Since~$\Reg(P,\gf)$ is self-dual (\emph{via} the natural orthocomplementation), it suffices to prove that it is a lower bounded homomorphic image of a free lattice. This amounts, in turn, to proving that $\Reg(P,\gf)$ has no cycle of \jirr\ elements with respect to the join-dependency relation~$D$ (cf. Freese, Je\v{z}ek, and Nation \cite[Corollary~2.39]{FJN}). In order to prove this, it is sufficient to prove that $\ba\Dr\bb$ implies that $\max\ba>\max\bb$, for all \jirr\ elements~$\ba$ and~$\bb$ of~$\Reg(P,\gf)$. By Lemma~\ref{L:Arr2D}, there exists a \mirr\ $\bu\in\Reg(P,\gf)$ such that $\ba\nearrow\bu\searrow\bb$. The element $\bc=\orth{\bu}$ is \jirr, and, by Lemma~\ref{L:PerspInt2SD}, $\max\ba\geq\max\bc\geq\max\bb$. Suppose that $\max\ba=\max\bc=\max\bb$. By Lemma~\ref{L:PerspInt2SD}, $\orth{(\bc_*)}=\ba\vee\orth{\bc}=\bb\vee\orth{\bc}$, thus, as~$\Reg(P,\gf)$ is \jsd, $\orth{(\bc_*)}=(\ba\wedge\bb)\vee\orth{\bc}$. On the other hand, $\ba\Dr\bb$ thus $\ba\neq\bb$, and thus $\ba\wedge\bb$ lies either below~$\ba_*$ or below~$\bb_*$, hence below $\orth{\bc}$. It follows that $\orth{(\bc_*)}=\orth{\bc}$, \contr. Therefore, $\max\ba>\max\bb$.
\end{proof}

\begin{example}\label{Ex:OrthSDnotBded}
The lattice~$K$ represented on the left hand side of Figure~\ref{Fig:KplusK}, taken from J\'onsson and Nation~\cite{JoNa77} (see also Freese, Je\v{z}ek, and Nation \cite[page~111]{FJN}), is semidistributive but not bounded. It follows that the parallel sum $L=K\parallel K$ (cf. Section~\ref{S:Basic}), represented on the right hand side of Figure~\ref{Fig:KplusK}, is also semidistributive and not bounded. Now observe that~$K$ has an involutive dual automorphism~$\ga$. Sending each~$x$ in one copy of~$K$ to~$\ga(x)$ in the other copy of~$K$ (and exchanging~$0$ and~$1$) defines an orthocomplementation of~$L$.

\begin{figure}[htb]
\includegraphics{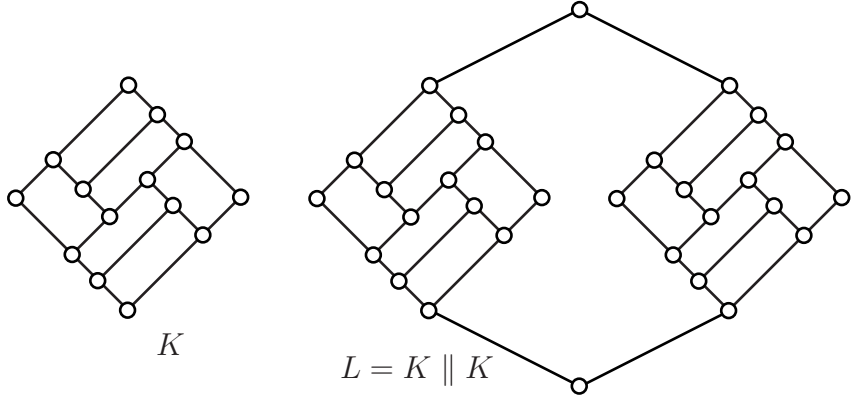}
\caption{The lattices $K$ and $L=K\parallel K$}\label{Fig:KplusK}
\end{figure}

This shows that \emph{a finite, semidistributive ortholattice need not be bounded}.
\end{example}

Although $\Reg(P,\gf)$ may not be semidistributive, even for a finite closure space $(P,\gf)$ of semilattice type (cf. Example~\ref{Ex:ClopM3}), there are important cases where semidistributivity holds, such as the case of the closure space associated to an antisymmetric, transitive binary relation (cf. Santocanale and Wehrung~\cite{SaWe12a}). Further such situations will be investigated in Section~\ref{S:SemilBded} and~\ref{S:PermGraph}.

\section{Boundedness of lattices of regular closed subsets of semilattices}\label{S:SemilBded}

Although $\Reg(P,\gf)$ may not be semidistributive, even for a finite
closure space $(P,\gf)$ of semilattice type, the situation changes for
the closure space associated to a finite semilattice. We first state
an easy lemma.

\begin{lemma}\label{L:Jirr2MinNbhdSem}
  Let~$S$ be a \js. Then every completely \jirr\ member~$\ba$
  of~$\Reg{S}$ is a minimal neighborhood of some element of~$S$.
\end{lemma}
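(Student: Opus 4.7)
The plan is to exhibit $\ba$ itself as a minimal neighborhood of its largest element. Two facts from earlier in the paper do most of the work. By Corollary~\ref{C:MinNbhdSemil2}(iv), every completely \jirr\ element of~$\Reg S$ is clopen, so in particular $\ba$ is open. By Lemma~\ref{L:DescrJirrRegP}, $\ba$ admits a largest element~$p$, with $\ba_*=\ba\setminus\set{p}$, so $\ba$ is already an open neighborhood of~$p$.

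I will then produce a minimal neighborhood of $p$ contained in $\ba$. Since $(S,\tcl)$ is algebraic (cf.\ Example~\ref{Ex:FromJSemil}), downward directed intersections of open sets are open, so the Zorn's lemma argument behind Proposition~\ref{P:MinNbhdClos}(i) yields a minimal element $\bu$ in the poset of open neighborhoods of $p$ contained in $\ba$. This $\bu$ is in fact minimal among \emph{all} neighborhoods of $p$: any strictly smaller neighborhood $\bw$ of $p$ would satisfy $\cgf(\bw)\subseteq\bw\subsetneq\bu$, with $\cgf(\bw)$ an open neighborhood of $p$ inside $\ba$, contradicting the minimality of $\bu$. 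By Theorem~\ref{T:MinNbhdSemil}, this $\bu$ is clopen, hence belongs to~$\Reg S$, and the set-theoretic inclusion $\bu\subseteq\ba$ reads as $\bu\leq\ba$ in the lattice~$\Reg S$.

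The final move is pure complete \jirry\ of $\ba$: either $\bu=\ba$, in which case we are done, or $\bu\leq\ba_*$. The latter alternative would give $p\in\bu\subseteq\ba_*=\ba\setminus\set{p}$, a contradiction. Hence $\bu=\ba$, and $\ba$ is a minimal neighborhood of~$p$. I do not foresee a real obstacle: the proof is a short assembly of previously established pieces, the only slightly delicate point being the standard observation that a minimal open neighborhood of $p$ inside an ambient open set is automatically minimal among all neighborhoods of~$p$.
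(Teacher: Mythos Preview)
Your argument is correct, but it takes a slightly longer route than the paper. The paper's proof is a one-liner: since $\tin(\ba)=\bigcup_{i\in I}\ba_i$ for minimal neighborhoods~$\ba_i$ (Proposition~\ref{P:MinNbhdClos}(i)) and every minimal neighborhood is clopen (Theorem~\ref{T:MinNbhdSemil}), we get $\ba=\tcl\tin(\ba)=\bigvee_{i\in I}\ba_i$ in~$\Reg S$; complete \jirry\ then forces $\ba=\ba_i$ for some~$i$. No appeal to Corollary~\ref{C:MinNbhdSemil2}(iv) or to the structural Lemma~\ref{L:DescrJirrRegP} is needed.

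Your version instead singles out the top element~$p$ of~$\ba$, extracts a single minimal neighborhood~$\bu$ of~$p$ inside~$\ba$, and rules out $\bu\leq\ba_*$ via $p\in\bu$. This is perfectly fine, and it has the minor conceptual advantage of identifying from the start \emph{which} element~$\ba$ is a minimal neighborhood of. Note, incidentally, that your use of Corollary~\ref{C:MinNbhdSemil2}(iv) is not strictly necessary: Lemma~\ref{L:DescrJirrRegP} already gives $p\in\cgf(\ba)$, so you could run Zorn inside~$\cgf(\ba)$ without knowing in advance that~$\ba$ is open. The paper's approach avoids both detours by working with the full decomposition of~$\tin(\ba)$ rather than a single piece of it.
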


\begin{proof}
Since $\tin(\ba)=\bigcup_{i\in I}\ba_i$ for minimal neighborhoods~$\ba_i$ (cf. Proposition~\ref{P:MinNbhdClos}) and every minimal neighborhood is clopen (cf. Theorem~\ref{T:MinNbhdSemil}), $\ba=\tcl\tin(\ba)=\bigvee_{i\in I}\ba_i$ is a join of minimal neighborhoods in~$\Reg{S}$. Since~$\ba$ is completely \jirr, it follows that~$\ba$ is itself a minimal neighborhood.
\end{proof}

On the other hand, there are easy examples of finite \js{s} containing join-reducible minimal neighborhoods. The structure of completely \jirr\ elements of $\Reg{S}$ will be further investigated in Section~\ref{S:CJISemil}.

A further illustration of Theorem~\ref{T:SDReg2bounded} is provided by the following result.

\begin{theorem}\label{T:FinSemilBded}
For any \emph{finite} \js~$S$, the lattice $\Reg S$ is a bounded homomorphic image of a free lattice.
\end{theorem}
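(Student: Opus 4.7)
The strategy is to apply Theorem~\ref{T:SDReg2bounded}: since $(S,\tcl)$ is a finite closure space of semilattice type (Example~\ref{Ex:FromJSemil}), boundedness is equivalent to semidistributivity, and by the self-duality of~$\Reg S$ provided by the orthocomplementation (Lemma~\ref{L:BasicOrth}) it suffices to verify either one of the two laws. Rather than establishing \jsdy{} or \msdy{} directly, I will prove the stronger fact that the join-dependency relation~$\Dr$ on completely \jirr{} elements of $\Reg S$ strictly decreases the maximum, so it is acyclic, whence $\Reg S$ is semidistributive, and by Theorem~\ref{T:SDReg2bounded} bounded.

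The structural tool is the explicit description of completely \jirr{} elements given by Lemma~\ref{L:Jirr2MinNbhdSem} together with Theorem~\ref{T:MinNbhdSemil}: each such~$\ba$ has the form $\ba=(S\dnw p)\setminus\ba_0$, where $p=\max\ba$ and~$\ba_0$ is a maximal proper ideal of $S\dnw p$; its unique lower cover is $\ba_*=\ba\setminus\set{p}$ (Lemma~\ref{L:DescrJirrRegP}). Suppose $\ba\Dr\bb$. By Lemma~\ref{L:Arr2D}, there exists a completely \mirr{}~$\bu$ with $\ba\nearrow\bu\searrow\bb$; setting $\bc=\orth{\bu}$ (which is completely \jirr), two applications of Lemma~\ref{L:PerspInt2SD} (namely (ii) to the pair $\ba,\bc$ and (i) to the pair $\bc,\bb$) yield $\max\ba\geq\max\bc\geq\max\bb$. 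It remains to rule out the equality case.

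Suppose for contradiction that $\max\ba=\max\bc=\max\bb=p$. Then $\ba,\bb,\bc$ correspond respectively to maximal proper ideals $\ba_0,\bb_0,\bc_0$ of $S\dnw p$, with $\ba_0\neq\bb_0$ since $\ba\neq\bb$. The perspectivity conclusion of Lemma~\ref{L:PerspInt2SD} gives $\ba\vee\bu=\bb\vee\bu=\orth{(\bc_*)}$. Expanding $\bu=\orth{\bc}=\tcl\pI{(S\setminus S\dnw p)\cup\bc_0}$ and noticing that any join of elements of~$\bu$ lying below~$p$ is actually a join of elements of the ideal~$\bc_0$, one readily translates the perspectivity conclusion into the set-theoretic inclusions $(S\dnw p)\setminus\set{p}\subseteq\ba_0\cup\bc_0$ and $(S\dnw p)\setminus\set{p}\subseteq\bb_0\cup\bc_0$; equivalently, $(S\dnw p)\setminus\set{p}\setminus\bc_0\subseteq\ba_0\cap\bb_0$. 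The main obstacle is extracting a contradiction from these constraints, and here is how: picking $x\in\ba_0\setminus\bb_0$, the maximality of~$\bb_0$ produces $y\in\bb_0$ with $x\vee y=p$; since~$\bc_0$ is join-closed and proper, $y\notin\bc_0$ (else $p=x\vee y\in\bc_0$); hence by the last inclusion $y\in\ba_0$, and then $p=x\vee y\in\ba_0$ contradicts the properness of~$\ba_0$. This rules out equality and completes the argument.
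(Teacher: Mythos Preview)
Your proof is correct and follows essentially the same route as the paper's: both show that $\ba\Dr\bb$ forces $\max\ba>\max\bb$ by reducing, via Lemma~\ref{L:PerspInt2SD} and the minimal-neighborhood description of \jirr{} elements, to a combinatorial statement about maximal proper ideals of $S\dnw p$. The only cosmetic differences are that the paper extracts $\ba\cap\bc=\bc\cap\bb=\set{p}$ (equivalently your inclusions $S\ddnw p\subseteq\ba_0\cup\bc_0$ and $S\ddnw p\subseteq\bb_0\cup\bc_0$) directly from the arrow relations rather than through the perspectivity identity, and finishes by proving $\ba_0=\bb_0$ instead of deriving $p\in\ba_0$; also, note that your step ``$y\notin\bc_0$ (else $p=x\vee y\in\bc_0$)'' tacitly uses $x\in\bc_0$, which does follow from your combined inclusion since $x\notin\bb_0$.
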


\begin{proof}
  As in the proof of Theorem~\ref{T:SDReg2bounded}, it is sufficient
  to prove that $\ba\Dr\bb$ implies that $\max\ba>\max\bb$, for all
  \jirr\ elements~$\ba$ and~$\bb$ of~$\Reg S$. By Lemma~\ref{L:Arr2D},
  there exists a \mirr\ $\bu\in\Reg S$ such that
  $\ba\nearrow\bu\searrow\bb$.  The element $\bc=\orth{\bu}(=\bu^{\cpl})$ is \jirr\ and it follows from
  Lemma~\ref{L:PerspInt2SD} that
  $\max\ba\geq\max\bc\geq\max\bb$. Suppose that
  $\max\ba=\max\bc=\max\bb$ and denote that element by~$p$. It follows
  from Lemma~\ref{L:Jirr2MinNbhdSem} that~$\ba$, $\bb$, and~$\bc$ are
  minimal neighborhoods of~$p$. Thus, by Theorem~\ref{T:MinNbhdSemil},
  the sets~$\ba$, $\bb$, $\bc$ are all clopen and their respective
  complements~$\tilde{\ba}$, $\tilde{\bb}$, $\tilde{\bc}$ in~$S\dnw p$
  are maximal proper ideals of~$S\dnw p$. {}From
  $\ba\nearrow\bc^{\cpl}\searrow\bb$ and Lemma~\ref{L:DescrJirrRegP}
  it follows that $\ba\cap\bc=\bc\cap\bb=\set{p}$, that is,
  $\tilde{\ba}\cup\tilde{\bc}=\tilde{\bb}\cup\tilde{\bc}=S\ddnw p$. If
  either $\tilde{\ba}\subseteq\tilde{\bc}$ or
  $\tilde{\bb}\subseteq\tilde{\bc}$, then, by the maximality
  statements on both~$\tilde{\ba}$ and~$\tilde{\bb}$, we get $\tilde{\ba}=\tilde{\bc}=\tilde{\bb}$,  thus $\ba=\bb$, in contradiction with $\ba\Dr\bb$. Hence, there are
  $a_0\in\tilde{\ba}\setminus\tilde{\bc}$ and
  $b_0\in\tilde{\bb}\setminus\tilde{\bc}$. For each $a\in\tilde{\ba}$, the element~$a\vee a_0$ belongs to $\tilde{\ba}\setminus\tilde{\bc}=\tilde{\bb}\setminus\tilde{\bc}$, thus $a\in\tilde{\bb}$, so $\tilde{\ba}\subseteq\tilde{\bb}$. Likewise, using~$b_0$, we get $\tilde{\bb}\subseteq\tilde{\ba}$, therefore $\ba=\bb$, a contradiction.
\end{proof}

\begin{example}\label{Ex:NonSDSem}
Theorem~\ref{T:FinSemilBded} implies trivially that~$\Reg S$ is semidistributive, for any finite \js~$S$. We show in the present example that this result cannot be extended to infinite semilattices.

Shiryaev characterized in~\cite{Shir85} the semilattices with semidistributive lattice of subsemilattices. His results were extended to lattices of various kinds of subsemilattices, and various versions of semidistributivity, in Adaricheva~\cite{Adar88}. We show here, \emph{via} a straightforward modification of one of Shiryaev's constructions, that there is a distributive lattice~$\Delta$ whose lattice of all regular closed \jsubsemi{s} is not semidistributive.

\begin{figure}[htb]
\includegraphics{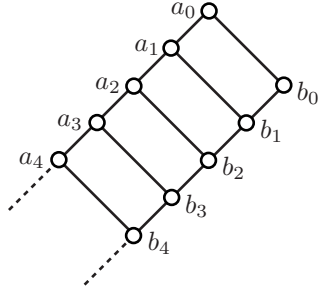}
\caption{The lattice~$\Delta$}
\label{Fig:Delta}
\end{figure}

Endow $\Delta=\omega^{\op}\times\set{0,1}$ with the componentwise ordering, and set $a_n=(n,1)$ and $b_n=(n,0)$, for each $n<\omega$. The lattice~$\Delta$ is represented in Figure~\ref{Fig:Delta}. Of course, $\Delta$ is a distributive lattice; we shall view it as a \js.

\begin{proposition}\label{P:DeltanotSD}
Every regular closed subset of~$\Delta$ is clopen \pup{i.e., $\Clop\Delta=\Reg\Delta$}. Furthermore, $\Reg\Delta$ has nonzero elements $\ba$, $\bb_0$, and~$\bb_1$ such that $\ba\wedge\bb_0=\ba\wedge\bb_1=\es$ and $\ba\subseteq\bb_0\cup\bb_1$. In particular, $\Reg\Delta$ is neither semidistributive nor pseudocomplemented.
\end{proposition}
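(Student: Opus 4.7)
The plan splits along the two assertions. For the claim $\Clop\Delta = \Reg\Delta$, I would invoke Corollary~\ref{C:MinNbhdSemil3}, reducing the goal to verifying its condition~(iv): the join-closure $\gf(\bu)$ of any open subset $\bu \subseteq \Delta$ is open. To make the verification tractable, I would parameterize any subset $\bu$ of $\Delta$ by the pair $A(\bu) = \setm{n}{a_n \in \bu}$ and $B(\bu) = \setm{n}{b_n \in \bu}$, and observe that the minimal coverings of $a_n$ (in the sense of Example~\ref{Ex:FromJSemil}) are exactly $\set{a_n}$ and the two-element sets $\set{a_m,b_n}$ for $m > n$, while the only minimal covering of $b_n$ is $\set{b_n}$. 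By Lemma~\ref{L:OpenSemType}, openness of $\bu$ then amounts to the condition: for every $n \in A(\bu)$, either $n \in B(\bu)$ or $[n+1,\infty) \subseteq A(\bu)$.

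A direct calculation of joins in $\Delta$ shows that $B(\gf(\bu)) = B(\bu)$, while $A(\gf(\bu))$ consists of $A(\bu)$ together with those $p \in B(\bu)$ for which $A(\bu)$ contains some index strictly greater than $p$. A case analysis on whether $A(\bu)$ is empty, nonempty and bounded (with maximum $M$), or unbounded then shows that the openness condition transfers from $\bu$ to $\gf(\bu)$. The delicate case is when $A(\bu)$ is nonempty and bounded: openness of $\bu$ forces $A(\bu) \subseteq B(\bu)$, from which $A(\gf(\bu)) \subseteq B(\bu) = B(\gf(\bu))$, so the openness condition for $\gf(\bu)$ holds vacuously. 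The main obstacle is carrying out this case analysis without omission.

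For the second claim, I would exhibit the explicit witnesses $\ba = \setm{a_n}{n \in \omega}$, $\bb_0 = \setm{a_{2k}, b_{2k}}{k \in \omega}$, and $\bb_1 = \setm{a_{2k+1}, b_{2k+1}}{k \in \omega}$. Each is easily seen to be a subsemilattice whose complement is also a subsemilattice, hence clopen. Plainly $\bb_0 \cup \bb_1 = \Delta$, so $\ba \subseteq \bb_0 \cup \bb_1$. For $\ba \wedge \bb_0 = \es$, note that $\ba \cap \bb_0 = \setm{a_{2k}}{k \in \omega}$, and that for each $k$ the minimal covering $\set{a_{2k+1}, b_{2k}}$ of $a_{2k}$ is disjoint from $\ba \cap \bb_0$ (since $a_{2k+1} \notin \bb_0$ and $b_{2k} \notin \ba$); hence $\cgf(\ba \cap \bb_0) = \es$ by Lemma~\ref{L:OpenSemType}, and so $\ba \wedge \bb_0 = \gf(\cgf(\ba \cap \bb_0)) = \es$. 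The identical argument with parities swapped gives $\ba \wedge \bb_1 = \es$.

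Finally, since $\bb_0 \vee \bb_1 = \gf(\Delta) = \Delta$, meet-semidistributivity would force $\ba = \ba \wedge \Delta = \ba \wedge \bb_0 = \es$, a contradiction; and if $\ba$ had a pseudocomplement $\ba^*$, then $\bb_0, \bb_1 \leq \ba^*$ would give $\Delta = \bb_0 \vee \bb_1 \leq \ba^*$, whence $\ba \wedge \ba^* = \ba \neq \es$, again a contradiction. The key insight making Part~2 work is the even/odd partition of the index set, which is tailored to the two-element minimal-covering structure $\set{a_m, b_n}$ identified in Part~1.
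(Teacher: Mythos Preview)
Your proposal is correct and follows essentially the same approach as the paper: both parts use the same witnesses $\ba$, $\bb_0$, $\bb_1$, and both reduce the first claim to showing that the join-closure of an open set is open. The only difference is organizational: where you parameterize by $A(\bu),B(\bu)$ and split into three cases, the paper argues more directly that if $a_m\in\tcl(\bu)\setminus\bu$ then necessarily $b_m\in\bu$ (since $a_m=a_k\vee b_m$ for some $k>m$ with both joinands in~$\bu$), which immediately gives openness of $\tcl(\bu)$ without any case split.
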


\begin{proof}
We first observe that the nontrivial irredundant joins in~$\Delta$ are exactly those of the form
 \[
 a_m=a_n\vee b_m\,,
 \quad \text{with } m<n<\omega\,.
 \]
Let~$\bu$ be an open subset of~$\Delta$, we shall prove that~$\tcl(\bu)$ is open as well. By the observation above, it suffices to prove that whenever $m<n<\omega$, $a_m\in\tcl(\bu)$ implies that either $b_m\in\tcl(\bu)$ or $a_n\in\tcl(\bu)$. Since~$\bu$ is open, that conclusion is obvious if $a_m\in\bu$. Now suppose that $a_m\in\tcl(\bu)\setminus\bu$. Since~$\ba_m$ is obtained as a nontrivial irredundant join of elements of~$\bu$, there exists an integer $k>m$ such that $\set{a_k,b_m}\subseteq\bu$. In particular, $b_m\in\bu$ and the desired conclusion holds again. This completes the proof that $\Reg\Delta=\Clop\Delta$.

Now we set
 \begin{align*}
 \ba&=\setm{a_n}{n<\omega}\,,\\
 \bb_0&=\setm{a_{2k}}{k<\omega}\cup
 \setm{b_{2k}}{k<\omega}\,,\\
 \bb_1&=\setm{a_{2k+1}}{k<\omega}\cup
 \setm{b_{2k+1}}{k<\omega}\,.
 \end{align*}
It is straightforward to verify that $\ba$, $\bb_0$, and~$\bb_1$ are all clopen and that $\ba\subseteq\bb_0\cup\bb_1$. Furthermore, for each $i\in\set{0,1}$, the subset
 \[
 \ba\cap\bb_i=
 \setm{a_{2k+i}}{k<\omega}
 \]
has empty interior (for $a_{2k+i}=a_{2k+i+1}\vee b_{2k+i}$), that is, $\ba\wedge\bb_i=\es$.
\end{proof}

\end{example}

\section{Completely \jirr\ regular closed sets in semilattices}\label{S:CJISemil}

We observed in Lemma~\ref{L:Jirr2MinNbhdSem} that for any \js~$S$, every completely \jirr\ element of~$\Reg{S}$ is a minimal neighborhood. By invoking the structure theorem of minimal neighborhoods in~$\Reg{S}$ (viz. Theorem~\ref{T:MinNbhdSemil}), we shall obtain, in this section, a complete description of the completely \jirr\ elements of~$\Reg{S}$.

We start with an easy lemma, which will make it possible to identify the possible top elements of completely \jirr\ elements of~$\Reg{S}$. We denote by~$\Id S$ the lattice of all ideals of~$S$ (the empty set included), under set inclusion.

\begin{lemma}\label{L:nlowcov}
The following statements are equivalent, for any element~$p$ in a \js~$S$ and any positive integer~$n$.
\begin{enumerate}
\item $S\dnw p$ has at most~$n$ lower covers in~$\Id S$.

\item There are ideals~$\ba_1$, \dots, $\ba_n$ of~$S$ such that $S\ddnw p=\bigcup_{1\leq i\leq n}\ba_i$.

\item There are lower covers~$\ba_1$, \dots, $\ba_n$ of~$S\dnw p$ in~$\Id S$ such that $S\ddnw p=\bigcup_{1\leq i\leq n}\ba_i$.

\item There is no $(n+1)$-element subset~$W$ of~$S\ddnw p$ such that $p=u\vee v$ for all distinct $u,v\in W$.
\end{enumerate}
\end{lemma}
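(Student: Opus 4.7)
The plan is to close the cycle (i)$\Rightarrow$(iii)$\Rightarrow$(ii)$\Rightarrow$(iv)$\Rightarrow$(i), based on the following observation: the lower covers of $S\dnw p$ in~$\Id S$ are exactly the maximal proper ideals of the sub-\js~$S\dnw p$, and every element of~$S\ddnw p$ lies in some such lower cover. Indeed, for any $x\in S\ddnw p$, the principal ideal $S\dnw x$ is a proper ideal of~$S\dnw p$, and since the union of a chain of ideals not containing~$p$ does not contain~$p$, Zorn's lemma extends~$S\dnw x$ to a maximal proper ideal. Hence $S\ddnw p$ equals the union of all lower covers of~$S\dnw p$ in~$\Id S$. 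Assuming~(i), listing the at most~$n$ lower covers (with repetitions if necessary) yields~(iii); (iii)$\Rightarrow$(ii) is immediate; and for (ii)$\Rightarrow$(iv), if $\bigcup_{i=1}^n\ba_i=S\ddnw p$ with ideals~$\ba_i$ of~$S$ and $W\subseteq S\ddnw p$ were an $(n{+}1)$-element set with pairwise joins equal to~$p$, a pigeonhole argument would place two distinct elements $u,v\in W$ in some~$\ba_i$, forcing $p=u\vee v\in\ba_i$, contradicting $\ba_i\subseteq S\ddnw p$.

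The main step is (iv)$\Rightarrow$(i), proved contrapositively. Given $n+1$ pairwise distinct lower covers $\ba_0,\dots,\ba_n$ of~$S\dnw p$, I construct witnesses $u_0,\dots,u_n\in S\ddnw p$ to the failure of~(iv). For each ordered pair $(i,j)$ with $i\neq j$, distinctness together with the maximality of~$\ba_j$ gives $\ba_i\not\subseteq\ba_j$, so I pick $c_{ij}\in\ba_i\setminus\ba_j$. For each unordered pair $\set{i,j}$ with $i\neq j$, maximality forces $\ba_i\vee\ba_j=S\dnw p$ in~$\Id S$, so there exist $\gamma_{ij}\in\ba_i$ and $\gamma_{ji}\in\ba_j$ with $\gamma_{ij}\vee\gamma_{ji}=p$. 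I then set $u_i=\bigvee_{j\neq i}(c_{ij}\vee\gamma_{ij})$. Closure of $\ba_i$ under joins places $u_i$ in~$\ba_i$, so $u_i\in S\ddnw p$. For $j\neq i$, $c_{ij}\leq u_i$ and $c_{ij}\notin\ba_j$, whence the downward closure of $\ba_j$ gives $u_i\notin\ba_j$; in particular, the $u_i$ are pairwise distinct. Finally, for $i\neq j$, $u_i\vee u_j\geq\gamma_{ij}\vee\gamma_{ji}=p$, so $u_i\vee u_j=p$.

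The key delicate point is designing the $u_i$ to simultaneously (a)~lie in the right ideal, (b)~stay pairwise distinct, and (c)~join pairwise to~$p$. The trick is to pack into~$u_i$ both a ``separator'' $c_{ij}$ for each $j\neq i$ (responsible for~(b)) and a ``top-recovering'' component $\gamma_{ij}$ (responsible for~(c)); the ideal structure of~$\ba_i$ then automatically delivers~(a). Once the right elements $c_{ij}$ and $\gamma_{ij}$ are named, the verification is a short chase through downward closure and join-closedness of the $\ba_i$.
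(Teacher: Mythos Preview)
Your proof is correct. The implications (i)$\Rightarrow$(iii)$\Rightarrow$(ii)$\Rightarrow$(iv) coincide with the paper's treatment (Zorn for (i)$\Rightarrow$(iii), pigeonhole for (ii)$\Rightarrow$(iv)). The genuine difference lies in how you close the cycle. The paper proves (iv)$\Rightarrow$(iii) directly: it chooses a \emph{maximal} anti-orthogonal subset~$W$ of~$S\ddnw p$ (necessarily of size~$\leq n$ by~(iv)), and for each $u\in W$ shows that $\ba_u=\setm{x\in S}{x\vee u<p}$ is a lower cover of~$S\dnw p$, with $S\ddnw p=\bigcup_{u\in W}\ba_u$ coming from maximality of~$W$. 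You instead prove (iv)$\Rightarrow$(i) contrapositively, manufacturing an $(n{+}1)$-element anti-orthogonal set from $n{+}1$ distinct lower covers via your $c_{ij}$/$\gamma_{ij}$ construction. The two arguments are in a sense dual: the paper turns anti-orthogonal elements into lower covers, while you turn lower covers into anti-orthogonal elements. The paper's route is slightly more economical in that the single construction $u\mapsto\ba_u$ yields~(iii) immediately, whereas your route needs the separate Zorn step to recover~(iii) from~(i); on the other hand, your construction of the~$u_i$ is entirely explicit and avoids any appeal to maximality of an auxiliary set.
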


\begin{proof}
(ii)$\Rightarrow$(iv). Let~$W$ be an $(n+1)$-element subset of~$S\ddnw p$ such that $p=u\vee v$ for all distinct $u,v\in W$. Every element of~$W$ belongs to some~$\ba_i$, thus there are $i\in[n]$ and distinct $u,v\in W$ such that $u,v\in\ba_i$. Hence $p=u\vee v$ belongs to~$\ba_i$, a contradiction.

(iv)$\Rightarrow$(iii). Let $W\subseteq S\ddnw p$ such that $p=u\vee
v$ for all distinct $u,v\in W$ (we say that~$W$ is
\emph{anti-orthogonal}), of maximal cardinality, necessarily at
most~$n$, with respect to that property. The set $\ba_u=\setm{x\in
  S}{x\vee u<p}$ is a lower subset of $S\ddnw p$, for each $u\in
W$. If~$\ba_u$ is not an ideal, then there are $x,y\in\ba_u$ such that
$p=x\vee y\vee u$, and then $W'=\set{x\vee u,y\vee
  u}\cup(W\setminus\set{u})$ is anti-orthogonal with $\card W'>\card
W$, a contradiction; hence~$\ba_u$ is an ideal of~$S\dnw p$. Let~$\bb$
be an ideal of~$S$ with $\ba_u\subsetneqq\bb\subseteq S\dnw p$. Every
$x\in\bb\setminus\ba_u$ satisfies $p=x\vee u$. Since $x\in\bb$ and
$u\in\ba_u\subseteq\bb$, we get $p\in\bb$, so $\bb=S\dnw p$, thus
completing the proof that $\ba_u\prec S\dnw p$. Finally, it follows
from the maximality assumption on~$W$ that $S\ddnw p=\bigcup_{u\in
  W}\ba_u$.

(iii)$\Rightarrow$(ii) is trivial, so (ii)--(iv) are equivalent. Trivially, (iii) implies~(i). Finally, suppose that~(i) holds. By Zorn's Lemma, every $x\in S\ddnw p$ is contained in some lower cover of~$S\dnw p$; hence~(iii) holds.
\end{proof}

Referring to the canonical join-embedding $S\hookrightarrow\Id S$, $p\mapsto S\dnw p$, we shall often identify~$p$ and~$S\dnw p$ and thus state~(i) above by saying that ``$p$ has at most~$n$ lower covers in the ideal lattice of~$S$''. Since~$\es$ is an ideal, $S\dnw p$ has always a lower cover in~$\Id S$.

\begin{theorem}\label{T:JirrRegSemil}
For any \js~$S$, the completely \jirr\ members of~$\Reg{S}$ are exactly the set differences $(S\dnw p)\setminus\ba'$, for $p\in S$ with at most two lower covers in the ideal lattice of~$S$, one of them being~$\ba'$.
\end{theorem}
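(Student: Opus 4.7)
The plan is to reduce to the structural characterisation of minimal neighborhoods in~$\Reg S$ established in Section~\ref{S:HostSemil}, and then to perform a covering analysis. Combining Lemma~\ref{L:Jirr2MinNbhdSem} with Lemma~\ref{L:DescrJirrRegP}, every completely \jirr\ element $\bc\in\Reg S$ is a minimal neighborhood of its largest element~$p$, with $\bc_*=\bc\setminus\set{p}$; by Theorem~\ref{T:MinNbhdSemil}, such a~$\bc$ has the form $(S\dnw p)\setminus\ba'$ for some maximal proper ideal~$\ba'$ of~$S\dnw p$, equivalently a lower cover of $S\dnw p$ in~$\Id S$. The whole argument will therefore amount to linking the complete \jirry\ of $\bc=(S\dnw p)\setminus\ba'$ to the number of lower covers of $S\dnw p$ in~$\Id S$.

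For necessity, I will start from a completely \jirr\ $\bc=(S\dnw p)\setminus\ba'$ and, for a contradiction, assume $S\dnw p$ has at least three lower covers in~$\Id S$. Lemma~\ref{L:nlowcov}(iv) then supplies an anti-orthogonal triple $\set{u,v,w}\subseteq S\ddnw p$, so that any two of $u,v,w$ join to~$p$. Since~$\ba'$ is a proper ideal of~$S\dnw p$, at most one of $u,v,w$ can lie in~$\ba'$ (if two did, their join~$p$ would too). Picking $u,v\notin\ba'$, both belong to $\bc_*=(S\ddnw p)\setminus\ba'$, yet $u\vee v=p\notin\bc_*$, contradicting the closedness of~$\bc_*$ (forced by $\bc_*\in\Reg S$).

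For sufficiency, I will take $p\in S$ with at most two lower covers in~$\Id S$, one of them~$\ba'$, set $\bc=(S\dnw p)\setminus\ba'$ and $\bc_*=\bc\setminus\set{p}$, and verify that~$\bc_*$ is the unique lower cover of~$\bc$ in~$\Reg S$. Theorem~\ref{T:MinNbhdSemil} already gives that~$\bc$ is clopen. If~$\ba'$ is the sole lower cover, then $\ba'=S\ddnw p$ and $\bc_*=\es$; otherwise, with a second lower cover~$\ba''$, $S\ddnw p=\ba'\cup\ba''$ by Lemma~\ref{L:nlowcov}(iii) and $\bc_*=\ba''\setminus\ba'$ is clopen as a difference of ideals (see the observation preceding Corollary~\ref{C:MinNbhdSemil2}). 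In either case $\bc_*\in\Reg S$ with $\bc_*\subsetneqq\bc$, so it remains to check that every $\bd\in\Reg S$ with $\bd\subsetneqq\bc$ satisfies $p\notin\bd$. Assuming otherwise, the identity $\bd=\tcl\tin\bd$ supplies some $\bx\in\cM(p)$ with $\bx\subseteq\tin\bd\subseteq\bc$. If $\bx=\set{p}$, then $\tin\bd$ is an open neighborhood of~$p$, which by Proposition~\ref{P:MinNbhdClos}(i) contains a minimal neighborhood of~$p$, forcing either $\bc\subseteq\bd$ (so $\bd=\bc$, \contr) or $(S\dnw p)\setminus\ba''\subseteq\bd$ (so $\ba'\subseteq\ba''$, whence $\ba'=\ba''$ by the maximality of~$\ba'$ among proper ideals of~$S\dnw p$, \contr). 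If instead $\bx\neq\set{p}$, then $p\notin\bx$ puts~$\bx$ into~$\bc_*$, which is either empty or contained in the proper ideal~$\ba''$; in the latter case $p=\bigvee\bx\in\ba''$, contradicting $p\notin\ba''$.

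The main obstacle will be uniformly handling the sufficiency direction across the ``one lower cover'' and ``two lower covers'' sub-cases: the $\bx=\set{p}$ analysis requires simultaneously ruling out both candidate minimal neighborhoods of~$p$, and the $\bx\neq\set{p}$ analysis requires simultaneously ruling out $\bc_*$ being empty and $\bc_*$ lying in a proper ideal. The key unifying fact is that the maximality of $\ba'$ (and~$\ba''$, if it exists) inside~$S\dnw p$ collapses any strict inclusion between minimal neighborhoods of~$p$ into the forbidden identification $\ba'=\ba''$.
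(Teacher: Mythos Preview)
Your proof is correct, and its overall scaffolding matches the paper's: both directions rest on Theorem~\ref{T:MinNbhdSemil}, Lemma~\ref{L:Jirr2MinNbhdSem}, Lemma~\ref{L:DescrJirrRegP}, and Lemma~\ref{L:nlowcov}. The tactical details, however, differ in both directions.

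For necessity, the paper argues constructively: it sets $\ba''=S\dnw\ba_*$, observes that $\ba'\cup\ba''\supseteq\ba'\cup\ba_*=S\ddnw p$, and invokes Lemma~\ref{L:nlowcov}(ii)$\Rightarrow$(i) to bound the number of lower covers by two. You instead argue by contradiction via Lemma~\ref{L:nlowcov}(iv), extracting an anti-orthogonal triple and forcing $\bc_*$ to fail closedness. Your route is slightly longer but perfectly sound; the paper's has the minor advantage of identifying the second lower cover explicitly.

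For sufficiency, the paper's argument is more direct: once it establishes $p\in\tin(\bb)$ (using that $\ba\setminus\set{p}\subseteq\ba''$ forbids any nontrivial join to~$p$ inside~$\ba$), it uses the maximality of~$\ba'$ to show that every $x\in\ba\setminus\set{p}$ satisfies $p=x\vee a$ for some $a\in\ba'$, and since $a\notin\bb$ while $p\in\tin(\bb)$, this forces $x\in\bb$; hence $\bb=\ba$. You instead split on whether the minimal covering $\bx\subseteq\tin\bd$ is trivial, handling $\bx=\set{p}$ by pulling a minimal neighborhood inside $\tin\bd$ (via Proposition~\ref{P:MinNbhdClos}(i) and Theorem~\ref{T:MinNbhdSemil}) and collapsing the two candidate neighborhoods by maximality, and handling $\bx\neq\set{p}$ by trapping $\bx$ inside~$\ba''$. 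This works, but the paper's single element-chasing step avoids the case split and the appeal back to minimal-neighborhood containment.
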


\begin{proof}
  It follows from Lemma~\ref{L:Jirr2MinNbhdSem} that every completely
  \jirr\ member~$\ba$ of $\Reg{S}$ is a minimal neighborhood of some
  $p\in S$. By Theorem~\ref{T:MinNbhdSemil}, there exists a lower
  cover~$\ba'$ of~$S\dnw p$ in~$\Id S$ such that $\ba=(S\dnw
  p)\setminus\ba'$. Furthermore, it follows from
  Lemma~\ref{L:DescrJirrRegP} that $\ba_*=\ba\setminus\set{p}$, so
  $\ba''=S\dnw\ba_*$ is a proper ideal of~$S\dnw p$. Since
  $\ba'\cup\ba''$ contains $\ba'\cup\ba_*=S\ddnw p$, we get
  $\ba'\cup\ba''=S\ddnw p$. By Lemma~\ref{L:nlowcov}, it follows
  that~$S\dnw p$ has at most two lower covers in~$\Id S$.

  Conversely, let $p\in S$ with at least one, but at most two, lower
  covers~$\ba'$ and~$\ba''$ in~$\Id S$. The set $\ba=(S\dnw
  p)\setminus\ba'$ is clopen. Moreover, it follows from
  Lemma~\ref{L:nlowcov} that $S\ddnw p=\ba'\cup\ba''$. Now
  $\ba\setminus\set{p}$ is a lower subset of~$\ba$, thus it is
  open. Further, $\ba\setminus\set{p}$ is contained in~$\ba''$,
  thus~$p$ does not belong to its closure; since~$\ba$ is closed, it
  follows that $\ba\setminus\set{p}$ is closed, so it is
  clopen. Let~$\bb\in\Reg{S}$ such that $p\in\bb\subseteq\ba$. Since
  $p\in \bb = \tcl\tin(\bb)$ and~$p$ is not a join of elements of
  $\ba\setminus\set{p}$, we get $p\in\tin(\bb)$. {}From $\ba'\prec
  S\dnw p$ it follows that for each $x\in\ba\setminus\set{p}$, there
  exists $a\in\ba'$ such that $p=x\vee a$. Since $p\in\tin(\bb)$ and
  $a\notin\bb$, it follows that $x\in\bb$. Therefore,
  $\ba\setminus\set{p}\subseteq\bb$, so $\bb=\ba$, thus completing the
  proof that $\ba\setminus\set{p}$ is the unique lower cover of~$\ba$
  in~$\Reg{S}$.
\end{proof}

\section{Boundedness of lattices of regular closed subsets from graphs}\label{S:PermGraph}

Let~$G$ be a graph. We denote by~$\so{G}^+$ the poset of all connected subsets of~$G$, ordered by set inclusion, and we set $\so{G}=\so{G}^+\setminus\set{\es}$. A nonempty finite subset~$\bx$ of~$\so{G}$ is a \emph{partition} of an element~$X\in\so{G}$, in notation $X=\bprt\bx$, if~$X$ is the disjoint union of all members of~$\bx$. In case $\bx=\set{X_1,\dots,X_n}$, we shall sometimes write $X=X_1\prt\cdots\prt X_n$ instead of $X=\bprt\bx$.

For any $\bx\subseteq\so{G}$, let~$\tcl(\bx)$ be the \emph{closure of~$\bx$ under disjoint unions}, that is,
 \[
 \tcl(\bx)=\Setm{X\in\so{G}}
 {(\exists\by\subseteq\bx)
 \pI{X=\bigsqcup\by}}\,.
 \]
Dually, we denote by $\tin(\bx)$ the  interior of~$\bx$, that is, the largest open subset of~$\bx$.

It is straightforward to verify that~$\tcl$ is an algebraic closure operator on~$\so{G}$.
With respect to that closure operator, a subset~$\ba$ of~$\so{G}$ is closed if{f} for any partition $X=X_1\prt\cdots\prt X_n$ in~$\so{G}$, $\set{X_1,\dots,X_n}\subseteq\ba$ implies that $X\in\ba$. Dually, $\ba$ is open if{f} for any partition $X=X_1\prt\cdots\prt X_n$ in~$\so{G}$, $X\in\ba$ implies that $X_i\in\ba$ for some~$i$. In both statements, it is sufficient to take $n=2$ (for whenever $X=X_1\prt\cdots\prt X_n$, there exists $i>1$ such that $X_1\cup X_i$ is connected, and then $X=(X_1\prt X_i)\prt\bprt_{j\notin\set{1,i}}X_j$). In our arguments about graphs, we shall often allow, by convention, the empty set in partitions, thus letting $X=\es\prt X_1\prt\cdots\prt X_n$ simply mean that $X=X_1\prt\cdots\prt X_n$.
We shall call $(\so{G},\tcl)$ the \emph{closure space canonically associated to the graph~$G$}.

For $P\in\so{G}$, a nonempty subset $\bx\subseteq\so{G}$ belongs to~$\cM(P)$ if{f}~$P$ is the disjoint union of a nonempty finite subset of~$\bx$, but of no proper subset of~$\bx$. Hence $\bx\in\cM(P)$ if{f}~$\bx$ is finite and $P=\bigsqcup\bx$, and we get the following.

\begin{proposition}\label{P:soGSemilType}
The closure space $(\so{G},\tcl)$ has semilattice type, for every graph~$G$.
\end{proposition}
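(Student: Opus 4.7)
The plan is very short because the bulk of the work has already been done in the paragraphs preceding the statement, where it is observed that $\tcl$ is an algebraic closure operator and that the minimal coverings of $P\in\so{G}$ are precisely the finite partitions of $P$ into connected pieces. What remains is to check the one clause in Definition~\ref{D:SemilType} specific to semilattice type: that $P=\bigvee\bx$ (in the poset $(\so{G},\subseteq)$) whenever $\bx\in\cM(P)$.

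So, first, I would briefly recall from the preceding discussion that $\cM(P)$ consists exactly of the finite sets $\bx=\{X_1,\dots,X_n\}\subseteq\so{G}$ with $P=X_1\sqcup\cdots\sqcup X_n$, and that $(\so{G},\tcl)$ is algebraic. Both facts are immediate from the definition of $\tcl$: a finite disjoint union of members of $\bx$ that equals $P$ must use each $X_i$ exactly once, and conversely any nonempty disjoint union subcollection of $\bx\setminus\{X_j\}$ omits the points of $X_j$ and so cannot equal~$P$.

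Next, I would establish that such a partition is a join in $(\so{G},\subseteq)$. Given $\bx=\{X_1,\dots,X_n\}\in\cM(P)$, the set $P=X_1\cup\cdots\cup X_n\in\so{G}$ is obviously an upper bound of $\bx$ in $(\so{G},\subseteq)$. Conversely, let $Q\in\so{G}$ be any upper bound of $\bx$, so that $X_i\subseteq Q$ for each $i$; then
\[
P=X_1\cup\cdots\cup X_n\subseteq Q,
\]
so $P\leq Q$ in $\so{G}$. Hence $P$ is the least upper bound of $\bx$ in $\so{G}$, i.e., $P=\bigvee\bx$, which is precisely what the semilattice-type condition demands.

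There is no real obstacle here: the notion of ``minimal covering'' was arranged so that its members partition $P$, and the set-theoretic join of a partition into connected pieces is automatically the join in the poset of connected subsets. The only mild subtlety is that $\so{G}$ is not itself a join-semilattice (the union of two connected subsets need not be connected), but as Remark~\ref{Rk:SemTypeNotSem} makes explicit, Definition~\ref{D:SemilType} does not require $P$ to be a semilattice, only that the specific joins $\bigvee\bx$ for $\bx\in\cM(p)$ exist and equal~$p$.
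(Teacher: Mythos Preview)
Your proposal is correct and follows the same approach as the paper: the paper establishes, in the paragraph immediately preceding the proposition, that $\bx\in\cM(P)$ if{f}~$\bx$ is finite and $P=\bigsqcup\bx$, and then simply states the proposition. You have made explicit the (trivial) verification that such a partition has join~$P$ in $(\so{G},\subseteq)$, which the paper leaves implicit; this is the only missing step and you handle it correctly.
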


Observe that $(\so{G},\subseteq)$ might
not be a \js, for example if $G = \cC_{4}$. On the other hand, if $G$ is a block graph, then $(\so{G},\subseteq)$ is a \js\ if{f}~$G$ is connected.

\begin{definition}\label{D:PermG}
The \emph{permutohedron} (resp., \emph{extended permutohedron}) on~$G$ is the set~$\sP(G)$ (resp., $\sR(G)$) of all clopen (resp., regular closed) subsets of~$\so{G}$, ordered by set inclusion. That is, $\sP(G)=\Clop(\so{G},\tcl)$ and $\sR(G)=\Reg(\so{G},\tcl)$.
\end{definition}

In particular, $\sR(G)$ is always a lattice. We will see in Section~\ref{S:ClopGraph} in which case~$\sP(G)$ is a lattice.

\begin{example}\label{Ex:PermSnGrph}
The Dynkin diagram~$G_{n}$ of the symmetric group~$\fS_n$ consists of all transpositions $\sigma_i=\begin{pmatrix}i & i+1\end{pmatrix}$, where $1\leq i<n$, with $\sigma_i\sim\sigma_j$ if{f} $i-j=\pm1$.
  
Observe that there is a bijection between the connected subgraphs of~$G_{n}$ and the pairs $(i,j)$ with $1 \leq i < j \leq n$, whereas a set of connected subsets is closed if{f} the corresponding pairs form a transitive relation. Hence, $\sP(G_{n})$ is isomorphic to the lattice of permutations on~$n$ elements, that is, to the classical permutohedron~$\sP(n)$.
\end{example}

We define the collection of all \emph{cuts}, respectively \emph{proper cuts}, of a connected subset~$H$ in a graph~$G$ as
 \begin{align*}
 \Cuts(H)&=
 \setm{X\subseteq H
 \text{ nonempty}}
 {X\text{ and }H\setminus X
 \text{ are both connected}}\,,\\
 \Cuts_*(H)&=
 \Cuts(H)\setminus\set{H}\,.
 \end{align*}
The following lemma says that any completely \jirr\ element of~$\sR(G)$ is ``open on cuts''.

\begin{lemma}\label{L:JIalmOpen}
Let~$\ba$ be a completely \jirr\ element of~$\sR(G)$, with largest element~$H$. Then $\ba\cap\Cuts(H)$ is contained in $\tin(\ba)$.
\end{lemma}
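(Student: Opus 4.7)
The plan is to argue by contradiction, exploiting the fact that $X$ being a cut is precisely what lets me graft $H\setminus X$ onto any partition of $X$ to obtain a partition of $H$. First, I would dispose of the trivial case $X=H$, which follows immediately from Lemma~\ref{L:DescrJirrRegP} because $H\in\cgf(\ba)=\tin(\ba)$. Thus the real work is when $X\in\Cuts_*(H)$, so that $H\setminus X$ is itself a nonempty connected subset of~$G$, hence a member of~$\so{G}$.

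Next, I would suppose $X\notin\tin(\ba)$ and look for a contradiction. By Lemma~\ref{L:OpenSemType} there is a minimal covering $\bx\in\cM(X)$ with $\bx\cap\ba=\es$; since $X\in\ba$, the trivial covering $\{X\}$ is excluded, so Proposition~\ref{P:soGSemilType} together with the description of minimal coverings in $(\so{G},\tcl)$ forces $\bx=\{X_1,\dots,X_n\}$ to be a nontrivial partition of $X$ into connected pieces, with each $X_i\notin\ba$.

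The crucial step is then to consider $\by=\{X_1,\dots,X_n,H\setminus X\}$. Because $X$ is a cut, this is a partition of $H$, hence $\by\in\cM(H)$. Since Lemma~\ref{L:DescrJirrRegP} gives $H\in\tin(\ba)$, Lemma~\ref{L:OpenSemType} forces $\by$ to meet~$\ba$; as no $X_i$ lies in~$\ba$, this pins down $H\setminus X\in\ba$.

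To finish, I would observe that both $X$ and $H\setminus X$ are proper nonempty subsets of $H$, so both lie in $\ba_{*}=\ba\setminus\{H\}$ by Lemma~\ref{L:DescrJirrRegP}. Since $\ba_{*}$ is regular closed, in particular closed under disjoint unions with connected output, and $H=X\sqcup(H\setminus X)$, this would give $H\in\ba_{*}$, contradicting $H\notin\ba_{*}$. The only genuine obstacle here is spotting the role of the cut hypothesis: everything else is mechanical once one sees that ``cut'' is exactly the feature allowing a partition of $X$ to be extended by one piece to a partition of $H$, after which the interior property of $H$ and the closedness of $\ba_{*}$ collide.
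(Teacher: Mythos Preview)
Your proof is correct and follows essentially the same approach as the paper's. The only cosmetic difference is that the paper argues directly (for an arbitrary partition $X=X_1\prt\cdots\prt X_n$, it first shows $H\setminus X\notin\ba$ using closedness of~$\ba_*$, then uses $H\in\tin(\ba)$ to force some $X_i\in\ba$), whereas you phrase the same two steps contrapositively; the key ingredients---$H\in\tin(\ba)$, $\ba_*=\ba\setminus\{H\}$ closed, and the extended partition $H=(H\setminus X)\prt X_1\prt\cdots\prt X_n$---are identical.
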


\begin{proof}
Let $X=X_1\prt\cdots\prt X_n$ with $X\in\ba\cap\Cuts(H)$, we must prove that $X_i\in\ba$ for some~$i$. If $X=H$ then this follows from $H\in\tin(\ba)$ (cf. Lemma~\ref{L:DescrJirrRegP}). Suppose from now on that $X\neq H$. The complement $Y=H\setminus X$ is connected. Furthermore, $Y\notin\ba$ (otherwise~$X$ and~$Y$ would both belong to $\ba\setminus\set{H}=\ba_*$, so $H\in\ba_*$, a contradiction) and $H=Y\prt\bprt_{1\leq i\leq n}X_i$ belongs to~$\tin(\ba)$ (cf. Lemma~\ref{L:DescrJirrRegP}), thus $X_i\in\ba$ for some~$i$.
\end{proof}

For subsets~$U$ and~$V$ in a graph~$G$, we set
 \begin{align}
 U\simeq V&\quad\text{if}\quad
 \pI{\exists(u,v)\in U\times V}
 (\text{either }u=v
 \text{ or }u\sim v)\,,
 \label{Eq:DefnUsimeqV}\\
 U\sim V&\quad\text{if}\quad
 \pII{U\cap V=\es\text{ and }
 \pI{\exists(u,v)\in U\times V}
 (u\sim v)}\,.
 \label{Eq:DefnUsimV}
 \end{align}
Hence, $U\sim V$ if{f} $U\simeq V$ and $U\cap V=\es$. Moreover, if
 $U,V\in\so{G}$, then $U\simeq V$ if{f} $U\cup V$ is connected. We
 denote by~$\coco(X)$ the set of all connected components of a
 subset~$X$ of~$G$. We omit the straightforward proof of the following
 lemma.

\begin{lemma}\label{L:HominusX}
The following statements hold, for all $X,H\in\so{G}$ with $X\subseteq H$ and all $U,V\in\coco(H\setminus X)$:
\begin{enumerate}
\item $U$ is a cut of~$H$ and $U\sim X$;

\item if $U\neq V$, then $U\not\simeq V$.
\end{enumerate}
\end{lemma}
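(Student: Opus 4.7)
The plan is to establish the two items separately, both by elementary component-chasing in~$H$. For~(i), I would first note that $U\cap X=\es$ and that $U$ is connected directly from the definition of a connected component of $H\setminus X$. To produce the relation $U\sim X$, I would pick $u\in U$ and, using the connectedness of $H$ together with $X\neq\es$, take a shortest path in $H$ from $u$ to a vertex of $X$; by minimality, its penultimate vertex lies in $H\setminus X$ and is linked to $u$ by an initial segment entirely contained in $H\setminus X$, hence in the component $U$, so the final edge of the path produces the required neighbouring pair. To prove that $H\setminus U$ is connected, I would rewrite it as $X\cup\bigcup\setm{V\in\coco(H\setminus X)}{V\neq U}$ and apply the same shortest-path argument to each $V\neq U$ in place of $U$: each such~$V$ contains a vertex adjacent to some vertex of~$X$, and since~$X$ itself is connected the pieces glue together into a connected set. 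This will also ensure $U\in\Cuts(H)$.

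For~(ii), I would argue by contradiction. If $U\simeq V$, then either $U\cap V\neq\es$, which violates the assumption that $U$ and $V$ are distinct connected components of $H\setminus X$, or there is an edge between a vertex of~$U$ and a vertex of~$V$ inside $H\setminus X$, forcing $U$ and~$V$ to belong to the same component, again a contradiction.

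I do not foresee any real obstacle. The only mild point is the degenerate case $X=H$, where $\coco(H\setminus X)=\es$ and the statement becomes vacuous; once $X\subsetneq H$ is in force, $U$ is automatically nonempty and the shortest-path argument is uniform. The whole lemma is essentially a convenient repackaging of standard facts about components of induced subgraphs, tailored to feed the subsequent analysis of~$\sR(G)$.
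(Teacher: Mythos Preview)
Your argument is correct and is exactly the kind of elementary component-chasing the authors have in mind; in fact the paper omits the proof entirely, declaring it straightforward. Your shortest-path argument for $U\sim X$ and the decomposition $H\setminus U=X\cup\bigcup\setm{V\in\coco(H\setminus X)}{V\neq U}$ are the natural way to fill in the details, and your handling of the degenerate case $X=H$ is appropriate.
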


If $X\subseteq H$ in~$\so{G}$, let $X\leq^{\oplus}H$ hold if $\coco(H\setminus X)$ is finite.

\begin{lemma}\label{L:H-Xdisjba}
Let~$G$ be a graph, let~$\ba$ be a completely \jirr\ element of~$\sR(G)$ with greatest element~$H$, and let $X\in\partial\ba$. Then $X\leq^{\oplus}H$ and\linebreak $\coco(H\setminus X)\cap\ba=\es$.
\end{lemma}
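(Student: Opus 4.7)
The plan is to apply Lemma~\ref{L:DescrJirrRegP} to the fixed $X\in\partial\ba$: this furnishes a minimal covering $\bx\in\cM(H)$ with $\bx\cap\ba=\set{X}$. Unraveling the definition of $\cM(H)$ in the closure space $(\so{G},\tcl)$, such a $\bx$ is precisely a finite partition $H=X\prt Y_1\prt\cdots\prt Y_n$ of $H$ into pairwise disjoint nonempty connected subsets, with $Y_i\notin\ba$ for every $i\in[n]$ (if $n=0$ then $H=X$ and both conclusions are trivial, so assume $n\geq 1$). Since each $Y_i$ is connected and disjoint from $X$, it lies in a unique connected component of $H\setminus X$; and since the family $\set{Y_1,\dots,Y_n}$ covers $H\setminus X$, every connected component of $H\setminus X$ is the disjoint union of the $Y_i$'s it contains. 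In particular, $\card\coco(H\setminus X)\leq n$, giving $X\leq^{\oplus}H$.

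For the second claim, I would argue by contradiction: assume some $C\in\coco(H\setminus X)$ lies in $\ba$. By Lemma~\ref{L:HominusX}(i), $C$ is a cut of $H$, so $C\in\ba\cap\Cuts(H)$ and Lemma~\ref{L:JIalmOpen} yields $C\in\tin(\ba)$. Invoking Lemma~\ref{L:OpenSemType}, every minimal covering of $C$ must meet $\ba$. Yet the family $\by=\setm{Y_i}{Y_i\subseteq C}$ consists of finitely many pairwise disjoint nonempty connected subsets whose disjoint union is~$C$, so $\by\in\cM(C)$; and $\by\cap\ba=\es$ because $Y_i\notin\ba$ for every $i\in[n]$. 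This is the desired contradiction.

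The key conceptual input is the recognition that the connected components of $H\setminus X$ automatically qualify as cuts of $H$ (Lemma~\ref{L:HominusX}(i)), which allows the ``openness on cuts'' property of $\ba$ from Lemma~\ref{L:JIalmOpen} to be combined with the minimal covering of $H$ supplied by Lemma~\ref{L:DescrJirrRegP}. I do not anticipate any serious obstacle: once $\bx$ has been extracted, the remaining work is simply to track how the parts $Y_i$ distribute among the components of $H\setminus X$.
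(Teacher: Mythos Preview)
Your proof is correct and follows essentially the same approach as the paper's: both extract the minimal covering $\bx\in\cM(H)$ with $\bx\cap\ba=\set{X}$ from Lemma~\ref{L:DescrJirrRegP}, deduce $X\leq^{\oplus}H$ from the finiteness of this partition, and then combine Lemma~\ref{L:HominusX}(i) with Lemma~\ref{L:JIalmOpen} to reach a contradiction from any component $C\in\coco(H\setminus X)\cap\ba$ via the sub-partition $\by\subseteq\bx\setminus\set{X}$ covering~$C$. Your version is slightly more explicit (invoking Lemma~\ref{L:OpenSemType} and handling the degenerate case $X=H$), but there is no substantive difference.
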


\begin{proof}
By the final statement of Lemma~\ref{L:DescrJirrRegP}, there exists $\bx\in\cM(H)$ such that $\bx\cap\ba=\set{X}$. {}From $H=X\prt\bprt\vecm{Z}{Z\in\bx\setminus\set{X}}$ it follows that $X\leq^{\oplus}H$. Now let $Y\in\coco(H\setminus X)$ and suppose that~$Y\in\ba$. Since~$Y$ is a cut of~$H$ (cf. Lemma~\ref{L:HominusX}) and by Lemma~\ref{L:JIalmOpen}, $Y\in\tin(\ba)$. Furthermore, $Y=\bprt\by$ for some $\by\subseteq\bx\setminus\set{X}$, thus $\by\cap\ba\neq\es$, and thus $(\bx\setminus\set{X})\cap\ba\neq\es$, a contradiction.
\end{proof}

The following lemma means that in the finite case, the \jirr\ members of  $\sR(G)$ are determined by their proper cuts. This result will be extended to the infinite case, with a noticeably harder proof, in Corollary~\ref{C:cltjmujirr}.

\begin{lemma}\label{L:baiiCuts2ba}
Let~$G$ be a finite graph, let~$\ba$ and~$\bb$ be \jirr\ elements of~$\sR(G)$ with the same largest element~$H$. If $\ba\cap\Cuts_*(H)=\bb\cap\Cuts_*(H)$, then $\ba=\bb$.
\end{lemma}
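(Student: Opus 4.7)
The plan is to establish $\ba \subseteq \bb$; the reverse inclusion then follows by symmetry, since the hypotheses are symmetric in $\ba$ and $\bb$. To prove $\ba \subseteq \bb$, I would exploit the fact that $(\so{G},\tcl)$ has semilattice type and \emph{a fortiori} poset type, hence is a (finite) convex geometry by the observation following Lemma~\ref{L:DrelClSpSem}. In a finite convex geometry, every closed set is the closure of its set of extreme points; consequently, it is enough to show $\partial \ba \subseteq \bb$.

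So fix $X \in \partial \ba$. The case $X = H$ is immediate, so assume $X \neq H$. By Lemma~\ref{L:H-Xdisjba}, the set $H \setminus X$ has finitely many connected components $Y_1, \ldots, Y_k$, all disjoint from $\ba$. By Lemma~\ref{L:HominusX}(i), each $Y_i$ lies in $\Cuts_*(H)$; the hypothesis $\ba \cap \Cuts_*(H) = \bb \cap \Cuts_*(H)$ then yields $Y_i \notin \bb$ for every $i$.

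It remains to show $X \in \bb$. The set $\bx = \{X, Y_1, \ldots, Y_k\}$ is a partition of $H$ into nonempty connected subsets, and is therefore an element of $\cM(H)$. On the other hand, Lemma~\ref{L:DescrJirrRegP} gives $H \in \tin(\bb)$, so by Lemma~\ref{L:OpenSemType} every minimal covering of $H$ must meet $\bb$; in particular $\bx \cap \bb \neq \es$. Since no $Y_i$ lies in $\bb$, this pins down $X \in \bb$, finishing the proof that $\partial \ba \subseteq \bb$, whence $\ba \subseteq \bb$.

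I do not anticipate any serious obstacle. The only non-routine move is the reduction to proving $\partial \ba \subseteq \bb$ via the convex-geometry property of $(\so{G},\tcl)$; once this is in place, the combination of Lemmas~\ref{L:HominusX}, \ref{L:H-Xdisjba}, \ref{L:DescrJirrRegP}, and \ref{L:OpenSemType} applied to the canonical partition of $H$ associated to a boundary element $X$ makes the cut hypothesis deliver exactly what is needed.
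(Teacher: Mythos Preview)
Your proof is correct and follows essentially the same route as the paper's: reduce by symmetry and the convex-geometry identity $\ba=\tcl(\partial\ba)$ to showing $\partial\ba\subseteq\bb$, then for $X\in\partial\ba\setminus\set{H}$ use Lemma~\ref{L:H-Xdisjba} and the hypothesis to exclude the components of $H\setminus X$ from~$\bb$, and conclude $X\in\bb$ from $H\in\tin(\bb)$. The only cosmetic difference is that you make the appeal to Lemma~\ref{L:OpenSemType} explicit where the paper leaves it implicit.
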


\begin{proof}
By symmetry, it suffices to prove that $\ba\subseteq\bb$. Since $\ba=\tcl(\partial\ba)$ (cf. Edelman and Jamison \cite[Theorem~2.1]{EdJa}), it suffices to prove that every $X\in\partial\ba$ belongs to~$\bb$. If $X=H$ this is obvious, so suppose that $X\neq H$. It follows from Lemma~\ref{L:H-Xdisjba} that $X\leq^{\oplus}H$ and $\coco(H\setminus X)\cap\ba=\es$. Since every element of $\coco(H\setminus X)$ is a (proper) cut of~$H$ and by assumption, it follows that $\coco(H\setminus X)\cap\bb=\es$. Since $H\in\tin(\bb)$ (cf. Lemma~\ref{L:DescrJirrRegP}) and $H=X\prt\bprt\vecm{Y}{Y\in\coco(H\setminus X)}$, it follows that $X\in\bb$, as desired.
\end{proof}

\begin{lemma}\label{L:babccomploncuts}
Let~$G$ be a graph, let~$\ba$ and~$\bc$ be completely \jirr\ elements of~$\sR(G)$, with the same largest element~$H$, such that $\orth{\ba}\searrow\bc$. Then $\ba\cap\Cuts_*(H)$ and $\bc\cap\Cuts_*(H)$ are complementary in $\Cuts_*(H)$.
\end{lemma}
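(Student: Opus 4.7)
The statement is an ``exactly one'' dichotomy over $\Cuts_*(H)$, so I will prove disjointness $\ba\cap\bc\cap\Cuts_*(H)=\es$ and covering $\Cuts_*(H)\subseteq\ba\cup\bc$ separately. The main input from the hypothesis $\orth{\ba}\searrow\bc$ is the identity
\[
\tin(\ba)\cap\bc=\set{H}\,,
\]
which is exactly the content of the Claim in the proof of Lemma~\ref{L:PerspInt2SD}, applied here with $\max\bc=H$. The second ingredient is that $H\in\tin(\ba)$ and $H\in\tin(\bc)$, by Lemma~\ref{L:DescrJirrRegP} applied to each of $\ba$ and $\bc$.

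For disjointness, suppose $X\in\ba\cap\bc\cap\Cuts_*(H)$. Since $X\in\ba\cap\Cuts(H)$, Lemma~\ref{L:JIalmOpen} gives $X\in\tin(\ba)$, hence $X\in\tin(\ba)\cap\bc=\set{H}$, contradicting $X\in\Cuts_*(H)$. For covering, fix $X\in\Cuts_*(H)$ and set $Y=H\setminus X$; by definition of a proper cut, $Y$ is itself in $\Cuts_*(H)$, and the pair $\set{X,Y}$ is a minimal covering of $H$ (since $H=X\prt Y$ with both parts nonempty connected, while no singleton subcollection has $H$ in its closure). Because $H\in\tin(\ba)\cap\tin(\bc)$, Lemma~\ref{L:OpenSemType} forces $\set{X,Y}$ to meet each of $\ba$ and $\bc$. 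If both $X$ and $Y$ lay in $\ba$, then Lemma~\ref{L:JIalmOpen} would put both into $\tin(\ba)$, and the element of $\set{X,Y}$ that also lies in $\bc$ would then be in $\tin(\ba)\cap\bc=\set{H}$, contradicting $X,Y\in\Cuts_*(H)$. Hence exactly one of $X,Y$ lies in $\ba$; by the analogous argument exactly one lies in $\bc$; and by disjointness these two must be different. In particular $X\in\ba\cup\bc$.

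There is no serious obstacle here: all the substantive technical work is already packaged in Lemmas~\ref{L:DescrJirrRegP}, \ref{L:JIalmOpen}, and~\ref{L:PerspInt2SD}. The only conceptual point to notice is that $\Cuts_*(H)$ is partitioned into complementary pairs $\set{X,H\setminus X}$, and that for each such pair the two-block partition $H=X\prt Y$ is a minimal covering of~$H$; this is what transports the scalar equality $\tin(\ba)\cap\bc=\set{H}$ into the set-level complementarity we need.
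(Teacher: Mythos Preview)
Your proof is correct and follows essentially the same route as the paper's: both use the identity $\tin(\ba)\cap\bc=\set{H}$ from the Claim in Lemma~\ref{L:PerspInt2SD}, together with Lemma~\ref{L:JIalmOpen} and $H\in\tin(\ba)\cap\tin(\bc)$, and the paper's covering step is just the contrapositive form of yours (from $X\notin\ba$ deduce $H\setminus X\in\tin(\ba)$, hence $H\setminus X\notin\bc$, hence $X\in\bc$).

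One small wrinkle: the phrase ``by the analogous argument exactly one lies in~$\bc$'' is not literally symmetric, because the hypothesis $\orth{\ba}\searrow\bc$ gives $\tin(\ba)\cap\bc=\set{H}$ but not immediately $\tin(\bc)\cap\ba=\set{H}$. You do not actually need the symmetric identity, though: once you know exactly one of $X,Y$ lies in~$\ba$ and at least one lies in~$\bc$, the disjointness you already proved forces the one in~$\bc$ to be the other one, and $X\in\ba\cup\bc$ follows.
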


\begin{proof}
The statement $\tin(\ba)\cap\bc=\set{H}$ is established in the Claim within the proof of Lemma~\ref{L:PerspInt2SD}. By Lemma~\ref{L:JIalmOpen}, it follows that $\ba\cap\bc\cap\Cuts(H)=\set{H}$.

Now let $X\in\Cuts(H)\setminus\ba$, we must prove that $X\in\bc$. Necessarily, $X\neq H$. {}From $H=X\prt(H\setminus X)$, $X\notin\ba$, and $H\in\tin(\ba)$ it follows that $H\setminus X\in\tin(\ba)$. Since $\tin(\ba)\cap\bc=\set{H}$, it follows that $H\setminus X\notin\bc$. {}From $H=X\prt(H\setminus X)$, $H\setminus X\notin\bc$, and $H\in\tin(\bc)$ it follows that $X\in\bc$.
\end{proof}

\begin{theorem}\label{T:bsPGBded}
The extended permutohedron $\sR(G)=\Reg(\so{G},\tcl)$ on a finite graph~$G$ is a bounded homomorphic image of a free lattice.
\end{theorem}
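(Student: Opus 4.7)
The plan is to mimic closely the strategy used in the proofs of Theorem~\ref{T:SDReg2bounded} and Theorem~\ref{T:FinSemilBded}, the latter being the analogue in the semilattice setting. Since $(\so{G},\tcl)$ has semilattice type (Proposition~\ref{P:soGSemilType}) and $\sR(G)$ is self-dual via the orthocomplementation~$\orth{}$, it will be enough to prove that the join-dependency relation~$\Dr$ on~$\sR(G)$ is cycle-free; the same statement for~$\sR(G)^{\op}$ then follows by self-duality, and the Freese--Je\v{z}ek--Nation criterion (\cite[Corollary~2.39]{FJN}, cited already in the excerpt) yields boundedness.

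To prove cycle-freeness, I will show the strict monotonicity
\[
\ba\Dr\bb\ \Longrightarrow\ \max\ba>\max\bb
\qquad\text{for all \jirr\ }\ba,\bb\in\sR(G),
\]
where $\max\ba$ and $\max\bb$ refer to the largest elements furnished by Lemma~\ref{L:DescrJirrRegP}. By Lemma~\ref{L:Arr2D}, $\ba\Dr\bb$ yields a completely \mirr\ $\bu\in\sR(G)$ with $\ba\nearrow\bu\searrow\bb$; the element $\bc=\orth{\bu}$ is then completely \jirr, and Lemma~\ref{L:PerspInt2SD} gives $\max\ba\geq\max\bc\geq\max\bb$. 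I will assume, toward a contradiction, that all three maxima coincide with a common $H\in\so{G}$, and deduce $\ba=\bb$ (which contradicts $\ba\Dr\bb$).

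The key observation is that the two arrow relations translate cleanly into the hypothesis of Lemma~\ref{L:babccomploncuts}. Unfolding definitions, $\ba\nearrow\orth{\bc}$ is equivalent to $\orth{\ba}\searrow\bc$ (both assert $\bc_{*}\leq\orth{\ba}$ and $\bc\nleq\orth{\ba}$, using $\orth{\bc}^{*}=\orth{(\bc_{*})}$); similarly $\orth{\bc}\searrow\bb$ is already in the required form. Applying Lemma~\ref{L:babccomploncuts} twice, once to the pair $(\ba,\bc)$ and once to $(\bc,\bb)$, I obtain that both $\ba\cap\Cuts_{*}(H)$ and $\bb\cap\Cuts_{*}(H)$ are the complement of $\bc\cap\Cuts_{*}(H)$ inside $\Cuts_{*}(H)$. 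Hence
\[
\ba\cap\Cuts_{*}(H)=\bb\cap\Cuts_{*}(H).
\]
Since $G$ is finite, Lemma~\ref{L:baiiCuts2ba} forces $\ba=\bb$, the desired contradiction.

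The main obstacle, as I see it, is simply the bookkeeping around the arrow relations: one must verify carefully that $\ba\nearrow\orth{\bc}$ really does restate as $\orth{\ba}\searrow\bc$, so that Lemma~\ref{L:babccomploncuts} (whose hypothesis is written in the $\orth{}\searrow$ form) can be invoked twice. Once that translation is in place, the argument runs on rails parallel to Theorem~\ref{T:FinSemilBded}, with the pair (Lemma~\ref{L:babccomploncuts}, Lemma~\ref{L:baiiCuts2ba}) playing the role previously played by the description of minimal neighborhoods as complements of maximal ideals (Theorem~\ref{T:MinNbhdSemil}) and the easy rigidity of maximal ideals used there. Finiteness of~$G$ is used only through Lemma~\ref{L:baiiCuts2ba}; the cut-complementarity step is valid in general.
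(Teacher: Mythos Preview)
Your proposal is correct and follows essentially the same route as the paper's own proof: reduce boundedness to the absence of $\Dr$-cycles via the Freese--Je\v{z}ek--Nation criterion, use Lemma~\ref{L:Arr2D} and Lemma~\ref{L:PerspInt2SD} to get $\max\ba\geq\max\bc\geq\max\bb$, and in the equality case invoke Lemma~\ref{L:babccomploncuts} twice and then Lemma~\ref{L:baiiCuts2ba} to force $\ba=\bb$. Your explicit verification that $\ba\nearrow\orth{\bc}$ is equivalent to $\orth{\ba}\searrow\bc$ makes explicit a step the paper leaves implicit; one minor caveat is that finiteness of~$G$ is also used for the applicability of Lemma~\ref{L:Arr2D} and the $\Dr$-cycle criterion, not only through Lemma~\ref{L:baiiCuts2ba}.
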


\begin{proof}
As in the proof of Theorem~\ref{T:SDReg2bounded}, it is sufficient to prove that $\ba\Dr\bb$ implies that $\max\ba>\max\bb$, for all \jirr\ elements~$\ba$ and~$\bb$ of~$\sR(G)$. By Lemma~\ref{L:Arr2D}, there exists a \mirr\ $\bu\in\sR(G)$ such that $\ba\nearrow\bu\searrow\bb$. The element $\bc=\orth{\bu}$ is \jirr\ and it follows from Lemma~\ref{L:PerspInt2SD} that $\max\ba\geq\max\bc\geq\max\bb$. Suppose that $\max\ba=\max\bc=\max\bb$ and denote that element by~$H$. It follows from Lemma~\ref{L:babccomploncuts} that $\pI{\ba\cap\Cuts_*(H),\bc\cap\Cuts_*(H)}$ and $\pI{\bc\cap\Cuts_*(H),\bb\cap\Cuts_*(H)}$ are both complementary pairs, within~$\Cuts_*(H)$, of proper cuts. It follows that $\ba\cap\Cuts_*(H)=\bb\cap\Cuts_*(H)$, so, by Lemma~\ref{L:baiiCuts2ba}, $\ba=\bb$, in contradiction with $\ba\Dr\bb$.
\end{proof}

\begin{example}\label{Ex:InfteMainSD}
The conclusion of Theorem~\ref{T:bsPGBded} implies, in particular, that~$\sR(G)$ is semidistributive for any finite graph~$G$. We show here that this conclusion cannot be extended to infinite graphs. The infinite path $\cP_{\go}=\set{0,1,2,\dots}$, with graph incidence defined by $i\sim j$ if $i-j=\pm1$, is an infinite tree. The subsets~$\ba$, $\bb$, $\bc$ of~$\so{\cP_{\go}}$ defined by
 \begin{align*}
 \ba&=\setm{\co{2m,\infty}}{m<\go}\cup
 \setm{[2m,2n]}{m\leq n<\go}\,,\\
 \bb&=\setm{\co{2m+1,\infty}}{m<\go}\cup
 \setm{[2m+1,2n+1]}{m\leq n<\go}\,,\\
 \bc&=\setm{\co{n,\infty}}{n<\go}
 \end{align*}
are all clopen.  Furthermore,
 \begin{align*}
 \ba\cap\bc&=
 \setm{\co{2m,\infty}}{m<\go}\,,\\
 \bb\cap\bc&=
 \setm{\co{2m+1,\infty}}{m<\go}
 \end{align*}
have both empty interior, so $\ba\wedge\bc=\bb\wedge\bc=\es$. On the other hand, $\bc\subseteq\ba\cup\bb$, thus $(\ba\vee\bb)\wedge\bc=\bc\neq\es$. Therefore, $\sR(\cP_\go)$ (which, by Theorem~\ref{T:PGLatt}, turns out to be identical to~$\sP(\cP_\omega)$) is neither pseudocomplemented nor \msd\ (so it is not \jsd\ either).
\end{example}

\section{Graphs whose permutohedron is a lattice}\label{S:ClopGraph}

The main goal of this section is a characterization of those graphs~$G$ such that the permutohedron $\sP(G)$ ($=\Clop(\so{G},\tcl)$) is a lattice. Observe that unlike Theorem~\ref{T:ClopWFLatt}, the statement of Theorem~\ref{T:PGLatt} does not require any finiteness assumption on~$G$.

\begin{theorem}\label{T:PGLatt}
The following are equivalent, for any graph~$G$:
\begin{enumerate}
\item $\sP(G)$ is a lattice.

\item The closure of any open subset of~$\so{G}$ is open \pup{i.e., $\sP(G)=\sR(G)$}.

\item $G$ is a block graph without $4$-cliques.
\end{enumerate}
\end{theorem}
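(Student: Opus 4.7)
The plan is to prove the cycle of implications (ii)$\Rightarrow$(i)$\Rightarrow$(iii)$\Rightarrow$(ii). The first is immediate, since $\sR(G)$ is always a lattice by Lemma~\ref{L:BasicReg}.

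For (iii)$\Rightarrow$(ii), the heart is the following structural fact on block graphs, which I would establish first: \emph{for any $X\in\so{G}$ and any partition $X=X_1\prt X_2$ into two nonempty connected subsets, exactly one block of the induced subgraph $G[X]$ is straddled by the partition}. Existence follows from the connectedness of $X$; otherwise every block lies wholly on one side, forcing a cut vertex between sides to belong to both. Uniqueness is obtained by picking two straddled blocks at minimal block-tree distance---they must share a cut vertex $v\in X_1$ (say), and then the non-$v$ vertices of each block that lie in $X_2$ would belong to distinct connected components of $G[X_2]$, since any path between them in $G[X]$ passes through $v\notin X_2$. Under (iii), this unique straddled block is an edge or a triangle. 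Now let $\bu$ be open and $X\in\tcl(\bu)$ with $X=X_1\prt X_2$; by algebraicity of~$\tcl$, write $X=\bigsqcup_{j\in J}Y_j$ with $Y_j\in\bu$ and $J$ finite. Intersections of connected subsets are connected in any block graph, so each $Y_j\cap X_i$ is connected or empty, and each straddling $Y_j$ must contain the (unique) minority-side vertex of the straddled block of $G[X]$; pairwise disjointness of the $Y_j$ then forces at most one straddler $Y_{j_0}$. If no such $Y_{j_0}$ exists, $X_i=\bigsqcup\setm{Y_j}{Y_j\subseteq X_i}$ realizes $X_i\in\tcl(\bu)$; otherwise, openness of $\bu$ applied to the partition $Y_{j_0}=Y_{j_0}^1\sqcup Y_{j_0}^2$ yields $Y_{j_0}^i\in\bu$ for some $i$, whence $X_i=Y_{j_0}^i\sqcup\bigsqcup\setm{Y_j}{j\neq j_0,\ Y_j\subseteq X_i}$ is a $\bu$-cover of $X_i$.

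For (i)$\Rightarrow$(iii), I argue the contrapositive. A short verification shows that if $H$ is an induced subgraph of $G$, then $\so{H}\subseteq\so{G}$ and every $\ba\in\sP(H)$, viewed as a subset of $\so{G}$, belongs to $\sP(G)$; moreover, restricting any clopen upper bound in $\sP(G)$ to $\so{H}$ produces a clopen upper bound in $\sP(H)$. Hence the absence of a join in $\sP(H)$ implies the absence of a join in $\sP(G)$, and it suffices to exhibit, for each forbidden induced subgraph $H\in\setm{\cC_n}{n\geq 4}\cup\set{\cD,\cK_4}$, two clopens $\ba_1,\ba_2\in\sP(H)$ with no join. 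In each case, $\ba_1,\ba_2$ are chosen so that $\tcl(\ba_1\cup\ba_2)$ contains $V(H)\in\so{H}$ as an element but fails openness on a specific $2$-partition $V(H)=U\prt V$ with neither piece in $\tcl(\ba_1\cup\ba_2)$; then the two minimal clopen upper bounds obtained by adjoining $U$ respectively $V$ (and closing) are incomparable and meet exactly in $\tcl(\ba_1\cup\ba_2)$, which is not open, so no least upper bound exists. For $\cC_4=\set{v_1,v_2,v_3,v_4}$, take $\ba_1=\set{\set{v_1,v_2},\set{v_2}}$ and $\ba_2=\set{\set{v_3,v_4},\set{v_4}}$, obstructed at $\cC_4=\set{v_2,v_3}\prt\set{v_1,v_4}$. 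For $\cC_n$ with $n\geq 5$, take $\ba_1=\set{\set{v_1,v_2},\set{v_1}}$ and $\ba_2=\setm{\set{v_3,\dots,v_k}}{3\leq k\leq n}$ (the ``initial-arc'' clopen of the path $v_3v_4\cdots v_n$), obstructed at $\cC_n=\set{v_4}\prt(\cC_n\setminus\set{v_4})$. For both $\cD$ and $\cK_4$ on vertex set $\set{a,b,c,d}$ (with $c\not\sim d$ in $\cD$), take $\ba_1=\set{\set{a},\set{a,c}}$ and $\ba_2=\set{\set{b},\set{b,d}}$, obstructed at $V(H)=\set{a,d}\prt\set{b,c}$.

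The main obstacle will be the (i)$\Rightarrow$(iii) direction, specifically delivering a uniform obstruction for cycles of arbitrary length: the ``two opposite edges'' phenomenon internal to $\cC_4$ does not literally transfer, since longer cycles contain no induced $\cC_4$, so the arc-based construction above must be used instead, and one must verify in each case that an interior singleton $\set{v_4}$ and its cycle-complement $\cC_n\setminus\set{v_4}$ are both genuinely unreachable as disjoint unions of elements of $\ba_1\cup\ba_2$, that the two minimal completions are in fact incomparable clopen upper bounds, and that no other clopen lies strictly between $\ba_1\cup\ba_2$ and their intersection.
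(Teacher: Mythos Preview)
Your (iii)$\Rightarrow$(ii) argument is correct and takes a genuinely different route from the paper. The paper argues by contradiction: given an open~$\bu$ with $\tcl(\bu)$ not open, it fixes two partitions $\bx\subseteq\bu$ and $\by\cap\tcl(\bu)=\es$ of some~$P$, builds a bipartite incidence graph on $\bx\cup\by$, extracts an induced cycle, and derives a geometric contradiction with the block-graph hypothesis. Your ``unique straddled block'' lemma is more direct: it localizes the interaction between the two partitions to a single~$Y_{j_0}$, and then a single application of openness of~$\bu$ finishes. This is shorter and structurally cleaner; the paper's argument, on the other hand, never needs to name the straddled block explicitly and handles all forbidden configurations in one cycle-extraction step.

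Your (i)$\Rightarrow$(iii) direction has a real gap in the $\cC_n$ case for $n\geq 5$: the ``adjoin~$U$ (resp.~$V$) and close'' construction does not yield clopen sets. With $\ba_1=\set{\set{v_1,v_2},\set{v_1}}$ and $\ba_2=\setm{\set{v_3,\dots,v_k}}{3\leq k\leq n}$, adjoining $\set{v_4}$ to $\tcl(\ba_1\cup\ba_2)$ and closing adds only $\set{v_4}$; but the result is not open at~$\cC_n$, since the partition $\cC_n=\set{v_5}\prt(\cC_n\setminus\set{v_5})$ has neither side in the set. The same failure occurs when adjoining $V=\cC_n\setminus\set{v_4}$. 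The paper sidesteps this with a uniform device: for each forbidden~$H$ it exhibits $P_0,P_1,P_2,P_3\in\so{H}$ with $H=P_0\prt P_2=P_1\prt P_3$ and $P_i\cap P_{i+1}\neq\es$, sets $\ba_i=\setm{X\in\so{G}}{X\subseteq P_i,\ X\cap P_{i+1}\neq\es}$, and uses the complements $\so{G}\setminus\ba_1$ and $\so{G}\setminus\ba_3$ as the two clopen upper bounds of~$\set{\ba_0,\ba_2}$ that no common upper bound can lie below. Your strategy is repairable along similar lines---for instance, $\setm{X}{v_4\in X\Rightarrow v_3\in X}$ and $\setm{X}{v_5\in X\Rightarrow v_4\in X}$ serve as the required clopen upper bounds in the cycle case---but the ``minimal completion by closing'' idea does not work as stated.
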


\begin{proof}
(ii)$\Rightarrow$(i) is trivial.

(i)$\Rightarrow$(iii). Suppose that~$G$ is not a block graph without $4$-cliques; we shall prove that~$\sP(G)$ is not a lattice. It is easy to construct, for each graph~$P$ in the collection $\Gamma=\set{\cK_4,\cD}\cup\setm{\cC_n}{4\leq n<\go}$ (cf. Figure~\ref{Fig:Graphs}), nonempty connected subsets~$P_0$, $P_1$, $P_2$, and~$P_3$ of~$P$ such that 
 \begin{equation}\label{Eq:PiPjneqes}
 P=P_0\prt P_2=P_1\prt P_3\text{ while }
 P_i\cap P_{i+1}\neq\es\text{ for all }i<4
 \end{equation}
(with indices reduced modulo~$4$). By assumption, one of the members of~$\Gamma$ embeds into~$G$ as an induced subgraph. This yields nonempty connected subsets~$P_i$, for $0\leq i\leq 3$, and~$P$ of~$G$ satisfying~\eqref{Eq:PiPjneqes}.

It is easy to see that the set $\ba_i=\setm{X\in\so{G}}{X\subseteq P_i\text{ and }X\cap P_{i+1}\neq\es}$ is clopen in~$\so{G}$, for each $i<4$. We claim that $\ba_i\cap\ba_j=\es$ whenever $i\in\set{0,2}$ and $j\in\set{1,3}$. Indeed, suppose otherwise and let $Z\in\ba_i\cap\ba_j$. {}From $Z\subseteq P_i$ and $Z\cap P_{j+1}\neq\es$ it follows that $P_i\cap P_{j+1}\neq\es$, so $i-j\in\set{0,1,2}$, and so $i-j=1$. Likewise, $j-i=1$; \contr.

It follows that $\ba_i\subseteq\so{G}\setminus\ba_j$ for all
$i\in\set{0,2}$ and all $j\in\set{1,3}$. Suppose that there exists
$\bb\in\sP(G)$ such that
$\ba_i\subseteq\bb\subseteq\so{G}\setminus\ba_j$. {}From $P_i\in\ba_i$
and $P=P_0\prt P_2$ it follows (using the closedness of~$\bb$) that
$P\in\bb$. Since $P=P_1\prt P_3$ and~$\bb$ is open, either $P_1\in\bb$ or
$P_3\in\bb$. In the first case, we get $P_{1} \notin\ba_{1}$ from $\bb\subseteq\so{G}\setminus\ba_1$, while in the
second case we get $P_{3}\notin\ba_{3}$;  \contr\ in both cases.

(iii)$\Rightarrow$(ii).
Suppose that~$G$ is a block graph without $4$-cliques, and suppose that there exists an open set $\bu\subseteq\so{G}$ such that $\tcl(\bu)$ is not open. This means that there are $P\in\tcl(\bu)$ and a partition~$\by$ of~$P$ such that
 \begin{equation}\label{Eq:bycapclu=es}
 \by\cap\tcl(\bu)=\es\,. 
 \end{equation}
On the other hand, from $P\in\tcl(\bu)$ it follows that there exists a partition~$\bx$ of~$P$ such that
 \begin{equation}\label{Eq:bxinu}
 \bx\subseteq\bu\,. 
 \end{equation}
In particular, from~\eqref{Eq:bycapclu=es} and~\eqref{Eq:bxinu} it follows that $\bx\cap\by=\es$.
Moreover, as~$G$ is a block graph, the intersection of any two connected subsets of~$G$ is connected, hence, as $P=\bprt\bx=\bprt\by$, we get the following decompositions in~$\so{G}$:
 \begin{align}
 X&=\bprt\vecm{X\cap Y}
 {X\cap Y \neq \es\;,\ Y\in\by}&&
 (\text{for each }X\in\bx)\,,\label{Eq:DecompXxy}\\
 Y&=\bprt\vecm{X\cap Y}
 {X\cap Y \neq \es\;,\ X\in\bx}&&
 (\text{for each }Y\in\by)\,.\label{Eq:DecompYxy}
 \end{align}
For each $X\in\bx$, it follows from $X\in\bu$ (cf.~\eqref{Eq:bxinu}) and the openness of~$\bu$ that there exists $s(X)\in\by$ such that $X\cap s(X)\in\bu$. On the other hand, each $Y\in\by$ belongs to the complement of~$\tcl(\bu)$ (cf. \eqref{Eq:bycapclu=es}), thus there exists $s(Y)\in\bx$ such that $s(Y)\cap Y$ is nonempty and does not belong to~$\bu$. {}From this it is easy to deduce that
 \begin{equation}\label{Eq:sgenercycle}
 Z\cap s(Z)\neq\es\text{ and }s^2(Z)\neq Z\,,
 \quad\text{ for each }Z\in\bx\cup\by\,. 
 \end{equation}
Consider the graph with vertex set $\bz=\bx\cup\by$, and incidence
relation~$\asymp$ defined by $X\asymp Y$ if{f} $X\neq Y$ and $X\cap
Y\neq\es$, for all $X,Y\in\bz$. Since~$\bx$ and~$\by$ are both partitions
of~$P$, the graph~$\bz$ is bipartite. Let~$n$ be a positive integer,
minimal with the property that there exists $Z\in\bz$ such that
$s^n(Z)=Z$. It follows from~\eqref{Eq:sgenercycle} that
$n\geq3$. Further, by the minimality assumption on~$n$, all sets~$Z$,
$s(Z)$, \dots, $s^{n-1}(Z)$ are pairwise distinct. Since $s^k(Z)\asymp s^{k+1}(Z)$ for each~$k$, it follows that the graph $(\bz,\asymp)$ has an induced cycle of length $\geq3$. Since this graph is bipartite, the cycle above has the form $\vec{P}=(P_0,P_1,\dots,P_{2n-1})$, for some integer $n\geq2$.

Pick $g_i\in P_i\cap P_{i+1}$, for each $i<2n$ (indices are reduced modulo~$2n$). Since~$g_i$ and~$g_{i+1}$ both belong to the connected set~$P_{i+1}$, they are joined by a path~$\vec{g}_i$ contained in~$P_i$. By joining the~$\vec{g}_i$ together, we obtain a path~$\vec{g}$ (not induced \emph{a priori}) containing all the~$g_i$ as vertices.

Choose the~$\vec{g}_i$ in such a way that the length~$N$ of~$\vec{g}$ is as small as possible. Since the~$g_i$ are pairwise distinct (because~$\vec{P}$ is an induced path), $N\geq2n$.

Since~$\vec{P}$ is an induced cycle in $(\bz,\asymp)$, the (ranges of) the paths~$\vec{g}_i$ and~$\vec{g}_j$ meet if{f} $i-j\in\set{0,1,-1}$, for all $i,j<2n$. Moreover, it follows from the minimality assumption on~$N$ that $\vec{g}_i\cap\vec{g}_{i+1}=\set{g_{i+1}}$, for each $i<2n$. Therefore, the range of the path~$\vec{g}$ is biconnected, so, as~$G$ is a block graph, $\vec{g}$ is a clique, and so, by assumption, $N\leq3$, in contradiction with $N\geq2n$.
\end{proof}

\begin{example}\label{Ex:PK2K3}
(Observe the similarity with Example~\ref{Ex:RS2S3}.)
It is an easy exercise to verify that~$\sP(\cK_2)=\sR(\cK_2)$ is isomorphic to the permutohedron on three letters~$\sP(3)$, which is the six-element ``benzene lattice''.

On the other hand, the lattice~$\sP(\cK_3)=\sR(\cK_3)$ has apparently not been met until now.

Denote by~$a$, $b$, $c$ the vertices of the graph~$\cK_3$.  The
lattice~$\sP(\cK_3)$ is represented on the right hand side of
Figure~\ref{Fig:PK3}, by using the following labeling convention:
 \begin{gather*}
 \set{\set{a}}\mapsto a\,,\quad
 \set{\set{a,b},\set{a}}\mapsto
 a^2b\,,\quad
 \set{\set{a,b},\set{a},\set{b}}
 \mapsto a^2b^2\,,\\
 \set{\set{a,b},\set{a,c},\set{a}}
 \mapsto a^3bc
 \end{gather*}
(the ``variables'' $a$, $b$, $c$ being thought of as pairwise commuting, so for example $a^2b=ba^2$), then $\ol{\es}=\so{\cK_3}$, $\ol{a}^2\ol{b}^2=\so{\cK_3}\setminus (a^2b^2)$, and so on.

\begin{figure}[htb]
\centering
\includegraphics{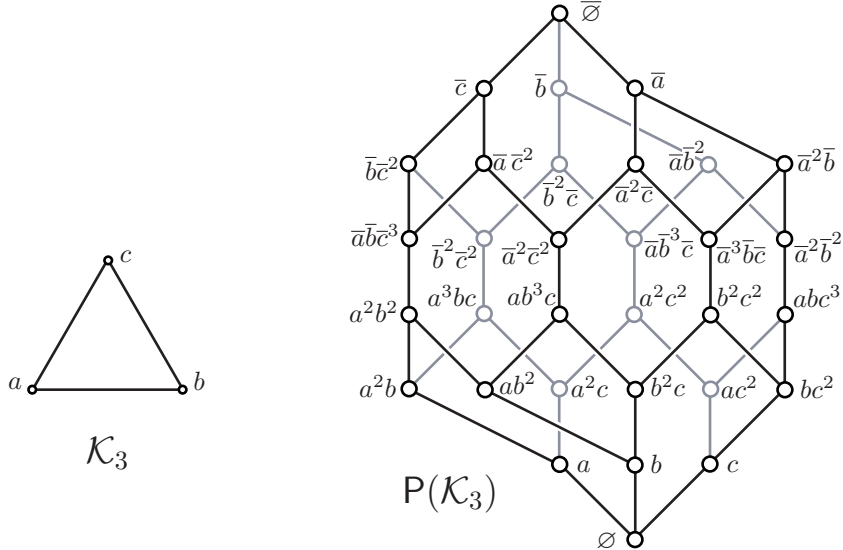}
\caption{The permutohedron on the graph $\cK_3$}
\label{Fig:PK3}
\end{figure}

While we prove in~\cite{SaWe12a} that every open subset of a transitive binary relation is a union of clopen subsets, the open subset $\bu=\set{a,b,c,abc}$ of~$\so{\cK_3}$ is not a union of clopen subsets.

The \jirr\ elements of~$\sP(\cK_3)$ are~$a$, $a^2b$, $\ol{a}\ol{b}\ol{c}^3$, $\ol{a}^2\ol{b}^2$, and cyclically. They are all closed under intersection, and they never contain all the members of a nontrivial partition.

By Theorem~\ref{T:PGLatt}, the permutohedron~$\sP(\cK_4)$ is not a lattice. Brute force computation shows that $\card\sP(\cK_4)=370$ while $\card\sR(\cK_4)=382$. Every \jirr\ element of~$\sR(\cK_4)$ belongs to~$\sP(\cK_4)$. Labeling the vertices of~$\cK_4$ as~$a$, $b$, $c$, $d$, we get the \jirr\ element $\set{b,c,ab,ac,bc,abc,bcd,abcd}$ in~$\Reg\cK_4$. It contains $ab$ and $ac$ but not their intersection~$a$. It contains all entries of the partition $bc=b\prt c$.

\end{example}

Variants of~$\sP(G)$ and~$\sR(G)$, with the collection of all connected subsets of~$G$ replaced by other alignments, are studied in more detail in Santocanale and Weh\-rung~\cite{CSW2}.

\section{Completely \jirr\ regular closed sets in graphs}\label{S:CJIBlockGraph}

While $\Reg S$ is always the Dedekind-MacNeille completion of $\Clop
S$, for any \js~$S$ (cf. Corollary~\ref{C:MinNbhdSemil2}), the
situation for graphs is more complex. In this section we shall give a
convenient description of the completely \jirr\ members of~$\sR(G)$,
in terms of so-called \emph{pseudo-ultrafilters} on members
of~$\so{G}$, for an arbitrary graph~$G$. This will imply that the completely \jirr\ elements 
are determined by the proper cuts of their top element, thus extending Lemma~\ref{L:baiiCuts2ba}
to the infinite case (cf. Corollary~\ref{C:cltjmujirr}). In addition,
this will yield a large class of graphs~$G$ for which every completely
\jirr\ member of~$\sR(G)$ is clopen (cf. Theorem~\ref{T:DiamContr} and
Corollary~\ref{C:CJIBGorCyc}).

In this section we will constantly refer to the restrictions to~$\so{G}$ of the binary relations~$\simeq$ and~$\sim$ introduced in~\eqref{Eq:DefnUsimeqV} and~\eqref{Eq:DefnUsimV}. {}From Lemma~\ref{L:HminusX+Y} to Proposition~\ref{P:cltjmujirr} we shall fix a graph~$G$ and a nonempty connected subset~$H$ of~$G$.
  
\begin{lemma}\label{L:HminusX+Y}
  Let $X,Y,Z\in\so{H}$ with $Z=X\prt Y$. Denote by~$\widehat{X}$ the
  unique member of~$\coco(H\setminus X)$ containing~$Y$, and
  define~$\widehat{Y}$ similarly with~$X$ and~$Y$ interchanged. The
  following statements hold:
\begin{enumerate}
\item Every member of $\coco(H\setminus X)\setminus\set{\widehat{X}}$ is contained in~$\widehat{Y}$, and symmetrically with~$(X,\widehat{X})$ and~$(Y,\widehat{Y})$ interchanged.

\item $H=\widehat{X}\cup\widehat{Y}$.

\item $\coco(H\setminus Z)=\coco(\widehat{X}\cap\widehat{Y})\cup\pI{\coco(H\setminus X)\setminus\set{\widehat{X}}}\cup\pI{\coco(H\setminus Y)\setminus\set{\widehat{Y}}}$.

\item $T\sim X$ and $T\sim Y$, for any $T\in\coco(\widehat{X}\cap\widehat{Y})$.
\end{enumerate}
\end{lemma}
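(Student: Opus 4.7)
The plan is to handle the four items in the stated order, since (ii)--(iv) all build on~(i). For~(i), I would start from the fact that $H$ is connected: if $C\in\coco(H\setminus X)\setminus\set{\widehat{X}}$, then the only way to get from~$C$ to the rest of~$H$ is through~$X$, since an edge from~$C$ to another component of $H\setminus X$ would immediately merge those components. Hence there must be some edge joining $C$ to $X$ in~$H$, so $C\cup X$ is connected. Now $Y\subseteq\widehat{X}$ and $C\neq\widehat{X}$ force $C\cap Y=\es$, and combined with $X\cap Y=\es$ this gives $C\cup X\subseteq H\setminus Y$. Being a connected subset of $H\setminus Y$ that meets~$\widehat{Y}$ (which by definition contains~$X$), $C\cup X$ lies entirely in~$\widehat{Y}$; in particular $C\subseteq\widehat{Y}$. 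The symmetric claim is identical. Item~(ii) is then immediate: $H\setminus X$ is the disjoint union of $\widehat{X}$ and the remaining components, all contained in $\widehat{X}\cup\widehat{Y}$ by~(i), and $X\subseteq\widehat{Y}$.

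For~(iii), I would first establish the set-theoretic decomposition. Since $Y\subseteq\widehat{X}$, removing $Y$ from the partition of $H\setminus X$ into its components gives
\[
H\setminus Z=(\widehat{X}\setminus Y)\cup\bigcup\pI{\coco(H\setminus X)\setminus\set{\widehat{X}}}\,.
\]
Applying~(i) with the roles of~$X$ and~$Y$ swapped, every $C\in\coco(H\setminus Y)\setminus\set{\widehat{Y}}$ satisfies $C\subseteq\widehat{X}$, whence
\[
\widehat{X}\setminus Y=(\widehat{X}\cap\widehat{Y})\cup\bigcup\pI{\coco(H\setminus Y)\setminus\set{\widehat{Y}}}\,.
\]
Plugging this in yields the union appearing in the lemma. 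It then remains to verify that the three families are pairwise disjoint and that each piece is a full connected component of $H\setminus Z$. Disjointness uses~(i) together with the fact that distinct components of $H\setminus X$, or of $H\setminus Y$, are disjoint. For any $C\in\coco(H\setminus X)\setminus\set{\widehat{X}}$, maximality of~$C$ inside $H\setminus Z$ is automatic, since any larger connected subset of $H\setminus Z$ would remain a connected subset of $H\setminus X$. The maximality of $T\in\coco(\widehat{X}\cap\widehat{Y})$ is the key point: any connected $T'\subseteq H\setminus Z$ with $T\subseteq T'$ sits in some component of $H\setminus X$ and some component of $H\setminus Y$, and since $T\subseteq\widehat{X}\cap\widehat{Y}$ those must be $\widehat{X}$ and $\widehat{Y}$ respectively, so $T'\subseteq\widehat{X}\cap\widehat{Y}$ and therefore $T'=T$.

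Item~(iv) follows from the same circle of ideas. The decomposition of $\widehat{X}\setminus Y$ above, combined with the maximality argument of~(iii), shows that each $T\in\coco(\widehat{X}\cap\widehat{Y})$ is in fact a connected component of $\widehat{X}\setminus Y$. Since $\widehat{X}$ is connected and $Y\subseteq\widehat{X}$, every component of $\widehat{X}\setminus Y$ is adjacent to~$Y$ in~$\widehat{X}$; together with $T\cap Y=\es$ this yields $T\sim Y$. The relation $T\sim X$ follows symmetrically, via the analogous decomposition of $\widehat{Y}\setminus X$ and the connectedness of~$\widehat{Y}$. The main obstacle throughout is the careful component-level bookkeeping in~(iii); everything ultimately rests on the elementary fact that whenever $K$ is a connected subgraph and $A\subseteq K$, each connected component of $K\setminus A$ must share an edge with~$A$ in~$K$.
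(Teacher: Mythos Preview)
Your argument is correct and, for parts~(i)--(iii), essentially matches the paper's: both establish~(i) by showing that any $C\in\coco(H\setminus X)\setminus\set{\widehat{X}}$ misses~$Y$, is adjacent to~$X$ (Lemma~\ref{L:HominusX}), and therefore sits in~$\widehat{Y}$; (ii)~and the set-theoretic part of~(iii) then follow. For the component structure in~(iii) the paper checks that distinct pieces~$U,V$ of the right-hand side satisfy $U\not\simeq V$, while you check maximality of each piece directly; these are equivalent and equally easy.

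The noteworthy difference is in~(iv). The paper argues by choosing a path in~$\widehat{X}$ from~$T$ to~$Y$, looking at the first vertex outside~$T$, and ruling out by case analysis that it lies in some $Y'\in\coco(H\setminus Y)\setminus\set{\widehat{Y}}$ or in another component of $\widehat{X}\cap\widehat{Y}$. Your route is shorter: having already decomposed $\widehat{X}\setminus Y$ in the course of~(iii), you observe that each $T\in\coco(\widehat{X}\cap\widehat{Y})$ is a connected component of $\widehat{X}\setminus Y$, and then invoke the elementary fact (Lemma~\ref{L:HominusX} applied with $\widehat{X}$ in place of~$H$ and~$Y$ in place of~$X$) that every such component is adjacent to~$Y$. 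This avoids the case analysis entirely and makes the dependence on the earlier items more transparent.
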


\begin{proof}
Let $U\in\coco(H\setminus X)\setminus\set{\widehat{X}}$. Suppose first that $U\simeq Y$ (cf.~\eqref{Eq:DefnUsimeqV}). Since $U\cup Y$ is connected, disjoint from~$X$, and contains~$Y$, it is contained in~$\widehat{X}$, hence $U=\widehat{X}$, a contradiction. Hence $U\not\simeq Y$, so $U\subseteq H\setminus Y$. Since $X\subseteq H\setminus Y$ and $U\sim X$ (cf. Lemma~\ref{L:HominusX}), it follows that $U\subseteq\widehat{Y}$, thus completing the proof of~(i).

Now set $X'=\bigcup\pI{
 \coco(H\setminus\set{X})
 \setminus\set{\widehat{X}}}$, and define~$Y'$ symmetrically. It follows from~(i) that $X\cup X'\subseteq\widehat{Y}$. Since $X'\cup\widehat{X}=\bigcup\coco(H\setminus\set{X})=H\setminus X$, it follows that $H=X\cup X'\cup\widehat{X}\subseteq\widehat{X}\cup\widehat{Y}$ and~(ii) follows.
 
As a further consequence of~(i), $(X\cup X')\cap(Y\cup Y')\subseteq\widehat{Y}\cap(Y\cup Y')=\es$. Since $H=X\cup X'\cup\widehat{X}=Y\cup Y'\cup\widehat{Y}$, it follows that $H=X\cup X'\cup Y\cup Y'\cup(\widehat{X}\cap\widehat{Y})$ (disjoint union), so the union of the right hand side of~(iii) is $H\setminus Z$. Furthermore, every element of the right hand side of~(iii) is, by definition, nonempty and connected. Finally, it follows easily from~(i) together with Lemma~\ref{L:HominusX} that any two distinct members~$U$ and~$V$ of the right hand side of~(iii) satisfy $U\not\simeq V$; (iii) follows.

(iv). Since~$\widehat{X}$ is connected and contains $T\cup Y$, there exists a path~$\gamma$, within~$\widehat{X}$, from an element $t\in T$ to an element of~$Y$. We may assume that the successor~$y$ of~$t$ in~$\gamma$ does not belong to~$T$. Recall now that~$y$ belongs to $H = Y \cup \bigcup \coco(H\setminus Y)$. If $y\in Y'$ for some $Y'\in\coco(H\setminus Y)\setminus\set{\widehat{Y}}$, then we get, from $t\in\widehat{Y}$, that $\widehat{Y}\sim Y'$, a contradiction. If $y\in\widehat{Y}$ then $T\sim W$ for some $W\in\coco(\widehat{X}\cap\widehat{Y})$ distinct of~$T$, a
contradiction. Hence, $y\in Y$, so $T\sim Y$. Symmetrically, $T\sim
X$.
\end{proof}

\begin{definition}\label{D:PFil}
A \emph{pseudo-ultrafilter} on~$H$ is a subset $\mu \subseteq \Cuts(H)$ such that $H\in\mu$ and whenever~$X$, $Y$, $Z$ are cuts of~$H$ such that $Z=X\prt Y$,
\begin{enumerate}
\item $X\in\mu$ and $Y\in\mu$ implies that $Z\in\mu$;

\item $X\notin\mu$ and $Y\notin\mu$ implies that $Z\notin\mu$;

\item $X\in\mu$ if{f} $H\setminus X\notin\mu$, whenever~$X$ is a proper cut of~$H$.
\end{enumerate}
\end{definition}

Observe that~$H$ is necessarily the largest element of~$\mu$. We leave to the reader the straightforward proof of the following lemma.

\begin{lemma}\label{L:mu2tildemu}
If~$\mu$ is a pseudo-ultrafilter on~$H$, then so is the \emph{conjugate pseudo-ultrafilter} $\tilde{\mu}=\pI{\Cuts(H)\setminus\mu}\cup\set{H}$.
\end{lemma}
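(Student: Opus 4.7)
The plan is to verify directly the defining clauses of Definition~\ref{D:PFil} for $\tilde{\mu}$. The initial clause $H\in\tilde{\mu}$ is immediate from the definition $\tilde{\mu}=\pI{\Cuts(H)\setminus\mu}\cup\set{H}$. The basic observation underlying the remaining clauses is that if $X\in\Cuts(H)$ is a proper cut (that is, $X\neq H$), then $X\in\tilde{\mu}$ if and only if $X\notin\mu$; thereafter the argument is a routine translation of the axioms for~$\mu$ through this negation.

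I would dispatch clause~(iii) of Definition~\ref{D:PFil} first: if $X$ is a proper cut of~$H$, then so is $H\setminus X$, and one simply chains
\[
X\in\tilde{\mu}\iff X\notin\mu\iff H\setminus X\in\mu\iff H\setminus X\notin\tilde{\mu}\,,
\]
the middle equivalence being clause~(iii) of Definition~\ref{D:PFil} applied to~$\mu$.

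For clauses~(i) and~(ii), fix a decomposition $Z=X\prt Y$ with $X,Y,Z\in\Cuts(H)$; since $X$ and $Y$ are nonempty with $X\cup Y=Z\subseteq H$, each of $X,Y$ is a proper cut of~$H$ (regardless of whether $Z=H$ or $Z\neq H$). I would then split into two cases. If $Z\neq H$, then $X$, $Y$, $Z$ are all proper cuts, so membership in $\tilde{\mu}$ is uniformly the negation of membership in $\mu$ on $\set{X,Y,Z}$, and clauses~(i) and~(ii) for $\tilde{\mu}$ become respectively clauses~(ii) and~(i) for $\mu$ under this translation. If instead $Z=H$, then $Y=H\setminus X$, and clause~(iii) for $\mu$ forces exactly one of $X,Y$ to belong to~$\mu$; hence both hypotheses ``$X,Y\in\tilde{\mu}$'' and ``$X,Y\notin\tilde{\mu}$'' fail vacuously, while $Z=H\in\tilde{\mu}$ makes clause~(i) trivially valid in any event.

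As the whole argument is a direct unwinding of definitions through complementation, I do not anticipate any genuine obstacle; the only point deserving a moment's attention is the case $Z=H$, where the asymmetry forced by clause~(iii) for~$\mu$ renders the hypotheses of clauses~(i) and~(ii) for $\tilde{\mu}$ vacuous rather than informative.
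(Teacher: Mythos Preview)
Your argument is correct and complete; the paper itself omits the proof entirely, leaving it to the reader as straightforward. Your case split on whether $Z=H$ and the observation that clause~(iii) for~$\mu$ makes the hypotheses of~(i) and~(ii) vacuous in that case is exactly the kind of routine verification intended.
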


Given a pseudo-ultrafilter $\mu$ on $H$, we define
 \begin{align*}
 \tj(\mu)&=
 \setm{X\in\so{H}}
 {X\leq^{\oplus}H\text{ and }
 \coco(H\setminus X)\cap\mu
 =\es}\,,\\
 \tj_{*}(\mu)&=\tj(\mu)\setminus\set{H}\,.
 \end{align*}
 We shall fix, until Proposition~\ref{P:cltjmujirr}, a
 pseudo-ultrafilter~$\mu$ on~$H$. It is obvious that $\tj(\mu) \cap \Cuts(H) = \mu$. This observation is extended in the following lemma.

\begin{lemma}\label{L:tjmuoncuts}
$\mu=\tcl\pI{\tj(\mu)}\cap\Cuts(H)$.
\end{lemma}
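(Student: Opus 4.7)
To prove $\mu = \tcl(\tj(\mu)) \cap \Cuts(H)$, the plan is to establish the two inclusions separately. The easy inclusion $\mu \subseteq \tcl(\tj(\mu)) \cap \Cuts(H)$ will follow from the already-noted equality $\tj(\mu) \cap \Cuts(H) = \mu$ (which holds because, for any proper cut $X$, $\coco(H\setminus X) = \{H\setminus X\}$, and by condition~(iii) of Definition~\ref{D:PFil}, $X \in \mu$ is equivalent to $H\setminus X \notin \mu$, i.e.\ to $X \in \tj(\mu)$), together with the containment $\tj(\mu) \subseteq \tcl(\tj(\mu))$.

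For the reverse inclusion $\tcl(\tj(\mu)) \cap \Cuts(H) \subseteq \mu$, I take $Z \in \tcl(\tj(\mu)) \cap \Cuts(H)$ and prove $Z \in \mu$. The case $Z = H$ is trivial (since $H \in \mu$ by definition), so assume $Z$ is a proper cut. Write $Z = \bprt \bx$ for some finite $\bx \subseteq \tj(\mu)$, and proceed by strong induction on $n = |\bx|$. The base case $n = 1$ gives $Z \in \tj(\mu) \cap \Cuts(H) = \mu$.

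For the inductive step $n \geq 2$: since $Z$ is connected, the adjacency graph on $\bx$ (with $X$ joined to $X'$ whenever $X \sim X'$ in $H$) is connected, and picking any leaf $X$ of some spanning tree leaves $\bx' = \bx \setminus \{X\}$ whose union $Y = \bprt \bx'$ is still connected. Applying Lemma~\ref{L:HminusX+Y}(iii) to $Z = X \prt Y$, and using that $\coco(H\setminus Z) = \{H\setminus Z\}$ is a singleton (as $Z$ is a proper cut), exactly one of three configurations arises: \emph{Case A}, $H\setminus Z \in \coco(H\setminus X) \setminus \{\widehat{X}\}$; \emph{Case C}, $\widehat{X} \cap \widehat{Y} = H\setminus Z$ with both $X$ and $Y$ being cuts of $H$ (so that the other two summands of the formula vanish); or \emph{Case B}, $H\setminus Z \in \coco(H\setminus Y) \setminus \{\widehat{Y}\}$. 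In Case~A, from $X \in \tj(\mu)$ we get $\coco(H\setminus X) \cap \mu = \es$, so $H\setminus Z \notin \mu$, and by~(iii), $Z \in \mu$. In Case~C, $X \in \tj(\mu) \cap \Cuts(H) = \mu$; moreover $Y$ is now a cut in $\tcl(\tj(\mu))$ presented by $\bx'$ of size $n-1$, so by the inductive hypothesis $Y \in \mu$, and condition~(i) of Definition~\ref{D:PFil} then yields $Z \in \mu$.

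The main obstacle is Case~B, where one cannot conclude directly since $Y$ need not belong to $\tj(\mu)$. My plan is to rule Case~B out under the standing hypotheses. In Case~B, $\coco(H\setminus Y) = \{X, H\setminus Z\}$ forces $X \not\sim H\setminus Z$ in $H$; since $H$ is connected and $H\setminus Z \neq \es$, every edge from $Z$ to $H\setminus Z$ must originate in $Y$, whence $H\setminus X = Y \cup (H\setminus Z)$ is connected and $X$ is itself a cut of $H$, so that $X \in \tj(\mu) \cap \Cuts(H) = \mu$. Using that $Y$ separates $X$ from $H\setminus Z$ in $H$, I would select a \emph{minimal} subfamily $\bx^{\ast} \subseteq \bx'$ whose union still separates $X$ from $H\setminus Z$; picking $X_j \in \bx^{\ast}$ and combining $X$ with the elements of $\bx' \setminus \bx^{\ast}$ adjacent to it via iterated applications of~(i) yields a cut in $\mu$ that appears as a component of $H\setminus X_j$, contradicting $X_j \in \tj(\mu)$. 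The technical heart of the argument, and the main step I expect to be delicate, is carrying out this contradiction-derivation cleanly; once Case~B is ruled out, Cases~A and~C close the induction.
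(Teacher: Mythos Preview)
Your easy inclusion and your treatment of Cases~A and~C are correct, and the induction on~$|\bx|$ is exactly the paper's framework. The gap is Case~B, and the sketch you give does not close. Concretely: once $\card\bx^{\ast}\geq 2$, minimality of~$\bx^{\ast}$ forces that no single $X_j\in\bx^{\ast}$ separates~$X$ from $H\setminus Z$, so the component of $H\setminus X_j$ containing~$X$ also contains $H\setminus Z$---it is therefore \emph{not} a disjoint union of members of~$\bx$, and the induction hypothesis does not apply to it. If instead you mean the component of~$X$ inside $H\setminus\bigcup\bx^{\ast}$, that set need not be a component of $H\setminus X_j$ (elements of $\bx^{\ast}\setminus\{X_j\}$ may be adjacent to it). Either way, ``iterated applications of~(i)'' requires every intermediate union to be a cut of~$H$, which you have no mechanism to guarantee. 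And Case~B cannot simply be avoided by a smarter choice of leaf: if the only member of~$\bx$ adjacent to $H\setminus Z$ is a cut-vertex of the adjacency graph on~$\bx$, no leaf satisfies $X\sim H\setminus Z$.

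The paper's proof sidesteps all of this with two moves. First, it proves as a preliminary claim (using the induction hypothesis) that \emph{every} $X_i\in\bx$ is a cut, hence lies in~$\mu$: were some~$X_i$ not a cut, any component of $H\setminus X_i$ other than the one containing $H\setminus Z$ would be a cut expressible as a $\bprt$ of fewer than~$n$ members of~$\tj(\mu)$, hence in~$\mu$ by induction---contradicting $X_i\in\tj(\mu)$. Second, rather than insisting that the chosen piece be a leaf (so that its complement in~$Z$ stays connected), the paper picks any $X_{i_0}\in\bx$ with $X_{i_0}\sim(H\setminus Z)$, assumes $Z\notin\mu$ (so $H\setminus Z\in\mu$), and proves that $X_{i_0}\prt(H\setminus Z)$ is a cut. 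Its complement is then $\bprt_{j\neq i_0}X_j$, automatically a cut, and lies in~$\mu$ by the induction hypothesis; meanwhile $X_{i_0}\prt(H\setminus Z)\in\mu$ by~(i), yielding complementary cuts both in~$\mu$, a contradiction. The real work is showing $X_{i_0}\prt(H\setminus Z)$ is a cut, and that argument again leans on the induction hypothesis.
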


\begin{proof}
  We prove the nontrivial containment. We must prove that if
  $X=\bprt_{i<m}X_i$, with each $X_i\in\tj(\mu)$ and~$X$ a cut, then
  $X\in\mu$. If $X=H$ the conclusion is trivial, so we suppose that
  $X\neq H$. The complement $Y=H\setminus X$ is a proper cut of~$H$.  We argue by induction on~$m$. For $m=1$ the proof is
  straightforward, as $\tj(\mu) \cap \Cuts(H) \subseteq \mu$; let us suppose therefore that $m \geq 2$.

  \setcounter{claim}{0}

  \begin{claim}\label{Cl:allXicuts}
$X_i\in\mu$, for each $i<m$.
  \end{claim}
  
  \begin{cproof}
    Suppose that~$X_i$ is not a cut, denote by~$Y'$ the unique connected component
    of $H\setminus X_i$ containing~$Y$, and let $V\in\coco(H\setminus
    X_i)\setminus\set{Y'}$. Since~$V$ is both a cut and a (disjoint)
    union of some members of $\setm{X_j}{j\neq i}$, it follows from
    the induction hypothesis that $V\in\mu$. On the other hand, from
    $X_i\in\tj(\mu)$ together with the definition of~$\tj(\mu)$, it
    follows that $V\notin\mu$, a contradiction.
    
Since~$X_i$ is a cut and $\tj(\mu)\cap\Cuts(H)\subseteq\mu$, the conclusion follows.
  \end{cproof}

  By way of contradiction, we suppose next that $X \not\in \mu$, so
  that the cut $Y=H\setminus X$ belongs to~$\mu$. Since $H=Y\prt\bprt_{i<m}X_i$ is connected, there exists $i_{0}<m$ such
  that $X_{i_{0}}\sim Y$ (cf.~\eqref{Eq:DefnUsimV}); so $X_{i_{0}}\cup Y=X_{i_{0}}\prt Y$.

\begin{claim}\label{Cl:1conncomp}
The set $X_{i_{0}}\prt Y$ is a cut of~$H$.
\end{claim}

\begin{cproof}
  Suppose otherwise and let $U$, $V$ be distinct connected components
  of $H\setminus(X_{i_{0}}\cup Y)$. It follows from
  Claim~\ref{Cl:allXicuts} that $H\setminus X_{i_{0}}$ is connected,
  thus there exists a path~$\gamma$, within that set, between an
  element of~$U$ and an element of~$V$. The path~$\gamma$ meets
  necessarily~$Y$. Since~$Y$ is connected, we may assume that~$\gamma$
  enters and exits~$Y$ exactly once. If~$p$ (resp., $q$) denotes the
  entry (resp., exit) point of~$\gamma$ in~$Y$, then the predecessor
  of~$p$ in~$\gamma$ belongs to~$U$ and the successor of~$q$
  in~$\gamma$ belongs to~$V$. It follows that $U\sim Y$ and $V\sim Y$
  (cf.~\eqref{Eq:DefnUsimV}); hence $W\sim Y$ for each
  $W\in\coco\pI{H\setminus(X_{i_{0}}\cup Y)}$. Since~$Y$ is also a
  cut, a similar proof yields that $W\sim X_{i_{0}}$ for each
  $W\in\coco\pI{H\setminus(X_{i_{0}}\cup Y)}$.  Now every such~$W$ is
  a cut, and also a disjoint union of members of $\setm{X_k}{k\neq
    i}$, hence, by the induction hypothesis, $W\in\mu$. Fix such
  a~$W$. Then $X_{i_{0}}\prt W$ is a cut, whose complement in~$H$ is
  $Y\prt\bprt\pII{\coco\pI{H\setminus(X_{i_{0}}\cup
      Y)}\setminus\set{W}}$.  Since the cardinality of
    $\coco\pI{H\setminus(X_{i_{0}}\cup Y)}$ is smaller than $m$, it
    follows from the induction hypothesis that
    $H\setminus(X_{i_{0}}\prt W)$ belongs to~$\mu$. Further, as
    $X_{i_{0}}$, $W$, $X_{i_{0}}\prt W$ are cuts and $X_{i_{0}},W \in
    \mu$, it follows from the definition of a pseudo-ultrafilter that
    $X_{i_{0}}\prt W \in \mu$. Therefore, we obtain two complementary
    cuts both belonging to~$\mu$, a contradiction.
\end{cproof}

Since $H\setminus(X_{i_{0}}\prt Y)$ is a disjoint union of members of
$\setm{X_j}{j\neq i_{0}}$, it follows from the induction hypothesis
together with Claim~\ref{Cl:1conncomp} that $H\setminus(X_{i_{0}}\prt Y)\in\mu$. As $X_{i_{0}}$, $Y$, $X_{i_{0}}\prt Y$ are cuts and $X_{i_{0}},Y \in \mu$, we deduce that $X_{i_{0}}\prt Y \in \mu$, in contradiction with~$\mu$ being a pseudo-ultrafilter. This ends the proof of Lemma~\ref{L:tjmuoncuts}.
\end{proof}

\begin{lemma}\label{L:Hirredcljmu}
$H\notin\tcl\pI{\tj_*(\mu)}$.
\end{lemma}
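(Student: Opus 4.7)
The plan is to argue by contradiction using Lemma~\ref{L:tjmuoncuts} as the sole nontrivial ingredient. Suppose, toward a contradiction, that $H \in \tcl(\tj_*(\mu))$. Then there is a partition $H = \bprt_{i<m} X_i$ with every $X_i \in \tj_*(\mu)$; in particular each $X_i$ is a proper subset of $H$, so $m \geq 2$.

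Fix any index, say $i_{0} = 0$. Since $X_0 \neq H$, the set $H \setminus X_0$ is nonempty and thus has at least one connected component; pick $Y \in \coco(H \setminus X_0)$. By Lemma~\ref{L:HominusX}(i), $Y$ is a cut of $H$. Each $X_j$ with $j \neq 0$ is a nonempty connected subset of $H \setminus X_0$, hence lies entirely inside some unique element of $\coco(H \setminus X_0)$. Setting $J = \setm{j \neq 0}{X_j \subseteq Y}$, we therefore have the partition
\[
Y = \bprt_{j\in J} X_j\,.
\]

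Since each $X_j$ belongs to $\tj_*(\mu) \subseteq \tj(\mu)$, this displays $Y$ as a member of $\tcl(\tj(\mu))$. Because $Y \in \Cuts(H)$, Lemma~\ref{L:tjmuoncuts} yields $Y \in \mu$. But then $\coco(H \setminus X_0) \cap \mu \neq \es$, contradicting the hypothesis $X_0 \in \tj(\mu)$.

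There is no real obstacle here: the argument is a direct consequence of the bridge, provided by Lemma~\ref{L:tjmuoncuts}, between the closure operator $\tcl$ and the combinatorial structure of~$\mu$. The only point requiring any care is to observe that each $X_j$, being connected, is entirely contained in one connected component of $H \setminus X_0$, which is what allows us to write $Y$ as a disjoint union of the $X_j$'s rather than having to further subdivide them.
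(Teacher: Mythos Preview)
Your proof is correct, and it takes a slightly different route from the paper's. The paper first builds an auxiliary graph on the index set~$[n]$ (with $i\sim j$ iff $X_i\sim X_j$), picks an index~$i$ such that $\set{i}$ is a cut in that graph, so that $X_i$ itself is a cut of~$H$; then $X_i\in\tj(\mu)\cap\Cuts(H)\subseteq\mu$, while Lemma~\ref{L:tjmuoncuts} applied to the complementary cut $H\setminus X_i=\bprt_{j\neq i}X_j$ forces it into~$\mu$ as well, violating axiom~(iii) of a pseudo-ultrafilter. Your argument sidesteps the graph-theoretic search for a cut vertex: you fix an arbitrary~$X_0$, take any connected component~$Y$ of $H\setminus X_0$, note that~$Y$ is automatically a cut and a disjoint union of some of the remaining~$X_j$'s, and then Lemma~\ref{L:tjmuoncuts} places~$Y$ in~$\mu$, contradicting the defining condition $\coco(H\setminus X_0)\cap\mu=\es$ of~$\tj(\mu)$ directly. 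Your approach is a bit more economical; the paper's approach has the minor virtue of exhibiting an explicit complementary pair in~$\mu$.
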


\begin{proof}
Suppose, otherwise, that $H=\bprt_{1\leq i\leq n}X_i$\,, with $n\geq2$ and each $X_i\in\tj(\mu)$. Define a binary relation~$\sim$ on~$[n]$ by letting $i\sim j$ hold if{f} $X_i\sim X_j$. There exists $i\in[n]$ such that~$\set{i}$ is a cut of $([n],\sim)$ (e.g., take~$i$ at maximum $\sim$-distance from~$1$). Then~$X_i$ is a cut of~$H$, so it belongs to~$\mu$. By Lemma~\ref{L:tjmuoncuts}, $H\setminus X_i=\bprt_{j\neq i}X_j$ also belongs to~$\mu$, a contradiction.
\end{proof}

\begin{lemma}\label{L:tjmuopen}
The set~$\tj(\mu)$ is open.
\end{lemma}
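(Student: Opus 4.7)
The plan is to verify the criterion for openness that every binary partition $Z=X\prt Y$ of an element $Z\in\tj(\mu)$, with $X,Y\in\so{G}$, has at least one part in $\tj(\mu)$; by the reduction to binary partitions noted at the start of Section~\ref{S:PermGraph}, this suffices. First I would note, using part~(iii) of Lemma~\ref{L:HminusX+Y}, that both $X$ and $Y$ satisfy $X,Y\leq^{\oplus}H$, since $\coco(H\setminus X)\setminus\set{\widehat{X}}$ is contained in the finite set $\coco(H\setminus Z)$ (and symmetrically for $Y$). Assuming for contradiction that neither $X\in\tj(\mu)$ nor $Y\in\tj(\mu)$, there exist components $U\in\coco(H\setminus X)\cap\mu$ and $V\in\coco(H\setminus Y)\cap\mu$; since $\coco(H\setminus Z)\cap\mu=\es$, the decomposition in Lemma~\ref{L:HminusX+Y}(iii) forces $U=\widehat{X}$ and $V=\widehat{Y}$, so $\widehat{X},\widehat{Y}\in\mu$.

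Next I would verify that $\widehat{X}$ is a proper cut of $H$: it is connected by construction, while its complement $R_X=H\setminus\widehat{X}=X\cup\bigcup\pI{\coco(H\setminus X)\setminus\set{\widehat{X}}}$ is connected because every component of $H\setminus X$ is incident with $X$ by Lemma~\ref{L:HominusX}, and nonempty since $X\neq\es$. Symmetrically $R_Y=H\setminus\widehat{Y}$ is also a proper cut of $H$. By property~(iii) of pseudo-ultrafilters, $R_X,R_Y\notin\mu$. If $\widehat{X}\cap\widehat{Y}=\es$, then $H=\widehat{X}\cup\widehat{Y}$ (Lemma~\ref{L:HminusX+Y}(ii)) gives $R_X=\widehat{Y}\in\mu$, a contradiction.

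Otherwise, $\coco(\widehat{X}\cap\widehat{Y})$ is nonempty and finite (being a subset of $\coco(H\setminus Z)$ by Lemma~\ref{L:HminusX+Y}(iii)) and may be enumerated as $\set{T_1,\ldots,T_k}$; each $T_j$ is a proper cut of $H$ by Lemma~\ref{L:HominusX} applied to $Z$, and satisfies $T_j\notin\mu$. The crux of the argument would be an induction on $i\in\set{0,\ldots,k}$ showing that $S_i=R_X\prt T_1\prt\cdots\prt T_i$ is a proper cut of $H$ not in $\mu$: its connectedness uses $T_j\sim X\subseteq R_X$ from Lemma~\ref{L:HminusX+Y}(iv); its complement $R_Y\cup T_{i+1}\cup\cdots\cup T_k$ is connected via $T_j\sim Y\subseteq R_Y$; and $S_i\notin\mu$ follows from property~(ii) of pseudo-ultrafilters applied to the cuts $S_{i-1}$ and $T_i$. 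At $i=k$, the proper cut $S_k$ has complement $R_Y$, so property~(iii) forces $R_Y\in\mu$, the desired contradiction. The main technical obstacle is precisely this iterative assembly, which requires controlling connectedness of both $S_i$ and of its complement at each step; here the symmetric incidences $T_j\sim X$ and $T_j\sim Y$ guaranteed by Lemma~\ref{L:HminusX+Y}(iv) are essential.
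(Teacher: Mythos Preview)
Your proof is correct, and it takes a genuinely different route from the paper's. Both arguments begin identically: from $Z\in\tj(\mu)$ and $X,Y\notin\tj(\mu)$ one gets $\widehat{X},\widehat{Y}\in\mu$, hence the proper cuts $R_X=H\setminus\widehat{X}$ and $R_Y=H\setminus\widehat{Y}$ lie outside~$\mu$, and the $T_j\in\coco(\widehat{X}\cap\widehat{Y})$ are proper cuts outside~$\mu$. From there the paper simply observes that $H=R_X\prt R_Y\prt T_1\prt\cdots\prt T_k$ is a partition of~$H$ into proper cuts belonging to the conjugate pseudo-ultrafilter~$\tilde{\mu}$ (hence to~$\tj_*(\tilde{\mu})$), and invokes Lemma~\ref{L:Hirredcljmu} applied to~$\tilde{\mu}$ for an immediate contradiction. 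Your argument instead runs an explicit induction, gluing the~$T_j$ one at a time onto~$R_X$ and using axiom~(ii) of Definition~\ref{D:PFil} at each step, until the complement becomes~$R_Y$ and axiom~(iii) fires. What your approach buys is self-containment: you never touch the conjugate~$\tilde{\mu}$, nor Lemmas~\ref{L:tjmuoncuts} and~\ref{L:Hirredcljmu}. What the paper's approach buys is brevity, by cashing in the machinery already built; in effect your induction is re-proving, in this particular configuration, the special case of Lemma~\ref{L:Hirredcljmu} that the paper calls upon.
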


\begin{proof}
Let $X,Y,Z\in\so{G}$ such that $Z=X\prt Y$ and $Z\in\tj(\mu)$. {}From $Z\leq^{\oplus}H$ it follows immediately that $X\leq^{\oplus}H$ and $Y\leq^{\oplus}H$. By Lemma~\ref{L:HminusX+Y}, we can write
 \begin{align}
 \coco(H\setminus X)&=\setm{X_i}{0\leq i\leq m}\,,
 \label{Eq:cocoH-X}\\
 \coco(H\setminus Y)&=\setm{Y_j}{0\leq j\leq n}
 \label{Eq:cocoH-Y}
 \intertext{without repetitions (e.g., $i\mapsto X_i$ is one-to-one)
   and with $X\subseteq Y_0$ and $Y\subseteq X_0$\,,} \coco(X_0\cap
 Y_0)&=\setm{Z_k}{k<\ell} \qquad\text{without repetitions}\,,
 \label{Eq:cocoX0Y0}\\
 \coco(H\setminus Z)&=\setm{Z_k}{k<\ell}\cup
 \setm{X_i}{1\leq i\leq m}\cup
 \setm{Y_j}{1\leq j\leq n}\,,
 \label{Eq:cocoH-Z}
 \end{align}
 for natural numbers~$m$, $n$, $\ell$. By~\eqref{Eq:cocoH-Z}, the
 assumption $Z\in\tj(\mu)$ means that all $Z_k\notin\mu$ while
 $X_i\notin\mu$ whenever $i>0$ and $Y_j\notin\mu$ whenever $j>0$.

Now suppose that $X,Y\notin\tj(\mu)$. By the paragraph above together
with~\eqref{Eq:cocoH-X} and~\eqref{Eq:cocoH-Y}, this means that
$X_0,Y_0\in\mu$. Since~$\mu$ is a pseudo-ultrafilter, $H\setminus
X_0\notin\mu$ and $H\setminus Y_0\notin\mu$. Since those two proper
cuts are disjoint (cf. Lemma~\ref{L:HminusX+Y}(ii)), we get a
partition $H=(H\setminus X_0)\prt(H\setminus
Y_0)\prt\bprt_{k<\ell}Z_k$ with all members belonging to the conjugate
pseudo-ultrafilter~$\tilde{\mu}$ (cf. Lemma~\ref{L:mu2tildemu}). By
Lemma~\ref{L:Hirredcljmu} (applied to~$\tilde{\mu}$), this is a
contradiction.
\end{proof}

We make cash of our previous observations with the following result. 

\begin{proposition}\label{P:cltjmujirr}
The set $\tcl\pI{\tj(\mu)}$ is completely \jirr\ in~$\sR(G)$, with lower cover $\tcl\pI{\tj(\mu)}\setminus\set{H}=\tcl\pI{\tj_{*}(\mu)}$.
\end{proposition}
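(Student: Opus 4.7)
The plan is to verify the two ingredients of Lemma~\ref{L:DescrJirrRegP}, namely that $\ba=\tcl(\tj(\mu))$ is regular closed with unique lower cover $\bb=\tcl(\tj_{*}(\mu))$, and that $\ba\setminus\bb=\{H\}$.

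First I would check that both $\ba$ and $\bb$ are regular closed. By Lemma~\ref{L:tjmuopen}, $\tj(\mu)$ is open, so $\ba$ is regular closed by Lemma~\ref{L:BasicReg}. For $\tj_{*}(\mu)$, an analogous argument works: if $X\in\tj_{*}(\mu)$ and $X=X_1\prt X_2$, then $X_1,X_2$ are proper subsets of $X\subsetneq H$, so neither equals $H$; openness of $\tj(\mu)$ places one of them in $\tj(\mu)$, hence in $\tj_{*}(\mu)$. Thus $\bb$ is regular closed. I would then verify $\bb=\ba\setminus\{H\}$: the property $H\notin\bb$ is Lemma~\ref{L:Hirredcljmu}, while any $X\in\ba\setminus\{H\}$ arises as $X=\bprt\bx$ with $\bx\subseteq\tj(\mu)$; since $H\in\bx$ would force $X=H$, we have $\bx\subseteq\tj_{*}(\mu)$, hence $X\in\bb$. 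In particular $\bb\subsetneq\ba$.

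The hard part is showing that $\ba$ is completely \jirr\ with $\ba_{*}=\bb$, which amounts to proving that every regular closed $\ba'\subseteq\ba$ containing $H$ already equals $\ba$. The key intermediate claim is: any open $\bu\subseteq\ba$ with $H\in\tcl(\bu)$ in fact contains $H$ as an element. To prove this, I would pick via Lemma~\ref{L:Alg2MinCov} a minimal partition $\bw\subseteq\bu$ of $H$ and rule out $|\bw|\geq 2$. In that case the incidence graph on $\bw$ (two parts adjacent when their union is connected) is connected on at least two vertices, hence has a non-cut vertex $W$, which makes $H\setminus W=\bprt(\bw\setminus\{W\})$ connected. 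Then both $W$ and $H\setminus W$ are proper cuts of $H$: $W\in\bu\subseteq\ba$, and $H\setminus W\in\tcl(\bu)\subseteq\tcl(\ba)=\ba$. By Lemma~\ref{L:tjmuoncuts} both belong to $\mu$, contradicting pseudo-ultrafilter axiom~(iii) of Definition~\ref{D:PFil}. Hence $\bw=\{H\}$ and $H\in\bu$.

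Equipped with this claim, the conclusion is quick. Given regular closed $\ba'\subseteq\ba$ with $H\in\ba'$, set $\bu=\tin(\ba')$; then $\bu\subseteq\ba$ is open and $H\in\tcl(\bu)=\ba'$, so the claim yields $H\in\tin(\ba')$. By Lemma~\ref{L:OpenSemType} every minimal covering of $H$ meets $\ba'$. Applied to the minimal covering $\{X\}\cup\coco(H\setminus X)$ for an arbitrary $X\in\tj(\mu)$, this forces $X\in\ba'$: no $Y\in\coco(H\setminus X)$ can lie in $\ba'\subseteq\ba$, for otherwise Lemma~\ref{L:tjmuoncuts} would place the cut $Y$ in $\mu$, contradicting the definition of $\tj(\mu)$. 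Hence $\tj(\mu)\subseteq\ba'$, so $\ba=\tcl(\tj(\mu))\subseteq\ba'$, and $\ba'=\ba$ as required.
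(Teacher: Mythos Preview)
Your proof is correct and follows essentially the same architecture as the paper's: establish that both $\tcl(\tj(\mu))$ and $\tcl(\tj_*(\mu))$ are regular closed, identify the latter with $\tcl(\tj(\mu))\setminus\{H\}$ via Lemma~\ref{L:Hirredcljmu}, then show that any regular closed $\ba'\subseteq\tcl(\tj(\mu))$ containing~$H$ must contain all of~$\tj(\mu)$ by first forcing $H\in\tin(\ba')$ and then using the partitions $H=X\prt\bprt\coco(H\setminus X)$. The only noticeable variation is tactical: for the step ``$H\in\tin(\ba')$'' the paper simply observes that a nontrivial partition of~$H$ inside~$\ba'$ would land in $\tcl(\tj_*(\mu))$ and invoke Lemma~\ref{L:Hirredcljmu}, whereas you reprove this via a non-cut-vertex argument and Lemma~\ref{L:tjmuoncuts}---valid, but slightly redundant given that you have already established that $\ba\setminus\{H\}$ is closed.
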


\begin{proof}
It follows from Lemma~\ref{L:tjmuopen} that $\tcl\pI{\tj(\mu)}$ is regular closed. Furthermore, it follows from Lemma~\ref{L:Hirredcljmu} that $\tcl\pI{\tj(\mu)}\setminus\set{H}$ is closed; the equality $\tcl\pI{\tj(\mu)}\setminus\set{H}=\tcl\pI{\tj_{*}(\mu)}$ follows. Since $\tj_{*}(\mu)$ is a lower subset of the open set~$\tj(\mu)$, it is open as well; hence $\tcl\pI{\tj_{*}(\mu)}$ is regular closed.

It remains to prove that every regular closed subset $\ba$ of~$\tcl\pI{\tj(\mu)}$ with $H\in\ba$ contains~$\tj(\mu)$ (and thus is equal to $\tcl\pI{\tj(\mu)}$). If $H\notin\tin(\ba)$, then, since~$H$ belongs to $\ba=\tcl\tin(\ba)$, there is a partition of the form $H=\bprt_{i<m}X_i$ with $m\geq2$ and each $X_i\in\ba$, thus each $X_i\in\tcl\pI{\tj_{*}(\mu)}$, and thus $H\in\tcl\pI{\tj_{*}(\mu)}$, a contradiction. Hence $H\in\tin(\ba)$. Now let $X\in\tj(\mu)$ and write $\coco(H\setminus X)=\setm{X_i}{i<n}$. Then each~$X_i$ is a cut and $X_i\notin\mu$, thus $X_i\notin\tcl\pI{\tj(\mu)}$ by Lemma~\ref{L:tjmuoncuts}, and thus $X_i\notin\ba$. Since $H\in\tin(\ba)$ and $H=X\prt\bprt_{i<n}X_i$, it follows that $X\in\ba$, as desired.
\end{proof}

Now we are ready to prove the main result of this section.

\begin{theorem}\label{T:cltjmujirr}
Let~$G$ be a graph. Then the completely \jirr\ elements of~$\sR(G)$ are exactly the sets $\tcl\pI{\tj(\mu)}$, for pseudo-ultrafilters~$\mu$.
\end{theorem}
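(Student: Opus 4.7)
The plan is to establish the nontrivial direction, since Proposition~\ref{P:cltjmujirr} already shows that $\tcl(\tj(\mu))$ is completely join-irreducible in $\sR(G)$ for every pseudo-ultrafilter~$\mu$. So let $\ba$ be an arbitrary completely join-irreducible element of $\sR(G)$. By Lemma~\ref{L:DescrJirrRegP}, $\ba$ has a largest element $H\in\so{G}$, with $\ba_*=\ba\setminus\set{H}$, $H\in\tin(\ba)$, and (by the claim embedded in that proof) $\ba_*$ regular closed. The natural candidate for the pseudo-ultrafilter is
\[
\mu=\ba\cap\Cuts(H)\,.
\]

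First I would verify that $\mu$ satisfies the three conditions of Definition~\ref{D:PFil}. Condition~(i) is immediate from closedness of $\ba$: if $X,Y\in\mu$ and $Z=X\prt Y$ is a cut, then $Z\in\ba$, so $Z\in\mu$. For condition~(ii), if $X,Y\notin\mu$ but $Z=X\prt Y\in\mu$, then $Z$ is a cut in $\ba$, hence $Z\in\tin(\ba)$ by Lemma~\ref{L:JIalmOpen}; openness then forces $X\in\ba$ or $Y\in\ba$, and since both $X,Y$ are cuts, one of them lies in $\mu$, a contradiction. For condition~(iii) with $X$ a proper cut of $H$: $X$ and $H\setminus X$ cannot both be in $\mu$, because then both would belong to the closed set $\ba_*$ (neither equals $H$), forcing $H=X\prt(H\setminus X)\in\ba_*$, a contradiction; conversely, since $H\in\tin(\ba)$ and $H=X\prt(H\setminus X)$ is a partition, openness ensures at least one of $X,H\setminus X$ lies in $\ba$, hence in $\mu$.

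Next I would show $\tj(\mu)\subseteq\ba$. Let $X\in\tj(\mu)$, so $X\leq^{\oplus}H$ and $\coco(H\setminus X)\cap\mu=\es$. If $X=H$ we are done. Otherwise, enumerate $\coco(H\setminus X)=\set{X_1,\dots,X_n}$; each $X_i$ is a cut of $H$ by Lemma~\ref{L:HominusX}, and $X_i\notin\mu$ means $X_i\notin\ba$. From $H=X\prt X_1\prt\cdots\prt X_n$ together with $H\in\tin(\ba)$, openness of $\tin(\ba)$ forces some piece to lie in $\ba$; since none of the $X_i$ does, we get $X\in\ba$. Since $\ba$ is closed, it follows that $\tcl(\tj(\mu))\subseteq\ba$.

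Finally I would conclude by a join-irreducibility argument. By Proposition~\ref{P:cltjmujirr}, $\tcl(\tj(\mu))$ is itself completely join-irreducible with largest element $H\in\tj(\mu)$. Since $\ba$ is completely join-irreducible with lower cover $\ba_*=\ba\setminus\set{H}$, any regular closed subset of $\ba$ is either equal to $\ba$ or contained in $\ba_*$. As $H\in\tcl(\tj(\mu))$ but $H\notin\ba_*$, the second option is ruled out, yielding $\tcl(\tj(\mu))=\ba$. The main technical subtlety is condition~(iii) of the pseudo-ultrafilter definition, which crucially uses both that $\ba_*$ is closed (to exclude two complementary cuts) and that $H\in\tin(\ba)$ (to force at least one); the remaining steps are straightforward openness/closedness manipulations backed by the structural results of Section~\ref{S:BdedReg}.
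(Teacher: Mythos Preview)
Your proof is correct and follows essentially the same route as the paper: define $\mu=\ba\cap\Cuts(H)$, verify the three pseudo-ultrafilter axioms using closedness of~$\ba$, Lemma~\ref{L:JIalmOpen}, and the pair of facts $H\in\tin(\ba)$ and $\ba_*=\ba\setminus\set{H}$ closed, then show $\tj(\mu)\subseteq\ba$ via $H\in\tin(\ba)$, and conclude by complete \jirry\ of~$\ba$. The only cosmetic difference is that the paper invokes Lemma~\ref{L:tjmuopen} to see that $\tcl(\tj(\mu))$ is regular closed, whereas you obtain this as a byproduct of Proposition~\ref{P:cltjmujirr}; both are fine.
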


\begin{proof}
One direction is provided by Proposition~\ref{P:cltjmujirr}, so that it suffices to prove that every completely \jirr\ element~$\ba$ of~$\sR(G)$ has the form $\tcl\pI{\tj(\mu)}$.

It follows from Lemma~\ref{L:DescrJirrRegP} that~$\ba$ has a largest element~$H$, and $H\in\tin(\ba)$. Since~$\ba$ is closed, the set $\mu=\ba\cap\Cuts(H)$ satisfies item~(i) of Definition~\ref{D:PFil}.

Let $X$, $Y$, and~$Z$ be cuts of~$H$ such that $Z=X\prt Y$ and $Z\in\mu$. It follows from Lemma~\ref{L:JIalmOpen} that $Z\in\tin(\ba)$, so either $X\in\ba$ or $Y\in\ba$, that is, either $X\in\mu$ or $Y\in\mu$,  thus completing the proof of item~(ii) of
Definition~\ref{D:PFil}. 

Since $\ba_*=\ba\setminus\set{H}$ is closed and $H \in
  \tin(\ba)$, item~(iii) of Definition~\ref{D:PFil} is also
satisfied. Therefore, $\mu$ is a pseudo-ultrafilter.

We claim that $\tj(\mu)\subseteq\ba$. Let $X\in\tj(\mu)$ and write $\coco(H\setminus X)=\setm{X_i}{i<n}$. Then each~$X_i$ is a cut of~$H$ and $X_i\notin\mu$, so $X_i\notin\ba$. Since $H\in\tin(\ba)$ and $H=X\prt\bprt_{i<n}X_i$, it follows that $X\in\ba$, as desired.

By Lemma~\ref{L:tjmuopen}, $\tcl\pI{\tj(\mu)}$ is regular closed. This set contains~$H$ as an element, and, by the paragraph above, it is contained in~$\ba$. Hence $\tcl\pI{\tj(\mu)}$ is not contained in $\ba_*=\ba\setminus\set{H}$, and hence $\tcl\pI{\tj(\mu)}=\ba$.
\end{proof}

We obtain the following strengthening of Lemma~\ref{L:baiiCuts2ba}.

\begin{corollary}\label{C:cltjmujirr}
Let~$G$ be a graph, let~$\ba$ and~$\bb$ be completely \jirr\ elements of~$\sR(G)$ with the same largest element~$H$. If $\ba\cap\Cuts_*(H)\subseteq\bb\cap\Cuts_*(H)$, then $\ba=\bb$.
\end{corollary}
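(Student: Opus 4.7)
The plan is to leverage the pseudo-ultrafilter characterization of completely \jirr\ elements established in Theorem~\ref{T:cltjmujirr}. By that theorem, there exist pseudo-ultrafilters~$\mu$ and~$\nu$ on~$H$ such that $\ba=\tcl\pI{\tj(\mu)}$ and $\bb=\tcl\pI{\tj(\nu)}$. Moreover, inspecting the proof of Theorem~\ref{T:cltjmujirr}, the pseudo-ultrafilter associated to a completely \jirr\ element~$\bx$ with largest element~$H$ is precisely $\bx\cap\Cuts(H)$; hence $\mu=\ba\cap\Cuts(H)$ and $\nu=\bb\cap\Cuts(H)$.

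Since both $\mu$ and $\nu$ contain~$H$ as their largest element, the hypothesis $\ba\cap\Cuts_*(H)\subseteq\bb\cap\Cuts_*(H)$ translates immediately into the inclusion $\mu\subseteq\nu$ at the level of pseudo-ultrafilters. The main step is then to show that $\mu\subseteq\nu$ forces $\mu=\nu$: this is where the self-duality condition~(iii) of Definition~\ref{D:PFil} is crucial. Indeed, suppose for contradiction that some proper cut $X\in\nu\setminus\mu$. Applying condition~(iii) to the pseudo-ultrafilter~$\mu$ yields $H\setminus X\in\mu\subseteq\nu$; but then applying condition~(iii) to~$\nu$ forces $X\notin\nu$, a contradiction.

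Having established $\mu=\nu$, the definition of $\tj(\cdot)$ gives $\tj(\mu)=\tj(\nu)$, whence $\ba=\tcl\pI{\tj(\mu)}=\tcl\pI{\tj(\nu)}=\bb$. The only delicate point in this argument is the identification of~$\mu$ as~$\ba\cap\Cuts(H)$, but this is already implicit in the construction carried out in the proof of Theorem~\ref{T:cltjmujirr}, where verifying the pseudo-ultrafilter axioms for $\ba\cap\Cuts(H)$ was the core content. Once this identification is in hand, the argument is essentially a two-line calculation exploiting the \emph{complementation} axiom of pseudo-ultrafilters, which is the feature that makes the passage from $\subseteq$ to $=$ automatic.
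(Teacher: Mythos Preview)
Your proof is correct and follows essentially the same approach as the paper: invoke Theorem~\ref{T:cltjmujirr} to identify the pseudo-ultrafilters $\mu=\ba\cap\Cuts(H)$ and $\nu=\bb\cap\Cuts(H)$, translate the hypothesis into $\mu\subseteq\nu$, and then use the complementation axiom~(iii) of Definition~\ref{D:PFil} to force $\mu=\nu$. The paper's proof is terser but uses exactly the same steps.
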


\begin{proof}
By Theorem~\ref{T:cltjmujirr}, the sets $\ga=\ba\cap\Cuts(H)$ and $\gb=\bb\cap\Cuts(H)$ are pseudo-ultrafilters on~$H$, $\ba=\tcl\pI{\tj(\ga)}$, and $\bb=\tcl\pI{\tj(\gb)}$. By assumption, $\ga\subseteq\gb$. Any $X\in\gb\setminus\ga$ is a proper cut of~$H$ and $H\setminus X\in\ga\subseteq\gb$, which contradicts $X\in\gb$; so $\ga=\gb$, and the desired conclusion follows.
\end{proof}

As we will see in Theorem~\ref{T:NonClopjirr}, a completely \jirr\ element of~$\sR(G)$ may not be clopen. However, the following result provides a large class of graphs for which every completely \jirr\ regular closed set is clopen.

A graph~$G$ is \emph{contractible} to a graph~$H$ if~$H$ can be obtained from~$G$ by contracting some edges; that is, there is a surjective map $\gf\colon G\twoheadrightarrow H$ with connected fibers such that for all distinct $x,y\in H$, $\gf^{-1}\set{x}\sim\gf^{-1}\set{y}$ if{f} $x\sim y$. If $H=\cD$, the diamond (cf. Figure~\ref{Fig:Graphs}), we say that~$G$ is \emph{diamond-contractible}.

\begin{theorem}\label{T:DiamContr}
The following are equivalent, for any graph~$G$:
\begin{enumerate}
\item $\tj(\mu)$ is clopen, for any pseudo-ultrafilter~$\mu$ on a member of~$\so{G}$.

\item $G$ has no diamond-contractible induced subgraph.
\end{enumerate}
Furthermore, if~\textup{(ii)} holds, then every completely \jirr\ element of~$\sR(G)$ is clopen.
\end{theorem}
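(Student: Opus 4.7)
The proof will rely on the fact that $\tj(\mu)$ is always open by Lemma~\ref{L:tjmuopen}, so that clopenness of $\tj(\mu)$ amounts to its being closed under disjoint unions in $\so{H}$. Once (i) is established under the hypothesis~(ii), the final clause follows immediately from Theorem~\ref{T:cltjmujirr}: every completely \jirr\ element of~$\sR(G)$ has the form $\tcl\pI{\tj(\mu)}$ for a pseudo-ultrafilter~$\mu$, which equals~$\tj(\mu)$ itself once the latter is closed.

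For the contrapositive of (i)$\Rightarrow$(ii), given a contraction $\gf\colon H\twoheadrightarrow\cD$ of an induced subgraph $H\subseteq G$, with connected fibers $A,B,C,D$ over the respective vertices $a,b,c,d$ of~$\cD$ (where $c$--$d$ is the missing edge), I would fix any $d_0\in D$ and set
\[
\mu=\setm{S\in\Cuts(H)}{S=H\text{ or }d_0\notin S}\,.
\]
A quick check verifies the three axioms of Definition~\ref{D:PFil}: axiom~(iii) holds because exactly one of $X,H\setminus X$ contains~$d_0$ for each proper cut~$X$; axiom~(i) holds because $d_0\notin X,Y$ jointly imply $d_0\notin X\cup Y$; axiom~(ii) is vacuous since two disjoint proper cuts cannot both contain~$d_0$. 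Using the five adjacencies of~$\cD$, both $B\cup C\cup D$ and $A\cup C\cup D$ are connected, so $\coco(H\setminus A)$ and $\coco(H\setminus B)$ are each singletons containing~$d_0$, hence disjoint from~$\mu$; this gives $A,B\in\tj(\mu)$. On the other hand, $\coco(H\setminus(A\cup B))=\set{C,D}$ since $C\not\sim D$, and $C\in\mu$ because $d_0\notin C$, so $A\sqcup B\notin\tj(\mu)$, demonstrating that $\tj(\mu)$ fails to be closed.

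For (ii)$\Rightarrow$(i), I would take $X_1,X_2\in\tj(\mu)$ with $X_1\sim X_2$ and aim at $X=X_1\sqcup X_2\in\tj(\mu)$. The plan is first to use~(ii) to force $\coco(\widehat{X_1}\cap\widehat{X_2})$ to have at most one element: if distinct components $W,Z$ existed, then the induced subgraph of~$G$ on $X_1\cup X_2\cup W\cup Z$ would contract onto~$\cD$ via the map sending those four pieces to $a,b,c,d$ respectively, as Lemma~\ref{L:HminusX+Y}(iv) supplies the five required adjacencies while $W\not\sim Z$ holds because $W,Z$ are distinct components of $\widehat{X_1}\cap\widehat{X_2}$, contradicting~(ii). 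Combined with Lemma~\ref{L:HminusX+Y}(iii), this yields $\coco(H\setminus X)$ finite; and using $X_1,X_2\in\tj(\mu)$ to exclude the components in $\coco(H\setminus X_i)$ from~$\mu$, the only possible element of $\coco(H\setminus X)\cap\mu$ would be the unique component $W=\widehat{X_1}\cap\widehat{X_2}$, if it exists. If $W\in\mu$, then Lemma~\ref{L:HminusX+Y}(ii) produces the disjoint decomposition $H\setminus W=(H\setminus\widehat{X_1})\sqcup(H\setminus\widehat{X_2})$ of the cut $H\setminus W$ into two cuts, both in~$\mu$ by axiom~(iii) (since $\widehat{X_1},\widehat{X_2}\notin\mu$); axiom~(i) then forces $H\setminus W\in\mu$, contradicting $W\in\mu$ via axiom~(iii). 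The main difficulty is precisely this final squeeze, which plays axioms~(i) and~(iii) of the pseudo-ultrafilter off each other on the natural complementary partition coming from Lemma~\ref{L:HminusX+Y}(ii), thereby ruling out the bad component~$W$ without further case analysis on auxiliary components of $H\setminus X_i$.
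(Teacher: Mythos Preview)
Your proposal is correct and follows essentially the same route as the paper's proof: the same principal-filter pseudo-ultrafilter for (i)$\Rightarrow$(ii) (your $A,B,C,D$ are the paper's $X,Y,U,V$), and for (ii)$\Rightarrow$(i) the same use of Lemma~\ref{L:HminusX+Y}(iv) to force $\widehat{X_1}\cap\widehat{X_2}$ connected, followed by the same complementary-cut argument via axioms~(i) and~(iii) on the decomposition $H\setminus W=(H\setminus\widehat{X_1})\sqcup(H\setminus\widehat{X_2})$. The only point you leave implicit, which the paper spells out, is that $H\setminus W$ is itself a cut (i.e., connected), which follows from $X_1\sim X_2$ and $X_i\subseteq H\setminus\widehat{X_i}$.
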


\begin{proof}
Suppose first that~$G$ has a diamond-contractible induced subgraph~$H$. There exists a partition $H=X\prt Y\prt U\prt V$ in~$\so{H}$ such that $(X,Y,U,V)$ forms a diamond in $(\so{H},\sim)$, with diagonal $\set{X,Y}$. Pick $v\in V$. The set
 \[
 \mu=\setm{Z\in\so{H}}{v\notin Z}
 \cup\set{H}
 \]
is a pseudo-ultrafilter on~$H$, with $X,Y,U\in\mu$ (so $X,Y,U\in\tj(\mu)$) and $V\notin\mu$. Since $\coco\pI{H\setminus(X\cup Y)}=\set{U,V}$ meets~$\mu$, $X\prt Y\notin\tj(\mu)$, so~$\tj(\mu)$ is not closed.

Conversely, suppose that~$G$ has no diamond-contractible induced subgraph, let~$\mu$ be a pseudo-ultrafilter on $H\in\so{G}$, and let $Z=X\prt Y$ in~$\so{H}$ with $X,Y\in\tj(\mu)$. We can write
 \[
 \coco(H\setminus X)=\setm{X_i}
 {0\leq i\leq m}\quad\text{and}\quad
 \coco(H\setminus Y)=\setm{Y_j}
 {0\leq j\leq n}\,,
 \]
without repetitions, with all~$X_i\notin\mu$ and all $Y_j\notin\mu$, and in such a way that $X\subseteq Y_0$ and $Y\subseteq X_0$.

\begin{sclaim}
The set $X_0\cap Y_0$ is either empty, or a cut. Furthermore, $X_0\cap Y_0\notin\mu$.
\end{sclaim}

\begin{scproof}
By Lemma~\ref{L:HminusX+Y}(iv), for any distinct $U,V\in\coco(X_0\cap Y_0)$, the quadruple $(X,Y,U,V)$ forms a diamond in~$\so{H}$, with diagonal $\set{X,Y}$. Hence the (connected) induced subgraph $H'=X\cup Y\cup U\cup V$ is diamond-contractible, a contradiction. Therefore, $X_0\cap Y_0$ is connected.

Since $X_0$ and~$Y_0$ are both proper cuts, so are $H\setminus X_0$ and $H\setminus Y_0$. Moreover, $X\subseteq H\setminus X_0$,
$Y\subseteq H\setminus Y_0$, and $X\sim Y$, thus $H\setminus
X_0\simeq H\setminus Y_0$, and thus $H\setminus(X_0\cap
Y_0)=(H\setminus X_0)\cup(H\setminus Y_0)$ is connected.

{}From $X_0,Y_0\notin\mu$ it follows that both $H\setminus X_0$ and $H\setminus Y_0$ belong to~$\mu$. But those two sets are disjoint (cf. Lemma~\ref{L:HminusX+Y}(ii)), hence their union, namely $H\setminus(X_0\cap Y_0)$, belongs to~$\mu$; whence $X_0\cap Y_0\notin\mu$.
\end{scproof}

Now it follows from Lemma~\ref{L:HminusX+Y}(iii) that the set
 \[
 \coco(H\setminus Z)=\pI{\set{X_0\cap Y_0}\setminus\set{\es}}\cup\setm{X_i}{1\leq i\leq m}\cup\setm{Y_j}{1\leq j\leq n}
 \]
is disjoint from~$\mu$; that is, $Z\in\tj(\mu)$.

If~(ii) holds, then it follows from the equivalence above together with Theorems~\ref{L:tjmuopen} and~\ref{T:cltjmujirr} that every completely \jirr\ element of~$\sR(G)$ is clopen.
\end{proof}

In particular, it is easy to verify that if~$G$ is either a block graph or a cycle, then no induced subgraph of~$G$ is diamond-contractible. Therefore, by putting together Theorem~\ref{T:DiamContr} and Lemma~\ref{L:MacNeille}, we obtain the following result.

\begin{corollary}\label{C:CJIBGorCyc}
Let~$G$ be a finite graph. If~$G$ is either a block graph or a cycle, then $\sR(G)$ is the Dedekind-MacNeille completion of~$\sP(G)$.
\end{corollary}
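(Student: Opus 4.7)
The plan is to deduce Corollary~\ref{C:CJIBGorCyc} as a direct application of Theorem~\ref{T:DiamContr} and Lemma~\ref{L:MacNeille}. What I actually have to verify is the combinatorial condition~(ii) of Theorem~\ref{T:DiamContr}: that neither a finite block graph nor a cycle admits any diamond-contractible induced subgraph. Once this is done, Theorem~\ref{T:DiamContr} delivers that every completely \jirr\ member of~$\sR(G)$ is clopen, i.e.\ lies in~$\sP(G)$. Since~$G$ is finite, the set~$\so{G}$ is finite, so the lattice $\sR(G)=\Reg(\so{G},\tcl)$ is finite and hence spatial (every element is, trivially, a finite join of \jirr\ elements). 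Lemma~\ref{L:MacNeille}(ii) then immediately yields that $\sR(G)$ is the Dedekind-MacNeille completion of~$\sP(G)$.

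For the cycle case, I would argue as follows. Any induced subgraph of~$\cC_n$ is either a disjoint union of paths, or~$\cC_n$ itself. Edge contractions preserve the property of being a forest, and the diamond~$\cD$ contains a cycle; hence no forest contracts to~$\cD$. On the other hand, contracting an edge~$uv$ of~$\cC_n$ merges two adjacent vertices whose only other neighbours are distinct, producing~$\cC_{n-1}$; by induction on the number of contracted edges, every contraction of~$\cC_n$ is a cycle (or a single vertex), never~$\cD$.

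For the block graph case, since induced subgraphs of block graphs are again block graphs, it suffices to show that no block graph is diamond-contractible. Suppose, toward a contradiction, that a block graph~$H$ admits a partition $V(H)=X\sqcup Y\sqcup U\sqcup V$ into connected subsets with exactly the diamond adjacency pattern, namely with edges between $X$--$U$, $X$--$V$, $Y$--$U$, $Y$--$V$, and $U$--$V$ but not between $X$ and~$Y$. Fixing one edge witnessing each of these five adjacencies and joining the endpoints by induced paths inside each part, one obtains a closed walk visiting the four parts in the cyclic order $X$--$U$--$Y$--$V$--$X$, and also the crossing edge $U$--$V$. An appeal to the characterization of block graphs as graphs with no induced~$\cC_n$ (for $n\geq 4$) and no induced diamond, together with the fact that in a block graph the induced path between two given vertices is unique, shows by a minimality/shortcutting argument on the parts that some 4-vertex induced subgraph supported on these chosen representatives must already contain an induced~$\cC_4$ or an induced diamond, contradicting the block graph hypothesis. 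This is the single delicate point of the proof.

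The main obstacle is therefore the rigorous case analysis in the block graph case; everything else is mechanical and formal. Once diamond-contractibility is ruled out for induced subgraphs of~$G$, the chain \emph{(forbidden contractions) $\Rightarrow$ (all completely \jirr\ elements clopen, via Theorem~\ref{T:DiamContr}) $\Rightarrow$ (Dedekind-MacNeille completion, via Lemma~\ref{L:MacNeille}(ii) and spatiality of the finite lattice $\sR(G)$)} finishes the argument in one line.
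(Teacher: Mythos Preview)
Your proposal is correct and follows precisely the paper's route: check that no induced subgraph of a block graph or cycle is diamond-contractible, invoke Theorem~\ref{T:DiamContr} to get that every completely \jirr\ element of~$\sR(G)$ is clopen, and conclude via Lemma~\ref{L:MacNeille}(ii) using that the finite lattice~$\sR(G)$ is spatial. The paper is in fact terser than you on the combinatorial verification (it simply says ``it is easy to verify''); for block graphs, a cleaner finish than your shortcutting sketch is to observe that every minimal vertex separator in a block graph is a single vertex, whereas a diamond-contraction with non-adjacent parts~$X$,~$Y$ yields two $X$--$Y$ paths through~$U$ and through~$V$ respectively, so no single vertex can separate~$X$ from~$Y$.
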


\begin{remark}\label{Rk:ClopenRG}
  The statement that all~$\tj(\mu)$ are clopen, although it implies
  that every completely \jirr\ element of~$\sR(G)$ is clopen, is not
  equivalent to that statement: as a matter of fact, $\tcl(\tj(\mu))$ might be open while properly containing~$\tj(\mu)$. For example, for the diamond graph~$\cD$, every completely \jirr\ element in~$\sR(\cD)$ is clopen, but not every~$\tj(\mu)$ is clopen (or even regular open).
\end{remark}

\section{Lattices of clopen sets for poset and semilattice type}\label{S:ClopLat}

Some of the results that we have established in earlier sections, in the particular cases of semilattices ($\Reg S$ and $\Clop S$) or graphs ($\sR(G)$ and~$\sP(G)$), about $\Reg(P,\gf)$ being the Dedekind-MacNeille completion of $\Clop(P,\gf)$, can be extended to arbitrary closure spaces of semilattice type; some others cannot. In this section we shall survey some of the statements that can be extended, and give counterexamples to some of those that cannot be extended.

The following crucial lemma expresses the abundance of clopen subsets in any well-founded closure space of semilattice type.

\begin{lemma}\label{L:AbundClop}
Let $(P,\gf)$ be a well-founded closure space of semilattice type.
Let~$\ba$ and~$\bu$ be subsets of~$P$ with~$\ba$ clopen and~$\bu$ open. If $\bu\not\subseteq\ba$, then $(\ba\dnw p)\cup\set{p}$ is clopen for any minimal element~$p$ of $\bu\setminus\ba$.
\end{lemma}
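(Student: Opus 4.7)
The plan is to verify directly that $\bb=(\ba\dnw p)\cup\set{p}$ is both closed and open, treating the extra element~$p$ separately from~$\ba\dnw p$. The latter is already clopen by Lemma~\ref{L:downpReg}(iii), so what remains is to account for the single additional point~$p$. Throughout I will repeatedly use that semilattice type implies poset type, that $\bx\in\cM(q)$ gives $q=\bigvee\bx$ with $\bx\subseteq P\dnw q$, and that by Lemma~\ref{L:SemType2Atom} the space is atomistic, so $\set{p}\in\cM(p)$ for every $p\in P$.

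For closedness, I would take $\bx\in\cM(q)$ with $\bx\subseteq\bb$ and show $q\in\bb$. If $p\notin\bx$ then $\bx\subseteq\ba\dnw p$, and closedness of $\ba\dnw p$ yields $q\in\ba\dnw p\subseteq\bb$. If $p\in\bx$, then every element of~$\bx$ lies at or below~$p$, so $q=\bigvee\bx\leq p$; combined with $p\leq q$, this forces $q=p$. Since $\set{p}\in\cM(p)$ is itself a covering of $q=p$, the minimality of~$\bx$ forces $\bx=\set{p}$, whence $q=p\in\bb$.

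For openness, by Lemma~\ref{L:OpenSemType} it suffices to check that every $\bx\in\cM(q)$ meets~$\bb$, for each $q\in\bb$. If $q\in\ba\dnw p$, the openness of~$\ba$ (via Lemma~\ref{L:OpenSemType}) yields $y\in\bx\cap\ba$; poset type gives $y\leq q\leq p$, hence $y\in\ba\dnw p\subseteq\bb$. If $q=p$, the openness of~$\bu$ yields $y\in\bx\cap\bu$ with $y\leq p$; if $y=p$, then $y\in\bb$, while if $y<p$ the minimality of~$p$ in $\bu\setminus\ba$ combined with $y\in\bu$ forces $y\in\ba$, so $y\in\ba\dnw p\subseteq\bb$. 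Either way $\bx\cap\bb\neq\es$.

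The real substance of the argument — and the only place where well-foundedness genuinely enters, through the existence of a minimal element~$p$ of $\bu\setminus\ba$ — is the last subcase of openness. Without the minimality of~$p$, a witness $y\in\bx\cap\bu$ with $y<p$ could lie in $\bu\setminus\ba$, blocking membership in~$\bb$; it is exactly the minimality hypothesis that rules this out. All other steps are essentially bookkeeping built on Lemmas~\ref{L:OpenSemType}, \ref{L:downpReg}, and~\ref{L:SemType2Atom}.
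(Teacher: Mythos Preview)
Your proof is correct and follows essentially the same approach as the paper: verify closedness and openness of $(\ba\dnw p)\cup\set{p}$ directly, using that $\ba\dnw p$ is already clopen (Lemma~\ref{L:downpReg}) and handling the extra point~$p$ via the semilattice-type condition $q=\bigvee\bx$ and the minimality of~$p$ in $\bu\setminus\ba$. The only cosmetic differences are that the paper restricts to nontrivial coverings in the openness argument (so it need not split into $y=p$ versus $y<p$), and that your remark forcing $\bx=\set{p}$ in the closedness step is unnecessary, since you had already concluded $q=p\in\bb$.
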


\begin{proof}
For any $q\in\gf\pI{(\ba\dnw p)\cup\set{p}}$, there exists $\bx\in\cM(q)$ with $\bx\subseteq(\ba\dnw p)\cup\set{p}$. If $\bx\subseteq\ba\dnw p$, then, as~$\ba\dnw p$ is closed (cf. Lemma~\ref{L:downpReg}),
$q\in\ba\dnw p$. If $\bx\not\subseteq\ba\dnw p$, then $p\in\bx$, thus (as $q=\bigvee\bx$ and $\bx\subseteq P\dnw p$) $q=p$. In both cases, $q\in(\ba\dnw p)\cup\set{p}$. Hence $(\ba\dnw p)\cup\set{p}$ is closed.
  
Now let $q\in(\ba\dnw p)\cup\set{p}$ and let $\by\in\cM(q)$ be nontrivial.  {}From $q=\bigvee\by$ it follows that $\by\subseteq P\dnw q$. If $q\in\ba\dnw p$, then, as~$\ba$ is open, $\by\cap\ba\neq\es$, so $\by\cap(\ba\dnw p)\neq\es$. Suppose now that $q=p$. As~$\by$ is a nontrivial covering of~$p$ and $p=\bigvee\by$, every element of~$\by$ is smaller than~$p$, thus, by the minimality assumption on~$p$, we get $\by\cap\bu\subseteq\ba$, so $\by\cap\bu\subseteq\ba\dnw p$. As~$\bu$ is open, $p \in \bu$, and $\by \in \cM(p)$, we get $\by\cap\bu\neq\es$; whence $\by\cap(\ba\dnw p)\neq\es$. Hence $(\ba\dnw p)\cup\set{p}$ is open.
\end{proof}

\begin{theorem}\label{T:AbundClop}
Let $(P,\gf)$ be a well-founded closure space of semilattice type. Then the poset $\Clop(P,\gf)$ is tight in~$\Reg(P,\gf)$.
\end{theorem}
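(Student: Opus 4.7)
The plan is to reduce the problem to proving the meet clause~\eqref{Eq:PresCreatMM} of Definition~\ref{D:tight}, and then to derive the join clause~\eqref{Eq:PresCreatJJ} by a complementation-based duality. The meet clause will be established by a direct application of Lemma~\ref{L:AbundClop} on a cleverly chosen open set, with well-foundedness of~$P$ supplying the minimal element needed to trigger that lemma.

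For the meet clause, I would start from $\ba\in\Clop(P,\gf)$ with $\ba=\bigwedge X$ in $\Clop(P,\gf)$. Setting $\bb=\bigwedge X$ in $\Reg(P,\gf)$, the formula from Lemma~\ref{L:BasicReg}(ii) gives $\bb=\gf\cgf\pI{\bigcap X}$, and since $\ba$ is clopen and contained in each member of~$X$, it follows that $\ba\subseteq\bb$. Assume, for contradiction, $\ba\subsetneq\bb$, and set $\bu=\cgf\pI{\bigcap X}$, so that $\bu$ is open, $\bb=\gf(\bu)$, and $\bu\subseteq\bc$ for every $\bc\in X$ (each $\bc$ being open). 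The crucial observation is that $\bu\not\subseteq\ba$, for otherwise $\bb=\gf(\bu)\subseteq\gf(\ba)=\ba$, \contr. Well-foundedness then supplies a minimal element~$p$ of $\bu\setminus\ba$, and Lemma~\ref{L:AbundClop} delivers a clopen set $\bd=(\ba\dnw p)\cup\set{p}$. The inclusions $\ba\dnw p\subseteq\ba\subseteq\bc$ and $p\in\bu\subseteq\bc$ show $\bd\subseteq\bc$ for every $\bc\in X$, so~$\bd$ is a clopen lower bound of~$X$; this forces $\bd\subseteq\ba$ and contradicts $p\in\bd\setminus\ba$.

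The join clause will then follow by duality. Complementation $\bx\mapsto\bx^\cpl$ restricts to an order-reversing involution of~$\Clop(P,\gf)$, since the complement of a clopen set is clopen, and for any clopen~$\bx$ the orthocomplement $\orth\bx=\gf(\bx^\cpl)$ of Definition~\ref{D:OrthReg} coincides with $\bx^\cpl$ (as $\bx^\cpl$ is itself closed in that case). Hence, if $\ba=\bigvee X$ in $\Clop(P,\gf)$, then $\ba^\cpl=\bigwedge\setm{\bc^\cpl}{\bc\in X}$ in $\Clop(P,\gf)$; the meet clause just proved transports this equality to $\Reg(P,\gf)$, and a final application of the dual automorphism~$\orth{}$ of $\Reg(P,\gf)$ (Lemma~\ref{L:BasicOrth}) recovers $\ba=\bigvee X$ in $\Reg(P,\gf)$.

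The only real obstacle is the meet clause; in particular, pinpointing the right open set~$\bu$ to feed into Lemma~\ref{L:AbundClop} and then checking that the resulting clopen set~$\bd$ genuinely bounds~$X$ from below. Both rely on the identification $\bu=\cgf\pI{\bigcap X}\subseteq\bigcap X$, which is available precisely because the members of~$X$ are clopen, hence open. Once that geometric picture is in place, well-foundedness of~$P$ automatically produces the required minimal witness~$p$, and Lemma~\ref{L:AbundClop} does the rest.
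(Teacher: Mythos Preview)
Your proof is correct and follows essentially the same route as the paper's: both reduce to the meet clause, set $\bu=\cgf\pI{\bigcap X}$, and use Lemma~\ref{L:AbundClop} on a minimal element of $\bu\setminus\ba$ to produce a clopen lower bound $(\ba\dnw p)\cup\set{p}$ of~$X$ not contained in~$\ba$. The only difference is cosmetic: the paper argues directly that $\bu=\ba$ (hence $\bu$ is clopen and the $\Reg$-meet is $\gf(\bu)=\ba$), and leaves the join clause entirely implicit, whereas you phrase the meet argument as a contradiction from $\ba\subsetneq\bb$ and then spell out the complementation duality for the join clause explicitly.
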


\begin{proof}
Let $\vecm{\ba_i}{i\in I}$ be a family of clopen subsets of~$P$, having a meet~$\ba$ in~$\Clop(P,\gf)$. It is obvious that~$\ba$ is contained in the open set $\bu=\cgf\pI{\bigcap\vecm{\ba_i}{i\in I}}$. Suppose that the containment is proper. By Lemma~\ref{L:AbundClop}, there exists $p\in\bu\setminus\ba$ such that $(\ba\dnw p)\cup\set{p}$ is clopen. {}From $p\in\bu$ it follows that $p\in\ba_i$ for each~$i$, thus $(\ba\dnw p)\cup\set{p}\subseteq\ba_i$ for each~$i$, and thus, by the definition of~$\ba$, we get $(\ba\dnw p)\cup\set{p}\subseteq\ba$, so $p\in\ba$, \contr. Therefore, $\bu=\ba$ is clopen, so the meet of the~$\ba_i$ in~$\Reg(P,\gf)$, which is equal to~$\gf(\bu)$ (cf. Lemma~\ref{L:BasicReg}), is equal to~$\ba$ as well.
\end{proof}

The analogue of Theorem~\ref{T:AbundClop} for regular closed subsets in a transitive binary relation holds as well, see Santocanale and Wehrung~\cite{SaWe12a}. This is also the case for semilattices, see Corollary~\ref{C:MinNbhdSemil2}.
For a precursor of those results, for permutohedra on posets, see Pouzet~\emph{et al.} \cite[Lemma~11]{PRRZ}.

\begin{theorem}\label{T:ClopWFLatt}
Let $(P,\gf)$ be a well-founded closure space of semilattice type. Then $\Clop(P,\gf)$ is a lattice if{f} $\Clop(P,\gf)=\Reg(P,\gf)$.
\end{theorem}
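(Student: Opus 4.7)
One direction is immediate: since $\Reg(P,\gf)$ is always a complete lattice (cf. Lemma~\ref{L:BasicReg}), the equality $\Clop(P,\gf)=\Reg(P,\gf)$ trivially forces $\Clop(P,\gf)$ to be a lattice. For the nontrivial converse, the plan is to fix an arbitrary $\ba\in\Reg(P,\gf)$ and show directly that $\ba$ is clopen, by exhibiting it as the stabilisation of a strictly ascending transfinite chain of clopen subsets of~$\ba$. The driving machinery will be Lemma~\ref{L:AbundClop} at successor stages, the lattice hypothesis together with the tightness result (Theorem~\ref{T:AbundClop}) to keep joins inside~$\ba$, and algebraicity of~$\gf$ at limit stages.

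First I would set $\bu=\cgf(\ba)$, so that $\bu$ is open with $\gf(\bu)=\ba$, and put $\bc_0=\es$. At a successor stage, once a clopen $\bc_\alpha\subseteq\ba$ is in hand, if $\bu\subseteq\bc_\alpha$ I stop; otherwise, well-foundedness of $P$ provides a minimal $p\in\bu\setminus\bc_\alpha$, and Lemma~\ref{L:AbundClop} then gives that $\bd_\alpha=(\bc_\alpha\dnw p)\cup\set{p}$ is clopen (and plainly contained in~$\ba$, since $\bc_\alpha\subseteq\ba$ and $p\in\bu\subseteq\ba$). Using the hypothesis that $\Clop(P,\gf)$ is a lattice, I then form $\bc_{\alpha+1}=\bc_\alpha\vee\bd_\alpha$ in $\Clop(P,\gf)$; by tightness (Theorem~\ref{T:AbundClop}) this coincides with the join in $\Reg(P,\gf)$, namely $\gf(\bc_\alpha\cup\bd_\alpha)$, and therefore lies in $\gf(\ba)=\ba$. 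So $\bc_{\alpha+1}$ is a clopen subset of $\ba$ strictly containing $\bc_\alpha$, because $p\in\bd_\alpha\setminus\bc_\alpha$.

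At a limit ordinal $\lambda$, I would take $\bc_\lambda=\bigcup_{\alpha<\lambda}\bc_\alpha$. A union of open sets is always open, and algebraicity of $(P,\gf)$ means that~$\gf$ commutes with directed unions of closed sets, hence $\bc_\lambda$ is closed as well; thus $\bc_\lambda$ is a clopen subset of~$\ba$, and the recursion can proceed. Since the chain $(\bc_\alpha)$ is strictly ascending in $\Pow P$, it must halt at some ordinal $\alpha^*$ with $\bu\subseteq\bc_{\alpha^*}$; applying~$\gf$ yields $\ba=\gf(\bu)\subseteq\gf(\bc_{\alpha^*})=\bc_{\alpha^*}\subseteq\ba$, so $\ba=\bc_{\alpha^*}$ is clopen.

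I expect two delicate points. The first is the successor step: the reason the $\Clop$-join $\bc_\alpha\vee\bd_\alpha$ does not jump outside $\ba$ is exactly tightness of $\Clop(P,\gf)$ in $\Reg(P,\gf)$, without which the $\Clop$-join could in principle overshoot. The second, and the main obstacle, is the limit step: closedness of the increasing union $\bc_\lambda$ is not automatic in a general closure space, and relies decisively on $(P,\gf)$ being algebraic (openness at a limit is free, since arbitrary unions of opens are open).
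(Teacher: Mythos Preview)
Your proof is correct and follows essentially the same approach as the paper's: both use Lemma~\ref{L:AbundClop} to produce the auxiliary clopen set $(\bc\dnw p)\cup\set{p}$, invoke the tightness result (Theorem~\ref{T:AbundClop}) to keep the resulting $\Clop$-join inside~$\ba$, and rely on algebraicity for directed unions of clopen sets. The only difference is packaging: the paper uses Zorn's Lemma to get a maximal clopen subset of the regular open set~$\bu=\cgf(\ba)$ directly, then derives a contradiction with a single application of the successor step, whereas you build the ascending chain explicitly by transfinite recursion; the paper's version is a bit shorter but the content is identical.
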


\begin{proof}
We prove the nontrivial direction. Suppose that~$\Clop(P,\gf)$ is a lattice. Given a regular open subset~$\bu$ of~$P$, we must prove that~$\bu$ is closed (thus clopen). As~$\gf$ is an algebraic closure operator, $\Clop(P,\gf)$ is closed under directed unions, thus it follows from Zorn's Lemma that the set of all clopen subsets of~$\bu$ has a maximal element, say~$\ba$.

Suppose that~$\ba$ is properly contained in~$\bu$. Since~$P$ is well-founded and by Lem\-ma~\ref{L:AbundClop}, there exists $p\in\bu\setminus\ba$ such that $\bb=(\ba\dnw p)\cup\set{p}$ is clopen. By assumption, the pair $\set{\ba,\bb}$ has a join~$\bd$ in~$\Clop(P,\gf)$. Furthermore, it follows from Theorem~\ref{T:AbundClop} that $\bd=\gf(\ba\cup\bb)$, whence $\bd\subseteq\gf(\bu)$, so $\bd=\cgf(\bd)\subseteq\cgf\gf(\bu)=\bu$. Since $\ba\subseteq\bd$ and~$\bd$ is clopen, it follows from the maximality statement on~$\ba$ that $\ba=\bd$, thus $p\in\ba$, \contr.
\end{proof}

\begin{examplepf}\label{Ex:FinPosClop}
A finite closure system $(P,\gf)$ of semilattice type with a non open, \jirr\ element of $\Reg(P,\gf)$.
\end{examplepf}

\begin{note}
By Theorem~\ref{T:ClopWFLatt}, for such an example, $\Clop(P,\gf)$ cannot be a lattice. By Lem\-ma~\ref{L:MacNeille}, $\Reg(P,\gf)$ is not the Dedekind-MacNeille completion of $\Clop(P,\gf)$.
\end{note}

\begin{figure}[htb]
\includegraphics{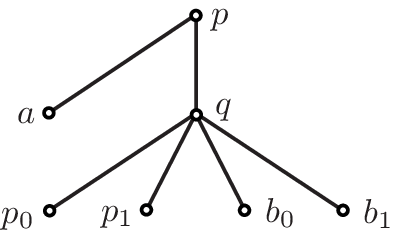}
\caption{The poset $P$ of Example \ref{Ex:FinPosClop}}\label{Fig:FinPosClop}
\end{figure}

\begin{proof}
Consider the poset~$P$ represented in Figure~\ref{Fig:FinPosClop}, and say that a subset~$\bx$ of~$P$ is \emph{closed} if
 \begin{align*}
 \set{a,p_i}\subseteq\bx&\ \Rightarrow\ 
 p\in\bx\,,
 &&\text{for each }i\in\set{0,1}\,,\\
 \set{p_0,p_1}\subseteq\bx&\ \Rightarrow\ 
 q\in\bx\,,\\
 \set{b_0,b_1}\subseteq\bx&\ \Rightarrow\ 
 q\in\bx\,.
 \end{align*}
Denote by~$\gf$ the corresponding closure operator. The nontrivial coverings in~$(P,\gf)$ are given by
 \[
 \set{a,p_0},\set{a,p_1}\in\cM(p)\text{ and }
 \set{p_0,p_1},\set{b_0,b_1}\in\cM(q)\,.
 \]
It follows that $(P,\gf)$ is a finite closure space of semilattice type.
 
Set $\ba=\set{p,p_0,p_1,q}$. Then $\cgf(\ba)=\set{p,p_0,p_1}$  and
$\gf\cgf(\ba)=\ba$, so~$\ba$ is regular closed. Moreover,
$\ba\setminus\set{p}=\set{p_0,p_1,q}$ is regular closed (and not open), and every regular closed proper subset of~$\ba$ is contained in $\ba\setminus\set{p}$. Hence~$\ba$ is \jirr\ in $\Reg(P,\gf)$ and $\ba_*=\ba\setminus\set{p}$. The regular closed set~$\ba$ is not open, as $q\in\ba$ while $b_0\notin\ba$ and $b_1\notin\ba$.
\end{proof}

For a related counterexample, arising from the context of graphs ($\sR(G)$ and~$\sP(G)$), see Section~\ref{S:NonClopNbhd}.

The following modification of Example~\ref{Ex:FinPosClop} shows that Theorem~\ref{T:AbundClop} cannot be extended to the non well-founded case. 

\begin{examplepf}\label{Ex:InfPosClop}
A closure space $(P,\gf)$ of semilattice type, with clopen subsets~$\ba_0$ and~$\ba_1$ with nonempty meet in~$\Reg(P,\gf)$ and with empty meet in~$\Clop(P,\gf)$.
\end{examplepf}

\begin{proof}
Denote by~$P$ the poset represented on the left hand side of Figure~\ref{Fig:InfPosClop}. A detail of~$P$ is shown on the right hand side of Figure~\ref{Fig:InfPosClop}. The index~$\xi$ ranges over the set~$\SS$ of all finite sequences of elements of~$\set{0,1}$, and $\xi 0$ (resp., $\xi 1$) stands for the concatenation of~$\xi$ and~$0$ (resp., $1$). We leave a formal definition of~$P$ to the reader.

\begin{figure}[htb]
\includegraphics{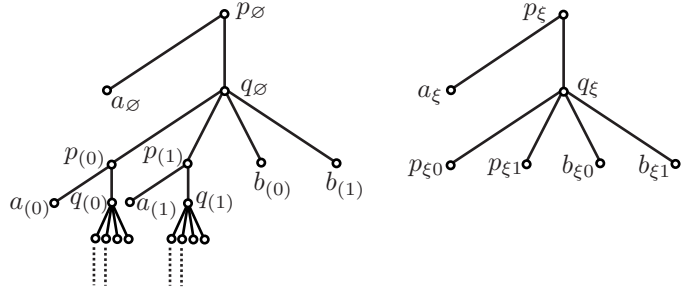}
\caption{The poset $P$ of 
Example \ref{Ex:InfPosClop}}\label{Fig:InfPosClop}
\end{figure}

Now a subset~$\bx$ of~$P$ is closed if
 \begin{align*}
 \set{a_{\xi},p_{\xi i}}\subseteq\bx&\ \Rightarrow\ 
 p_{\xi}\in\bx\,,
 &&\text{for each }\xi\in\SS
 \text{ and each }i\in\set{0,1}\,,\\
 \set{p_{\xi 0},p_{\xi 1}}\subseteq\bx&
 \ \Rightarrow\ 
 q_{\xi}\in\bx\,,
 &&\text{for each }\xi\in\SS\,,\\
 \set{b_{\xi 0},b_{\xi 1}}\subseteq\bx&
 \ \Rightarrow\ 
 q_{\xi}\in\bx\,,
 &&\text{for each }\xi\in\SS\,.
 \end{align*}
We denote by~$\gf$ the corresponding closure operator. We set $\bp=\setm{p_{\xi}}{\xi\in\SS}$, $\bq=\setm{q_{\xi}}{\xi\in\SS}$, and $\ba=\bp\cup\bq$. Then $\cgf(\ba)=\bp$ and $\gf\cgf(\ba)=\ba$, so~$\ba$ is regular closed.

We claim that~$\ba$ has no nonempty clopen subset. For let $\bx\subseteq\ba$ be clopen. If $q_{\xi}\in\bx$ for some~$\xi\in\SS$, then (as $\set{b_{\xi 0},b_{\xi 1}}\in\cM(q_{\xi})$) $\set{b_{\xi 0},b_{\xi 1}}$ meets~$\bx$, thus it meets~$\ba$, \contr; whence $\bx\subseteq\bp$. If $p_{\xi}\in\bx$ for some $\xi\in\SS$, then (as $\set{a_{\xi},p_{\xi i}}\in\cM(p_{\xi})$ for each $i<2$) $\set{p_{\xi 0},p_{\xi 1}}\subseteq\bx$, thus $q_{\xi}\in\bx$, \contr. This proves our claim.

Now the sets $\ba_i=\ba\cup\setm{b_{\xi i}}{\xi\in\SS}$, for
$i\in\set{0,1}$, are both clopen and they intersect in~$\ba$. By the
paragraph above, the meet of $\set{\ba_0,\ba_1}$ in $\Clop(P,\gf)$ is the empty set. However, the meet of $\set{\ba_0,\ba_1}$ in $\Reg(P,\gf)$ is~$\ba$.
\end{proof}

\begin{note}
  For the closure space $(P,\gf)$ of Example~\ref{Ex:InfPosClop}, the
  poset $\Clop(P,\gf)$ is not a lattice. Indeed, for each $i<2$, the
  subset $\bb_i=(P\dnw\set{p_{(0)},p_{(1)}})\cup\set{q_\es,b_{(i)}}$
  is clopen, $\ba_i\subseteq\bb_j$ for all $i,j<2$, and there is no
  clopen subset~$\bc$ of~$P$ such that
  $\ba_i\subseteq\bc\subseteq\bb_j$ for all $i,j<2$.
\end{note}

The following example shows that the lattice of all regular closed subsets of a closure space of semilattice type may not be spatial.

\begin{examplepf}\label{Ex:SemTypeNotSp}
An infinite closure space $(P,\varphi)$ of semilattice type such that
\begin{enumerate}
\item $P$ is a \js\ with largest element.

\item $\Reg(P,\gf)=\Clop(P,\gf)$.

\item The poset $\Clop P$ \pup{cf. Example~\textup{\ref{Ex:FromJSemil}}} is not a lattice.

\item None of the lattices~$\Reg P$ \pup{cf. Example~\textup{\ref{Ex:FromJSemil}}} and ~$\Reg(P,\varphi)$ has any completely \jirr\ element.
\end{enumerate}
\end{examplepf}

\begin{proof}
  We denote by~$P$ the set of all finite sequences of elements of
  $\set{0,1,2}$. For $p,q\in P$, let $p\leq q$ hold if~$q$ is a prefix of $p$. Observe, in particular, that~$P$ is a \js, with largest element the empty sequence~$\es$. The join of any subset~$\bx$ of~$P$ is the longest common prefix for all elements of~$\bx$.

Say that a subset~$\bx$ of~$P$ is closed if
 \[
 \set{pi,pj}\subseteq\bx\ 
 \Rightarrow\ p\in\bx\,,\quad
 \text{for all }p\in P
 \text{ and all distinct }
 i,j\in\set{0,1,2}\,,
 \]
and denote by~$\gf$ the associated closure operator. It is easy to verify that $(P,\gf)$ is a closure space of semilattice type.

\setcounter{claim}{0}
\begin{claim}\label{Cl:UUaiclosed}
Let $p\in P$ and let $\ba_i\subseteq P\dnw pi$ be closed, for each $i\in\set{0,1,2}$. Set
 \[
 X=\setm{i<3}{pi\in\ba_i}\,.
 \]
If $\bigcup_{i<3}\ba_i$ is not closed, then $\card X\geq 2$.
\end{claim}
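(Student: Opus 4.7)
The plan is to argue by contradiction: assume that $\bigcup_{i<3}\ba_i$ fails the closure condition, and deduce $\card X\geq 2$. By the very definition of~$\gf$, such a failure provides an element $q\in P$ together with two distinct $k,l\in\set{0,1,2}$ such that $\set{qk,ql}\subseteq\bigcup_{i<3}\ba_i$ while $q\notin\bigcup_{i<3}\ba_i$. I would then run a case analysis on the position of the string~$q$ relative to~$p$.

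The main structural observation is that each $\ba_j\subseteq P\dnw pj$, so every element of any $\ba_j$ is, as a string, an extension of~$pj$; in particular both $qk$ and $ql$ are extensions of~$p$. A short length-counting argument based on $|qk|=|ql|=|q|+1\geq|p|$ rules out the case $|q|<|p|-1$ outright; the case $|q|=|p|-1$ would force $qk=ql=p$, which is impossible since $k\neq l$; and $|q|=|p|$ with $q\neq p$ would contradict $qk$ extending~$p$. Thus either $q=p$, or $q$ properly extends~$p$.

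In the second case, write $q=pis$ for the uniquely determined $i\in\set{0,1,2}$ appearing right after~$p$. Since the three strings $p0,p1,p2$ are pairwise incomparable, any extension of~$p$ properly longer than~$p$ can extend at most one of them; hence $qk$ and $ql$ both lie in $P\dnw pi$, and the assumption $qk,ql\in\bigcup_{j<3}\ba_j$ forces $qk,ql\in\ba_i$. The closedness of~$\ba_i$ then yields $q\in\ba_i\subseteq\bigcup_{j<3}\ba_j$, contradicting the choice of~$q$.

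Consequently $q=p$, in which case $qk=pk$ and $ql=pl$. By the same uniqueness-of-prefix argument as above, $pk$ can belong only to $\ba_k$ and $pl$ only to $\ba_l$, so both $k$ and $l$ lie in~$X$; since $k\neq l$, this gives $\card X\geq 2$. I expect the only delicate point to be the length bookkeeping that eliminates the spurious positions of~$q$; once those are excluded, the rest is a direct consequence of closedness of each $\ba_i$ together with the fact that distinct $pj$'s have incomparable downsets in~$P$.
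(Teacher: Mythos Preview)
Your proof is correct and follows essentially the same approach as the paper's: both arguments reduce a failure of closedness to the conclusion $q=p$, then read off $\set{k,l}\subseteq X$. The paper organizes the reduction slightly differently---it first observes that $qk$ and $ql$ cannot lie in the same~$\ba_m$ (else closedness of~$\ba_m$ gives $q\in\ba_m$), so they lie in distinct $\ba_{i'},\ba_{j'}$, and then uses incomparability of $pi'$ and $pj'$ to force $p=q$---whereas you first pin down the position of~$q$ relative to~$p$ by a length count and then dispose of the case ``$q$ properly extends $p$'' by showing both $qk,ql$ fall into the same~$\ba_i$. The ingredients are identical; only the order of the case analysis differs.
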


\begin{cproof}
If~$\ba=\bigcup_{i<3}\ba_i$ is not closed, then there are $q\in P$ and $i\neq j$ such that $\set{qi,qj}\subseteq\ba$ but $q\notin\ba$. If $\set{qi,qj}\subseteq\ba_k$, for some $k<3$, then, as~$\ba_k$ is closed, we get $q\in\ba_k\subseteq\ba$, a contradiction. It follows that there are $i'\neq j'$ such that $qi\in\ba_{i'}$ and $qj\in\ba_{j'}$. {}From $\ba_{i'}\subseteq P\dnw pi'$ it follows that~$qi$ extends~$pi'$, thus, as it also extends~$q$, the finite sequences~$q$ and~$pi'$ are comparable (with respect to~$\leq$). Likewise, $q$ and~$pj'$ are comparable. Since $i'\neq j'$, it follows that~$p$ extends~$q$. Since~$qi$ extends~$pi'$ and~$qj$ extends~$pj'$, it follows that $p=q$, $i=i'$, and $j=j'$. Therefore, $pi\in\ba_i$ and $pj\in\ba_j$, so $\set{i,j}\subseteq X$.
\end{cproof}

\begin{claim}\label{Cl:cfgclosclos}
If~$\ba$ is closed, then $\cgf(\ba)$ is closed, for any $\ba\subseteq P$.
\end{claim}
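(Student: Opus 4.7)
The plan is to apply the characterization of the interior operator given in Lemma~\ref{L:OpenSemType}: a point $p$ belongs to $\cgf(\ba)$ if{f} every minimal covering of $p$ meets~$\ba$. The minimal coverings in $(P,\gf)$ are easy to list explicitly: for each $p \in P$, they are the trivial covering $\set{p}$ together with the three two-element coverings $\set{pi,pj}$ for distinct $i,j\in\set{0,1,2}$. This explicit inventory, combined with the pigeonhole observation that any two $2$-element subsets of $\set{0,1,2}$ must share a point, will drive the whole argument.

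First I would verify the standing observation $\cgf(\ba)\subseteq\ba$: if $p\in\cgf(\ba)$, then the trivial minimal covering $\set{p}$ must meet~$\ba$, which forces $p\in\ba$. Next, to check that $\cgf(\ba)$ is closed, I would take distinct $i,j\in\set{0,1,2}$ and $q\in P$ with $\set{qi,qj}\subseteq\cgf(\ba)$, and show $q\in\cgf(\ba)$. By the previous remark, $\set{qi,qj}\subseteq\ba$; since $\ba$ is closed and $\set{qi,qj}\in\cM(q)$, we obtain $q\in\ba$ as well.

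It then remains to check, via Lemma~\ref{L:OpenSemType}, that every minimal covering of $q$ meets $\ba$. The trivial covering $\set{q}$ meets $\ba$ because we have just shown $q\in\ba$. For any nontrivial minimal covering $\set{qk,q\ell}$ with $k\neq\ell$ in $\set{0,1,2}$, a pigeonhole argument applies: two distinct $2$-element subsets of a $3$-element set necessarily intersect, so $\set{k,\ell}\cap\set{i,j}\neq\es$. Hence one of $qk,q\ell$ equals $qi$ or $qj$, and thus lies in $\ba$. This gives $q\in\cgf(\ba)$, completing the proof.

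I do not expect a genuine obstacle here; the claim is essentially a consequence of the very rigid branching structure of~$P$ (fan-out exactly three) which makes the key pigeonhole step trivial. The only subtlety is to not forget to use the trivial coverings $\set{p}$, both to observe $\cgf(\ba)\subseteq\ba$ and to handle the trivial covering in the final verification that every minimal covering of~$q$ meets~$\ba$.
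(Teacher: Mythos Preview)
Your argument has a genuine gap: the inventory of minimal coverings is wrong. You assert that the minimal coverings of~$q$ are exactly $\set{q}$ and the three pairs $\set{qk,q\ell}$, but the closure operator here is recursive, so minimal coverings can reach arbitrarily deep into the tree. For a concrete counterexample, take $\bx=\set{q00,q01,q1}$. Its closure contains~$q0$ (from $q00,q01$) and then~$q$ (from $q0,q1$), so~$\bx$ covers~$q$; and removing any one element destroys this, so~$\bx\in\cM(q)$. Your pigeonhole step gives no information about whether such an~$\bx$ meets~$\ba$: none of $q00$, $q01$, $q1$ need equal~$qi$ or~$qj$.

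The paper's proof confronts exactly this difficulty. Given an arbitrary $\bx\in\cM(p)$ disjoint from~$\ba$ (necessarily $\bx\subseteq P\ddnw p$), it splits~$\bx$ as $\bigcup_{k<3}(\bx\dnw pk)$ and sets $\ba_k=\gf(\bx\dnw pk)\subseteq P\dnw pk$. The union $\bigcup_{k<3}\ba_k$ is closed and contains~$\bx$, hence contains~$p$ --- but wait, that would force $p\leq pk$ for some~$k$, which is absurd. So the union is \emph{not} closed, and Claim~\ref{Cl:UUaiclosed} (proved just before) then yields two indices $i'\neq j'$ with $pi'\in\ba_{i'}$ and $pj'\in\ba_{j'}$. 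Now the pigeonhole idea enters, but one level up: $\set{i,j}\cap\set{i',j'}$ contains some~$k$, and for that~$k$ we have both $pk\in\cgf(\ba)$ (by hypothesis) and $pk\in\gf(\bx\dnw pk)$. Since $pk\in\cgf(\ba)$, every covering of~$pk$ meets~$\ba$; in particular $\bx\dnw pk$ does, hence so does~$\bx$, contradicting $\bx\cap\ba=\es$. The extra ingredient you are missing is precisely this decomposition-and-Claim~\ref{Cl:UUaiclosed} step, which handles deep minimal coverings.
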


\begin{cproof}
Let $p\in P$ and let $i\neq j$ in $\set{0,1,2}$ such that $\set{pi,pj}\subseteq\cgf(\ba)$, we must prove that $p\in\cgf(\ba)$. {}From $\set{pi,pj}\subseteq\cgf(\ba)\subseteq\ba$, together with~$\ba$ being closed, it follows that $p\in\ba$. Suppose that $p\notin\cgf(\ba)$. There exists $\bx\in\cM(p)$ such that $\bx\cap\ba=\es$; from $p\in\ba$ it follows that $\bx\subseteq P\ddnw p$, so $\bx=\bigcup_{k<3}(\bx\dnw pk)$. The set~$\ba_k=\gf(\bx\dnw pk)$ is a closed subset of $P\dnw pk$, for each $k<3$. If $\bb=\bigcup_{k<3}\ba_k$ is closed, then, as $\bx\subseteq\bb$ and $p\in\gf(\bx)$, we get $p\in\bb$, so $p\leq pk$ for some $k<3$, a contradiction. Hence~$\bb$ is not closed, so, by Claim~\ref{Cl:UUaiclosed} above, there are distinct $i'\neq j'$ in $\set{0,1,2}$ such that $pi'\in\ba_{i'}$ and $pj'\in\ba_{j'}$. Pick $k\in\set{i,j}\cap\set{i',j'}$. Then $pk\in\cgf(\ba)$ and $pk\in\gf(\bx\dnw pk)$, hence $(\bx\dnw pk)\cap\ba\neq\es$, and hence $\bx\cap\ba\neq\es$, thus completing the proof that $p\in\cgf(\ba)$.
\end{cproof}

It follows from Claim~\ref{Cl:cfgclosclos} that
$\Reg(P,\gf)=\Clop(P,\gf)$.

Suppose that $\Reg P$ (that is, the lattice of all regular closed subsets over the closure space $(P,\tcl)$ defined in Example~\ref{Ex:FromJSemil}) has a completely \jirr\ element~$\ba$. By Corollary~\ref{C:MinNbhdSemil2},
$\ba$ is clopen. Since $(P,\tcl)$ is a closure space of semilattice
type, it follows from Lemma~\ref{L:DescrJirrRegP} that~$\ba$ has a
largest element~$p$, with $\ba_*=\ba\setminus\set{p}$. For all $i\neq
j$ in $\set{0,1,2}$, it follows from $p\in\cgf(\ba)$ and
$p\in\tcl\set{pi,pj}$ that $\set{pi,pj}\cap\ba\neq\es$. Since this
holds for every possible choice of $\set{i,j}$, it follows that there
are distinct $i,j\in\set{0,1,2}$ such that $\set{pi,pj}\subseteq\ba$,
so $\set{pi,pj}\subseteq\ba\setminus\set{p}=\ba_*$, and so, since~$\ba_*$ is closed, $p\in\ba_*$, a contradiction.

An argument similar to the one of the paragraph above shows that~$\Reg(P,\gf)$ has no completely \jirr\ element either.

The subsets $\ba_i=P\dnw\set{i0}$, $\bb_0=(P\dnw\set{0,10,2})\cup\set{\es}$, and $\bb_1=(P\dnw\set{0,1})\cup\set{\es}$ are all clopen, with $\ba_i\subseteq\bb_j$ for all $i,j<2$. However, suppose that there exists a clopen subset~$\bc\subseteq P$ such that $\ba_i\subseteq\bc\subseteq\bb_j$ for all $i,j<2$. Since $\set{00,10}\subseteq\ba_0\cup\ba_1\subseteq\bc$ and~$\bc$ is closed, $\es=00\vee10$ belongs to~$\bc$. Since $\es=1\vee2$ and~$\bc$ is open, it follows that $\set{1,2}$ meets~$\bc$, thus it meets $\bb_0\cap\bb_1$, a contradiction as $1\notin\bb_0$ and $2\notin\bb_1$. Therefore, $\Clop P$ is not a lattice.
\end{proof}

Our next example shows that Theorem~\ref{T:ClopWFLatt} cannot be extended from closure spaces of semilattice type to closure spaces of poset type. That is, in poset type, even if $\Clop(P,\gf)$ is a lattice, it may not be a sublattice of $\Reg(P,\gf)$.

\begin{example}\label{Ex:PosetClEx}
Let $P=\set{a_0,a_1,a,\top}$ be the poset represented on the left hand side of Figure~\ref{Fig:RegNotClop}, and set
 \[
 \gf(\bx)=\begin{cases}
 \bx\cup\set{\top}\,,&\text{if either }
 \set{a_0,a_1}\subseteq\bx
 \text{ or }a\in\bx\,,\\
 \bx\,,&
 \text{otherwise}
 \end{cases}\,,
 \qquad\text{for each }
 \bx\subseteq P\,.
 \]
 
\begin{figure}[htb]
\includegraphics{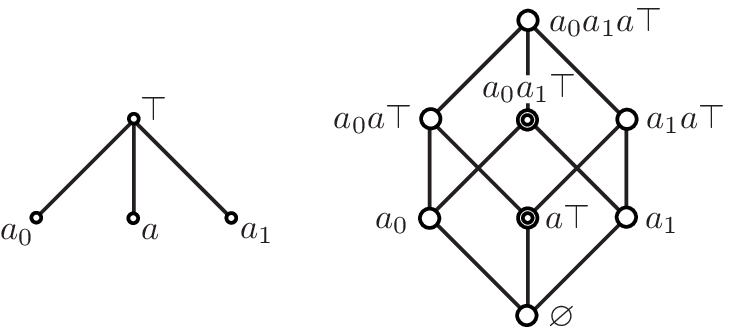}
\caption{The poset~$P$ and the containment $\Clop(P,\gf)\subsetneqq\Reg(P,\gf)$}
\label{Fig:RegNotClop}
\end{figure}

It is straightforward to verify that~$\gf$ is a closure operator on~$P$. Furthermore, the nontrivial minimal coverings in~$(P,\gf)$ are exactly those given by the relations $\top\in\gf(\set{a_0,a_1})$ and $\top\in\gf(\set{a})$. Since~$a_0$, $a_1$, and~$a$ are all smaller than~$\top$, it follows that $(P,\gf)$ is a closure space of poset type.

We represent the eight-element Boolean lattice $L=\Reg(P,\gf)$ on the
right hand side of Figure~\ref{Fig:RegNotClop} (the labeling is given
by $\set{a_0}\mapsto a_0$, $\set{a_0,a,\top}\mapsto a_0a\top$, and so
on). The poset $K=\Clop(P,\gf)$ is the six-element ``benzene
lattice''. The two elements of $L\setminus K$ (viz. $\set{a,\top}$ and
$\set{a_0,a_1,\top}$) are marked by doubled circles on the right hand
side of Figure~\ref{Fig:RegNotClop}.

Observe that~$\set{a_0}$ and~$\set{a_1}$ are both clopen, and that
 \begin{align*}
 \set{a_0}\vee\set{a_1}&=
 \set{a_0,a_1,\top}&&\text{in }
 \Reg(P,\gf)\,,\\
 \set{a_0}\vee\set{a_1}&=
 \set{a_0,a_1,a,\top}&&\text{in }
 \Clop(P,\gf)\,.
 \end{align*}
In particular, $\Clop(P,\gf)$ is not a sublattice of $\Reg(P,\gf)$.
\end{example}

\section{A non-clopen \jirr\ regular open set in a finite graph}\label{S:NonClopNbhd}

The present section will be devoted to the description of a counterexample showing that Corollary~\ref{C:CJIBGorCyc} cannot be extended to arbitrary finite graphs. We shall denote by~$H$ the graph denoted, in the online database \url{http://www.graphclasses.org/}, by $\cK_{3,3}-e$, labeled as on Figure~\ref{Fig:K33-e}.

\begin{figure}[htb]
\includegraphics{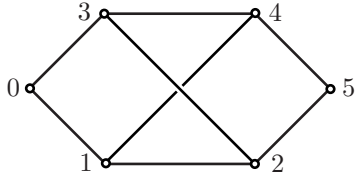}
\caption{The graph~$H$}\label{Fig:K33-e}
\end{figure}

We skip the braces and commas in denoting the subsets of~$H$, and we set
 \begin{multline}\label{Eq:Descrbv}
 \bv=\{1,3,5,01,03,12,34,013,123,125,134,
 145,235,345,\\
 0123,0134,0145,0235,1235,1345,12345,
 01345,01235,012345\}\,. 
 \end{multline}
The elements of~$\bv$ are surrounded by boxes in Table~\ref{Ta:buinK33-e234}.
Not every nonempty subset of~$H$ belongs to~$\so{H}$ (e.g., $02$), in which case we mark it as such (e.g., \OUT{02}). The subset $1234=12\prt 34$ belongs to~$\tcl(\bv)\setminus\bv$.

In order to facilitate the verification of the proof of Theorem~\ref{T:NonClopjirr}, we list the elements of $\bv^\cpl=\so{H}\setminus\bv$:
 \begin{multline}\label{Eq:Descrbvcpl}
 \bv^\cpl=\{0,2,4,14,23,25,45,012,014,023,034,124,234,245,\\
0124,0125,0234,0345,1234,1245,2345,01234,01245,02345\}\,. 
 \end{multline}

\begin{theorem}\label{T:NonClopjirr}
  The set~$\bv$ is a minimal open, and not closed, neighborhood of~$H$, with $\tcl(\bv)=\bv\cup\set{1234}$. Furthermore, $\bv$ is \jirr\ in $\Regop(\so{H},\tcl)$.
\end{theorem}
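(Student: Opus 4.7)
My plan is to route all four assertions of the theorem through the pseudo-ultrafilter machinery of Section~\ref{S:CJIBlockGraph}. Set $\mu=\bv\cap\Cuts(H)$. The two central verifications will be that $\mu$ is a pseudo-ultrafilter on~$H$ and that $\bv=\tj(\mu)$; once these are in place, the theorem will fall out from earlier results with almost no extra work.

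To verify that $\mu$ is a pseudo-ultrafilter, I read off $\Cuts(H)$ from~\eqref{Eq:Descrbv} and~\eqref{Eq:Descrbvcpl} (the cuts of~$H$ are exactly those $X\in\so{H}$ for which $H\setminus X\in\so{H}$), and check: for each proper cut~$X$, exactly one of $X$ and $H\setminus X$ belongs to~$\mu$ (condition~(iii) of Definition~\ref{D:PFil}); and for each triple $(X,Y,Z)$ of cuts of~$H$ with $Z=X\prt Y$, both $X,Y\in\mu$ implies $Z\in\mu$ and both $X,Y\notin\mu$ implies $Z\notin\mu$ (conditions~(i) and~(ii)). For the identity $\bv=\tj(\mu)$, since $H$ is finite the condition $X\leq^{\oplus}H$ is automatic, so $\tj(\mu)=\setm{X\in\so{H}}{\coco(H\setminus X)\cap\mu=\es}$; I then check element by element that $X\in\bv$ if and only if no component of $H\setminus X$ lies in~$\mu$.

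With these two verifications in hand, the openness of~$\bv$ is immediate from Lemma~\ref{L:tjmuopen}. For the closure identity, I will note that $1234=12\prt34\in\tcl(\bv)$ and check case by case, for each $Y\in\bv^\cpl\setminus\set{1234}$, that no finite subset of~$\bv$ partitions~$Y$; this simultaneously yields $\tcl(\bv)=\bv\cup\set{1234}$ and shows that $\bv$ is not closed. For minimality as a neighborhood of~$H$, I invoke Proposition~\ref{P:MinNbhd}: given $X\in\bv$, the partition $\bx=\set{X}\cup\coco(H\setminus X)$ of~$H$ into connected pieces lies in~$\cM(H)$; by Lemma~\ref{L:HominusX}(i) each component of $H\setminus X$ is a cut of~$H$, and since $X\in\tj(\mu)$ none of these components lies in $\mu=\bv\cap\Cuts(H)$, whence none lies in~$\bv$, giving $\bx\cap\bv=\set{X}$.

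For the \jirry, Proposition~\ref{P:cltjmujirr} gives that $\tcl(\bv)=\tcl(\tj(\mu))$ is completely \jirr\ in $\Reg(\so{H},\tcl)$. An elementary computation (using $\tcl(\bv)=\bv\cup\set{1234}$ together with the fact that the minimal covering $\set{14,23}$ of~$1234$ is disjoint from~$\bv$, so $1234\notin\cgf(\tcl(\bv))$) yields $\cgf(\tcl(\bv))=\bv$; hence $\bv\in\Regop(\so{H},\tcl)$, and the isomorphism $\cgf\colon\Reg(\so{H},\tcl)\to\Regop(\so{H},\tcl)$ of Corollary~\ref{C:RegP2RegopP} transports the \jirry\ of $\tcl(\bv)$ to that of~$\bv$. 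The principal obstacle is purely computational: enumerating the roughly forty cuts of~$H$, all partitions of each cut into two cuts, and verifying the pseudo-ultrafilter conditions together with $\bv=\tj(\mu)$ is finite but nontrivial. Once tabulated against~\eqref{Eq:Descrbv} and~\eqref{Eq:Descrbvcpl}, each check reduces to a routine one-line calculation.
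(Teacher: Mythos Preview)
Your proposal is correct and takes a genuinely different route from the paper. The paper proceeds entirely by direct verification: it checks that~$\bv^\cpl$ is closed (hence~$\bv$ is open), that $\bv\cup\set{1234}$ is closed (giving the closure identity), that $1234=14\prt23$ witnesses $1234\notin\tin\tcl(\bv)$ (so~$\bv$ is regular open), and then applies Proposition~\ref{P:MinNbhd} by exhibiting, for each $X\in\bv$, a specific $\bx\in\cM(H)$ with $\bx\cap\bv=\set{X}$---with ad hoc choices for the four elements $X\in\set{123,134,1235,1345}$ where $H\setminus X$ is disconnected. For the \jirry, the paper observes that a minimal regular open neighborhood of~$H$ is \jirr\ in~$\Regop$ provided~$H$ admits no partition $H=X\prt Y$ with $X,Y\in\tcl(\bv)$, and checks this on the table.

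Your approach instead verifies that $\mu=\bv\cap\Cuts(H)$ is a pseudo-ultrafilter and that $\bv=\tj(\mu)$, then harvests openness from Lemma~\ref{L:tjmuopen}, minimality uniformly from $\bx=\set{X}\cup\coco(H\setminus X)$ (which in fact reproduces the paper's ad hoc witnesses systematically, since each component of $H\setminus X$ is a cut not in~$\mu$, hence not in~$\bv$), and \jirry\ from Proposition~\ref{P:cltjmujirr} transported along the isomorphism $\cgf\colon\Reg\to\Regop$. The computational burden is comparable---both approaches tabulate essentially the same data---but your route explains \emph{why} the example behaves as it does: it is an instance of~$\tj(\mu)$ that fails to be closed, which by Theorem~\ref{T:DiamContr} is possible only because~$H$ is diamond-contractible. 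The paper's direct approach, by contrast, is self-contained and does not presuppose the machinery of Section~\ref{S:CJIBlockGraph}.
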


\begin{proof}
We first verify, by using~\eqref{Eq:Descrbvcpl}, that~$\bv^\cpl$ is closed. In order to do this, it is sufficient to verify that whenever~$X$, $Y$, $Z$ are nonempty connected subsets of~$H$ such that $Z=X\prt Y$, then $\set{X,Y}\subseteq\bv^\cpl$ implies that $Z\in\bv^\cpl$. We thus obtain that~$\bv$ is open.

Likewise, by using~\eqref{Eq:Descrbv}, we verify that $\bv\cup\set{1234}$ is closed. Since $1234=12\prt 34$ belongs to~$\tcl(\bv)$, it follows that $\tcl(\bv)=\bv\cup\set{1234}$. Moreover, $1234=14\prt 23$ in~$\so{H}$ with $14,23\notin\tcl(\bv)$, thus $1234\notin\tin\tcl(\bv)$, and thus, since~$\bv$ is open, it follows that $\tin\tcl(\bv)=\bv$, that is, $\bv$ is regular open.

By definition, $H=012345$ belongs to~$\bv$. We verify, by using
Proposition~\ref{P:MinNbhd}, that~$\bv$ is a minimal neighborhood
of~$H$. For each $X\in\bv$, we need to find $\bx\in\cM(H)$ such that
$\bx\cap\bv=\set{X}$. If $X=H$, take $\bx=\set{H}$. If $X=123$, take
$\bx=\set{123,0,45}$. If $X=134$, take $\bx=\set{134,0,25}$. If
$X=1235$, take $\bx=\set{1235,0,4}$. If $X=1345$, take
$\bx=\set{1345,0,2}$. In all other cases, $H\setminus X$ belongs
to~$\bv^\cpl$, so we can take $\bx=\set{X,H\setminus X}$.

Since~$\bv$ is a minimal neighborhood of~$H$, it is, \emph{a fortiori}, a minimal element of the set of all regular open neighborhoods of~$H$. In order to verify that~$\bv$ is \jirr\ in $\Regop(\so{H},\tcl)$, it suffices to verify that~$H$ is irreducible in~$\bv$, that is, that there is no partition of the form $H=X\prt Y$ with $X,Y\in\tcl(\bv)$. This can be easily checked on Table~\ref{Ta:buinK33-e234}.
\end{proof}

Even without invoking Proposition~\ref{P:MinNbhd}, it is easy to
verify directly that~$\bv$ contains no clopen neighborhood
of~$H$. Suppose, to the contrary, that~$\ba$ is such a clopen
neighborhood. Since $H=12\prt 0345$ with $H\in\ba$ and
$0345\notin\bv$ (thus $0345\notin\ba)$, it follows
from the openness of~$\ba$ that $12\in\ba$. Likewise,
$34\in\ba$. Since~$\ba$ is closed, it follows that $1234=12\prt 34$
belongs to~$\ba$, thus to~$\bv$, a contradiction.

\begin{corollary}\label{C:NonClopjirr}
The extended permutohedron~$\sR(H)$ is not the Dedekind-MacNeille completion of the permutohedron~$\sP(H)$.
\end{corollary}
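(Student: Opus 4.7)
The strategy is to exhibit a completely join-irreducible regular closed subset of $\so{H}$ that fails to be clopen, and then to invoke Lemma~\ref{L:MacNeille}(ii).

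The candidate is $\tcl(\bv) = \bv\cup\{1234\}$, already described by Theorem~\ref{T:NonClopjirr}. First I would transport the join-irreducibility information across Corollary~\ref{C:RegP2RegopP}: the map $\gf\colon\Regop(\so{H},\tcl)\to\sR(H)$ is an order-isomorphism, and it sends $\bv$ to $\tcl(\bv)$. Since Theorem~\ref{T:NonClopjirr} gives that $\bv$ is \jirr\ in $\Regop(\so{H},\tcl)$, its image $\tcl(\bv)$ is \jirr\ in $\sR(H)$. Because $\so{H}$ is finite, $\sR(H)$ is finite as well, so join-irreducibility and complete join-irreducibility coincide; thus $\tcl(\bv)$ is a completely \jirr\ element of $\sR(H)$.

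Next I would observe that $\tcl(\bv)$ is not clopen. Its interior is $\cgf\tcl(\bv) = \bv$ because $\bv$ is regular open (as established in the course of the proof of Theorem~\ref{T:NonClopjirr}). If $\tcl(\bv)$ were clopen, it would coincide with its own interior, forcing $\tcl(\bv) = \bv$; but $1234 \in \tcl(\bv)\setminus\bv$, a contradiction. Hence $\tcl(\bv)\notin\sP(H) = \Clop(\so{H},\tcl)$.

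Finally, I would conclude by contraposition using Lemma~\ref{L:MacNeille}(ii): if $\sR(H)$ were the Dedekind-MacNeille completion of $\sP(H)$, then every completely \jirr\ element of $\sR(H)$ would lie in $\sP(H)$, which is contradicted by $\tcl(\bv)$. This yields the corollary. The whole argument is a short assembly: the genuine work has already been done in Theorem~\ref{T:NonClopjirr}, so no real obstacle remains beyond carefully citing the two results above.
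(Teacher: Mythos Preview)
Your argument is correct and is exactly the implicit reasoning behind the corollary: transport the \jirry\ of~$\bv$ in~$\Regop$ to complete \jirry\ of~$\tcl(\bv)$ in the finite lattice~$\sR(H)$ via Corollary~\ref{C:RegP2RegopP}, observe that~$\tcl(\bv)\neq\bv=\tin\tcl(\bv)$ so~$\tcl(\bv)$ is not clopen, and conclude by Lemma~\ref{L:MacNeille}(ii). One minor remark: you need not appeal to the \emph{proof} of Theorem~\ref{T:NonClopjirr} for the regular-openness of~$\bv$, since membership in~$\Regop(\so{H},\tcl)$ is already part of its statement.
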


\begin{table}[htb]
\begin{tabular}{| c | c | c | c | c | c |}
\hline
0 & \BX{1} & 2 & \BX{3} & 4 & \BX{5}\\ \hline
\BX{12345} & 02345 & \BX{01345} & 
01245 & \BX{01235} & 01234\\ \hline
\end{tabular}
\begin{tabular}{| c | c | c | c | c |}
\hline
\BX{01} & \OUT{02} & \BX{03} & \OUT{04} & \OUT{05}\\ \hline
2345 & \BX{1345} & 1245 & 
\BX{1235} & 1234\\ \hline\hline
\BX{12} & \OUT{13} & 14 & \OUT{15} & 23\\
\hline
0345 & \OUT{0245} & \BX{0235} & 0234
& \BX{0145}\\ \hline\hline
\OUT{24} & 25 & \BX{34} & \OUT{35} & 45\\
\hline
\OUT{0135} & \BX{0134} & 0125 & 0124 &
\BX{0123}\\ \hline\hline
012 & \BX{013} & 014 & \OUT{015} & 023\\
\hline
\BX{345} & 245 & \BX{235} & 234 &
\BX{145}\\ \hline\hline
\OUT{024} & \OUT{025} & 034 & \OUT{035}
& \OUT{045}\\ \hline
\OUT{135} & \BX{134} & \BX{125} & 124 &
\BX{123}\\ \hline
\end{tabular}
\caption{\tvi Nonempty proper members of~$\so{H}$; members of~$\bv$ boxed}\label{Ta:buinK33-e234}
\end{table}

\section{A non-clopen minimal regular open neighborhood}\label{S:NonClopReg}

For any positive integer~$n$, the complete graph~$\cK_n$ is a block graph, hence~$\sR(\cK_n)$ is the Dedekind-MacNeille completion of~$\sP(\cK_n)$. (This follows from Corollary~\ref{C:CJIBGorCyc}; however, invoking Lemma~\ref{L:JIalmOpen} is even easier.) While the corresponding results for transitive binary relations (cf. Santocanale and Wehrung~\cite{SaWe12a}) and for \js{s} (cf. Corollary~\ref{C:MinNbhdSemil2}) are obtained \emph{via} the stronger result that every \emph{open} set is a union of clopen sets, we shall prove in this section that for~$n$ large enough (i.e., $n\geq7$), not even every \emph{regular open} subset of~$\so{\cK_n}$ is a union of clopen sets.

In what follows, we shall label the vertices of $G=\cK_7$ from~$0$ to~$6$, and describe the construction of a regular open subset of~$\so{G}$ that is not a set-theoretical union of clopen sets. It can be proved, after quite lengthy calculations, that~$7$ is the smallest integer with that property: for each $n\leq 6$, every regular open subset of~$\so{\cK_n}$ is a set-theoretical union of clopen sets.

\begin{theorem}\label{T:hostile7}
There exists a minimal neighborhood~$\bu$ of~$G=\cK_7$, which is, in addition, regular open, and such that
 \begin{equation}\label{Eq:X-X7}
 X\in\bu\ \Leftrightarrow\ 
 G\setminus X\notin\bu\,,
 \quad\text{for any }X\subseteq G\,, 
 \end{equation}
together with $Q_0,Q_1,Q_2\in\bu$ such that $G=Q_0\prt Q_1\prt Q_2$. In particular, $\bu$ contains no clopen neighborhood of~$G$.
\end{theorem}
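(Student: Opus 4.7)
The plan is to exhibit an explicit set $\bu \subseteq \so{G}$ satisfying all the stated properties, with the verification reduced to a finite combinatorial check in the spirit of the proof of Theorem~\ref{T:NonClopjirr}. Since $G = \cK_{7}$ is complete, every nonempty subset of $G$ is connected, so $\so{G} = \Pow{G} \setminus \set{\es}$; moreover every element of $\cM(G)$ is simply a partition of $G$ into nonempty parts. The condition~\eqref{Eq:X-X7}, together with $G \in \bu$ and $\es \notin \bu$, forces the construction to pair up the $126$ nonempty proper subsets of $G$ into $63$ complementary pairs and to select exactly one member of each pair to place in $\bu$, so that $\card \bu = 64$.

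First I would specify $\bu$ by an explicit tabular description, pair by pair, designed to simultaneously guarantee openness, regular openness, minimality as a neighborhood of~$G$, and the existence of three pairwise disjoint sets $Q_{0}, Q_{1}, Q_{2} \in \bu$ with $G = Q_{0} \prt Q_{1} \prt Q_{2}$. Condition~\eqref{Eq:X-X7} will then hold by construction. The remaining verifications are finite but delicate: openness amounts to checking that, for each $X \in \bu$ and each partition $X = Y \prt Z$ in~$\so{G}$, at least one of $Y, Z$ lies in $\bu$; regular openness amounts to checking that every $X \in \tcl(\bu) \setminus \bu$ admits a partition $X = Y \prt Z$ with both $Y, Z \notin \tcl(\bu)$; and minimality will be obtained via Proposition~\ref{P:MinNbhd} by exhibiting, for each $X \in \bu$, a partition of~$G$ whose only member in $\bu$ is~$X$ (for $X = G$ the singleton partition $\set{G}$ works, while for proper $X \in \bu$ one needs a partition $G = X \prt Y_{1} \prt \cdots \prt Y_{k}$ with each $Y_{i} \notin \bu$, i.e., $G \setminus X$ must decompose into ``negative'' pieces).

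Once such a $\bu$ is in hand, the final assertion will follow easily. Suppose, toward a contradiction, that $\ba \subseteq \bu$ is a clopen neighborhood of~$G$. Since $\ba$ is open and $G = Q_{0} \prt (Q_{1} \cup Q_{2})$ with $Q_{1} \cup Q_{2} = G \setminus Q_{0} \notin \bu \supseteq \ba$ by~\eqref{Eq:X-X7}, it follows that $Q_{0} \in \ba$; symmetrically, $Q_{1}, Q_{2} \in \ba$. But then the closedness of $\ba$ applied to the partition $Q_{0} \cup Q_{1} = Q_{0} \prt Q_{1}$ gives $Q_{0} \cup Q_{1} \in \ba \subseteq \bu$, contradicting $Q_{0} \cup Q_{1} = G \setminus Q_{2} \notin \bu$ (again by~\eqref{Eq:X-X7}).

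The hard part will be constructing $\bu$ itself. The remark in the paper that $n = 7$ is the smallest integer for which such a $\bu$ exists indicates that the constraints are tightly entangled: openness pushes $\bu$ toward being ``large,'' minimality and regular openness push it toward being ``small,'' \eqref{Eq:X-X7} forces a rigid complementation structure, and the requirement of a three-part partition inside $\bu$ demands a specific triple --- with just enough slack at $n = 7$ to accommodate all of them simultaneously. I therefore expect the construction to be presented as a labelled enumeration of the $126$ nonempty proper subsets of~$G$, in a table analogous to Table~\ref{Ta:buinK33-e234}, with the verification of all four conditions carried out by a direct finite case analysis of the relevant partitions.
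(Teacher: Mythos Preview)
Your proposal is correct and follows essentially the same approach as the paper: an explicit $64$-element set~$\bu$ is tabulated (with $Q_0=012$, $Q_1=34$, $Q_2=56$), then openness, the identification of $\tcl(\bu)$, and regular openness are checked by direct finite case analysis, minimality is obtained via Proposition~\ref{P:MinNbhd}, and the final ``no clopen neighborhood'' argument is exactly the one you wrote. One small simplification you could note: since~$G$ is complete, for every proper $X\in\bu$ the complement $G\setminus X$ is connected and, by~\eqref{Eq:X-X7}, not in~$\bu$, so the two-block partition $\set{X,\,G\setminus X}$ already witnesses Proposition~\ref{P:MinNbhd}; no multi-part decomposition of $G\setminus X$ is needed.
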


\begin{proof}
As in Section~\ref{S:NonClopNbhd}, we skip the braces in denoting subsets of~$G$, so, for instance, $134$ is short for $\set{1,3,4}$. The~$Q_i$ are defined by
 \[
 Q_0=012\,,\quad Q_1=34\,,\quad
 Q_2=56\,.
 \]
\begin{table}
\begin{tabular}{| c | c | c | c | c | c | c |}
\hline
\BX{0} & 1 & 2 & \BX{3} & 4 & \BX{5} & 6\\ \hline
123456 & \BX{023456} & \BX{013456} & 
012456 & \BX{012356} & 012346 & \BX{012345}\\ \hline\hline\hline
\BX{01} & \BX{02} & \BX{03} & \BX{04} & \BX{05} & \BX{06} & 12 \\ \hline
23456 & 13456 & 12456 & 
12356 & 12346 & 12345 & \BX{03456} \\
\hline\hline
13 & 14 & \BX{15} & 16 & \BX{23} & 24 & 25 \\ \hline
\BX{02456} & \BX{02356} & $02346^\star$ & 
\BX{02345} & $01456^\star$ & \BX{01356} & \BX{01346} \\
\hline\hline
26 & \BX{34} & \BX{35} & 36 & 45 & 46 & \BX{56} \\ \hline
\BX{01345} & $01256^\star$ & 01246 & \BX{01245} & 
\BX{01236} & \BX{01235} & $01234^\star$ \\ \hline\hline\hline
\BX{012} & \BX{013} & \BX{014} & \BX{015} & 016 & \BX{023} & 024 \\ \hline
$3456^\star$ & 2456 & $2356^\star$ & 
2346 & \BX{2345} & 1456 & \BX{1356} \\
\hline\hline
\BX{025} & \BX{026} & \BX{034} & \BX{035} & \BX{036} & \BX{045} & \BX{046} \\ \hline
1346 & $1345^\star$ & 1256 & 
1246 & 1245 & 1236 & $1235^\star$ \\
\hline\hline
\BX{056} & 123 &124 & 125 & 126 & 134 & \BX{135} \\ \hline
1234 & \BX{0456} & \BX{0356} & \BX{0346} & 
\BX{0345} & \BX{0256} & 0246 \\
\hline\hline
136 & 145 & 146 & 156 & 234 & \BX{235} & 236 \\ \hline
\BX{0245} & \BX{0236} & \BX{0235} & \BX{0234} & 
\BX{0156} & 0146 & \BX{0145} \\
\hline\hline
245 & 246 & 256 & \BX{345} & 346 & \BX{356} & 456 \\ \hline
\BX{0136} & \BX{0135} & \BX{0134} & 0126 & 
\BX{0125} & 0124 & \BX{0123} \\
\hline
\end{tabular}
\caption{\tvi Proper subsets of~$G$; members of~$\bu$ boxed, members of $\tcl(\bu)\setminus\bu$ marked by asterisks}\label{Ta:PperSubs7}
\end{table}
We group complementary pairs of nonempty subsets of~$G$ on Table~\ref{Ta:PperSubs7}, and we box and boldface, on that table, the elements of~$\bu\setminus\set{0123456}$.

Hence, $\bu=\set{0,023456,013456,3,\dots,356,0123,0123456}$. It has~$64$ elements. It is obvious, on the table, to see that $Q_i\in\bu$ for each $i\in\set{0,1,2}$. It is an elementary, although quite horrendous, task to verify that~$\bu$ is open and that the subset
 \[
 \ba=\bu\cup\set{01234,1235,1345,02346,01256,2356,01456,3456}
 \]
is closed. Each element of $\ba\setminus\bu$ is marked by an asterisk on Table~\ref{Ta:PperSubs7}. Each of the decompositions
 \begin{align*}
 01234&=012\prt34\,,&1235&=15\prt23\,,\\
 1345&=15\prt34\,,&02346&=23\prt046\,,\\
 01256&=012\prt56\,,&2356&=23\prt56\,,\\
 01456&=014\prt56\,,&3456&=34\prt56
 \end{align*}
yields a partition of an element of $\ba\setminus\bu$ into elements of~$\bu$; whence $\ba=\tcl(\bu)$. On the other hand,
each of the decompositions
 \begin{align*}
 01234&=13\prt024\,,&1235&=13\prt25\,,\\
 1345&=13\prt45\,,&02346&=36\prt024\,,\\
 01256&=25\prt016\,,&2356&=25\prt36\,,\\
 01456&=45\prt016\,,&3456&=36\prt45
 \end{align*}
yields a partition of an element of $\ba\setminus\bu$ into members not belonging to~$\tcl(\bu)$; whence $\bu=\tin(\tcl(\bu))$, that is, $\bu$ is regular open.

Since there are no complementary pairs of elements of~$\bu$, it follows from Proposition~\ref{P:MinNbhd} that~$\bu$ is a minimal neighborhood of~$G$ in~$\so{G}$.

Since $\tcl(\bu)=\ba\neq\bu$, the subset~$\bu$ is not clopen. Since it is a minimal neighborhood of~$G$, it contains no clopen neighborhood of~$G$.
\end{proof}

The last statement of Theorem~\ref{T:hostile7} can be proved directly, as follows. Let $\ba\subseteq\bu$ be clopen and suppose that $G\in\ba$. {}From $Q_1\prt Q_2\notin\bu$ it follows that $Q_1\prt Q_2\notin\ba$, thus, as $G=Q_0\prt Q_1\prt Q_2$ and~$\ba$ is open, we get $Q_0\in\ba$. Likewise, $Q_1\in\ba$, so, as~$\ba$ is closed, $Q_0\prt Q_1\in\ba$, that is, $01234\in\ba$, so $01234\in\bu$, \contr.

\begin{remark}\label{Rk:K3union}
It is much easier to find, even in~$\cK_3$, an open set which is not a set-theoretical union of clopen sets: just take $\bu=\set{0,1,2,012}$.
\end{remark}

\section{Open problems}\label{S:Pbs}

\begin{problem}\label{Pb:IdPHB}
  Is there a nontrivial lattice-theoretical identity (resp.,
  quasi-identity) that holds in the Dedekind-MacNeille completion of
  the poset of regions of any central hyperplane arrangement? How
  about hyperplane arrangements in fixed dimension?
\end{problem}

Any identity solving the first part of Problem~\ref{Pb:IdPHB} would,
in particular, hold in any permutohedron~$\sP(n)$, which would solve
another problem in Santocanale and Wehrung~\cite{SaWe11}. There is a
nontrivial quasi-identity, in the language of lattices \emph{with
  zero}, holding in the Dedekind-MacNeille completion~$L$ of any
central hyperplane arrangement, namely pseudocomplementedness
(cf. Corollary~\ref{C:DMcNPosHB1}). However, we do not know about
quasi-identities only in the language $(\vee,\wedge)$---we do not even
know whether the quasi-identity~\RSD{1}, introduced in
Section~\ref{S:SDReg}, holds in~$L$. For a related example, see
Example~\ref{Ex:AlmostSDReg2}.

Our next problem asks for converses to Theorems~\ref{T:bsPGBded} and~\ref{T:FinSemilBded}.

\begin{problem}\label{Pb:EmbPbBdedOrth}
Can every finite ortholattice, which is a bounded homomorphic image of a free lattice, be embedded into~$\sR(G)$ for some finite graph~$G$ \pup{resp., into~$\Reg{S}$ for some finite \js~$S$}?
\end{problem}

In Example~\ref{Ex:AlmostSDReg2}, we find a finite convex geometry whose lattice of regular closed subsets contains a copy of~$\sL_4$ (cf. Figure~\ref{Fig:NonSD}), thus fails the quasi-identity~\RSD{1} introduced in Section~\ref{S:SDReg}. However, this example also contains a copy of~$\sL_1$. This suggests the following problem.

\begin{problem}\label{Pb:CGL1}
Let $(P,\gf)$ be a finite convex geometry. If $\Reg(P,\gf)$ fails semidistributivity, does it necessarily contain a copy of~$\sL_1$?
\end{problem}

By Theorem~\ref{T:AlmostSDReg2}, Problem~\ref{Pb:CGL1} has a positive answer for $(P,\gf)$ of poset type.

Several results of the present paper state the boundedness of lattices of regular closed subsets of certain closure spaces. Permutohedra (on finite chains) are such lattices (cf. Caspard~\cite{Casp00}). The latter result was extended in Caspard, Le Conte de Poly-Barbut, and Morvan~\cite{CLM04} to all finite \emph{Coxeter lattices} (i.e., finite Coxeter groups with weak Bruhat ordering). Now to every finite Coxeter group is associated a so-called \emph{Dynkin diagram}, which is a \emph{tree}.

\begin{problem}\label{Pb:CoxLatt}
Relate an arbitrary finite Coxeter lattice~$L$ to the permutohedron on the corresponding Dynkin diagram (or a related graph). Can~$L$ be described as $\Reg(P,\gf)$, for a suitable closure system $(P,\gf)$ of semilattice type?
\end{problem}

For type~A the answer to Problem~\ref{Pb:CoxLatt} is well-known, as we
just get the usual permutohedra. For other types, the situation looks
noticeably more complicated. For example, let
$\mathcal{D}_{4}$ be the graph arising
from the Dynkin diagram of type $D_{4}$. Thus~$\mathcal{D}_{4}$ is a
star with three leaves (and one center). The lattice
$\sP(\mathcal{D}_{4})$, represented on the left hand side of Figure~\ref{fig:Pstar}, has~$160$ elements, while the Coxeter group arising from that diagram, whose weak Bruhat ordering is represented on the right hand side of Figure~\ref{fig:Pstar}, has~$192$ elements. It can be shown that the smaller lattice is a homomorphic image of the larger one, but that the smaller lattice does not embed into the larger one.

\begin{figure}[htbp]
  \centering
  \includegraphics[scale=0.18]{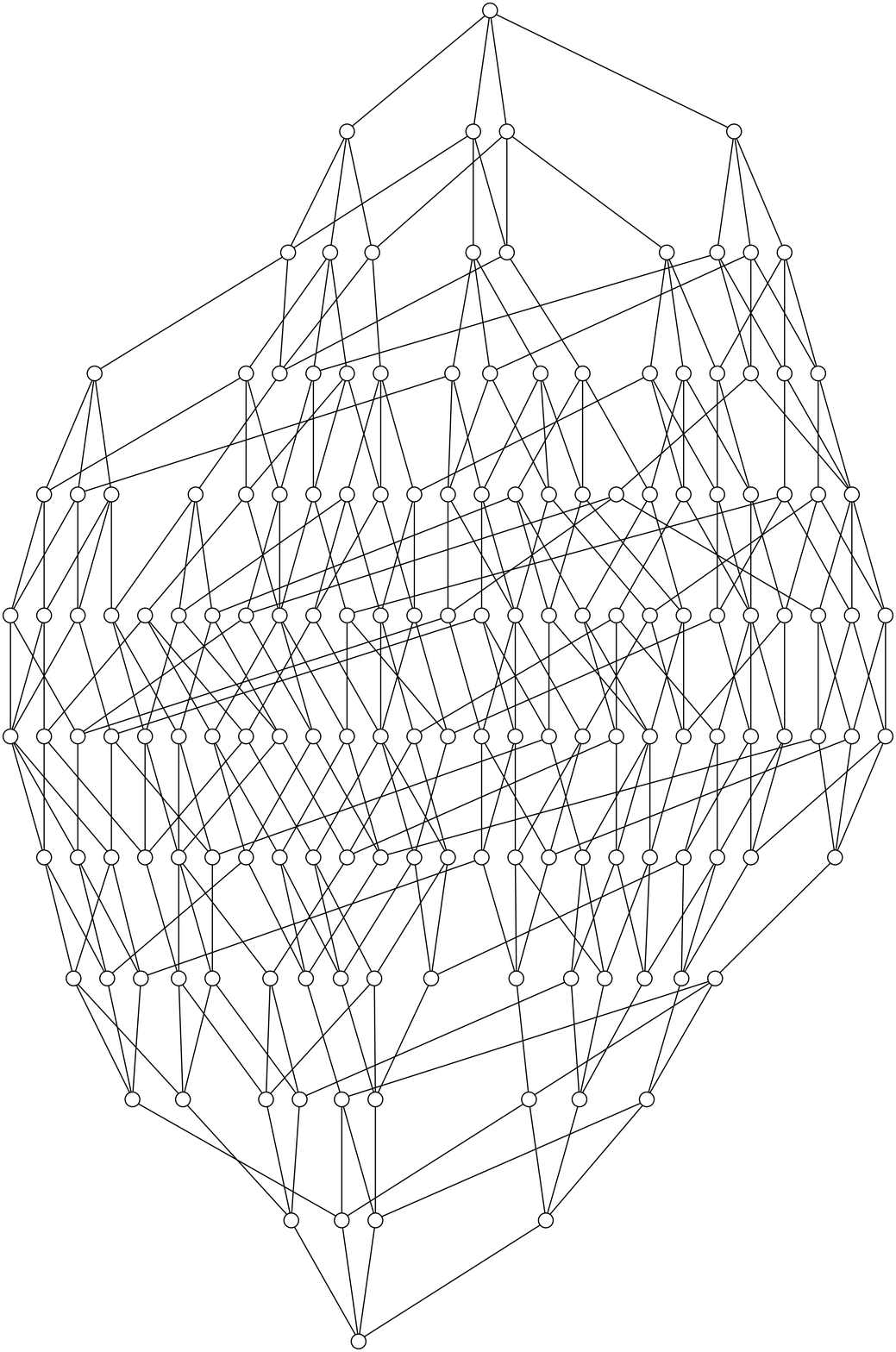}
  \quad
  \includegraphics[scale=0.18]{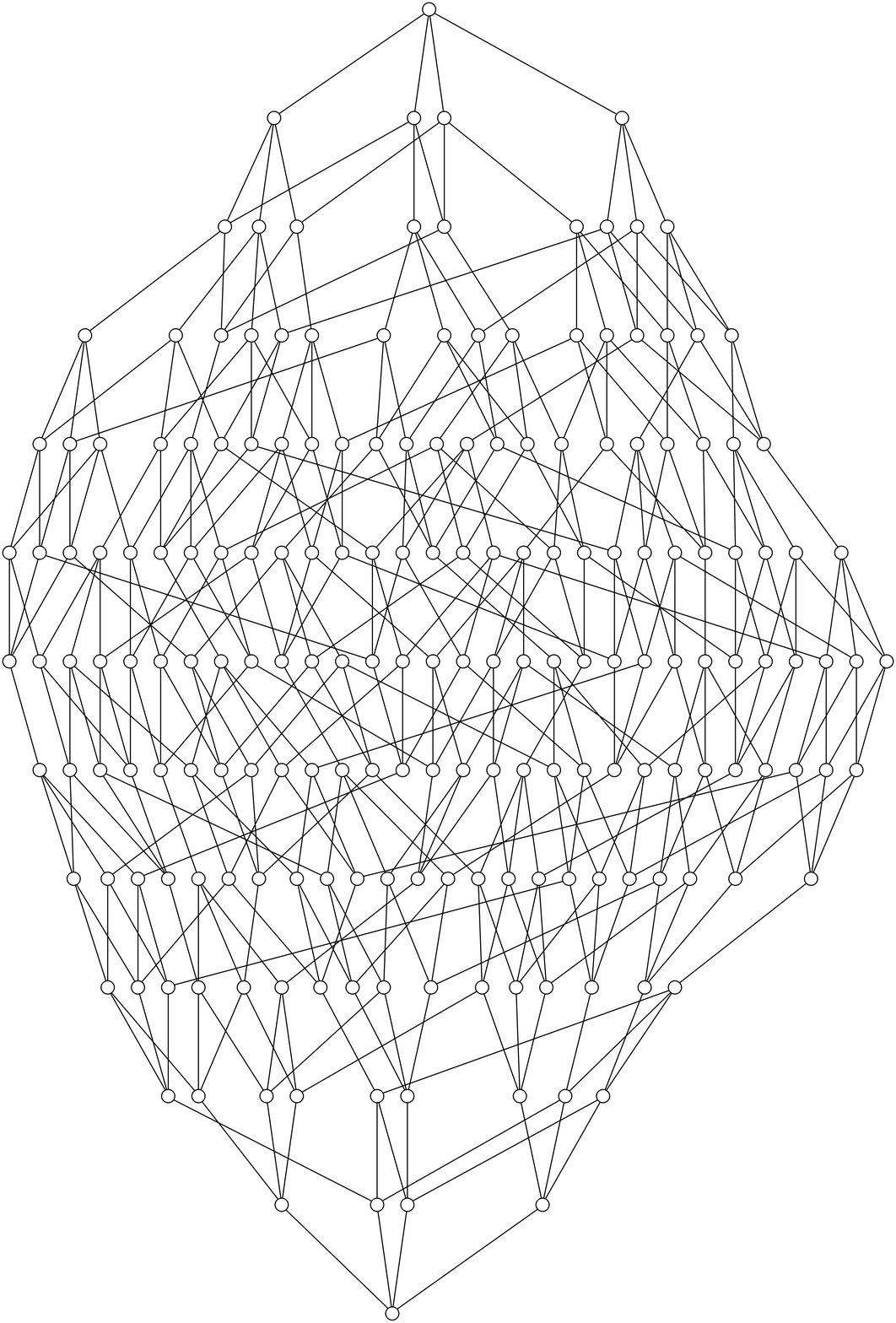}
  \caption{The lattice $\sP(\mathcal{D}_{4})$ and the Coxeter lattice of type~$D_{4}$}
  \label{fig:Pstar}
\end{figure}

%% \begin{problem}\label{Pb:JirrClop}
%% \hfill
%% \begin{itemize}
%% \item[(1)] Let~$G$ be a finite graph. Is it the case that every \jirr\ element of~$\sR(G)$ is clopen (i.e., it belongs to~$\sP(G)$)?
%% \item[(2)] Let~$S$ be a finite \js. Is it the case that every \jirr\ element of~$\Reg{S}$ is clopen (i.e., it belongs to~$\Clop{S}$?
%% \end{itemize}
%% \end{problem}

\begin{problem}\label{Pb:Spatial}
Let~$G$ be an infinite graph. Is every element of~$\sR(G)$ a join of completely \jirr\ (resp., clopen) elements of~$\sR(G)$?
\end{problem}

A counterexample to the analogue of Problem~\ref{Pb:Spatial}
for~$\Reg{S}$, for a \js~$S$, is given by
Example~\ref{Ex:SemTypeNotSp}. On the other hand, the analogue of
Problem~\ref{Pb:Spatial} for regular closed subsets of transitive
binary relations has a positive answer (cf.  Santocanale and
Wehrung~\cite{SaWe12a}). We do not even know the answer to
Problem~\ref{Pb:Spatial} for $G=\cK_\omega$, the complete graph on a
countably infinite vertex set. As evidence towards the negative, see
Theorem~\ref{T:hostile7}.

\begin{problem}\label{Pb:GraphTight}
  Let~$G$ be a graph. If a set $\setm{\ba_i}{i\in I}$ of clopen
  subsets of~$\so{G}$ has a join in~$\sP(G)$, is this join necessarily
  equal to $\tcl\pI{\bigcup\vecm{\ba_i}{i\in I}}$?
\end{problem}

The finite case of Problem~\ref{Pb:GraphTight} is settled by Theorem~\ref{T:AbundClop}.

\begin{problem}\label{Pb:AbundClop}
Can one remove the well-foundedness assumption from the statement of Theorem~\ref{T:ClopWFLatt}? That is, is $\Clop(P,\gf)$ a lattice if{f} $\Clop(P,\gf)=\Reg(P,\gf)$, for any closure space $(P,\gf)$ of semilattice type?
\end{problem}

Example~\ref{Ex:InfPosClop} suggests a negative answer to Problem~\ref{Pb:AbundClop}, while Corollary~\ref{C:MinNbhdSemil3} (dealing with \js{s}) and Theorem~\ref{T:PGLatt} (dealing with graphs) both suggest a positive answer to Problem~\ref{Pb:AbundClop}.

\begin{problem}\label{Pb:EqThPG}
Is there a nontrivial lattice identity that holds in $\sR(G)$ for every finite graph~$G$ (resp., in $\Reg S$ for every finite \js\ $S$)?
\end{problem}

Some ideas about Problem~\ref{Pb:EqThPG} may be found in Santocanale and Wehrung~\cite{SaWe11}.

\begin{problem}\label{Pb:SubgraphDMcN}
Let~$G$ be an induced subgraph of a graph~$H$. If~$\sR(H)$ is the Dedekind-MacNeille completion of~$\sP(H)$, is~$\sR(G)$ the Dedekind-MacNeille completion of~$\sP(G)$?
\end{problem}

\section{Acknowledgment}\label{S:Acknow}
The authors are grateful to William McCune for his
\texttt{Prover9-Mace4} program~\cite{McCune}, to Ralph Freese for his lattice drawing program~\cite{LatDraw}, and to the authors of the Graphviz framework, available online at \url{http://www.graphviz.org/}. Part of this work was completed while the second author was visiting the CIRM in March 2012. Excellent conditions provided by the host institution are greatly appreciated.

\end{document}